\renewcommand{\left}{\mleft}
\renewcommand{\right}{\mright}
\definecolor{ffqqqq}{rgb}{1,0,0}
\definecolor{qqffqq}{rgb}{0,1,0}
\definecolor{ffffff}{rgb}{1,1,1}
\definecolor{ttqqqq}{rgb}{0.07,0.07,0.07}
\colorlet{ColorGray}{gray!30}
\tikzset{cross/.style={cross out, draw=black, minimum size=2*(#1-\pgflinewidth), inner sep=0pt, outer sep=0pt}, cross/.default={1pt}}
\newlength\fullwidth
\numberwithin{equation}{section}
\DeclareMathSymbol{\leqslant}{\mathalpha}{AMSa}{"36} 
\DeclareMathSymbol{\geqslant}{\mathalpha}{AMSa}{"3E} 
\DeclareMathSymbol{\eset}{\mathalpha}{AMSb}{"3F}     
\def\1{\ifmmode {1\hskip -3pt \rm{I}} \else {\hbox {$1\hskip -3pt \rm{I}$}}\fi}
\newcommand{\var}{\operatorname{Var}}
\newcommand{\tc}{\thinspace |\thinspace}
\renewcommand{\L}{\Lambda}
\newcommand{\g}{\gamma}
\newcommand{\z}{\zeta}
\newtheorem{theorem}{Theorem}[section]
\newtheorem*{theorem*}{Theorem}
\newtheorem{lemma}[theorem]{Lemma}
\newtheorem{proposition}[theorem]{Proposition}
\newtheorem{corollary}[theorem]{Corollary}
\newtheorem{remark}[theorem]{Remark}
\newtheorem{definition}[theorem]{Definition}
\newtheorem{maintheorem}{Theorem}
\newtheorem*{question*}{Question}
\newtheorem*{remark*}{Remark}
\newtheorem*{idefinition*}{Definition}
\newtheorem{example}{Example}
\newcommand{\N}{\mathbb N}
\newcommand{\Z}{{\ensuremath{\mathbb Z}} }
\newcommand{\R}{{\ensuremath{\mathbb R}} }
\let\f=\varphi \let\g=\gamma
\let\y=\upsilon \let\x=\xi \let\z=\zeta
     \let\L=\Lambda
\newcommand{\qmed}{q_{\mathrm{med}}}
\newcommand{\supp}[1]{\mathrm{Supp}(#1)}
\title{The multicolour East model}
\author[Y. Couzini\'e]{Yannick~Couzini\'e}
\address{Dipartimento di Matematica e Fisica, Universit\`a Roma
  Tre}\email{yannick.couzinie@uniroma3.it}
\begin{document}
\begin{abstract}
        We consider the \emph{multicolour East model}, a model of glass forming
        liquids closely related to the East model on $\Z^d$. The state space
        ${(G\cup \{\star\})}^{\Z^d}$ consists of $|G|\le 2^d$ different
        \emph{vacancy types} and the \emph{neutral state} $\star$. To each
        $h\in G$ we associate unique facilitation mechanisms ${\{c_x^{h}\}}_{x\in
        \Z^d}$ that correspond to rotated versions of the East model
        constraints. If $c_x^{h}$ is satisfied, the state on $x$ can transition
        from $h$ to $\star$ with rate $p\in (0,1)$ or vice versa with rate
        $q_h\in (0,1)$, where generally $q_h\neq q_{h'}$ if $h'\neq h$.
        Notably, vertices in the state $h$ cannot transition directly to
        $h'\neq h$ and neighbouring $h'$-vacancies do not contribute in
        satisfying $c_x^{h}$. Thus, there is a novel blocking mechanism between
        vacancies of differing type. We find sufficient conditions on the model geometry to have a positive spectral gap and prove
        that with $|G|=2^d$ the model is not ergodic. For $d=2$ we prove
        that the model with $|G|\le 3$ has positive spectral gap and
        we find sufficient conditions on the transition rates for the spectral
        gap to be given in the leading order by the spectral gap of the East
        model on $\Z^2$ with parameter $q_{\min}=\min_{h\in G}q_h$ in the limit
        $q_{\min}\rightarrow 0$. In particular, we prove this when there are
        $h\in G$ with $q_h\gg q_{\min}$ by explicitly constructing mechanisms on
        which the frequent vacancy types cooperate to facilitate the East
        movement of the least frequent vacancies.
\end{abstract}
\maketitle

\section{Introduction}
In~\cite{garrahan2003coarse}, Chandler and Garrahan introduce a coarse-grained
model inspired by the complex dynamics of glass-forming liquids. It is best
described as a generalization of the East process, so let us first recall the
dynamics of the East process on $\Z^d$ (see e.g.\
\cites{cancrini2008kcm,faggionato2012east,mareche2019exponential}). The East
process is an interacting particle system with state space ${\{0,1\}}^{\Z^d}$
and single parameter $q\in (0,1)$.
Each vertex $x\in \Z^d$, with rate one and independently across $\Z^d$, is
resampled from $\{0,1\}$ according to the $\mathrm{Ber}(p)$-measure, $p=1-q$,
iff in the current configuration there is at least one vacancies (i.e.\ a state
``$0$'') among the neighbours $y$ of $x$ of the form $y=x-\mathbf{e}$ where
$\mathbf{e}$ are canonical base vectors of $\Z^d$ the set of which we denote by
$\mathcal{B}$. We say that the vacancies have north and east as their
propagation directions.

The model by Chandler and Garrahan is an interacting particle system on $\Z^d$
with state space ${(G\cup \{\star\})}^{\Z^d}$, $|G|=2^d$, and two parameters
$q,\xi\in (0,1)$, that is informally described as follows. To each element of
$G$ we associate a unique set of propagation directions corresponding to one of
the $2^d$ possible rotations of the East propagation directions. On each vertex
there are two Poisson clocks: one that gives diffusive rings with rate $\xi$
and one that gives directed rings with rate $1-\xi$. Both of these rings come
with their own facilitation mechanisms. A diffusive ring on $x\in \Z^d$ is
legal if there is a neighbour $y$ of $x$ in the state $h$ such that $x-y$
corresponds to one of the associated propagation directions of $h$, we say that
$x$ is $h$-facilitated. On a legal diffusive ring, if the state of $x$ is in
$G$, there is a transition to $\star$ with rate $p:=1-q$ and if the state of
$x$ is $\star$ there is a transition to any of the states in $G$ with rate
$q/2^d$ respectively. Legality for directed rings is a bit more tricky. At a
directed ring on $x$, if $x$ is in the neutral state $\star$, it can transition
to $h\in G'$ with rate $q/2^h$, where $G'$ is the set of all vacancy types $h$
such that $x$ is $h$-facilitated. If $x$ is in the state $h$ it can transition
to the neutral state $\star$ with rate $p$ iff it is $h$-facilitated. Thus,
while in the diffusive rings it suffices to be facilitated at all to be able to
transition from a state in $G$ to $\star$ and vice versa, directed rings
require the transitions and the facilitation to be by the same vacancy type, so
that the various vacancy types block each other.

In this paper we consider the limit case $\xi=0$ where there are
no diffusive rings. This model behaves like multiple rotated versions of the
East model evolving at the same time with a shared ``$1$'' state, represented
by the \emph{neutral state} $\star$. The original model is not ergodic (see
\cref{thm:ergodicity}(A)) so we consider the model with only a subset of all
possible $2^{d}$ rotations and allow for varying transition rates $q_h$ for the
various states $h\in G$. We dub this model the \emph{multicolour East model}
(MCEM). The MCEM is reversible with respect to the product measure $\mu$
that locally assigns a state with its corresponding probability ${\{q_h\}}_{h\in
G}$ or $p:=1-\sum_{h\in G} q_h$ for the neutral state, so that the transition
rates correspond to the equilibrium densities of the respective states.

In \cref{thm:ergodicity} we give sufficient conditions on the geometry of the
vacancy types so that the MCEM on $\Z^d$ has positive spectral gap, which in
particular implies that the two-dimensional MCEM with $|G|\le 3$ has positive
spectral gap and with $|G|=4$ is not ergodic, thus fully classifying the
ergodicity landscape in two dimensions. In \cref{thm:abc_relaxation} we then
give sufficient conditions on the equilibrium densities and the geometry for the
spectral gap of the two-dimensional MCEM to be given by the two-dimensional East
model spectral gap in the leading order in the limit $q_{\min}:=\min_{h\in
G}q_h\rightarrow 0$. In particular, we prove this for cases where there are one
or two vacancy types with much larger equilibrium densities than $q_{\min}$.
This result might be surprising at first, as one could expect the East model
dynamics of the least frequent vacancies to be blocked by the more frequent
ones, leading to a spectral gap that is given by these blocking dynamics. In
fact, we prove that the frequent vacancies cooperate in a way to facilitate the
two-dimensional East movement of the least frequent vacancies, so that the
blocking is negligible.

As far as the author is aware this is the first time this model is treated in
mathematical literature, but the physical motivation and dynamics bear close
resemblance to those of kinetically constrained models (KCM)
\cites{cancrini2008kcm,garrahan2011kinetically}. In fact, this paper lines up
well with current research on KCM on $\Z^d$ which looks at ergodicity and in
particular ergodicity breaking
transitions~\cites{cancrini2008kcm,kordzakhia2006ergodicity,shapira2020kinetically}
and at the spectral gap
behaviour~\cites{martinelli2019towards,hartarsky2021universality}.
\subsection{Construction}%
\label{sec:construction}
We start by constructing the state space and together with the associated
propagation directions.

\begin{definition}[Vacancy types and their constraints]
         The set of vacancy types is a finite set $\mathcal{V}$ of cardinality
         $2^d$. We identify $\mathcal{V}$ with the hypercube
         $H_d:={\{0,1\}}^d\subset \Z^d$ and refer to the vacancy type
         corresponding to the vertex $h\in H_d$ as \emph{the vacancy of type
         $h$ or the $h$-vacancy}. We say that $\mathbf{v}$ is a
         \emph{propagation direction} for the $h$-vacancy, and write
         $\mathbf{v}\in \mathcal{P}(h)$, if $\|\mathbf{v}\|=1$ and $h+\mathbf{v}\in H_d$. For $h\in
         H_d$ we say that $x\prec^{(h)} y$ if $x\cdot \mathbf{v} \le y \cdot
         \mathbf{v}$ for every $\mathbf{v} \in \mathcal{P}(h)$.
        
         Given $G\subset H_d$, which we identify with a collection of vacancy
         types in $\mathcal V$, we define the vertex state space
         $\mathcal{S}(G)$ as the union of $G$ together with the \emph{neutral
         state} denoted by $\star$. For $\omega\in {\mathcal{S}(G)}^{\Z^d}$,
         $h\in G$ and $x\in \Z^d$ the constraint $c_x^{h}(\omega)$ is given by
         \begin{equation}
                 {c}_x^{h}(\omega)
                 = \begin{cases}
                         1 &
                         \text{if}\ \exists
                         \, \mathbf{v}\in \mathcal{P}(h):\ \omega_{x-\mathbf{v}}\text{ is a $h$-vacancy},\\
                         0 & \text{otherwise.}
                 \end{cases}
         \end{equation}
    See \cref{fig:states} for an illustration of $H_2$ and $H_3$ with
    the associated propagation directions for each vacancy type.
\end{definition} 
\begin{remark}
        If $G=\{(0,0,\ldots,0)\}$, we can identify
        $\star$ with $1$ and $(0,0,\ldots, 0)$ with $0$ to recover the state
        space of the $d$-dimensional East model with the corresponding
        constraints on $\Z^d$.
\end{remark}
\noindent\textbf{Notation warning:} In the sequel, for a given $\omega\in
{\mathcal{S}(G)}^{\Z^d}$ and $h\in G$, we will often write $\omega_x=h$ meaning that $\omega_x$ is a vacancy of type $h$.

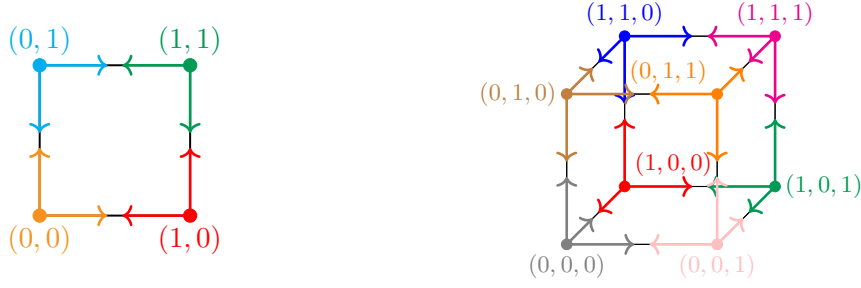
\begin{figure}
        \begin{center}
            \begin{subfigure}[c]{0.425\textwidth}
                \begin{tikzpicture}[]
                        \draw[line width=0.75pt] (0,0) -- ++(0,-2) -- ++(2,0)
                                -- ++(0,2) -- cycle;

                        \node[anchor=north, BurntOrange] at (0,-2) {$(0,0)$};
                        \draw[line width=1pt, ->, BurntOrange] (0,-2) -- ++(0,0.9);
                        \draw[line width=1pt, ->, BurntOrange] (0,-2) -- ++(0.9,0);
                        \filldraw[BurntOrange](0,-2)circle[radius=2.5pt] {};

                        \node[anchor=south, Cyan] at (0,0) {$(0,1)$};
                        \draw[line width=1pt, ->, Cyan] (0,0) -- ++(0,-0.9);
                        \draw[line width=1pt, ->, Cyan] (0,0) -- ++(0.9,0);
                        \filldraw[Cyan](0,0)circle[radius=2.5pt] {};

                        \node[anchor=south, ForestGreen] at (2,0) {$(1,1)$};
                        \draw[line width=1pt, ->, ForestGreen] (2,0) -- ++(0,-0.9);
                        \draw[line width=1pt, ->, ForestGreen] (2,0) -- ++(-0.9,0);
                        \filldraw[ForestGreen](2,0)circle[radius=2.5pt] {};

                        \node[anchor=north, red] at (2,-2) {$(1,0)$};
                        \draw[line width=1pt, ->, red] (2,-2) -- ++(0,0.9);
                        \draw[line width=1pt, ->, red] (2,-2) -- ++(-0.9,0);
                        \filldraw[red](2,-2)circle[radius=2.5pt] {};
                \end{tikzpicture}
            \end{subfigure}    
            \begin{subfigure}[c]{0.425\textwidth}
                        \begin{tikzpicture}[]
                                \newcommand{\Side}{2}
                                \coordinate (O) at (0,0,0);
                                \coordinate (A) at (0,\Side,0);
                                \coordinate (B) at (0,\Side,\Side);
                                \coordinate (C) at (0,0,\Side);
                                \coordinate (D) at (\Side,0,0);
                                \coordinate (E) at (\Side,\Side,0);
                                \coordinate (F) at (\Side,\Side,\Side);
                                \coordinate (G) at (\Side,0,\Side);

                                \foreach \x in {0,\Side} {
                                        \foreach \y in {0,\Side} {
                                                \foreach \z in {0,\Side} {
                                                        \filldraw[black!50] (\x,\y,\z)
                                                                circle (1pt);
                                                }
                                        }
                                }

                                \draw[black] (O) -- (C) -- (G) -- (D) -- cycle;
                                \draw[black] (O) -- (A) -- (E) -- (D) -- cycle;
                                \draw[black] (O) -- (A) -- (B) -- (C) -- cycle;
                                \draw[black] (D) -- (E) -- (F) -- (G) -- cycle;
                                \draw[black] (C) -- (B) -- (F) -- (G) -- cycle;
                                \draw[black] (A) -- (B) -- (F) -- (E) -- cycle;

                                \node[anchor=south, blue] at (A) {\footnotesize{$(1,1,0)$}};
                                \filldraw[blue] (A) circle (2pt);
                                \draw [black, ->,line width=1pt, blue] (A) -- (0.9,\Side,0) ;
                                \draw [black, ->,line width=1pt, blue] (A) -- (0,-0.9+\Side,0) ;
                                \draw [black, ->,line width=1pt, blue] (A) -- (0,\Side,+0.9) ;

                                \node[anchor=east, brown] at (B) {\footnotesize{$(0,1,0)$}};
                                \filldraw[brown] (B) circle (2pt);
                                \draw [black, ->,line width=1pt, brown] (B) -- (0.9,\Side,\Side) ;
                                \draw [black, ->,line width=1pt, brown] (B) -- (0,-0.9+\Side,\Side) ;
                                \draw [black, ->,line width=1pt, brown] (B) -- (0,\Side,\Side-0.9) ;

                                \node[anchor=north, gray] at (C) {\footnotesize{$(0,0,0)$}};
                                \filldraw[gray] (C) circle (2pt);
                                \draw [black, ->,line width=1pt, gray] (C) -- (0.9,0,\Side) ;
                                \draw [black, ->,line width=1pt, gray] (C) -- (0,0.9,\Side) ;
                                \draw [black, ->,line width=1pt, gray] (C) -- (0,0,\Side-0.9) ;

                                \node[anchor=west, ForestGreen] at (D) {\footnotesize{$(1,0,1)$}};
                                \filldraw[ForestGreen] (D) circle (2pt);
                                \draw [black, ->,line width=1pt,ForestGreen] (D) -- (\Side-0.9,0,0) ;
                                \draw [black, ->,line width=1pt,ForestGreen] (D) -- (\Side,+0.9,0) ;
                                \draw [black, ->,line width=1pt,ForestGreen] (D) -- (\Side,0,+0.9) ;

                                \node[anchor=south, magenta] at (E) {\footnotesize{$(1,1,1)$}};
                                \filldraw[magenta] (E) circle (2pt);
                                \draw [black, ->,line width=1pt, magenta] (E) -- (\Side-0.9,\Side,0) ;
                                \draw [black, ->,line width=1pt, magenta] (E) -- (\Side,\Side-0.9,0) ;
                                \draw [black, ->,line width=1pt, magenta] (E) -- (\Side,\Side,0.9) ;

                                \node[anchor=south east, orange] at (F) {\footnotesize{$(0,1,1)$}};
                                \filldraw[orange] (F) circle (2pt);
                                \draw [black, ->,line width=1pt,orange] (F) -- (\Side-0.9,\Side,\Side) ;
                                \draw [black, ->,line width=1pt,orange] (F) -- (\Side,\Side-0.9,\Side) ;
                                \draw [black, ->,line width=1pt,orange] (F) -- (\Side,\Side,\Side-0.9) ;

                                \node[anchor=north, pink] at (G) {\footnotesize{$(0,0,1)$}};
                                \filldraw[pink] (G) circle (2pt);
                                \draw [black, ->,line width=1pt, pink] (G) -- (\Side-0.9,0,\Side) ;
                                \draw [black, ->,line width=1pt, pink] (G) -- (\Side,+0.9,\Side) ;
                                \draw [black, ->,line width=1pt, pink] (G) -- (\Side,0,\Side-0.9) ;

                                \node[anchor=south west, red] at (O) {\footnotesize{$(1,0,0)$}};
                                \filldraw[red] (O) circle (2pt);
                                \draw [black, ->,line width=1pt, red] (O) -- (0.9,0,0) ;
                                \draw [black, ->,line width=1pt, red] (O) -- (0,0.9,0) ;
                                \draw [black, ->,line width=1pt, red] (O) -- (0,0,0.9) ;
                        \end{tikzpicture}
            \end{subfigure}    
            \caption{$H_d$ for $d=2$ (left) and $d=3$ (right) together with the
            vacancy types as coloured corners and the propagation directions
            (arrows) of
            the corners, the length of the propagation directions is less than half the
            actual length for rendering reasons.}\label{fig:states} 
       \end{center} 
\end{figure}

For $G\subset H_d$ we call vectors $\mathbf{q}=\{q_h\colon h\in G\}$ with
$q_h>0$ for $h\in G$, and $\sum_{h\in G} q_h < 1$, \emph{valid parameter
sets} and write $p=1-\sum_{h\in G} q_h$. Given a valid parameter set
$\mathbf{q}$ let $\nu$ denote the probability measure on $\mathcal{S}(G)$ that
assigns probability $p$ to the state $\star$ and $q_h$ to $h$ for all $h\in G$.
For any $\Lambda\subset \Z^d$ define the state space
$\Omega_{\Lambda}={\mathcal{S}(G)}^{\Lambda}$ and the measure
$\mu_{\Lambda}:=\otimes_{x\in \Lambda}\nu$, where we recall the notational
convention that we leave away $\Lambda$ if $\Lambda=\Z^d$. We also omit
the dependence on $\mathbf{q}$ and $G$ in the notation of $p$, $\nu$
and $\Omega_{\Lambda}$ since they will be clear from context.

For subsets $V\subset \Lambda\subset \Z^d$ and configurations $\omega\in
\Omega_{V}$, $\omega'\in \Omega_{\Lambda\setminus V}$
in $\Omega_{\Lambda}$ we write $\omega\cdot \omega'\in \Omega_{\Lambda}$ for
the state given by $\omega$ on $V$ and $\omega'$ on $\Lambda\setminus V$. We
say a function $f:\Omega\rightarrow \R$ is \emph{local} if the value
$f(\omega)$ only depends on the state of finitely many vertices.
\begin{definition}[The $G$-MCEM process]
        Given a subset $G\subset H_d$ and a valid parameter set $\mathbf{q}$ we
        define the continuous time $G$-MCEM process on $\Z^d$ via the infinitesimal
        generator, which we define through its action on local functions\footnote{See
                \cites{kordzakhia2006ergodicity,liggett2010continuous} on how to
                construct a continuous time Markov process starting from the
                action of the generator on local functions}
        $f:\Omega\rightarrow \R$, as
        \begin{equation}
                \mathcal{L}f(\omega)
                =\sum_{h\in G}\sum_{x\in \Z^d}
                {c}_x^{h}(\omega)
                [\mathds{1}_{\omega_x=\star}q_h+\mathds{1}_{\omega_x=h} p]
                \nabla^{(h)}_x f(\omega),
        \end{equation}
        where
        \begin{equation}
                \nabla^{(h)}_x f(\omega):=
                \begin{cases}
                        f(h\cdot \omega_{\Z^d\setminus \{x\}})-f(\omega)
                        &\colon \text{if $\omega_x=\star$,}\\
                        f(\star\cdot \omega_{\Z^d\setminus \{x\}})-f(\omega)
                        &\colon \text{if $\omega_x=h$,}\\
                        0
                        &\colon \text{else.}
                \end{cases}
        \end{equation}
        We write $\omega(t)$ for the state at time $t$ and $\mathds{E}_{\eta}$
        and $\mathds{P}_{\eta}$ for the corresponding expectation and law for
        the process started at $\eta\in \Omega$.
\end{definition}
\begin{remark}
        It might be surprising that the sum of the rates $q_h+p$ is strictly
        smaller than $1$. In fact, here the missing rate $1-q_h-p$
        is hidden in $\nabla_x^{(h)} f(\omega)=0$ if $\omega_x\not\in
        \{h,\star\}$, thus we could have added a term
        $\mathds{1}_{\omega_x\not\in \{h, \star\}} (1-q_h-p)$ for the
        transition in which nothing happens.
\end{remark}
\begin{remark}
        Notice that in the $G$-MCEM process a state can transition from $\star$
        to $h$ iff there is a vector $\mathbf{v}\in \mathcal{P}(h)$ such that
        $x-\mathbf{v}$ has an $h$-vacancy justifying the name \emph{propagation
        direction} for $\mathbf{v}$. In particular, an $h$-vacancy at $x$ can
        only influence those vertices $y$ such that $x\prec^{(h)} y$. Further,
        there is no transition from one vacancy type to another. The process,
        in order to change the state of a vertex from one vacancy type to
        another, first has to go through the neutral state $\star$ (justifying
        its name). In particular, when $|G|\ge 2$ an $h$-vacancy can be blocked
        by a cluster of nearby vacancies of type in $G\setminus \{h\}$. This
        blocking interaction is the main hurdle in bounding the spectral gap.
\end{remark}
The generated process is reversible with respect to $\mu$, indeed for
$x\in\Z^d$ and $\omega'\in \Omega_{\Z^d\setminus\{x\}}$
\begin{align}
        \sum_{\omega\in \Omega_x}
        &\mu_x(\omega) f(\omega\cdot \omega') [\mathds{1}_{\omega=\star}
        q_h+\mathds{1}_{\omega=h} p]
        \nabla_x^{(h)}g(\omega\cdot\omega'))\\
        &= p q_h (f(\star\cdot \omega')-f(h\cdot \omega'))(g(h\cdot
        \omega')-g(\star\cdot \omega')),
\end{align}
and thus, since $c_x^h$ does not depend on the state of $x$ we have
\begin{align}
        \mu(f\mathcal{L} g) = \mu(g\mathcal{L} f),
\end{align}
from which reversibility follows since $f$,$g$ were arbitrary. The associated
Dirichlet form is then
\begin{equation}
        \mathcal{D}(f) := \mu(-f\mathcal{L}f)
        = \sum_{h\in G}\sum_{x\in \Z^d}
        p q_h\mu\bigg[c_x^{h}\mathds{1}_{\omega_x\in \{\star,
                                h\}}
                        {(f(\star\cdot \omega)-f(h\cdot \omega))}^2\bigg]
                        \label{eqn:dirich_mcem}
\end{equation}
and we define the spectral gap as
\begin{equation}\label{eqn:variational_chara_mcem}
        \gamma(G;\mathbf{q})=\gamma(G)
        := \inf_{\substack{f\in \mathrm{Dom}(\mathcal{L})\\
                        f\neq \mathrm{const}}}
        \frac{\mathcal{D}(f)}{\var(f)}\;.
\end{equation}
Using~\cite{liggett1985interacting}*{Section~IV, Theorem~4.13} the MCEM
process is ergodic with stationary measure $\mu$, if $0$ is a simple eigenvalue
of $\mathcal{L}$ and thus in particular if the spectral gap is positive.
\begin{remark}\label{rem:dirichlet_subsets}
        We will sometimes write $\mathcal{L}_{\Lambda}$ and
        $\mathcal{D}_{\Lambda}$ in which the sum $\sum_{x\in \Z^d}$ is replaced
        with a sum over $\sum_{x\in \Lambda}$ and the measure $\mu$ with the
        measure $\mu_{\Lambda}$. These are functions on $\Omega_{\Lambda^c}$
        and thus either appear with a configuration
        $\omega\in\Omega_{\Lambda^c}$ or inside an average w.r.t.\
        $\mu_{\Lambda^c}$.
\end{remark}

\subsubsection{Graphical construction}%
\label{sub:iem_graphical_construction}
An alternative to the construction via the infinitesimal generator is
via a graphical construction. Put a marked Poisson process on each vertex in $
\Z^d$. The $k$-th ring at the vertex $x\in \Z^d$ occurs at time
$t_{x,k}$ and for each ring we have the mark $U_{x,k}\sim \mu$
so that $U_{x,k}\in \mathcal{S}(G)$ and ${\{U_{x,k}\}}_{x,k}$ is an i.i.d.\ family.
Consider a starting state $\omega(0)\in \Omega$ and denote by
$\omega(t)$ the state at time $t\in \R_+$. With $t_{x,k}-$ an infinitesimally
smaller time than $t_{x,k}$, the graphically constructed process
evolves as follows:
\begin{enumerate}[(i)]
        \item At $t_{x,k}$ we say that we have a $U_{x,k}$-legal ring if any of
                the following conditions is satisfied
                \begin{enumerate}[(a)]
                        \item $U_{x,k}=\star$ and there is an $h\in G$ such
                                that $\omega_x(t_{x,k}-)=h$ and
                                $c_{x}^{h}(\omega(t_{x,k}-))=1$, or
                        \item $U_{x,k}\neq \star$, $\omega_x(t_{x,k}-)=
                                \star$ and
                                $c_{x}^{U_{x,k}}(\omega(t_{x,k}-))=1$, or
                        \item $U_{x,k}=\omega_x(t_{x,k}-)$ and
                                $c_x^{U_{x,k}}(\omega(t_{x,k}-))=1$
                                (i.e.\ nothing changes).
                \end{enumerate}
        \item If $t_{x,k}$ is an $U_{x,k}$-legal ring, we set
                $\omega_x(t_{x,k})$ equal to 
                $U_{x,k}$.
\end{enumerate}
Showing that this construction is well defined on $\Z^d$ and
leads to the same process as the one constructed above through the
infinitesimal generator is analogous to the proof presented
in~\cite{kordzakhia2006ergodicity} for the North-East model.

\section{Results}%
\label{sec:results}
The first result gives sufficient conditions for ergodicity of the $G$-MCEM.
Recall for this that any $G\subset H_d$ inherits the graph structure of $\Z^d$.
\begin{maintheorem}\label{thm:ergodicity}%
        Consider all the following $G$-MCEM with an arbitrary valid parameter set
        $\mathbf{q}$.
        \begin{itemize}
                \item[(A)] If $G=H_d$ then the $G$-MCEM process is
                        not ergodic.
                \item[(B)] Suppose
                        $G\subsetneq H_d$ is such that either condition
                        holds:
                        \begin{enumerate}[(B.i)]
                                \item there is a canonical base vector
                                        $\mathbf{e}\in \mathcal{B}$
                                        of $\Z^d$
                                        such that for any two
                                        $h,h'\in G$ we have $h\cdot \mathbf{e}
                                        = h'\cdot \mathbf{e}$.
                                \item there is a superset $G'\subsetneq H_d$
                                        of $G$ such that $G'$ is isomorphic to
                                        a star-graph.
                        \end{enumerate}
                        Then the $G$-MCEM process has a positive spectral gap.
        \end{itemize}
\end{maintheorem}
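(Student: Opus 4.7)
I would treat parts (A) and (B) with separate methods.

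\textbf{Part (A).} Consider the checkerboard $\eta_x:=h^{\star}(x)$ where $h^{\star}(x):=x\bmod 2\in H_d$. For any $x$ and any $\mathbf{v}\in\mathcal{P}(\eta_x)$ the vector $h^{\star}(x-\mathbf{v})$ differs from $\eta_x$ precisely in the unique nonzero coordinate of $\mathbf{v}$, so $c_x^{\eta_x}(\eta)=0$ and $\eta$ is a dynamical fixed point. Using this I would produce a non-constant $f\in L^2(\mu)$ with $\mathcal{L}f=0$, disproving simplicity of $0$ (and hence ergodicity) via the criterion recalled after \eqref{eqn:variational_chara_mcem}. The candidate is built from the indicator that a large finite box matches $\eta$, extended to infinite volume by a compactness/percolation argument exploiting the fact that any destruction of a local checkerboard fragment requires mismatched vacancies on its boundary, which themselves are frozen by the same mechanism. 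The subtlety is that $\mu$ gives measure zero to the pure checkerboard, so the obstruction has to be assembled from $\mu$-positive events of local checkerboard occurrence rather than from a single Dirac mass.

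\textbf{Part (B.i).} If $h\cdot\mathbf{e}=c$ for all $h\in G$, then $\sigma\mathbf{e}\in\mathcal{P}(h)$ for every $h\in G$ with $\sigma:=1-2c\in\{\pm 1\}$. I would induct on $d$. The base case $d=1$ forces $|G|=1$ (as $G\subsetneq H_1$), so the $G$-MCEM reduces to the classical 1D East model with known positive gap. For $d\geq 2$, slice $\Z^d$ into hyperplanes orthogonal to $\mathbf{e}$; each slice carries a $(d-1)$-dimensional $G$-MCEM that still satisfies (B.i), hence has positive gap by induction, while the shared direction $\sigma\mathbf{e}$ lets one compare the inter-slice propagation to a 1D East model (treating any non-$\star$ state as a vacancy), which also has positive gap. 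Combining via the standard bisection argument in the spirit of~\cite{cancrini2008kcm,mareche2019exponential} then yields $\gamma(G;\mathbf{q})>0$.

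\textbf{Part (B.ii).} Let $G'\supseteq G$ be the star with centre $h_0$ and leaves $h_0+\mathbf{u}_j$ with distinct indices $i_j$. If $|G'|\leq d$, fewer than $d$ flip-indices appear among the edges, so some coordinate is fixed across $G'\supseteq G$ and case (B.i) applies. Otherwise $|G'|=d+1$ is a maximal star, no coordinate is shared, but the antipodal vertex $h_0+\sum_j\mathbf{u}_j\notin G'$ is precisely the vacancy type that would complete the frozen tiling of (A). I would prove positive gap in this remaining case by constructing canonical paths through the centre $h_0$: every $h\in G$ is at hypercube-distance $\leq 1$ from $h_0$, so an $h$-vacancy at $x$ can be swapped to an $h_0$-vacancy at $x$ via a $\star$-intermediate whenever a facilitator is available, and the absence of the antipode ensures that such facilitators can always be generated in a uniformly bounded number of legal moves. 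Applying the canonical-paths lower bound on the spectral gap then yields $\gamma(G;\mathbf{q})>0$.

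The principal obstacles I anticipate are (i) in (A), making the finite-volume frozen-tiling argument into a rigorous non-simplicity of $0$ in the infinite volume, since the relevant invariant objects are $\mu$-null; and (ii) in the maximal-star case of (B.ii), where no dimensional reduction to (B.i) is available and the canonical-path construction must be given explicitly with uniform bounds on path lengths and weights.
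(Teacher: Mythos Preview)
Your proposal has genuine gaps in all three parts.

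\textbf{Part (A).} You are overcomplicating this. The paper exhibits a \emph{finite} self-blocked configuration: set $\omega_{\mathbf{1}-h}=h$ for every $h\in H_d$. For each such vertex, every facilitating neighbour $(\mathbf{1}-h)-\mathbf{v}$ with $\mathbf{v}\in\mathcal{P}(h)$ lies inside $H_d$ and carries a different vacancy type, so no transition is legal regardless of the configuration outside $H_d$. The indicator of this event has positive $\mu$-measure and zero Dirichlet form; non-ergodicity follows in one line. Your checkerboard $\eta_x=x\bmod 2$, restricted to any finite box, is \emph{not} self-blocked: at a corner $h\in H_d$ in state $h$, the facilitating neighbours $h-\mathbf{v}$ lie \emph{outside} $H_d$, so the configuration can be destroyed from the boundary. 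The ``compactness/percolation argument'' you gesture at is both unnecessary and unspecified, and you yourself flag it as an unresolved obstacle.

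\textbf{Part (B.i).} The induction is broken. Take the maximal case $G=\{h\in H_d: h\cdot\mathbf{e}=0\}$ (which the paper assumes w.l.o.g.\ via \cref{lemma:monotonicity_in_g}). Projecting onto a slice orthogonal to $\mathbf{e}$ and dropping the $\mathbf{e}$-coordinate, the resulting set of vacancy types is all of $H_{d-1}$, so by your own Part (A) the slice model is \emph{non-ergodic} and certainly does not satisfy (B.i). Concretely, in $d=2$ with $G=\{(0,0),(1,0)\}$ the one-dimensional slice has two vacancy types with opposite propagation directions, and the pair $(1,0)$ at $0$, $(0,0)$ at $1$ is frozen. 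So the induction hypothesis fails at the first step. The paper's proof avoids this by never trying to relax within a slice autonomously: it builds a ``good'' configuration on $H_d$ (each $h\in G$ placed at its own corner) from which one can explicitly propagate any vacancy type along the shared direction and thereby clear successive slices, then applies the exterior condition theorem and the path method.

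\textbf{Part (B.ii).} Your reduction of non-maximal stars to (B.i) is correct and clean. For the maximal star, however, ``the absence of the antipode ensures that such facilitators can always be generated in a uniformly bounded number of legal moves'' is an assertion, not an argument. The paper spends two lemmas (\cref{lemma:move_good}, \cref{lemma:move_good_2}) constructing an explicit mechanism to move a good box along the diagonal $\mathbf{v}=\sum_i\mathbf{e}_i$, which requires auxiliary colourful regions to supply the leaf-type vacancies $h_i$ at each step; these are then packaged into an event $\mathcal{E}^{(N)}$ satisfying the exterior condition. Your canonical-path sketch contains none of this structure and gives no reason why path lengths and congestion are uniformly bounded.
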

\begin{example}
        Any $G\subset H_3$ that is a subset of a single face satisfies (B.i)
        and any $G\subset H_2$ with $|G|<4$ satisfies (B.ii). In particular
        note that this gives complete information about ergodicity in $d=2$ but
        leaves gaps for $d\ge 3$.
\end{example}

For $d=2$ we even find sufficient conditions on the geometry of $G$ and the
parameter set so that the limiting behaviour of the spectral gap is given by
the East model. Given a valid parameter set $\mathbf{q}$ we define $q_{\min}=\min_{h\in
G}q_h$, $q_{\max}=\max_{h\in G} q_h$ and if $|G|=3$ we write $\qmed$ for the
$q_h\in G$ that is not in $\{q_{\max},q_{\min}\}$. We further define
$\theta_{h}=\theta_{q_{h}}:=|\log_2(q_h)|$ and write $\gamma_2=\gamma_2(q)$ for
the spectral gap of the two-dimensional East model with vacancy density $q$ and
$\gamma(G;\mathbf{q})$ the spectral gap of the $G$-MCEM with parameter set
$\mathbf{q}$.
\begin{maintheorem}\label{thm:abc_relaxation}
        Fix $\Delta>0$ and 
        consider a $G$-MCEM on $\Z^2$ with $|G|\in \{2,3\}$ and a valid parameter set
        $\mathbf{q}$ such that $p>\Delta$. Then,
        \begin{equation}\label{eqn:abc_relaxation_limit}
                \lim_{q_{\min}\rightarrow
                0}\frac{\gamma(G;\mathbf{q})}{\gamma_2(q_{\min})}
                =1
        \end{equation}
        in the following cases.
        \begin{itemize}
        \item Any $2$-subset $G\subset H_2$ and either one of the following conditions holds:
        \begin{enumerate}[(2.i)]
                \item $\lim_{q_{\min}\rightarrow
                0}q_{\max} \theta_{q_{\min}}^3
                        = 0$,
                \item $\lim_{q_{\min}\rightarrow
                0}q_{\max}\theta_{q_{\min}}^3
                        /\log_2(\theta_{q_{\min}}) =\infty$.
        \end{enumerate}
        \item Any $3$-subset $G\subset H_3$ and either one of the following
                conditions holds:
        \begin{enumerate}[(3.i)]
                \item $\lim_{q_{\min}\rightarrow
                0}q_{\max} \theta_{q_{\min}}^3
                        = 0$,
                \item $\lim_{q_{\min}\rightarrow
                0}q_{\max}\theta_{\qmed}^3
                        /\log_2(\theta_{q_{\min}})=\infty$ and
                        $\lim_{q_{\min}\rightarrow 0}q_{\mathrm{med}}\theta_{q_{\min}}^6=
                        0$,
                \item $G$ is such that the vacancies associated to
                        $q_{\mathrm{med}}$ and $q_{\max}$ share a propagation direction and
                        $\liminf_{q_{\min}\rightarrow
                0}q_{\mathrm{med}}>0$.
        \end{enumerate}
        \end{itemize}
\end{maintheorem}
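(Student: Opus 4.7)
\emph{Upper bound via projection.} The plan is to prove matching upper and lower bounds on $\gamma(G;\mathbf{q})$. Let $h^{*}\in G$ realise $q_{h^{*}}=q_{\min}$ and define the projection $\pi\colon\Omega\to\{0,1\}^{\Z^2}$ by $\pi(\omega)_x=\mathds{1}_{\omega_x\neq h^{*}}$; its pushforward under $\mu$ is exactly the equilibrium measure of the two-dim.\ East model at vacancy density $q_{\min}$. For a test function $f=g\circ\pi$ with $g$ near-optimal for the East variational ratio at $q_{\min}$, one has $\var_\mu(f)=\var_{\rm East}(g)$; since $\pi$ is insensitive to $h\neq h^{*}$ transitions, only $h=h^{*}$ terms contribute to $\mathcal{D}(f)$, and a direct calculation using that $c_x^{h^{*}}$ translates into the East constraint under $\pi$ gives $\mathcal{D}(f)=\frac{p}{1-q_{\min}}\mathcal{D}_{\rm East}(g)\leq\mathcal{D}_{\rm East}(g)$. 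This yields $\gamma(G;\mathbf{q})\leq(1+o(1))\gamma_2(q_{\min})$ in the regimes where $\sum_{h\neq h^{*}}q_h\to 0$; the remaining regimes require a refined test function that exploits the fast relaxation of the frequent vacancies to absorb the prefactor into the $(1+o(1))$.

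\emph{Lower bound via bisection.} The main work is the matching lower bound, which I would prove by adapting to the multicolour setting the bisection--renormalisation scheme used for the two-dim.\ East model. Set $L_\ell=2^\ell$ for $\ell\le\theta_{q_{\min}}$ and establish a recursive inequality comparing $\gamma(G;\mathbf{q})^{-1}$ on boxes of side $L_\ell$ to its value on boxes of side $L_{\ell-1}$, paying a factor $1+O(1/\ell)$ per scale, exactly as in the East argument. The genuinely new ingredient is, at every recursive step, a conditional Poincar\'e inequality controlling the probability that a designated $h^{*}$-facilitation is realised despite possible blocking by vacancies of types in $G\setminus\{h^{*}\}$.

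\emph{Case-by-case strategies.} In (2.i) and (3.i) the hypothesis $q_{\max}\theta_{q_{\min}}^3\to 0$ makes a box of side $\theta_{q_{\min}}^{3/2}$ foreign-free with probability $1-o(1)$; the MCEM restricted to such a box coincides with the East model, and the standard East bisection applies directly. In (2.ii) and (3.ii) foreign vacancies are typical, but the $h^{\max}$- (and in (3.ii) the $h^{\mathrm{med}}$-) subdynamics relaxes inside a $\theta_{q_{\min}}$-box on a timescale much shorter than $\gamma_2(q_{\min})^{-1}$; I would then construct an explicit \emph{cooperative bypass}, i.e.\ a chain of East-legal $h^{\max}$- (and, where present, $h^{\mathrm{med}}$-)moves that clears the propagation direction of a designated $h^{*}$-target, and average out the bypass variables via a Poincar\'e inequality whose cost is polynomial in $\theta_{q_{\min}}$. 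In (3.iii) the pair $h^{\max},h^{\mathrm{med}}$ shares a propagation direction $\mathbf{v}^{*}$ and both densities are bounded away from zero; the joint $\{h^{\max},h^{\mathrm{med}}\}$-dynamics along every $\mathbf{v}^{*}$-line is therefore a one-dim.\ KCM with spectral gap bounded below by a positive constant, supplying essentially instantaneous background equilibration and reducing the case to the cooperative-bypass argument of (2.ii) and (3.ii).

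\emph{Main obstacle.} The delicate step is the explicit construction and Poincar\'e costing of the cooperative bypass in (2.ii) and (3.ii). Exhibiting a bypass of length $O(\theta_{q_{\min}})$ whose path-cost is only polynomial in $\theta_{q_{\min}}$, and matching that polynomial cost against the $\Theta(\theta_{q_{\min}}^2)$ behaviour of $\log(1/\gamma_2(q_{\min}))$, is what permits the prefactor to be absorbed into the $(1+o(1))$ and is precisely the source of the exponents $3$ and $6$ appearing in the hypotheses.
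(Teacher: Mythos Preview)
Your upper bound is overcomplicated. The projection $h^{*}\mapsto 0$, everything else $\mapsto 1$, gives $\gamma(G;\mathbf{q})\le\gamma_2(q_{\min})$ \emph{exactly} in every regime, since $p\le 1-q_{\min}$ makes the Dirichlet-form prefactor $p/(1-q_{\min})\le 1$. No refined test function is needed for (3.iii); this is Lemma~3.2 of the paper.

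For the lower bound your framework is different from the paper's and carries an unaddressed obstacle. The paper does \emph{not} run a bisection recursion on the MCEM. It uses the exterior condition theorem (Martinelli--Toninelli): one constructs translation-covariant events $\mathcal{E}_x$ supported in the exterior of $x$ with respect to a family of diagonal half-planes, with $|\mathrm{Supp}(\mathcal{E}_x)|\cdot\mu(\mathcal{E}_x^c)\to 0$, so that $\var(f)\le 4\sum_x\mu(\mathds{1}_{\mathcal{E}_x}\var_x(f))$. The event $\mathcal{E}_x$ asserts, for each $h\in G$, the existence of an $h$-traversable \emph{grid} of paths (a renormalised lattice of crossings avoiding the other types) carrying an $h$-vacancy; the $2d$-East asymptotics are then invoked on the intersection points of that grid, which form a set isomorphic to a box in $\Z^2$. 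The three $h$-grids can be fitted simultaneously into the exterior of $x$ precisely because the exterior is defined by diagonal half-planes.

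A direct bisection on the MCEM faces a structural problem you do not address: the three vacancy types have three distinct orientations (north-east, south-west, south-east), so no axis-aligned bipartition of a box is simultaneously compatible with all constraint directions, and the block-dynamics step of the recursion does not decouple. Your ``cooperative bypass'' intuition is correct and is exactly what the paper implements (via the coarse-grained $B$-traversable/$B$-super boxes in (3.ii) and the $AC$-super three-vertex blocks in (3.iii)), but it is plugged into the exterior-condition framework rather than into a scale recursion. Your identification of the exponent source is also slightly off: the $3$ in $\theta^3$ comes from the grid side length $\ell\sim\theta^{3/2}$ (so $|\mathrm{Supp}|\sim\ell^2\sim\theta^3$) needed for the Peierls estimate on traversable crossings, not from a bypass of length $O(\theta)$.
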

\begin{remark}
        The cases are ordered from the easiest to the hardest regime. The cases
        $(2.i)$ and $(3.i)$ are the easiest since in these cases even the
        highest density $q_{\max}$ is relatively low so that most vacancies in
        equilibrium are surrounded by large neutral state patches. Thus for
        these cases it is natural to conjecture that the spectral gap of the
        $G$-MCEM should be given by the two-dimensional East model spectral
        gap. This also includes the case by \cite{garrahan2003coarse} in which
        all vacancy type had the same density.

        The next harder case is if there is one vacancy type that is frequent
        in equilibrium, i.e.\ case $(2.ii)$ and $(3.ii)$. The
        conclusion of \cref{thm:abc_relaxation} still presents itself as a
        natural conjecture if we consider that any vacancy of the frequent type
        will see large patches of either neutral vertices or its own vacancy
        type. Thus, any vacancy of the frequent type that blocks the infrequent
        vacancies is likely to be removable by close vacancies of the same type
        allowing the infrequent vacancies to evolve according to their
        respective two-dimensional East model dynamics.

        The hardest case is $(3.iii)$, when two vacancy types are frequent. In this
        case the frequent vacancy types might block each other and we only manage to
        find configurations that remove the blocking frequent vacancies if they share a
        propagation direction.
\end{remark}
\begin{remark}
        It is possible to relax the requirement that $p>\Delta$ at the cost of
        an additional factor of $1/p$ or $1/p^2$ in $\gamma(G;\mathbf{q})$
        which represents the average waiting time for a vertex to get back into
        the neutral state. If $p\rightarrow 0$ then vertices rarely reach the
        neutral state and there can be no transition from one vacancy type to
        another explaining the extra cost in the spectral gap. As we have no
        tight bounds on the $p$ dependency, i.e.\ whether it should be $1/p$ or
        $1/p^2$ or even worse, we limit the discussion to the case $p>\Delta$.
\end{remark}

\section{Key tools}\label{sec:key_tools}
We recall past results together with smaller Lemmas that enter the proofs of
\cref{thm:ergodicity} and \cref{thm:abc_relaxation}. If $G$ and $\mathbf{q}$
are not explicitly stated then they, and correspondingly the state space $\Omega$,
local equilibrium $\nu$ and particle density $p$, are arbitrarily fixed.
\subsection{A constrained Poincar\'{e} inequality for product measures}%
\label{sub:exterior}
Define the support $\supp{A}$ of an event $\mathcal{A}\subset\Omega$ as the set
of vertices the event depends on.
\begin{definition}[Exterior condition]
        Given an increasing and exhausting collection of subsets
        ${\{V_n\}}_{n\in \Z}$ of $\Z^d$ (i.e.\ $V_n\subset V_{n+1}$ for all $n$
        and $\cup_n V_n=\Z^d$), let the exterior of $x\in V_n$ be the set
        $\mathrm{Ext}_x:= \cup_{j=n}^{\infty} V_{j+1}\setminus V_j$. We then
        say that the family of events \emph{${\{\mathcal{A}_x\}}_{x\in \Z^d}$
        satisfies the exterior condition} w.r.t.\ ${\{V_n\}}_{n\in \Z}$ if
        $\supp{\mathcal{A}_x}\subset \mathrm{Ext}_x$ for all $x\in
        \Z^d$.
\end{definition}

Let ${\{\mathcal{A}^{(i)}_x\}}_{x\in \Z^d}, i\in [k]:=\{1,\ldots, k\}$ and
write $\mathrm{Supp}(\mathcal{A}_x^{(I)}) = \bigcup_{i\in I}
\mathrm{Supp}(\mathcal{A}_x^{(i)})$ for nonempty subsets $I\subset
[k]$.
\begin{theorem}[Exterior condition theorem,
        {\cite{martinelli2019towards}*{Theorem~2}}]\label{thm:exterior_thm}
        Assume that
        \begin{equation}\label{eqn:ext_cond_thm_assumption_original}
                (2^k-1)\sup_{z\in \Z^2}
                \sum_{\substack{J\subset [k]\\J\neq \emptyset}}
                \sum_{\substack{x\in \Z^2\\ \{x\}\cup
                \mathrm{Supp}(\mathcal{A}_x^{(J)})\ni z}}
                \mu\left(\prod_{i\in
                J}(1-\mathds{1}_{\mathcal{A}_x^{(j)}})\right)< 1/4.
        \end{equation}
        Suppose in addition that there exists an exhausting and increasing
        family ${\{V_n\}}_{n\in \Z}$ of subsets of $\Z^d$ such that, for any
        $i\in [k]$, the family ${\{\mathcal{A}_x^{(i)}\}}_{x\in \Z^d}$
        satisfies the exterior condition w.r.t.\ ${\{V_n\}}_{n\in \Z}$. Then, for any
        local function $f:\Omega\rightarrow \R$ we have
        \begin{equation}\label{eqn:ext_cond_thm_dirichlet_rhs_original}
                \var(f) \le 4 \sum_{x}\mu\left(\left[\prod_{j=1}^k
                \mathds{1}_{\mathcal{A}_x^{(j)}}\right]\var_x(f)\right)\;.
        \end{equation}
        In particular, the same conclusion holds if, instead of
        \cref{eqn:ext_cond_thm_assumption_original} we have that
        \begin{equation}\label{eqn:ext_cond_indep_assumptions}
                \lim_{q_{\min}\rightarrow 0}\max_{j\in [k]}\left[\sup_{x\in \Z^2}
                        |\mathrm{Supp}(\mathcal{A}_x^{(j)})|
                \sup_{x\in \Z^2}
                \mu\left(1-\mathds{1}_{\mathcal{A}_x^{(j)}}\right)\right]=0.
        \end{equation}
\end{theorem}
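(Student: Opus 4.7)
The theorem is a constrained Poincaré inequality, and the natural strategy is to start from the standard tensorisation bound $\var(f)\le \sum_x \mu(\var_x(f))$ for the product measure $\mu$ and then argue that the constraint indicator $\mathds{1}_{\bigcap_j \mathcal{A}_x^{(j)}}$ can be inserted at a multiplicative cost of at most $4$. The exterior condition is precisely what makes this possible: for $x\in V_n\setminus V_{n-1}$ the event $\mathcal{A}_x:=\bigcap_j \mathcal{A}_x^{(j)}$ is measurable with respect to $V_n^c$, and in particular independent of $\omega_x$.

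My plan is to run a martingale decomposition along the filtration induced by $\{V_n\}_{n\in \Z}$. Enumerate $\Z^d$ as $x_1,x_2,\ldots$ so that $\{x_1,\ldots,x_{|V_n|}\}=V_n$, let $\mathcal{G}_n=\sigma(\omega_{x_1},\ldots,\omega_{x_n})$, and set $g_n:=\mathbb{E}[f\mid \mathcal{G}_n]$. Orthogonality of martingale increments gives $\var(f)=\sum_n \mu(\var_{x_n}(g_n))$. Splitting $f = \mathds{1}_{\mathcal{A}_{x_n}} f + \mathds{1}_{\mathcal{B}_{x_n}} f$ with $\mathcal{B}_{x_n}:=\Omega\setminus \mathcal{A}_{x_n}$, using $(a+b)^2\le 2a^2+2b^2$, and applying Jensen on the good piece (legitimate because $\mathds{1}_{\mathcal{A}_{x_n}}$ is $\omega_{x_n}$-independent by the exterior condition) yields
\[
\var_{x_n}(g_n) \le 2\, \mathbb{E}\bigl[\mathds{1}_{\mathcal{A}_{x_n}} \var_{x_n}(f) \bigm| \mathcal{G}_n\bigr] + 2\,\var_{x_n}\!\bigl(\mathbb{E}[\mathds{1}_{\mathcal{B}_{x_n}} f \mid \mathcal{G}_n]\bigr).
\]
Summing over $n$, the good part contributes exactly $2\sum_x \mu(\mathds{1}_{\mathcal{A}_x}\var_x(f))$, which is half of what the theorem promises and is the source of the factor $4$ in the end.

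The heart of the proof is absorbing the bad term $\sum_n \mu(\var_{x_n}(\mathbb{E}[\mathds{1}_{\mathcal{B}_{x_n}}f\mid \mathcal{G}_n]))$ into $\tfrac12\var(f)$. Expand $\mathds{1}_{\mathcal{B}_x}=1-\prod_j \mathds{1}_{\mathcal{A}_x^{(j)}}$ via inclusion-exclusion as a signed sum of $2^k-1$ products $\prod_{j\in J}(1-\mathds{1}_{\mathcal{A}_x^{(j)}})$, $\emptyset\neq J\subset [k]$. For each $(x,J)$, a Cauchy--Schwarz inside the conditional expectation extracts a factor of $\mu(\prod_{j\in J}(1-\mathds{1}_{\mathcal{A}_x^{(j)}}))$ while the leftover piece is a second moment of $f$ effectively concentrated on the influence region $\{x\}\cup \mathrm{Supp}(\mathcal{A}_x^{(J)})$. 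A secondary telescoping of $f$ restricted to that region bounds this second moment by a sum of local $\var(f)$-contributions indexed by reference sites $z$ in the influence region; re-indexing the resulting double sum $\sum_{(x,J)}$ by $z$ produces exactly the quantity on the LHS of \eqref{eqn:ext_cond_thm_assumption_original}. The hypothesis then forces the bad total to be at most $\tfrac12\var(f)$, and rearranging gives $\var(f)\le 4\sum_x \mu(\mathds{1}_{\mathcal{A}_x}\var_x(f))$. The reduction from \eqref{eqn:ext_cond_indep_assumptions} to \eqref{eqn:ext_cond_thm_assumption_original} is straightforward: bound $\mu(\prod_{j\in J}(1-\mathds{1}_{\mathcal{A}_x^{(j)}}))\le \min_{j\in J} \sup_x\mu(1-\mathds{1}_{\mathcal{A}_x^{(j)}}))$ and $|\{x:z\in \mathrm{Supp}(\mathcal{A}_x^{(j)})\}|\le \sup_x|\mathrm{Supp}(\mathcal{A}_x^{(j)})|$ (using the translation-compatible structure of the supports in the intended applications), then use that $k$ is fixed to send the combined product to $0$.

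The principal obstacle is the Cauchy--Schwarz/second-telescoping step for the bad term: one must extract the factor $\mu(\prod_{j\in J}(1-\mathds{1}_{\mathcal{A}_x^{(j)}}))$ \emph{without} accidentally picking up an unbounded moment of $f$. This is what forces both the secondary martingale decomposition restricted to the influence region and the re-indexing by the reference site $z$ that matches the weighted counting $(2^k-1)\sup_z \sum_J\sum_x(\cdots)$ of the hypothesis; controlling double counting in this re-indexing is the most delicate quantitative piece.
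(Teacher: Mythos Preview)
Your sketch is correct in spirit and indeed reconstructs the martingale/absorption argument of \cite{martinelli2019towards}, but note that the present paper does not actually carry out this proof: its ``proof'' consists only of two observations, namely that \eqref{eqn:ext_cond_indep_assumptions} implies \eqref{eqn:ext_cond_thm_assumption_original} by elementary bounds, and that the implication \eqref{eqn:ext_cond_thm_assumption_original} $\Rightarrow$ \eqref{eqn:ext_cond_thm_dirichlet_rhs_original} is proved verbatim in \cite{martinelli2019towards}, the only point being that the argument there uses nothing about the state space beyond $\mu$ being a product measure and hence transfers to the MCEM setting. So you have done substantially more than the paper asks; your filtration decomposition, good/bad split via $\mathds{1}_{\mathcal{A}_x}$, inclusion--exclusion over $J\subset[k]$, and re-indexing by the reference site $z$ are exactly the skeleton of the cited proof, and your caveat that the counting bound $|\{x:z\in\mathrm{Supp}(\mathcal{A}_x^{(j)})\}|\le \sup_x|\mathrm{Supp}(\mathcal{A}_x^{(j)})|$ relies on the translation structure of the applications is a fair point that the paper glosses over.
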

\begin{proof}
        \cref{eqn:ext_cond_indep_assumptions} implies
        \cref{eqn:ext_cond_thm_assumption_original} and the proof how
        \cref{eqn:ext_cond_thm_assumption_original} implies
        \cref{eqn:ext_cond_thm_dirichlet_rhs_original} is
        in~\cite{martinelli2019towards}. Note
        that~\cite{martinelli2019towards} made the statement with KCM in
        mind, but the proof only uses that $\mu$ is a product measure so
        applies equally to MCEM.\@
\end{proof}

\subsection{Monotonicity in \texorpdfstring{$G$}{G} of the spectral gap}%
\label{sub:projection}
Naturally one conjectures that the more vacancy types are added to the $G$-MCEM
the lower the spectral gap should be as the model gets progressively more
jammed through the interaction of the various vacancy types. Indeed, the next
result shows this is the case.
\begin{lemma}\label{lemma:monotonicity_in_g}
        For any $G'\subset G\subset H_d$ and valid parameter set $\mathbf{q}$
        for the $G$-MCEM we have
        \begin{equation}
                \gamma(G, \mathbf{q})
                \le \gamma(G',\mathbf{q}')
        \end{equation}
        with $\mathbf{q}'=\{q_h\colon h\in G'\}$ and in particular
        \begin{equation}
                \gamma(G, \mathbf{q})
                \le \gamma_d(q_{\min}).
        \end{equation}
\end{lemma}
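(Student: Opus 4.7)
The strategy is a standard lifting (projection) argument: transplant test functions from the $G'$-MCEM to the $G$-MCEM in such a way that variance is preserved while the Dirichlet form decreases, then apply the variational characterization \eqref{eqn:variational_chara_mcem}. Concretely, define the site-wise projection $\pi:\mathcal{S}(G)\to\mathcal{S}(G')$ by $\pi(\star)=\star$, $\pi(h)=h$ for $h\in G'$ and $\pi(h)=\star$ for $h\in G\setminus G'$, and extend it componentwise to $\pi:\Omega_G\to\Omega_{G'}$. For a local $f:\Omega_{G'}\to\R$ set $\tilde f:=f\circ\pi$. The plan is to establish the two estimates $\var_{\mu_G}(\tilde f)=\var_{\mu_{G'}}(f)$ and $\mathcal{D}_G(\tilde f)\le \mathcal{D}_{G'}(f)$, writing $\mathcal{D}_G,\mathcal{D}_{G'}$ for the Dirichlet forms of the $G$- and $G'$-MCEM.

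The variance identity is immediate from $\pi_*\mu_G=\mu_{G'}$: at each site the weight $p+\sum_{h\in G\setminus G'}q_h$ collapsed onto $\star$ by $\pi$ equals exactly the $\star$-weight $p'=1-\sum_{h'\in G'}q_{h'}$ of $\mu_{G'}$, while the weights $q_{h'}$ for $h'\in G'$ are untouched. For the Dirichlet form I split \eqref{eqn:dirich_mcem} according to $h\in G$. Terms with $h\in G\setminus G'$ vanish because $\pi$ identifies $\star$ and $h$ at the active site, so $\nabla_x^{(h)}\tilde f\equiv 0$. For $h\in G'$ both $c_x^h(\omega)$ and the squared increment $(\tilde f(\star\cdot\omega)-\tilde f(h\cdot\omega))^2$ are $\pi(\omega)$-measurable; integrating $\omega_x$ out contributes the factor $p+q_h$ and pushing forward $\omega_{\ne x}$ under $\pi$ to $\mu_{G',\ne x}$ leaves the same $\mu_{G',\ne x}$-expectation that appears on the $G'$-side. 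The term-by-term comparison is therefore
\begin{equation*}
  \frac{\mathcal{D}_G^{(h)}(\tilde f)}{\mathcal{D}_{G'}^{(h)}(f)} = \frac{p(p+q_h)}{p'(p'+q_h)}\le 1,
\end{equation*}
since $p\le p'$. Summing over $h\in G'$ gives $\mathcal{D}_G(\tilde f)\le\mathcal{D}_{G'}(f)$, and substituting both estimates into \eqref{eqn:variational_chara_mcem} yields $\gamma(G,\mathbf{q})\le\gamma(G',\mathbf{q}')$.

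For the particular bound $\gamma(G,\mathbf{q})\le\gamma_d(q_{\min})$, apply the above with $G'=\{h\}$ for some $h\in G$ with $q_h=q_{\min}$. The resulting one-vacancy MCEM coincides, up to reflecting each coordinate $i$ with $h_i=1$, with the $d$-dimensional East model at vacancy density $q_{\min}$, and its spectral gap is therefore $\gamma_d(q_{\min})$ by the invariance of the East generator under such reflections. No step here is genuinely difficult; the only point requiring some care is the Dirichlet form bookkeeping after projection, specifically verifying that the constraints $c_x^h$ with $h\in G'$ descend along $\pi$ (which works precisely because $\pi$ fixes $G'$ and maps $G\setminus G'$ to $\star$, so that no vertex outside $G'$ can satisfy a $G'$-constraint) and that the single-site prefactor aggregation $p(p+q_h)\le p'(p'+q_h)$ is consistent with $p\le p'$.
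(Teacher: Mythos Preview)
Your proof is correct and follows essentially the same projection argument as the paper: define the same map $\pi$ (the paper calls it $\varphi$), note that the variance is preserved because $\pi_*\mu_G=\mu_{G'}$, drop the $h\in G\setminus G'$ terms in the Dirichlet form since $\nabla_x^{(h)}\tilde f\equiv 0$ there, and compare the remaining terms using that $c_x^h$ descends along $\pi$ for $h\in G'$. Your explicit prefactor ratio $p(p+q_h)/p'(p'+q_h)\le 1$ is a slightly more careful bookkeeping than the paper's terse ``we could replace $\mu$ with $\mu'$'', but the content is identical; likewise your reduction to the East model via $G'=\{h\}$ with $q_h=q_{\min}$ and a reflection is exactly the paper's second paragraph.
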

\begin{proof}
        Let $G'\subset G\subset H_d$ and fix a parameter set $\mathbf{q}$ for
        the $G$-MCEM. Recall that $\mathcal{S}(G)=G\cup \{\star\}$.  Define the
        projection $\varphi$ on $\mathcal{S}(G)$ to $\mathcal{S}(G')$ that maps
        $G'$ onto itself and $\mathcal{S}(G)\setminus G'$ to $\star$. We then
        have, through the variational characterisation of the spectral gap
        \cref{eqn:variational_chara_mcem}, that
        \begin{equation}
                \gamma(G,\mathbf{q})
                = \inf_{\substack{
                f\in \mathrm{Dom}(\mathcal{L}^{(G,\mathbf{q})})\\f\neq \mathrm{const}}}
                \frac{\mathcal{D}(f)}{\var(f)}
                \le
                \inf_{\substack{ g\in \mathrm{Dom}(\mathcal{L}^{(G',\mathbf{q})})
                \\g\neq \mathrm{const}}}
                \frac{\mathcal{D}(g\circ \varphi)}
                {\var(g\circ \varphi)}\;,
        \end{equation}
        where (exceptionally) we write $\mathcal{L}^{(G,\mathbf{q})}$ for the
        generator of the $G$-MCEM to make the $G$-dependence explicit in this
        proof. Write $\nu'$ for the measure on $\mathcal{S}(G')$ that assigns
        probability $q_h$ to $h\in G'$ and $p':=1-\sum_{h\in G'}q_h$ to $\star$
        and let $\mu'$ be the product measure of $\nu'$. Since $\mu(g\circ
        \varphi)=\mu'(g)$ and ${(g\circ\varphi)(\cdot)}^2=g^2\circ
        \varphi(\cdot)$ we get $\var(g\circ\varphi) = \var_{\mu'}(g)$. For the
        Dirichlet form we get (recall \cref{eqn:dirich_mcem})
        \begin{align}
                \mathcal{D}(g\circ\varphi)
                &=\sum_{h\in G}\sum_{x\in \Z^2}
                \mu\mleft[c_x^h q_h p {(\nabla_x^{(h)}(g\circ\varphi))}^2\mright]\\
                &\le\sum_{h\in G'}\sum_{x\in \Z^2}
                \mu'\mleft[c_x^h q_h p' {(\nabla_x^{(h)}(g))}^2\mright]
        \end{align}
        where we used that the constraints $c_x^{h}$ only check whether a
        qualified neighbour is $h$ or not, and thus is the same for the
        $G$-MCEM and the $G'$-MCEM if $h\in G'$. Further $\nabla_x^{(h)}(g\circ
        \varphi)(\omega)=0$ if $\omega_x\not\in G'$ and so we could replace
        $\mu$ with $\mu'$. The r.h.s.\ is equal to the Dirichlet form of the
        $G'$-MCEM with parameter set $\mathbf{q}$ so we get the first part of
        the claim.

        The second part follows analogously by mapping the $h$ with the lowest
        equilibrium density to $0$ and all the other states to $1$ thus
        recovering the spectral gap $\gamma_d(q_{\min})$ of the East model with
        vacancy density $q_{\min}$.
\end{proof}

\subsection{Variance as transition terms and the path method}
Given the valid parameter set $\mathbf{q}$, recall the measure $\nu$ on
$\mathcal{S}(G)=\{\star\}\cup G$
that assigns probability $q_h$ to $h\in G$ and $p=1-\sum_{h\in G}q_h$ to $\star$.
\begin{lemma}[Variance as transition terms]\label{lemma:var_as_trans}
        For any function $f:\mathcal{S}(G)\rightarrow \R$ we find
        \begin{equation}\label{eqn:var_as_trans}
                \var_{\nu}(f)(\omega) \le 2\sum_{h\in G}q_h{(\nabla^{(h)}(f))}^2(\omega).
        \end{equation}
\end{lemma}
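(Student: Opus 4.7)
Since $f$ depends only on a single coordinate, the plan is to expand the left-hand side using the standard pairs formula for the variance of a product measure,
\[
\var_{\nu}(f)=\tfrac{1}{2}\sum_{\omega_1,\omega_2\in\mathcal{S}(G)}\nu(\omega_1)\nu(\omega_2)\bigl(f(\omega_1)-f(\omega_2)\bigr)^2,
\]
and to reduce every pair of distinct states to a pair containing the neutral state $\star$, so as to match the structure of the right-hand side, where the only differences that appear are of the form $f(h)-f(\star)$ because $(\nabla^{(h)}f)^2(\star)=(f(h)-f(\star))^2$.

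I would first isolate the pairs $(\omega_1,\omega_2)$ in which one of the two coordinates equals $\star$: these produce exactly $p\sum_{h\in G}q_h(f(h)-f(\star))^2$, which is already of the desired form. For the remaining cross-pairs $(h,h')$ with $h\neq h'$ both in $G$, I would insert $\star$ as an intermediate state via the elementary two-step inequality $(f(h)-f(h'))^2\le 2(f(h)-f(\star))^2+2(f(h')-f(\star))^2$, weight by the product $q_h q_{h'}$, and bound $\sum_{h'\neq h}q_{h'}\le 1-p$. This controls the cross-term contribution by $2(1-p)\sum_{h\in G}q_h(f(h)-f(\star))^2$.

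Adding the two contributions, the total is at most $(2-p)\sum_{h\in G}q_h(f(h)-f(\star))^2$, which yields the claimed bound (with the right-hand side most naturally read at $\omega=\star$, where the gradients are non-trivial). No real obstacle is expected: this is a path-method computation on a finite alphabet, and the only small wrinkle is the choice of $\star$ as the unique admissible intermediate state, which is forced on us by the fact that in the MCEM the gradient $\nabla^{(h)}$ only links $h$-vacancies to $\star$ and never to another vacancy $h'\neq h$. The factor of $2$ on the right is precisely what absorbs the cost of this path-splitting.
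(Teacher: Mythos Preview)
Your proposal is correct and follows essentially the same route as the paper: both start from the pairs formula $\var_\nu(f)=\tfrac12\sum_{\omega,\omega'}\nu(\omega)\nu(\omega')(f(\omega)-f(\omega'))^2$ and then route every difference through $\star$ via $(f(\omega)-f(\omega'))^2\le 2\bigl((f(\omega)-f(\star))^2+(f(\star)-f(\omega'))^2\bigr)$. The only cosmetic difference is that the paper applies this inequality uniformly to all pairs (yielding the constant $2$ directly), whereas you first isolate the pairs already containing $\star$ and obtain the slightly sharper constant $2-p$ before bounding it by $2$.
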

\begin{proof}
        Writing $p=q_{\star}$ in this proof we have
        \begin{align}
                \frac12 \sum_{\omega,\omega'\in S(G)} q_{\omega}
                q_{\omega'} {(f(\omega)-f(\omega'))}^2
                &=\sum_{\omega\in S(G)} q_\omega {f(\omega)}^2
                - {\left(\sum_{\omega\in S(G)} q_{\omega}
                                f(\omega)\right)}^2\\
                &= \var_{\nu}(f).
        \end{align}
        We recover the left hand side of the claim by applying
        Cauchy-Schwarz, giving
        \begin{equation}
                {(f(\omega)-f(\omega'))}^2
                \le 2 \left({(f(\omega)-f(\star))}^2+{(f(\star)-f(\omega'))}^2
                \right)
        \end{equation}
        and thus the claim.
\end{proof}
We say that a family of configurations ${\{(\omega^{(i)})\}}_{i\in [n]}$ is a
\emph{legal path} if each transition from $\omega^{(i)}$ to $\omega^{(i+1)}$ is
legal for the $G$-MCEM. Recall also that $x \prec^{(h)} y$ for $h\in H_d$ if
$x\cdot \mathbf{v}\le y\cdot \mathbf{v}$ for any $\mathbf{v}\in
\mathcal{P}(h)$.

Our second tool, the path method, is a well known trick in estimating the
spectral gap see for example~\cite{cancrini2008kcm}*{Proposition~6.6}
or~\cite{gine2006lectures} for uses in other contexts. Recall for this the
notation of $\mathcal{D}_{\Lambda}$ introduced in \cref{rem:dirichlet_subsets}
where the sum over all vertices in $\Z^d$ is replaced by the sum over
$\Lambda\subset \Z^d$ and the equilibrium measure $\mu$ by $\mu_{\Lambda}$.
\begin{lemma}[The path method]\label{lemma:path_method}
        Let $\omega,\eta\in \Omega$ and let $\Gamma=(\omega^{(1)}, \ldots,
        \omega^{(n)})$ be a legal path such that $\omega^{(1)}=\omega$ and
        $\omega^{(n)}=\eta$ and let $\Lambda\subset \Z^d$ consist of those
        vertices $x$ such that $\omega^{(i)}_x\neq \omega_x^{(i+1)}$ for some
        $i\in [n]$. Then, for any $f:\Omega\rightarrow \R$
        \begin{equation}
                \mu_{\Lambda}(\omega){(f(\omega)-f(\eta))}^2 \le
                \frac{n}{\min(\mathbf{q}, p)} \max_{i\in
                [n]}\frac{\mu_{\Lambda}(\omega)}{\mu_{\Lambda}(\omega^{(i)})}
                \mathcal{D}_{\Lambda}(f)(\omega).
        \end{equation}
\end{lemma}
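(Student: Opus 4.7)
The proof is a standard path-method computation: telescope $f(\omega)-f(\eta)$ along $\Gamma$, apply Cauchy--Schwarz, and then bound each elementary increment by a single term of the Dirichlet form $\mathcal{D}_\Lambda(f)(\omega)$.

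Concretely, I would start by writing $f(\omega)-f(\eta) = \sum_{i=1}^{n-1}(f(\omega^{(i)}) - f(\omega^{(i+1)}))$ and applying Cauchy--Schwarz to obtain
\begin{equation*}
\bigl(f(\omega)-f(\eta)\bigr)^2 \le n\sum_{i=1}^{n-1}\bigl(f(\omega^{(i)}) - f(\omega^{(i+1)})\bigr)^2.
\end{equation*}
Multiplying by $\mu_\Lambda(\omega)$ and rewriting each summand as
$\tfrac{\mu_\Lambda(\omega)}{\mu_\Lambda(\omega^{(i)})}\,\mu_\Lambda(\omega^{(i)})\bigl(f(\omega^{(i)})-f(\omega^{(i+1)})\bigr)^2$,
I can factor out $\max_{i}\mu_\Lambda(\omega)/\mu_\Lambda(\omega^{(i)})$, so the task reduces to producing, for each $i$, the single-step bound
\begin{equation*}
\mu_\Lambda(\omega^{(i)})\bigl(f(\omega^{(i)})-f(\omega^{(i+1)})\bigr)^2 \le \frac{1}{\min(\mathbf{q},p)}\,\mathcal{D}_\Lambda(f)(\omega).
\end{equation*}

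For this last estimate I use the legality of the transition $\omega^{(i)}\to\omega^{(i+1)}$. By the definition of a legal move there is a unique vertex $x_i\in\Lambda$ and a vacancy type $h_i\in G$ such that $\omega^{(i)}$ and $\omega^{(i+1)}$ coincide off $x_i$, with $\{\omega^{(i)}_{x_i},\omega^{(i+1)}_{x_i}\}=\{\star,h_i\}$ and $c_{x_i}^{h_i}(\omega^{(i)})=1$. All contributions in the double sum/expectation defining $\mathcal{D}_\Lambda(f)(\omega)$ (see \cref{eqn:dirich_mcem} and \cref{rem:dirichlet_subsets}) being non-negative, I retain only the $(h_i,x_i)$ index and, inside the expectation $\mu_\Lambda[\,\cdot\,]$, only the configuration equal to $\omega^{(i)}$ on $\Lambda$. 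Because $c_{x_i}^{h_i}$ does not depend on the state at $x_i$ itself, both the constraint factor and the indicator $\mathds{1}_{\omega_{x_i}\in\{\star,h_i\}}$ evaluate to $1$, and the squared difference reduces exactly to $\bigl(f(\omega^{(i)})-f(\omega^{(i+1)})\bigr)^2$. Thus
\begin{equation*}
\mathcal{D}_\Lambda(f)(\omega)\ge p\,q_{h_i}\,\mu_\Lambda(\omega^{(i)})\bigl(f(\omega^{(i)})-f(\omega^{(i+1)})\bigr)^2,
\end{equation*}
and the bound $pq_{h_i}\ge \min(\mathbf{q},p)$ (interpreting the author's shorthand as the slowest rate of the chain) yields the desired inequality.

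There is no real obstacle: the argument is essentially mechanical once one notices that a legal step of $\Gamma$ is tailor-made to match a single term in the Dirichlet sum. The only point that deserves attention is the observation, already used to derive reversibility, that $c_{x_i}^{h_i}$ is a function of the neighbours of $x_i$ alone, which is what allows the Dirichlet term at $(h_i,x_i)$ to be evaluated ``from both sides'' of the flip without changing value.
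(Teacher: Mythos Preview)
Your approach is exactly the paper's: telescope, apply Cauchy--Schwarz, pull out $\max_i \mu_\Lambda(\omega)/\mu_\Lambda(\omega^{(i)})$, and then match each legal step to a single nonnegative summand of $\mathcal{D}_\Lambda(f)(\omega)$. One small slip in the write-up: the displayed ``single-step bound'' $\mu_\Lambda(\omega^{(i)})(\ldots)^2 \le \tfrac{1}{\min(\mathbf q,p)}\,\mathcal{D}_\Lambda(f)(\omega)$, if summed over $i$, would introduce an extraneous factor of~$n$; the correct reduction---and what your isolation of the specific $(h_i,x_i,\omega^{(i)})$-term actually delivers---is that the \emph{sum} $\sum_i \mu_\Lambda(\omega^{(i)})(\ldots)^2$ is bounded by $\tfrac{1}{\min(\mathbf q,p)}\,\mathcal{D}_\Lambda(f)(\omega)$, since the distinct steps pick out distinct nonnegative contributions to the Dirichlet form (the paper is equally terse on this point).
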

\begin{proof}
        Write $f(\omega)-f(\eta)=\sum_{i\in [n-1]}
        f(\omega^{(i)})-f(\omega^{(i+1)})$ as a telescopic sum and use
        Cauchy-Schwarz to get
        \begin{align}
                \mu_{\Lambda}(\omega){(f(\omega)-f(\eta))}^2
                &\le n \max_{i\in
                [n]}\frac{\mu_{\Lambda}(\omega)}{\mu_{\Lambda}(\omega^{(i)})}\sum_{i\in [n-1]}
                \mu_{\Lambda}(\omega^{(i)}){(f(\omega^{(i)})-f(\omega^{(i+1)}))}^2\\
                &\le 
                \frac{n}{\min(\mathbf{q}, p)} \max_{i\in
                [n]}\frac{\mu_{\Lambda}(\omega)}{\mu_{\Lambda}(\omega^{(i)})}
                \mathcal{D}_{\Lambda}(f)(\omega),
        \end{align}
        where in the last inequality we used that for $\omega^{(i)}\rightarrow
        \omega^{(i+1)}$ to be a legal transition there is exactly one $x$ such
        that $\omega^{(i)}_x\neq \omega^{(i+1)}_x$.
\end{proof}
In the proofs of part $(B)$ and $(C)$ of \cref{thm:ergodicity} we do not
explicitly mention the length of the involved paths as the important thing is
that they are finite not how they scale. In \cref{thm:abc_relaxation} instead
it is of crucial importance to know the order of magnitude.

\section{Proof of \texorpdfstring{\cref{thm:ergodicity}}{Theorem 1}}
Part (A) of \cref{thm:ergodicity} can be proven without any of the tools just
introduced, while the two subparts of part (B) require some more involved
construction.
\subsection{Proof of \texorpdfstring{\cref{thm:ergodicity}}{Theorem 1}(A)}
If $G=H_d$ say that $\omega\in \Omega_{H_d}$ is a blocked state if
$\omega_{\mathbf{1}-h}=h$ for each $h\in H_d$. By construction, there is no
legal transition from a blocked state to a non-blocked state since to transition the
$h$-vacancy at $\mathbf{1}-h$ to $\star$ you need another $h$-vacancy inside
$H_d$ but every vertex in $H_d$ is already occupied by a different
vacancy type. Let $\mathcal{A}$ be the event that $\omega\restriction_{H_d}$ is
a blocked state. Then $\mathds{1}_{\mathcal{A}}$ is not a constant
function but $\mathcal{D}(\mathds{1}_{\mathcal{A}})=0$ while
$\mu(\mathcal{A})>0$ so that $\var_{\mu}(\mathds{1}_{\mathcal{A}})>0$.
Thus, $0$ is not a simple eigenvalue of the generator and we get the
claim by~\cite{liggett1985interacting}*{Section~IV, Theorem~4.13}.
\qed

\subsection{Vacancies with a common direction: Proof of
        \texorpdfstring{\cref{thm:ergodicity}}{Theorem 1}(B.i)}%
\label{sec:common_dir}
By \cref{lemma:monotonicity_in_g}, we assume w.l.o.g.\ that
$G=\{h_{\mathbf{j}}\colon \mathbf{j}\in {\{0,1\}}^{d-1}\simeq H_{d-1}\}$ where
$h_{\mathbf{j}}=(\mathbf{j}_1,\ldots, \mathbf{j}_{d-1}, 0)\in H_{d-1}\otimes
\{0\}\subset H_d$. For any
$\mathbf{j}\in H_{d-1}$ we have $\mathbf{e}_d\in \mathcal{P}(h_{\mathbf{j}})$ and
for $i\in [d-1]$ we have ${(-1)}^{\mathbf{j}_i}\mathbf{e}_i\in
\mathcal{P}(h_{\mathbf{j}})$.

We start by identifying a configuration on $H_d$ that allows us to remove any
vacancies in $H_d+k\mathbf{e}_d$ for $k\ge 2$ and for which we can apply the
exterior condition theorem, \cref{thm:exterior_thm}. Then we use the path
method to conclude.

Recall from the construction of the MCEM that each vacancy type $h\in G$ is
associated to a corner $x\in H_d$ of the hypercube. We say that a configuration
$\omega\in \Omega$ is $H_d$-good if $\omega_{x}$ for $x\in H_d$ is either in
the state of its associated vacancy type or in the neutral state if there is no
associated vacancy type, i.e.\ if $\omega_h =h$ for every $h\in G$ and
$\omega_x=\star$ for $x\in H_d\setminus \{G\}$ (see
the top left of \cref{fig:ergodicity_proof_bi} for the $d=3$ case). By the above assumption on
$G$ this means that if $\omega$ is $H_d$-good, then any vertex $v\in H_d$ with
$v\cdot \mathbf{e}_d=1$ is in the neutral state, i.e.\ $\omega_{H_{d-1}\otimes
\{1\}}\equiv \star$.

Given an $H_d$-good $\omega$ and any vacancy type $h\in G$ there is a legal path
starting from $\omega$ and ending in a state $\eta$ with $\eta_{x}= h$ for
$x\in H_{d-1}\otimes \{1\}$ and $\eta_x=\omega_x$ otherwise. Indeed, assume
$h=(0,0,\ldots, 0)$, then we can put $h$ on $\mathbf{e}_d=h+\mathbf{e}_d$ since
$\mathbf{e}_d\in \mathcal{P}(h)$. Subsequently, we can put $h$ on any
$\mathbf{e}_d+\mathbf{e}_i$ for $i\neq d$ since $\mathcal{P}(h)$ consists of
all positive propagation directions. Iterate this procedure adding another
$\mathbf{e}_j$ with $j\neq i,d$ and so on until all of
$H_{d-1}\otimes\{1\}$ is in state $h$. By construction this is
possible for any $h\in G$.

Then, there is a legal path starting from $\omega$ ending in a state $\eta$
such that $\eta_{H_{d-1}\otimes\{2\}}\equiv \star$. Indeed, this is a
consequence of $\mathbf{e}_d$ being a propagation direction of any vacancy type
$h$ and the fact that we can bring $h$ to any vertex in $H_{d-1}\otimes\{1\}$
as discussed in the previous paragraph. By reversibility, this implies that we
can construct a legal path that puts $H_{d-1}\otimes\{2\}$ into any state.

For any $k\in \N, k\ge 2$ we can iterate this argument to find a legal path
from $\omega$ to $\sigma$ where $\sigma_x=\star$ if $x\in \cup_{j\in
[2,k]} (H_{d-1}\otimes \{j\})$ and $\sigma_x=\omega_x$ otherwise.
By reversibility we can thus find a legal path to \emph{any}
$\sigma$ that agrees with $\omega$ outside of $\cup_{j\in
[2,k]} (H_{d-1}\otimes \{j\})$.
\begin{figure}[ht]
        \centering
        \begin{tikzpicture}[scale=0.8]
                \newcommand{\Side}{2}
                \coordinate (O) at (0,0,0);
                \coordinate (A) at (0,\Side,0);
                \coordinate (B) at (0,\Side,\Side);
                \coordinate (C) at (0,0,\Side);
                \coordinate (D) at (\Side,0,0);
                \coordinate (E) at (\Side,\Side,0);
                \coordinate (F) at (\Side,\Side,\Side);
                \coordinate (G) at (\Side,0,\Side);

                \draw[black!50] (2*\Side,0,0) -- (2*\Side,\Side,0) --
                        (2*\Side,\Side,\Side) -- (2*\Side,0,\Side) -- cycle;
                \foreach \y in {0,\Side} {
                        \foreach \z in {0,\Side} {
                                \draw[black!50] (\Side,\y,\z) -- (2*\Side,\y,\z);
                                \filldraw[black] (\Side,\y,\z) circle (2pt);
                                \filldraw[black] (2*\Side,\y,\z) circle (2pt);
                                \node[black,anchor=west] at (\Side,\y,\z) {\large{$\star$}};
                        }
                }

                \draw[black] (O) -- (C) -- (G) -- (D) -- cycle;
                \draw[black] (O) -- (A) -- (E) -- (D) -- cycle;
                \draw[black] (O) -- (A) -- (B) -- (C) -- cycle;
                \draw[black] (D) -- (E) -- (F) -- (G) -- cycle;
                \draw[black] (C) -- (B) -- (F) -- (G) -- cycle;
                \draw[black] (A) -- (B) -- (F) -- (E) -- cycle;

                \node[anchor=south, blue] at (A) {\footnotesize{$(1,1,0)$}};
                \filldraw[blue] (A) circle (2pt);
                \draw [black, ->,line width=1pt, blue] (A) -- (0.9,\Side,0) ;
                \draw [black, ->,line width=1pt, blue] (A) -- (0,-0.9+\Side,0) ;
                \draw [black, ->,line width=1pt, blue] (A) -- (0,\Side,+0.9) ;

                \node[anchor=east, brown] at (B) {\footnotesize{$(0,1,0)$}};
                \filldraw[brown] (B) circle (2pt);
                \draw [black, ->,line width=1pt, brown] (B) -- (0.9,\Side,\Side) ;
                \draw [black, ->,line width=1pt, brown] (B) -- (0,-0.9+\Side,\Side) ;
                \draw [black, ->,line width=1pt, brown] (B) -- (0,\Side,\Side-0.9) ;

                \node[anchor=north, gray] at (C) {\footnotesize{$(0,0,0)$}};
                \filldraw[gray] (C) circle (2pt);
                \draw [black, ->,line width=1pt, gray] (C) -- (0.9,0,\Side) ;
                \draw [black, ->,line width=1pt, gray] (C) -- (0,0.9,\Side) ;
                \draw [black, ->,line width=1pt, gray] (C) -- (0,0,\Side-0.9) ;

                \node[anchor=south west, red] at (O) {\footnotesize{$(1,0,0)$}};
                \filldraw[red] (O) circle (2pt);
                \draw [black, ->,line width=1pt, red] (O) -- (0.9,0,0) ;
                \draw [black, ->,line width=1pt, red] (O) -- (0,0.9,0) ;
                \draw [black, ->,line width=1pt, red] (O) -- (0,0,0.9) ;
                \draw[black!50, ->, line width=1pt] (0,\Side/2,\Side/2) -- ++(0.9,0,0);
                \node[anchor=north] at (0.1,\Side/2,\Side/2) {$\mathbf{e}_3$};

                \filldraw[red] (2*\Side,\Side,\Side) circle (2pt);

                \draw[->, thick] (2.5*\Side, 0.5*\Side,0.5*\Side) to[bend left] (3.5*\Side,0.5*\Side,0.5*\Side);

                \begin{scope}[shift={(4*\Side,0,0)}]
                        \coordinate (O) at (0,0,0);
                        \coordinate (A) at (0,\Side,0);
                        \coordinate (B) at (0,\Side,\Side);
                        \coordinate (C) at (0,0,\Side);
                        \coordinate (D) at (\Side,0,0);
                        \coordinate (E) at (\Side,\Side,0);
                        \coordinate (F) at (\Side,\Side,\Side);
                        \coordinate (G) at (\Side,0,\Side);

                        \draw[black!50] (2*\Side,0,0) -- (2*\Side,\Side,0) --
                                (2*\Side,\Side,\Side) -- (2*\Side,0,\Side) -- cycle;
                        \node[black,anchor=west] at (\Side,0,0) {\large{$\star$}};
                        \node[black,anchor=west] at (\Side,\Side,0) {\large{$\star$}};
                        \node[black,anchor=west] at (\Side,0,\Side) {\large{$\star$}};

                        \draw[black] (O) -- (C) -- (G) -- (D) -- cycle;
                        \draw[black] (O) -- (A) -- (E) -- (D) -- cycle;
                        \draw[black] (O) -- (A) -- (B) -- (C) -- cycle;
                        \draw[black] (D) -- (E) -- (F) -- (G) -- cycle;
                        \draw[black] (C) -- (B) -- (F) -- (G) -- cycle;
                        \draw[black] (A) -- (B) -- (F) -- (E) -- cycle;
                        \draw[red, line width=1.25pt,->] (O) -- (\Side, 0, 0) -- (\Side, \Side, 0)
                                -- (\Side, \Side, \Side);

                        \foreach \y in {0,\Side} {
                                \foreach \z in {0,\Side} {
                                        \draw[black!50] (\Side,\y,\z) -- (2*\Side,\y,\z);
                                        \filldraw[black] (\Side,\y,\z) circle (2pt);
                                        \filldraw[black] (2*\Side,\y,\z) circle (2pt);
                                }
                        }

                        \filldraw[blue] (A) circle (2pt);

                        \filldraw[brown] (B) circle (2pt);

                        \filldraw[gray] (C) circle (2pt);

                        \filldraw[red] (O) circle (2pt);

                        \filldraw[red] (2*\Side,\Side,\Side) circle (2pt);
                        \filldraw[red] (\Side, \Side, \Side) circle (2pt);
                \end{scope}

                \draw[->, thick, densely dotted] (5*\Side, -0.5*\Side,0.5*\Side) to[bend left] (5*\Side,-1*\Side,0.5*\Side);

                \begin{scope}[shift={(4*\Side,-2.5*\Side,0)}]
                        \coordinate (O) at (0,0,0);
                        \coordinate (A) at (0,\Side,0);
                        \coordinate (B) at (0,\Side,\Side);
                        \coordinate (C) at (0,0,\Side);
                        \coordinate (D) at (\Side,0,0);
                        \coordinate (E) at (\Side,\Side,0);
                        \coordinate (F) at (\Side,\Side,\Side);
                        \coordinate (G) at (\Side,0,\Side);

                        \draw[black!50] (2*\Side,0,0) -- (2*\Side,\Side,0) --
                                (2*\Side,\Side,\Side) -- (2*\Side,0,\Side) -- cycle;
                        \foreach \y in {0,\Side} {
                                \foreach \z in {0,\Side} {
                                        \draw[black!50] (\Side,\y,\z) -- (2*\Side,\y,\z);
                                        \filldraw[black] (\Side,\y,\z) circle (2pt);
                                        \filldraw[black] (2*\Side,\y,\z) circle (2pt);
                                        \node[black,anchor=west] at (\Side,\y,\z) {\large{$\star$}};
                                        \node[black,anchor=west] at (2*\Side,\y,\z) {\large{$\star$}};
                                }
                        }

                        \draw[black] (O) -- (C) -- (G) -- (D) -- cycle;
                        \draw[black] (O) -- (A) -- (E) -- (D) -- cycle;
                        \draw[black] (O) -- (A) -- (B) -- (C) -- cycle;
                        \draw[black] (D) -- (E) -- (F) -- (G) -- cycle;
                        \draw[black] (C) -- (B) -- (F) -- (G) -- cycle;
                        \draw[black] (A) -- (B) -- (F) -- (E) -- cycle;

                        \filldraw[blue] (A) circle (2pt);

                        \filldraw[brown] (B) circle (2pt);

                        \filldraw[gray] (C) circle (2pt);

                        \filldraw[red] (O) circle (2pt);
                \end{scope}

                \draw[->, thick, densely dotted] (3.5*\Side, -2*\Side,0.5*\Side) to[bend left] (2.25*\Side,-2*\Side,0.5*\Side);

                \begin{scope}[shift={(0,-2.5*\Side,0)}, xscale=0.4]
                        \coordinate (O) at (0,0,0);
                        \coordinate (A) at (0,\Side,0);
                        \coordinate (B) at (0,\Side,\Side);
                        \coordinate (C) at (0,0,\Side);
                        \coordinate (D) at (\Side,0,0);
                        \coordinate (E) at (\Side,\Side,0);
                        \coordinate (F) at (\Side,\Side,\Side);
                        \coordinate (G) at (\Side,0,\Side);

                        \foreach \x in {1,...,4}{
                                \foreach \y in {0,\Side} {
                                        \foreach \z in {0,\Side} {
                                                \draw[black!50] (\x*\Side,0,0) -- (\x*\Side,\Side,0) --
                                                        (\x*\Side,\Side,\Side) -- (\x*\Side,0,\Side) -- cycle;
                                                \draw[black!50] (\x*\Side,\y,\z) -- (2*\Side,\y,\z);
                                        }
                                }
                        }

                        \foreach \y in {0,\Side} {
                                \foreach \z in {0,\Side} {
                                        \draw[black!50, dashed] (4*\Side, \y, \z) --
                                                (4.9*\Side, \y,\z);
                                }
                        }

                        \foreach \x in {1,...,4}{
                                \foreach \y in {0,\Side} {
                                        \foreach \z in {0,\Side} {
                                                \filldraw[black] (\x*\Side,\y,\z) circle (2pt);
                                                \node[black,anchor=west] at (\x*\Side,\y,\z) {\large{$\star$}};
                                        }
                                }
                        }

                        \draw[black] (O) -- (C) -- (G) -- (D) -- cycle;
                        \draw[black] (O) -- (A) -- (E) -- (D) -- cycle;
                        \draw[black] (O) -- (A) -- (B) -- (C) -- cycle;
                        \draw[black] (D) -- (E) -- (F) -- (G) -- cycle;
                        \draw[black] (C) -- (B) -- (F) -- (G) -- cycle;
                        \draw[black] (A) -- (B) -- (F) -- (E) -- cycle;

                        \filldraw[blue] (A) circle (2pt);

                        \filldraw[brown] (B) circle (2pt);

                        \filldraw[gray] (C) circle (2pt);

                        \filldraw[red] (O) circle (2pt);
                \end{scope}
        \end{tikzpicture}
        \caption{\label{fig:ergodicity_proof_bi} Path from proof of part (B.i)
                for $d=3$. The first image (top left) shows the part on $H_d$
                of a $H_d$-good configuration with the propagation directions
                of the involved vacancies. To remove the
                $(0,0,1)$-vacancy on $(0,1,2)$ we use the red path
                from the second image. Iterating this procedure to put $\star$
                on all the black vertices (of initially arbitrary state) at
                $(\cdot, \cdot, 2)$ (third image). 
                This procedure iterates to any $(\cdot, \cdot, k)$ for $k\ge 2$
                (fourth picture).}
\end{figure}
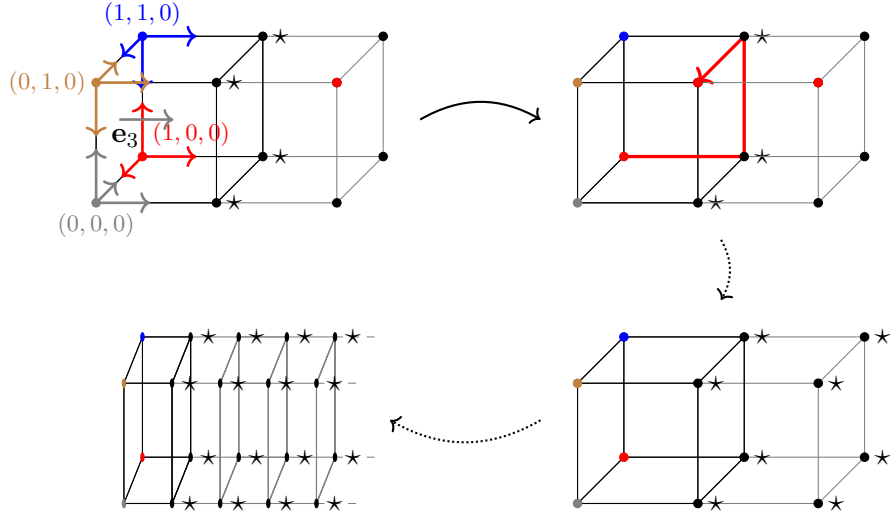

For $x\in \Z^d$ we say that $\omega$ is $(H_d+x)$-good if $\eta$ given by
$\eta_y=\omega_{y+x}$ is $H_d$ good.
Let $V_n=\{x\in \Z^d\colon x\cdot \mathbf{e}_d \ge -n \}$ for $n\in \Z$ so that
${\{V_n\}}_{n\in \Z}$ is an increasing and exhausting family of subsets of
$\Z^d$. With $\mathcal{A}_{x,j} := \{\omega\colon \omega \ \text{is}\
(H_d+x-(j+1)\mathbf{e}_d)\text{-good}\}$ we find that the family
$\mathcal{A}_x'(N)=\cup_{j\in [N]} \mathcal{A}_{x,j}$ for $N\ge 1$ satisfies the exterior
condition with respect to ${\{V_n\}}_{n\in \Z}$. The support of
$\mathcal{A}_x'(N)$ increases linearly in $N$ but the equilibrium failure
probability decreases exponentially in $N$. Thus, we can choose $N$ large
enough for \cref{eqn:ext_cond_thm_assumption_original} to hold and with
\cref{lemma:var_as_trans} we find a constant $C(\mathbf{q})$ such that
\begin{equation}
        \var(f)
        \le 4 \sum_{x\in
        \Z^d}\mu\left(\mathds{1}_{\mathcal{A}_x'(N)}\var_x(f)\right)
        \le C(\mathbf{q}) \sum_{x\in \Z^d}\sum_{j\in
        [N]}\sum_{h\in G}\sum_{\omega\in\mathcal{A}_{x,j}}\mu(\omega)
        {(\nabla_x^{(h)}f(\omega))}^2.\label{eqn:ergodicity_resolve_sum}
\end{equation}
Fix some $x\in\Z^d$, $j\in [N]$, $h\in G$ and $\omega\in \mathcal{A}_{x,j}$ and
assume w.l.o.g.\ that $\omega_x=h$. By the
above observations and translation invariance of the dynamics we find a legal
path $(\sigma^{(1)}, \ldots, \sigma^{(m)})$ with $m=O(N^2 2^{2d})$,
$\sigma^{(1)}=\omega$ and $\sigma^{(m)}=\sigma$ where $\sigma$ is the state
given by $\sigma_{x}=\star$ and $\sigma_{\Z^d\setminus
\{x\}}=\omega_{\Z^d\setminus \{x\}}$. Using the path method gives
\begin{equation}
        \mu(\omega)
        {(\nabla_x^{(h)}f(\omega))}^2
        \le C(\mathbf{q},m) \mu_{\Z^d\setminus\L_j(x)}(\omega)
        \mathcal{D}_{\Lambda_j(x)}(f)(\omega)
\end{equation}
where $\Lambda_j(x)$ is the smallest box containing both the support of
$\mathcal{A}_{x,j}$ and the origin. Using that $\Lambda_j(x)$ is finite for any
$x$ and $j$ we get
\begin{align}
        \var(f) 
        &\le C(\mathbf{q},m) \sum_{x\in \Z^d}\sum_{j\in
        [N]}\sum_{h\in G}\sum_{\omega\in\mathcal{A}_{x,j}}
        \mu_{\Z^d\setminus\L_j(x)}(\omega)
        \mathcal{D}_{\Lambda_j(x)}(f)(\omega)\\
        &\le C(\mathbf{q},m) \sum_{x\in \Z^d}\sum_{j\in
        [N]}\mu(-f\mathcal{L}_{\Lambda_j(x)}
        f)\\
        &\le C(\mathbf{q},m)\mathcal{D}(f),
\end{align}
where the $C(\mathbf{q},m)$ may be different from line to line (as is the
convention for constants throughout this paper). By the variational
characterisation of the spectral gap we thus have
\begin{equation}
        \gamma(G, \mathbf{q}) > 1/C(\mathbf{q},m).
\end{equation}
which is the claim.\qed

\subsection{G as a star graph: Proof of
        \texorpdfstring{\cref{thm:ergodicity}}{Theorem 1}(B.ii)}%
\label{sec:star_graph}
By \cref{lemma:monotonicity_in_g}, assume w.l.o.g.\ that $G=\{h_c,
h_1,\ldots, h_d\}$ where $h_c=0$ is the central vertex of $G$ and
$h_i=\mathbf{e}_i$, $i\in [d]$. We have
$\mathcal{P}(h_c)=\{\mathbf{e}_1,\ldots, \mathbf{e}_d\}$ and
$\mathcal{P}(h_i)=\{\mathbf{e}_1,\ldots, \mathbf{e}_{i-1},
-\mathbf{e}_i,\mathbf{e}_{i+1}, \ldots, \mathbf{e}_d\}$ so that the direction
$-\mathbf{e}_i$ is unique to $h_i$.

For $x\in \Z^d$ and $N_1,\ldots, N_d\in\N$ we say that the set
$x+\bigotimes_{i\in [d]}\{0,\ldots,N_i\}$ is a \emph{box with side lengths
$(N_1,\ldots,N_d)$ and origin $x$}. We call $x+(N_1,\ldots, N_d)$ the top right
corner of $B$. 
Let $\Lambda$ be the equilateral box of side
length $2$ and origin at $0$, i.e.\ $\Lambda={\{0,1,2\}}^d$. We call the vertex $x\in \Lambda$ a corner of
$\Lambda$ if $x_i\in \{0,2\}$ for all $i\in [d]$ and write $F_i=\{x\in
\Lambda\colon x_i=0\}$. For a configuration $\omega\in \Omega$ we say
that $\Lambda$ is good if $\omega_{2h}=h$ for every $h\in G$ and
$\omega_x=\star$ for $x\in \cup_{i}F_i\setminus \{2h\colon h\in G\}$.
Analogously define good boxes $\Lambda+x$ for any $x\in \Z^d$.
\begin{lemma}\label{lemma:move_good}
        If $\omega\in \Omega$ is such that $\Lambda$ is good and for each $i\in
        [d]$ there is a smallest $k_i\ge 2$ with
        $\omega_{\mathbf{v}+k_i\mathbf{e}_i}=h_i$, where we write
        $\mathbf{v}=\sum_{i\in [d]}\mathbf{e}_i$, then there is a legal path
        starting at $\omega$ and ending at a configuration $\sigma$ such that
        \begin{enumerate}[(i)]
                \item $\Lambda+\mathbf{v}$ is good in $\sigma$, and
                \item $\sigma_x=\star$ for any $x$ between $\Lambda$ and
                        $\mathbf{v}+k_i\mathbf{e}_i$, i.e.\ any $x\in
                        \cup_i\{\mathbf{v}+j\mathbf{e}_i\colon
                        3\le j\le k_i-1\}$ (if $k_i\ge 4$), and
                \item $\omega$ and $\sigma$ agree otherwise.
        \end{enumerate}
\end{lemma}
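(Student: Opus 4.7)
The plan is to construct an explicit legal path $\omega\to\sigma$ in three phases, leveraging the fact that the star-graph structure of $G$ gives complementary propagation sets: $\mathcal{P}(h_c)=\{\mathbf{e}_1,\ldots,\mathbf{e}_d\}$ is the standard East rule, while each $\mathcal{P}(h_i)$ replaces $+\mathbf{e}_i$ by $-\mathbf{e}_i$. Consequently every canonical direction $\pm\mathbf{e}_i$ is covered by some $\mathcal{P}(h)$, so any local facilitator I need can in principle be assembled from the initial vacancies of $\omega$: the corners of the good $\Lambda$, plus the distant $h_i$ at $\mathbf{v}+k_i\mathbf{e}_i$ for each $i$.

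\emph{Phase 1 (putting $h_c$ at $\mathbf{v}$).} Starting from the $h_c$ vacancy at $0$, I East-propagate $h_c$ along a monotone lattice path $0\to\mathbf{e}_1\to\mathbf{e}_1+\mathbf{e}_2\to\cdots\to\mathbf{v}$ inside $\Lambda$. On the bottom faces of $\Lambda$ the intermediate vertices are $\star$ by goodness, so the creations are direct; the only vertex that may need attention is $\mathbf{v}$ itself (and, for $d\ge 3$, possibly a few earlier interior vertices), which I clean using the already-built $h_c$ chain if it holds $h_c$, or via a short auxiliary $h_j$-chain seeded from $2\mathbf{e}_j$ (East-propagable in every direction $\mathbf{e}_l$ with $l\neq j$) if it holds $h_j$. \emph{Phase 2 (pulling each $h_i$ back, $k_i\ge 3$).} I first clean the ray $\{\mathbf{v}+j\mathbf{e}_i\colon 2\le j\le k_i-1\}$ to $\star$ by extending an $h_c$-highway from $\mathbf{v}$ outwards one site at a time (using the same auxiliary-chain trick to dispose of any obstructions) and then retracting it. With the ray clear, I create $h_i$ at $\mathbf{v}+(k_i-1)\mathbf{e}_i,\ldots,\mathbf{v}+2\mathbf{e}_i$ by iterated backward propagation via $-\mathbf{e}_i\in\mathcal{P}(h_i)$, starting from the surviving $h_i$ at $\mathbf{v}+k_i\mathbf{e}_i$. \emph{Phase 3 (cleanup).} In outward order I remove the newly created intermediate $h_i$'s at $\mathbf{v}+j\mathbf{e}_i$ for $3\le j\le k_i-1$, each using the outer $h_i$ neighbour as $-\mathbf{e}_i$ facilitator; finally, all the auxiliary $h_c$- and $h_j$-vacancies introduced in Phases 1 and 2 are retracted by reversing their creation moves, which is legal because the very facilitators used to create them are still in place.

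The main obstacle I expect is Phase 2: pre-existing non-$\star$ states along the rays force us to erect $h_c$- and possibly $h_j$-facilitators at positions well outside $\Lambda$, and the ordering of moves must be chosen so that every insertion and every erasure is legal at the moment it is performed. The star-graph structure of $G$ is essential here---since every $\pm\mathbf{e}_i$ is a propagation direction of some vacancy type, no local obstruction is intrinsically stuck, and every auxiliary chain I build can later be retracted by exactly reversing its creation sequence.
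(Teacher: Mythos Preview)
Your three-phase outline follows the paper's strategy closely. Two points need attention.

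\textbf{Phase 2 is imprecise about the highway type.} When the obstruction at $\mathbf{v}+j\mathbf{e}_i$ is of type $h_l$ with $l\notin\{c,i\}$, the only facilitating position you control is $\mathbf{v}+(j-1)\mathbf{e}_i$ on the ray itself: the other candidates $\mathbf{v}+j\mathbf{e}_i-\mathbf{e}_m$ (for $m\neq i,l$) and $\mathbf{v}+j\mathbf{e}_i+\mathbf{e}_l$ lie off the ray and outside any region you have touched. So an auxiliary $h_l$-chain routed from $2\mathbf{e}_l$ through the faces of $\Lambda$ cannot reach a useful position; it must run along the ray, displacing your $h_c$-highway. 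The paper handles this cleanly by first proving that \emph{every} vertex of $\Lambda\setminus\cup_i F_i$ (in particular $\mathbf{v}$) can be set to any state; then for each obstruction of type $h\neq h_i$ one sends an $h$-highway from $\mathbf{v}$ along the already-cleared portion of the ray, removes the obstruction, and retracts. Your scheme is repairable in the same way, but the ``$h_c$-highway plus side chains'' picture as written does not work.

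\textbf{For $d\ge 3$ you never make $\Lambda+\mathbf{v}$ good.} Condition (i) requires every vertex of $\cup_l(F_l+\mathbf{v})\setminus\{2h+\mathbf{v}:h\in G\}$ to be $\star$. Vertices such as $\mathbf{v}+2\mathbf{e}_i+\mathbf{e}_j$ (for distinct $i,j$, when $d\ge 3$) lie in $F_l+\mathbf{v}$ for every $l\neq i,j$, are not in $\Lambda$, and are not on any of your rays $\{\mathbf{v}+j\mathbf{e}_m\}$; your construction never touches them, so in $\sigma$ they still carry their arbitrary $\omega$-value. The paper closes this with an explicit final step: once the corners $\mathbf{v}+2h$ are in place, an induction analogous to the one relaxing $\Lambda\setminus\cup_i F_i$ clears these off-ray face vertices. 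Without this your $\sigma$ fails (i) whenever $d\ge 3$.
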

\begin{proof}
        We start by showing that there is a legal path that puts any state on
        $\Lambda\setminus \cup_i F_i$ and then we show how to use this to get
        that $\Lambda+\mathbf{v}$ is good. The steps are visualised for $d=2$
        in \cref{fig:ergodicity_proof_bii}.

        \textbf{Relax $\Lambda\setminus \cup_i F_i$:} Fix an $\omega$ as in
        the claim. Consider the vertex $h_c+\mathbf{v}=\mathbf{v}\in \Lambda$.
        For any $i\in [d]$ we have $\mathbf{v}-\mathbf{e}_i\in F_i\setminus
        \{2h\colon h\in G\}$ and thus $\omega_{\mathbf{v}-\mathbf{e}_i}=\star$.
        Now let $j\in [d]$, $j\neq i$. Since $-\mathbf{e}_j$ is the unique
        negative unit vector in $\mathcal{P}(h_j)$, there is a path\footnote{We
        use the term \emph{path} to mean paths on the lattice $\Z^d$ and the term \emph{legal path} to mean paths of
        configurations in $\Omega$ that are legal in the $G$-MCEM.} from $\mathbf{e}_j$
        to $\mathbf{v}-\mathbf{e}_i$ contained in $F_i\setminus \{2h\colon h\in G\}$
        consisting only of steps in $\mathcal{P}(h_j)$. Similar considerations apply to
        paths from the origin containing only steps in $\mathcal{P}(h_c)$. Thus,
        recalling that $\mathbf{e}_i\in \mathcal{P}(h_j)$, if $\Lambda$ is good there
        is a legal path that removes any non-$h_i$-vacancy from $\mathbf{v}$. Since $i$
        was arbitrary any vacancy type on $\mathbf{v}$ can be removed and by
        reversibility also any vacancy type can be put. Hence, we can also remove any
        non $h_i$-vacancy from $\mathbf{v}+\mathbf{e}_i$.

        Now use that $\mathbf{v}+\mathbf{e}_i-\mathbf{e}_j$ for $j\neq i$ is in
        $F_j$, if $d>2$ it is even in $F_j\setminus \{2h\colon h\in G\}$. If
        $v+\mathbf{e}_i-\mathbf{e}_j\neq \mathbf{e}_i$,
        there is a path contained in $F_j\setminus \{2h\colon h\in G\}$ from
        $\mathbf{e}_i$ to $\mathbf{v}+\mathbf{e}_i-\mathbf{e}_j$ consisting
        only of steps in $\mathcal{P}(h_i)$ so that there is a legal path that
        removes any vacancy from $\mathbf{v}+\mathbf{e}_i$. Since $i$ was
        arbitrary again we find a legal path that can put or remove any vacancy
        from $\mathbf{v}+\mathbf{e}$, for a canonical base vector
        $\mathbf{e}\in \mathcal{B}$. Analogously, it follows by induction in
        $n$ that we can remove or put any vacancy type on $x\in
        \Lambda\setminus \cup_i F_i$ with $\|x-\mathbf{v}\|_1=n$, where we just
        proved the base case $n=1$.
        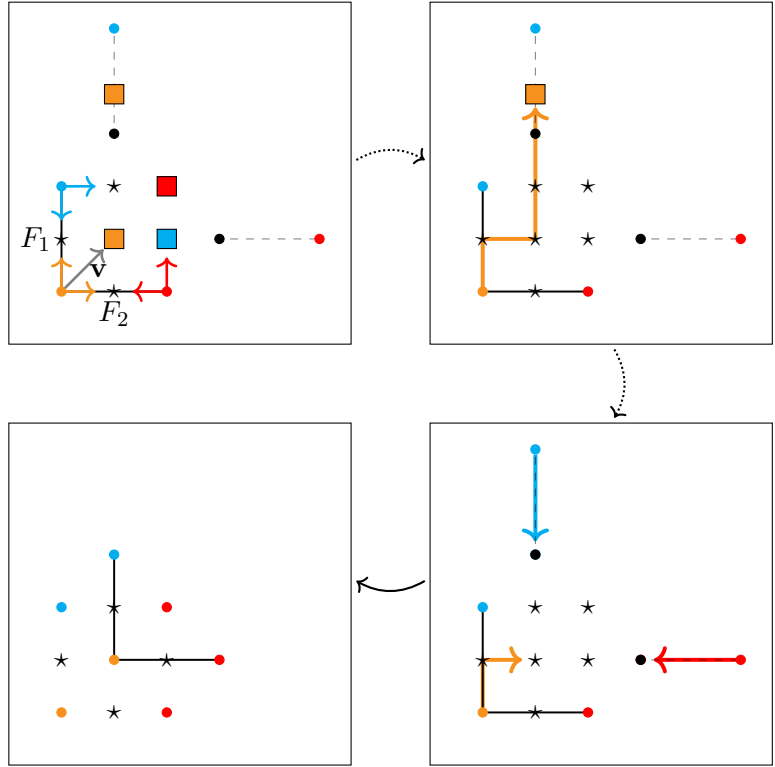
\begin{figure}[ht]
                \centering
                \begin{tikzpicture}[scale=0.7]

                        \draw[thin] (-1,-1) -- ++(0,6.5) -- ++(6.5,0) -- ++(0,-6.5) -- cycle;
                        \draw[line width=0.75pt] (0,2) -- ++(0,-2) -- ++(2,0);
                        \node[anchor=east] at (0,1) {$F_1$};
                        \node[anchor=north] at (1,0) {$F_2$};

                        \draw[line width=1pt, ->, black!50] (0,0) -- (0.8,0.8);
                        \node[anchor=north] at (0.7,0.7) {$\mathbf{v}$};
                        \draw[dashed, opacity=0.5] (1,2.9) -- (1,5);
                        \draw[dashed, opacity=0.5] (2.9,1) -- (4.9,1);

                        \draw[line width=1pt, shorten >=5pt, ->, Cyan] (0,2) -- ++(0,-0.9);
                        \draw[line width=1pt, shorten >=5pt, ->,Cyan ] (0,2) -- ++(0.9,0);
                        \filldraw[Cyan](0,2)circle[radius=2.5pt] {};
                        \draw[line width=1pt, shorten >=5pt, ->, BurntOrange] (0,0) -- ++(0,0.9);
                        \draw[line width=1pt, shorten >=5pt, ->, BurntOrange] (0,0) -- ++(0.9,0);
                        \filldraw[BurntOrange](0,0)circle[radius=2.5pt] {};
                        \draw[line width=1pt, shorten >=5pt, ->, red] (2,0) -- ++(0,0.9);
                        \draw[line width=1pt, shorten >=5pt, ->, red] (2,0) -- ++(-0.9,0);
                        \filldraw[red](2,0)circle[radius=2.5pt] {};

                        \filldraw[Cyan](1,5)circle[radius=2.5pt] {};
                        \node[draw, fill=BurntOrange, minimum size =1.01pt] (T)at (1,3.75){};
                        \filldraw[red](4.9,1)circle[radius=2.5pt] {};
                        \node[] at (0,1) {$\star$};
                        \node[] at (1,0) {$\star$};
                        \node[draw, fill=BurntOrange, minimum size =1.01pt] (T)at (1,1){};
                        \node[draw, fill=Cyan, minimum size =1.01pt] (T)at (2,1){};
                        \node[] at (1,2) {$\star$};
                        \node[draw, fill=red, minimum size =1.01pt] (T)at (2,2){};
                        \filldraw[black](3,1)circle[radius=2.5pt] {};
                        \filldraw[black](1,3)circle[radius=2.5pt] {};

                        \draw[->, thick, densely dotted] (5.6, 2.5) to[bend left] (6.9, 2.5);

                        \begin{scope}[shift={(8,0)}]

                                \draw[thin] (-1,-1) -- ++(0,6.5) -- ++(6.5,0) -- ++(0,-6.5) -- cycle;
                                \draw[line width=0.75pt] (0,2) -- ++(0,-2) -- ++(2,0);

                                \draw[BurntOrange, line width=1.5pt,shorten >=5pt, ->] (0,0) -- ++(0,1) -- ++(1,0) -- (1,3.75);

                                \draw[dashed, opacity=0.5] (1,2.9) -- (1,5);
                                \draw[dashed, opacity=0.5] (2.9,1) -- (4.9,1);

                                \filldraw[Cyan](0,2)circle[radius=2.5pt] {};
                                \filldraw[BurntOrange](0,0)circle[radius=2.5pt] {};
                                \filldraw[red](2,0)circle[radius=2.5pt] {};

                                \filldraw[Cyan](1,5)circle[radius=2.5pt] {};
                                \node[draw, fill=BurntOrange, minimum size =1.01pt] (T)at (1,3.75){};
                                \filldraw[red](4.9,1)circle[radius=2.5pt] {};
                                \filldraw[black](3,1)circle[radius=2.5pt] {};
                                \filldraw[black](1,3)circle[radius=2.5pt] {};
                                \node[] at (0,1) {$\star$};
                                \node[] at (1,0) {$\star$};
                                \node[] at (1,1) {$\star$};
                                \node[] at (2,1) {$\star$};
                                \node[] at (1,2) {$\star$};
                                \node[] at (2,2) {$\star$};

                        \end{scope}

                        \draw[->, densely dotted, thick] (10.5, -1.1) to[bend left] (10.5, -2.4);

                        \begin{scope}[shift={(8,-8)}]
                                \draw[thin] (-1,-1) -- ++(0,6.5) -- ++(6.5,0) -- ++(0,-6.5) -- cycle;
                                \draw[BurntOrange, line width=1.5pt,shorten >=5pt, ->] (0,0) -- ++(0,1) -- ++(1,0);
                                \draw[Cyan, line width=1.5pt,shorten >=5pt, ->] (1,4.9) -- (1,3);
                                \draw[red, line width=1.5pt,shorten >=5pt, ->] (4.9,1) -- ++(-1.9,0);
                                
                                \draw[line width=0.75pt] (0,2) -- ++(0,-2) -- ++(2,0);

                                \draw[dashed, opacity=0.5] (1,2.9) -- (1,5);
                                \draw[dashed, opacity=0.5] (2.9,1) -- (4.9,1);

                                \filldraw[Cyan](0,2)circle[radius=2.5pt] {};
                                \filldraw[BurntOrange](0,0)circle[radius=2.5pt] {};
                                \filldraw[red](2,0)circle[radius=2.5pt] {};

                                \filldraw[Cyan](1,5)circle[radius=2.5pt] {};
                                \filldraw[Cyan](1,3)circle[radius=2.5pt] {};
                                \filldraw[red](4.9,1)circle[radius=2.5pt] {};
                                \node[] at (1,1) {$\star$};
                                \node[] at (0,1) {$\star$};
                                \node[] at (1,0) {$\star$};
                                \node[] at (2,1) {$\star$};
                                \node[] at (1,2) {$\star$};
                                \node[] at (2,2) {$\star$};

                                \filldraw[red](3,1)circle[radius=2.5pt] {};
                                \filldraw[black](3,1)circle[radius=2.5pt] {};
                                \filldraw[black](1,3)circle[radius=2.5pt] {};
                        \end{scope}

                        \draw[->, thick] (6.9, -5.5) to[bend left] (5.6, -5.5);

                        \begin{scope}[shift={(0,-8)}]

                                \draw[thin] (-1,-1) -- ++(0,6.5) -- ++(6.5,0) -- ++(0,-6.5) -- cycle;
                                \draw[line width=0.75pt] (1,3) -- ++(0,-2) -- ++(2,0);

                                \filldraw[Cyan](0,2)circle[radius=2.5pt] {};
                                \filldraw[BurntOrange](0,0)circle[radius=2.5pt] {};
                                \filldraw[red](2,0)circle[radius=2.5pt] {};

                                \filldraw[Cyan](1,3)circle[radius=2.5pt] {};
                                \filldraw[BurntOrange](1,1)circle[radius=2.5pt] {};
                                \node[] at (1,0) {$\star$};
                                \node[] at (0,1) {$\star$};
                                \node[] at (2,1) {$\star$};
                                \node[] at (1,2) {$\star$};
                                \filldraw[red](2,2)circle[radius=2.5pt] {};

                                \filldraw[red](3,1)circle[radius=2.5pt] {};

                        \end{scope}

                \end{tikzpicture}
                \caption{Example configuration from \cref{lemma:move_good} in
                        the top-left. The black circles indicate the vertices
                        to which we need to bring the $(0,1)$-vacancy (blue) resp.\
                        $(1,0)$-vacancy (red). The squares indicate the vacancies
                        we replace with $\star$ in the course of the proof. From the first to the
                        second image any vacancy on $\Lambda\setminus \cup_i F_i$
                        is replaced with $\star$. For the third image we then
                        bring the necessary vacancies to $\Lambda+\mathbf{v}$ and
                        the final image shows how this gives a good
                        configuration on $\Lambda+\mathbf{v}$.}\label{fig:ergodicity_proof_bii}
        \end{figure}

        \textbf{Make $\Lambda+\mathbf{v}$ good:} For $i\in[d]$ we want to
        find a legal path which puts $h_i$ on $2\mathbf{e}_i+\mathbf{v}$ and,
        if $k_i\ge 4$, also puts the neutral state $\star$ on
        $\{\mathbf{v}+j\mathbf{e}_i\colon 3\le j \le k_i-1\}$. Then remove any
        other vacancies from $\cup_i (F_i+\mathbf{v})\setminus
        \{2h+\mathbf{v}\colon h\in G\}$.

        Let $i\in [d]$ and assume w.l.o.g.\ that $k_i>2$ since otherwise the
        $h_i$-vacancy is already at the correct position for
        $\Lambda+\mathbf{v}$ to be good. We already know that we can put any
        vacancy on $\mathbf{v}$ and since $\mathbf{e}_i\in \mathcal{P}(h)$ for
        every $h\in G\setminus\{h_i\}$ there is a legal path that removes any
        vacancy from $\{\mathbf{v}+j\mathbf{e}_i\colon j\in [1,k_i-1]\}$ (that
        are by assumption not $h_i$-vacancies since $k_i$ is the smallest integer such
        that $\omega_{\mathbf{v}+k_i\mathbf{e}_i}=h_i$). Then, use that
        $-\mathbf{e}_i\in \mathcal{P}(h_i)$ to bring the $h_i$ from
        $\mathbf{v}+k_i\mathbf{e}_i$ to $\mathbf{v}+2\mathbf{e}_i$ and put
        $\star$ in between $\mathbf{v}+2\mathbf{e}_i$ and
        $\mathbf{v}+k_i\mathbf{e}_i$.

        Since $i$ was arbitrary we can put $h_i$ on
        $\mathbf{v}+2\mathbf{e}_i$ for any $i$. Using again that we can put
        $\mathbf{v}$ into any state we can, in particular, put
        $h_c$ on $\mathbf{v}$. Thus, we get a legal path that puts $h$ on $\mathbf{v}+2h$ for each $h\in G$ and $\star$ on
        $\cup_i\{\mathbf{v}+j\colon j\in [3,k_i-1]\}$ where the final state
        still agrees with $\omega$ outside these vertices.

        Finally, we need to put the neutral state on $x \in(F_i+\mathbf{v})\setminus \Lambda$
        which is an analogous induction to the one for putting
        the neutral state on $\Lambda\setminus \cup_{i\in [d]}F_i$ detailed
        above.
\end{proof}
The Lemma tells us that we can move a good $\Lambda$ in the direction
$\mathbf{v}$, given enough non-central vacancies outside of $\Lambda$. To
satisfy the exterior condition this is too lose a condition as we cannot always
assume that we find these vacancies for each step. The next Lemma gives another
construction that does not require new vacancies after every step.
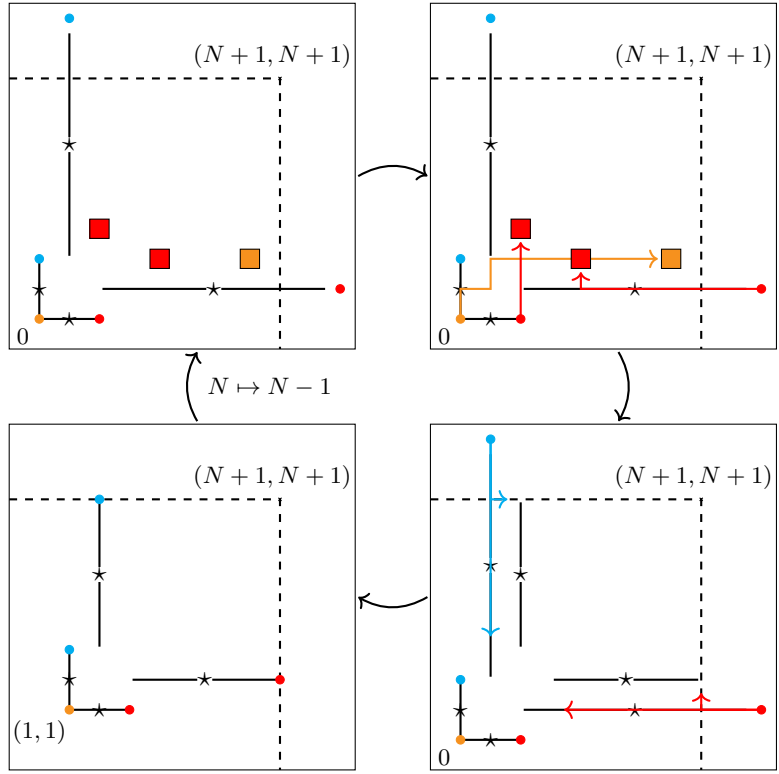
\begin{figure}[t]
        \centering
        \begin{tikzpicture}[scale=0.4]
                \draw[thin] (-1,-1) -- ++(11.5,0) -- ++(0,11.5) -- ++(-11.5,0) -- cycle;
                \draw[thick, black] (2,0) -- ++(-2,0) -- ++(0,2);

                \filldraw[red](10,1)circle[radius=4pt] {};
                \draw[thick] (2.1,1) -- (5.55,1);
                \node[] at (5.8,1) {$\star$};
                \draw[thick] (6.05,1) -- (9.5,1);

                \filldraw[Cyan](1,10)circle[radius=4pt] {};
                \draw[thick] (1,2.1) -- (1,5.55);
                \node[] at (1,5.8) {$\star$};
                \draw[thick] (1,6.05) -- (1,9.5);

                \draw(8,8) node[cross, black]{};
                \node[anchor=south] at (7.75,8) {\footnotesize $(N+1,N+1)$};

                \node[anchor=north east] at (0,0) {\footnotesize $0$};

                \draw[thick, dashed, black]  (-1,8) -- ++(9,0) -- ++(0,-9);

                \filldraw[Cyan](0,2)circle[radius=4pt] {};
                \node[] at (0,1) {$\star$};
                \filldraw[BurntOrange](0,0)circle[radius=4pt] {};
                \node[] at (1,0) {$\star$};
                \filldraw[red](2,0)circle[radius=4pt] {};



                \node[draw, fill=red, minimum size =1.01pt] (T)at (4,2){};
                \node[draw, fill=BurntOrange, minimum size =1.01pt] (T)at (7,2){};
                \node[draw, fill=red, minimum size =1.01pt] (T)at (2,3){};

                \draw[->, thick] (10.6, 4.75) to[bend left] (12.9, 4.75);
                \begin{scope}[shift={(14,0)}]
                        \draw[thin] (-1,-1) -- ++(11.5,0) -- ++(0,11.5) -- ++(-11.5,0) -- cycle;
                        \draw[thick, black] (2,0) -- ++(-2,0) -- ++(0,2);

                        \filldraw[red](10,1)circle[radius=4pt] {};
                        \draw[thick] (2.1,1) -- (5.55,1);
                        \node[] at (5.8,1) {$\star$};
                        \draw[thick] (6.05,1) -- (9.5,1);

                        \filldraw[Cyan](1,10)circle[radius=4pt] {};
                        \draw[thick] (1,2.1) -- (1,5.55);
                        \node[] at (1,5.8) {$\star$};
                        \draw[thick] (1,6.05) -- (1,9.5);

                        \draw(8,8) node[cross, black]{};
                        \node[anchor=south] at (7.75,8) {\footnotesize
                                $(N+1,N+1)$};

                        \node[anchor=north east] at (0,0) {\footnotesize $0$};

                        \draw[thick, dashed, black]  (-1,8) -- ++(9,0) -- ++(0,-9);

                        \filldraw[Cyan](0,2)circle[radius=4pt] {};
                        \node[] at (0,1) {$\star$};
                        \filldraw[BurntOrange](0,0)circle[radius=4pt] {};
                        \node[] at (1,0) {$\star$};
                        \filldraw[red](2,0)circle[radius=4pt] {};

                        \draw[BurntOrange, thick,shorten >=5pt, ->] (0,0) -- ++(0,1) -- ++(1,0) -- ++(0,1) -- (7,2);
                        \draw[red, thick,shorten >=5pt, ->] (10,1) -- ++(-6,0) -- ++(0,1);
                        \draw[red, thick,shorten >=5pt, ->] (2,0) -- (2,3);

                        \node[draw, fill=red, minimum size =1.01pt] (T)at (4,2){};
                        \node[draw, fill=BurntOrange, minimum size =1.01pt] (T)at (7,2){};
                        \node[draw, fill=red, minimum size =1.01pt] (T)at (2,3){};
                \end{scope}

                \begin{scope}[shift={(14,-14)}]
                        \draw[->, thick] (5.25, 12.9) to[bend left] (5.25, 10.6);
                        \draw[thin] (-1,-1) -- ++(11.5,0) -- ++(0,11.5) -- ++(-11.5,0) -- cycle;
                        \draw[thick, black] (2,0) -- ++(-2,0) -- ++(0,2);

                        \filldraw[red](10,1)circle[radius=4pt] {};
                        \draw[thick] (2.1,1) -- (5.55,1);
                        \node[] at (5.8,1) {$\star$};
                        \draw[thick] (6.05,1) -- (9.5,1);

                        \filldraw[Cyan](1,10)circle[radius=4pt] {};
                        \draw[thick] (1,2.1) -- (1,5.55);
                        \node[] at (1,5.8) {$\star$};
                        \draw[thick] (1,6.05) -- (1,9.5);

                        \draw[thick] (3.1,2) -- (5.25,2);
                        \node[] at (5.5,2) {$\star$};
                        \draw[thick] (5.75,2) -- (7.9,2);

                        \draw[thick] (2,3.1) -- (2,5.25);
                        \node[] at (2,5.5) {$\star$};
                        \draw[thick] (2,5.75) -- (2,7.9);

                        \draw(8,8) node[cross, black]{};
                        \node[anchor=south] at (7.75,8) {\footnotesize $(N+1,N+1)$};

                        \node[anchor=north east] at (0,0) {\footnotesize $0$};

                        \draw[thick, dashed, black]  (-1,8) -- ++(9,0) -- ++(0,-9);

                        \filldraw[Cyan](0,2)circle[radius=4pt] {};
                        \node[] at (0,1) {$\star$};
                        \filldraw[BurntOrange](0,0)circle[radius=4pt] {};
                        \node[] at (1,0) {$\star$};
                        \filldraw[red](2,0)circle[radius=4pt] {};

                        \draw[red, thick,shorten >=5pt, ->] (10,1) -- (3,1);
                        \draw[red, thick,shorten >=5pt, ->] (10,1) -- (8,1) -- (8,2);

                        \draw[Cyan, thick,shorten >=5pt, ->] (1,10) -- (1,3);
                        \draw[Cyan, thick,shorten >=5pt, ->] (1,10) -- (1,8) -- (2,8);
                \end{scope}

                \begin{scope}[shift={(0,-14)}]
                        \draw[->, thick]  (5.25, 10.6) to[bend left](5.25, 12.9);
                        \node[anchor=west] at (5.25,11.75) {\footnotesize $N\mapsto N-1$};
                        \draw[->, thick] (12.9, 4.75) to[bend left] (10.6, 4.75) ;
                        \draw[thin] (-1,-1) -- ++(11.5,0) -- ++(0,11.5) -- ++(-11.5,0) -- cycle;
                        \draw[thick, black] (3,1) -- ++(-2,0) -- ++(0,2);

                        \draw[thick] (3.1,2) -- (5.25,2);
                        \node[] at (5.5,2) {$\star$};
                        \draw[thick] (5.75,2) -- (7.9,2);

                        \draw[thick] (2,3.1) -- (2,5.25);
                        \node[] at (2,5.5) {$\star$};
                        \draw[thick] (2,5.75) -- (2,7.9);

                        \draw(8,8) node[cross, black]{};
                        \node[anchor=south] at (7.75,8) {\footnotesize
                                $(N+1,N+1)$};

                        \node[anchor=north east] at (1.25,1) {\footnotesize $(1,1)$};

                        \draw[thick, dashed, black]  (-1,8) -- ++(9,0) -- ++(0,-9);

                        \filldraw[Cyan](1,3)circle[radius=4pt] {};
                        \node[] at (1,2) {$\star$};
                        \filldraw[BurntOrange](1,1)circle[radius=4pt] {};
                        \node[] at (2,1) {$\star$};
                        \filldraw[red](3,1)circle[radius=4pt] {};

                        \filldraw[Cyan](2,8)circle[radius=4pt] {};
                        \filldraw[red](8,2)circle[radius=4pt] {};
                \end{scope}
        \end{tikzpicture}
        \caption{The first image (top left) shows an example $\omega$ as in
                \cref{lemma:move_good_2} for two dimensions using the colour
                and shape code from \cref{fig:ergodicity_proof_bi}. The second
                image shows the paths used to get rid of any vacancies on
                $\{2\mathbf{v}+j\mathbf{e}_i\colon j\in [N-1]\}$. The third
                image then shows the paths to move the good box and
                put the $(0,1)$- resp. $(1,0)$-vacancy on the
                respective $\{2\mathbf{v}+(N-1)\mathbf{e}_i\}$. The fourth
                image then shows how the resulting state is the same as in
                \cref{lemma:move_good_2} translated by $\mathbf{v}$ so we can
                iterate the proof by setting $N\mapsto N-1$.\label{fig:move_good_2}}
\end{figure}
\begin{lemma}\label{lemma:move_good_2}
        Fix an $N\in 2\N$, $N\ge 4$. Let $\omega\in \Omega$ be such that
        $\Lambda$ is good and for each $i\in [d]$ there is a $k_i\in [N,3N/2]$
        with $\omega_{\mathbf{v}+k_i\mathbf{e}_i}=h_i$ and such that
        $\omega_y=\star$ for each $y\in \{\mathbf{v}+n\mathbf{e}_i\colon n\in
        [k_i-1]\}$. Then, there is a legal path starting at
        $\omega$ and ending at $\sigma$ such that $\Lambda+(N-2)\mathbf{v}$ is
        good and that agrees with $\omega$ otherwise.
\end{lemma}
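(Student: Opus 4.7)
The plan is to prove \cref{lemma:move_good_2} by induction on $N$, exploiting the translation invariance of the $G$-MCEM to apply the inductive hypothesis to translated copies of $\Lambda$. The base case (say $N=4$) amounts to moving the good box twice by $\mathbf{v}$, which follows directly from two applications of \cref{lemma:move_good}. For the induction step, I want to construct a legal path from $\omega$ to a configuration $\omega'$ that satisfies the hypothesis of the lemma with $\Lambda$ replaced by $\Lambda+\mathbf{v}$ and $N$ replaced by $N-1$. Concretely, $\omega'$ should satisfy (a) $\Lambda+\mathbf{v}$ is good, (b) for each $i\in [d]$ there is an $h_i$-vacancy at $2\mathbf{v}+(N-1)\mathbf{e}_i$, and (c) $\omega'_y=\star$ for every $y\in\{2\mathbf{v}+n\mathbf{e}_i\colon n\in[N-2]\}$. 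The inductive hypothesis applied to $\omega'$ then yields a legal path to a configuration in which $(\Lambda+\mathbf{v})+(N-3)\mathbf{v}=\Lambda+(N-2)\mathbf{v}$ is good, closing the induction.

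The construction of $\omega'$ follows the three images of \cref{fig:move_good_2}. In the \emph{clearing stage}, for each $i\in[d]$, I use the $h_i$-vacancy at $\mathbf{v}+k_i\mathbf{e}_i$ together with the $\star$'s along the ray $\{\mathbf{v}+n\mathbf{e}_i\colon n\in[k_i-1]\}$ to remove any vacancies at the target positions $\{2\mathbf{v}+j\mathbf{e}_i\colon j\in[N-2]\}$. This proceeds analogously to the ``relax $\Lambda\setminus\cup_i F_i$'' step in the proof of \cref{lemma:move_good}: the $h_i$-vacancy can be transported along the cleared ray via $-\mathbf{e}_i\in\mathcal{P}(h_i)$ and duplicated onto neighbouring rays via the perpendicular directions $\mathbf{e}_\ell\in\mathcal{P}(h_i)$, $\ell\neq i$, providing the facilitation needed to reach the parallel ray through $2\mathbf{v}$. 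In the \emph{duplication stage}, the $h_i$-vacancy at $\mathbf{v}+k_i\mathbf{e}_i$ is used to create a persistent $h_i$-vacancy at $2\mathbf{v}+(N-1)\mathbf{e}_i$; the displacement $\mathbf{v}+(N-1-k_i)\mathbf{e}_i$ between these two points decomposes into steps in $\mathcal{P}(h_i)=\{-\mathbf{e}_i\}\cup\{\mathbf{e}_\ell\colon \ell\neq i\}$ (using $k_i\geq N$), so this is a legal transition that only touches vertices in the cleared region. Finally, the \emph{movement stage} applies the argument of \cref{lemma:move_good} verbatim to the remaining $h_i$-vacancies at $\mathbf{v}+k_i\mathbf{e}_i$, producing a good $\Lambda+\mathbf{v}$.

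The main obstacle is verifying that the three stages compose into a single legal path without interfering with one another, and that the final $\omega'$ agrees with $\omega$ outside a controlled set. Two subtleties arise. First, any auxiliary vacancies introduced on perpendicular rays during clearing and duplication must be removed by reversibility so that $\omega'$ differs from $\omega$ only on vertices between $\Lambda$ and $\Lambda+\mathbf{v}$ and on the ray positions that are explicitly modified. Second, the new $h_i$-vacancy at $2\mathbf{v}+(N-1)\mathbf{e}_i$ must be placed so as not to spoil either the goodness of $\Lambda$ (used by the movement stage) or the cleared rays (used for the next induction step); the bound $k_i\leq 3N/2$ ensures that the support of all manipulations for the $d$ different directions stays within a bounded region and, for $d\geq 2$, that the manipulations for different indices $i$ can be carried out sequentially in disjoint sub-regions. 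Once this path-planning is set up, $\omega'$ indeed satisfies the hypothesis of the lemma with parameter $N-1$ and the inductive hypothesis concludes the proof.
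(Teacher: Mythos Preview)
Your overall strategy matches the paper's proof: both argue by induction on $N$ (the paper phrases it as ``iterate the proof'') and both reduce each step to clearing the rays $\{2\mathbf{v}+n\mathbf{e}_i\}$, planting an $h_i$-vacancy at $2\mathbf{v}+(N-1)\mathbf{e}_i$, and then invoking \cref{lemma:move_good} to make $\Lambda+\mathbf{v}$ good.

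There is, however, a real gap in your clearing stage. You write that the $h_i$-vacancy, transported along its ray and duplicated onto perpendicular rays, ``provides the facilitation needed'' to clear $\{2\mathbf{v}+j\mathbf{e}_i\}$. But an $h_i$-vacancy facilitates only $h_i$-transitions: if a site on the target ray carries a vacancy of some other type $h\in G\setminus\{h_i\}$, your mechanism cannot remove it. The paper fixes this by also using the good box $\Lambda$. Since any vacancy type can be placed at $\mathbf{v}+\mathbf{e}_j\in\Lambda\setminus\cup_iF_i$ (by the very ``relax'' step you cite), and since $\mathbf{e}_i\in\mathcal{P}(h)$ for every $h\neq h_i$, one can send each such $h$ out along the $\mathbf{e}_i$-direction from $\mathbf{v}+\mathbf{e}_j$ to clear $h$-vacancies on the parallel ray $\{\mathbf{v}+n\mathbf{e}_i+\mathbf{e}_j\}$; the $h_i$-vacancy on the original ray is needed precisely for the $h_i$-vacancies, which the others cannot reach since $\mathbf{e}_i\notin\mathcal{P}(h_i)$. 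For $d>2$ an induction in the number $m$ of perpendicular steps then carries this from the ray $\{\mathbf{v}+n\mathbf{e}_i+\mathbf{e}_j\}$ all the way to the ray through $2\mathbf{v}$. Once you add this ingredient your three stages compose exactly as in the paper and the induction closes. A minor related point: extend your clearing range to include $j=N-1$, since your duplication stage needs $2\mathbf{v}+(N-1)\mathbf{e}_i$ to be in state $\star$ before an $h_i$-vacancy can be placed there.
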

\begin{proof}
        \cref{fig:move_good_2} illustrates a state $\omega$ as in the claim and
        the steps of the following proof. We start by clearing the line
        $\{2\mathbf{v}+n\mathbf{e}_i\colon n\in [N-1]\}$ of any vacancies and
        then move the good box by $\mathbf{v}$ so that we recover the initial
        situation and can iterate the argument.

        Fix an $i,j\in [d]$ s.t.\ $i\neq j$. We can
        bring the $h_i$-vacancy from $\mathbf{v}+k_i\mathbf{e}_i$ to
        $\mathbf{v}+n\mathbf{e}_i$ for any $n\in [N]$. Thus, we can remove any
        $h_i$-vacancy from $\{\mathbf{v}+n\mathbf{e}_i+\mathbf{e}_j\colon n\in
        [N]\}$. Since $\mathbf{v}+\mathbf{e}_j\in \Lambda\setminus \cup_i F_i$,
        we can put any $h$-vacancy on it. Thus,
        there is a legal path to  remove any vacancy from
        $\{\mathbf{v}+n\mathbf{e}_i+\mathbf{e}_j\colon n\in [N]\}$ using that
        $\mathbf{e}_i\in \mathcal{P}(h)$ for $h\neq h_i$. The chosen
        $j\neq i$ was arbitrary so that this works for any pair $i$, $j$.

        Analogously to the proof of \cref{lemma:move_good} this is the starting
        case for the proof that we can remove any vacancy from
        $\{\mathbf{v}+n\mathbf{e}_i+\sum_{i\in I} \mathbf{e}_i\colon n\in
        [N-1],I\subset [d]\setminus [i], |I|=m\}$ for any $m\le d-1$ by
        induction in $m$.

        In particular, we can remove any vacancy from $y\in
        2\mathbf{v}+(n-1)\mathbf{e}_i$ for $n\in [N]$ since
        $y=\mathbf{v}+n\mathbf{e}_i+\sum_{i\in [d]\setminus
        \{\mathbf{e}_i\}}\mathbf{e}_i$ is covered by the case $m=d-1$. The
        choice of $i\in [d]$ was arbitrary so that we can construct a legal
        path that ends in a state $\sigma^{(1)}$ on which
        $\cup_i \{2\mathbf{v}+n\mathbf{e}_i\colon n\in [N-2]\}$ is in the
        neutral state and $2\mathbf{v}+(N-1)\mathbf{e}_i$ has an $h_i$-vacancy.

        Further, by \cref{lemma:move_good} there is a path
        starting at $\sigma^{(1)}$ and ending in a state $\sigma^{(2)}$ in
        which $\Lambda+\mathbf{v}$ is good and which does not change
        $\cup_i\{2\mathbf{v}+n\mathbf{e}_i\colon n\in [N-1]\}$. The state
        $\sigma^{(2)}$ is now in the configuration of the claim for $N-1$ so
        that we can iterate the proof until we find a legal path
        that ends in a state with $\Lambda+(N-2)\mathbf{v}$ good.
\end{proof}
There is a useful property on boxes that
allows us to use both of the previous Lemmas.
\begin{definition}
        We say that a subset $R\subset \Z^d$ is \emph{colourful} in a
        configuration $\omega\in\Omega$ if for each vacancy type $h\in G$ on
        each straight line connecting two opposite boundaries of $R$ there is
        an $x$ such that $\omega_x=h$.
\end{definition}

For $N\in 2\N$ define the event $\mathcal{E}^{(N)}$ (see
\cref{fig:ergodicity_event}) as the set of configurations $\omega$ such that
        \begin{enumerate}[(E.i)]
                \item Let $B$ be the box with side lengths $(N,\ldots, N)$ and origin 
                        $-N\mathbf{v}$. The subset of $B$ given by the box with
                        side lengths $(N-1,\ldots, N-1)$ and origin
                        $-(N-1)\mathbf{v}$ \emph{excluding} the origin $0$ is
                        colourful.
                \item For each vertex $x\in B$ such that there is an $i\in[d]$
                                with $x\cdot\mathbf{e}_i=-N$ there is a $k\in
                                [N]$ and a good box $\Lambda$ such that
                                $x-k\mathbf{v}$ is the top right corner of
                                $\Lambda$.
                \item For each $i\in [d]$ the box with origin $-2N\mathbf{e}_i$
                        and side lengths given by $N$ in the $i$-direction and
                        $N/2$ otherwise is colourful.
        \end{enumerate}
        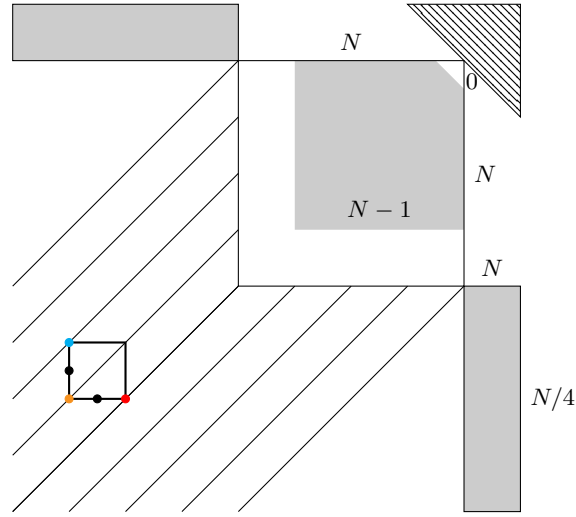
\begin{figure}[ht]
                \centering
                \begin{tikzpicture}[scale=0.75]
                        \node[anchor=north west] at (-0.15,-0.05) {\footnotesize $0$};
                        \node[anchor=south] at (-2,0) {\footnotesize $N$};
                        \node[anchor=west] at (0,-2) {\footnotesize $N$};
                        \node[anchor=south] at (0.5,-4) {\footnotesize $N$};
                        \node[anchor=south] at (-1.5,-3) {\footnotesize $N-1$};
                        \node[anchor=west] at (1,-6) {\footnotesize $N/4$};
                        \draw[] (0,0) rectangle  (-4,-4);
                        \fill[black,opacity=0.2] (-0.5,0) -- (0,-0.5) -- (0,-3)
                                -- (-3,-3) -- (-3,0) -- cycle ;
                        \draw[pattern=north west lines]  (-1,1) -- ++(2,-2)
                                -- ++(0,2) -- cycle;
                        \draw[fill=black!20] (-4,0) rectangle (-8,1);
                        \draw[fill=black!20] (0,-4) rectangle (1,-8);

                        \foreach \x in {0,-1,...,-4} {
                                \draw (-4,\x) -- ++(-4,-4);
                                \draw (\x,-4) -- ++(-4,-4);
                        }
                        \draw[black, thick] (-7,-6) -- ++(1,0) -- ++(0,1)
                                --++(-1,0) -- cycle;

                        \filldraw[Cyan](-7,-5)circle[radius=2pt] {};
                        \filldraw[black](-7,-5.5)circle[radius=2pt] {};
                        \filldraw[BurntOrange](-7,-6)circle[radius=2pt] {};
                        \filldraw[black](-6.5,-6)circle[radius=2pt] {};
                        \filldraw[red](-6,-6)circle[radius=2pt] {};
                \end{tikzpicture}
                \caption{Support of the event $\mathcal{E}^{(N)}$ in $d=2$ and
                        $N=4$. The gray sets are the boxes that we assume to be
                        colourful. The diagonal lines are the lines that we
                        assume intersect a good box $\Lambda$.}
                \label{fig:ergodicity_event}
        \end{figure}
        Let $\mathcal{E}^{(N)}_x$ be the correspondingly translated event for
        $x\in \Z^d$. On $\Z^d$ consider the $d-1$ dimensional hyperplane $U_0$
        perpendicular to $\mathbf{v}$ that goes through the origin. By
        construction we have $\mathrm{Supp}(\mathcal{E}^{(N)})\subset V^c_0$
        where $V_0:=\cup_{\ell=1}^{\infty}(U_0+\ell \mathbf{v})$. And the
        family ${\{\mathcal{E}^{(N)}_x\}}_{x\in \Z^d}$ satisfies the exterior
        condition w.r.t.\ ${\{V_n\}}_{n\in \Z}$ where $V_n=V_0-n\mathbf{v}$.
        The failure probability of $\mathcal{E}_x^{(N)}$ can be upper bounded
        by a series of simple union bounds to get a bound that is exponentially
        decreasing with $N$ while the support grows polynomially in $N$. 

        We can thus choose $N$ large enough (depending on $\mathbf{q}$) such
        that \cref{thm:exterior_thm} then gives
        \begin{equation}
                \var(f) \le 4\sum_{x\in \Z^d}
                \mu(\mathds{1}_{\mathcal{E}_x^{(N)}}\var_x(f)).
        \end{equation}

        The proof then concludes analogously to the proof of (B.i) once we have
        defined the paths that allow us to remove or put any vacancy type on
        $x$. W.l.o.g.\ consider only the case $x=0$ and fix $\omega\in
        \mathcal{E}^{(N)}$. Consider a $y$ in the box $B$ from (E.i) such that
        $y\cdot \mathbf{e}_i=-N$ for some $i\in [d]$. By (E.ii) there is a $k\in
        [N]$ such that $y-k\mathbf{v}$ is in a good box $\Lambda$. For any
        $j\in [d]$ we are guaranteed to hit one of the boxes from (E.iii) or
        the subset of $B$ given in (E.i) when going in the direction
        $\mathbf{e}_j$ from $y-k\mathbf{v}$. Since these sets are colourful,
        we can apply \cref{lemma:move_good} to propagate the good box. Since
        this works for arbitrary $k$, we can propagate it until it intersects
        $B$ and put the neutral state on $y$ in a legal path of finite length.
        These transitions are independent of the state of $y$, so we find a
        legal path starting at $\omega$ and ending in a state $\sigma$ where
        $\sigma_y=\star$ and $\sigma_z=\omega_z$ for $z\neq y$.

        The vector $y$ was arbitrary so that we can iterate this argument until
        the conditions to apply \cref{lemma:move_good_2} are satisfied. Notice
        that for this we need to propagate and leave the good box around $-N\mathbf{v}$
        which follows by the same argument. The statement then follows
        analogously to (B.i).
        \qed

\section{Spectral gap bounds for the two-dimensional MCEM: Proof
of \texorpdfstring{{\cref{thm:abc_relaxation}}}{Theorem 2}}%
\label{sec:abc_spectral_gap}
The upper bound in \cref{thm:abc_relaxation} follows by
\cref{lemma:monotonicity_in_g}. The steps to prove the corresponding lower
bound are analogous to the proof of \cref{thm:ergodicity}(B). The main
difference is that we have to be careful about the cost of our intermediate
steps. Before we were fine estimating the spectral gap by any positive
constant. Now we want to show that in the highest order the spectral gap is
given by the spectral gap of the East model on $\Z^d$ which we explicitly
recall here as it is central to the proof.
\begin{theorem}[{\cite{chleboun2016relaxation}*{Theorem~1}}]\label{thm:east_relax}
        As $q\rightarrow 0$ the spectral gap $\gamma_d(q)$ of the East model on
        $\Z^d$ with parameter $q$ is given as
        \begin{equation}
                \gamma_d(q) = 2^{-\frac{\theta_q^2}{2d}(1+o(1))},
        \end{equation}
        where $\theta_q:=|\log_2(q)|$. 
\end{theorem}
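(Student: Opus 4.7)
My plan is to prove matching upper and lower bounds of the form $\gamma_d(q)=2^{-\theta_q^2/(2d)(1+o(1))}$, with the lower bound being by far the more delicate part. For the upper bound I would use the variational characterisation of the spectral gap with a test function $f=\mathds{1}_A$ tailored to an event supported on a box $B$ of side $L=2^{\lfloor\theta_q/d\rfloor}$ --- the critical scale at which the probability that $B$ is vacancy-free is bounded away from $0$ and $1$. A direct evaluation of $\mathcal{D}(f)$ using the admissible boundary East-rings controls the Dirichlet form by a surface term of order $q$. To obtain the sharp exponent $1/(2d)$ rather than just some bound of order $\theta_q^2$, the event $A$ should be refined to pick out configurations of maximal combinatorial depth inside $B$, whose equilibrium probability matches $2^{-\theta_q^2/(2d)(1+o(1))}$ by the classical entropy count for East-type configurations ordered by their height function.

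For the lower bound I would use a bisection/renormalisation argument combined with \cref{lemma:path_method}. Partition $\Z^d$ into blocks of side $\ell=2^{\lceil\theta_q/d\rceil}$, so that each block contains at least one vacancy with probability bounded away from zero. One may then introduce a coarse-grained process on the block lattice whose spectral gap is bounded below uniformly in $q$ via \cref{thm:exterior_thm}, and show that any block-level East-like move can be implemented by a finite sequence of original East moves at controlled cost using the path method. The bulk of the gap estimate then comes from controlling the cost of creating a vacancy at an arbitrary prescribed position inside a single block, which by an internal bisection over $\sim\theta_q/d$ length scales accumulates the exponent $\theta_q^2/(2d)(1+o(1))$.

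The hardest part is matching the constants on both sides so as to produce exactly $1/(2d)$ rather than something larger. On the upper-bound side the sharp constant comes from the entropy of East configurations of a given combinatorial height $k$ in a box of side $L$, which behaves like $\log_2\binom{L}{k}\approx k\log_2(L/k)$, optimised over $k$. On the lower-bound side it comes from an inductive bisection where at each level one exploits the freedom of choosing among any of the $d$ coordinate directions in order to propagate a vacancy: this direction-freedom is precisely what yields the improvement from the $d=1$ exponent $\theta_q^2/2$ to $\theta_q^2/(2d)$. Aligning the two sides requires careful bookkeeping of entropy against energy along the bisection hierarchy, rather than any single hard combinatorial insight, and this inductive accounting is the main technical burden of the proof.
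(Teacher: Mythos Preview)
The paper does not prove this theorem; it is quoted verbatim from \cite{chleboun2016relaxation}*{Theorem~1} and used as a black box (see the opening of Section~\ref{sec:abc_spectral_gap}). There is therefore no ``paper's own proof'' to compare your proposal against.

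That said, your sketch is broadly in the spirit of the actual proof in \cite{chleboun2016relaxation}. The lower bound there is indeed obtained by a multiscale bisection/renormalisation over $\sim\theta_q/d$ scales, and the factor $1/d$ in the exponent does come from the freedom, at each bisection step, to cut the current box along any of the $d$ coordinate directions. Your description of the upper bound is less accurate: the sharp constant is not obtained from a single test function on a box of side $2^{\theta_q/d}$, but from a bottleneck argument counting the number of configurations reachable from a fixed vacancy using at most $n$ additional vacancies in $\Z^d$, which generalises the Chung--Diaconis--Graham combinatorial bound from $d=1$. Your phrasing ``configurations of maximal combinatorial depth \ldots\ ordered by their height function'' gestures at this but does not identify the actual mechanism. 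In any case, none of this is needed for the present paper, which only invokes the statement of \cref{thm:east_relax}.
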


\subsection{Preliminary constructions}\label{sec:hgrid}
Note that by \cref{lemma:monotonicity_in_g} the cases (3.i) and
(3.ii) imply the cases (2.i) and (2.ii). Using this and symmetry
considerations, w.l.o.g.\ we can assume
in the following that $G=\{(1,1),(0,0),(0,1)\}$. We call the associated MCEM the
$ABC$-model and call $A=(0,0)$, $B=(1,1)$, $C=(0,1)$ and $D=(1,0)$. As noted
in the introduction, by \cref{lemma:monotonicity_in_g} we have
\begin{align}
        \lim_{q_{\min}\rightarrow 0}
        \frac{\gamma(G,\mathbf{q})}{\gamma_2(q_{\min})} \le 1
\end{align}
to prove \cref{thm:abc_relaxation} we thus need the corresponding lower bound.

Analogously to bounding the spectral gap from zero our strategy for finding
good lower bounds on the spectral gap relies on the exterior condition theorem,
\cref{thm:exterior_thm}.
Fix a $G\subset H_2$ and consider a family ${\{\mathcal{A}_x\}}_{x\in \Z^2}$ of
events that satisfies the requirements of the exterior condition theorem so
that with \cref{lemma:var_as_trans} we have
\begin{equation}\label{eqn:result_from_exterior}
        \var(f)\le 4\sum_{x\in \Z^2}\mu(\mathds{1}_{\mathcal{A}_x}\var_x(f))
        \le \frac{4}{p} \sum_{h\in G}\sum_{x\in \Z^2}
        \mu\left[\mathds{1}_{\mathcal{A}_x}pq_h {(\nabla_x^{(h)}f)}^2\right].
\end{equation}
Since $p q_h {(\nabla_x^{(h)}f)}^2 =
\var_x(f\mathds{1}_{\{\star,h\}})=:\var_x(f\tc \{\star, h\})$
we can treat the transition for each vacancy type separately. The main
difficulty in finding good lower bounds on the spectral gap is then to identify
events $\mathcal{A}_x$ that satisfy the exterior condition, have a low failing
probability and such that for each $h\in G$ we have
\begin{equation}\label{eqn:particle_poincare}
        \mu\left[ \mathds{1}_{\mathcal{A}_x} \var_x(f\mid \{\star, h\})\right]
        \le
        2^{\theta_{q_{\min}}^2(1+\varepsilon)/4}\mu
        \left[\mathcal{D}_{\Lambda_h}(f)\right],
\end{equation}
for some $\Lambda_h$ such that the overlap of the various $\Lambda_h$ for the
different $x$ (and thus the overcounting term) can be absorbed into the $\varepsilon$ in
$2^{\theta_{q_{\min}}^2(1+\varepsilon)/4}$ for $q_{\min}$ small enough. We do
this by defining events $\mathcal{A}_x^{(h)}$ for each $h\in G$ and setting
$\mathcal{A}_x= \cap_{h\in G} \mathcal{A}_x^{(h)}$. Each $\mathcal{A}_x^{(h)}$
is defined such that it allows the rewriting of the local variance with
indicator $\mathds{1}_{\mathcal{A}^{(h)}_x}$ to a Dirichlet form by using a
mixture of auxiliary models that behave like the standard one- or
two-dimensional East model and the path method. Let us start by outlining the
construction used in the proof of part (3.i) and (3.ii).
\subsubsection{Geometric construction}\label{sub:renorm_geometric}
Let us start with some deterministic constructions for $h\in G$.
\begin{definition}[$h$-paths]
        For $h\in H_2$ we say that $\Gamma=(x_1,\ldots,x_n)\subset \Z^2$ is an
        $h$-path if $x_i-x_{i+1}\in \mathcal{P}(h)$ for $i\in [n-1]$, i.e.\
        starting from $x_n$ we can reach $x_1$ staying on $\Gamma$ and only
        using steps in $\mathcal{P}(h)$.
\end{definition}
\begin{remark}
        Note that we want $x_i-x_{i+1}$ to be a propagation direction of
        $\mathcal{P}(h)$ instead of the more intuitive direction from $x_i$ to
        $x_{i+1}$ (i.e.\ $x_{i+1}-x_i$). Defining it this way we can find an
        $h$-path starting from some vertex $x\in \Z^d$ and ending in a vertex
        containing an $h$-vacancy which can then travel on the $h$-path back to
        $x$.
\end{remark}
We build the $h$-grid first for $B$-vacancies and then explain how to
generalise to $h\in \{A,C\}$. We do the construction incrementally by
starting with a base cell for $B$-vacancies.
\begin{definition}[$B$-Base cell $Q$]\label{def:base_cell}
        Let $\ell\in 8\N$. Define $D^{(1)}\subset \Z^2$ as the $B$-path starting at
        $\mathbf{e}_1+3\mathbf{e}_2$ that first does an $\mathbf{e}_1$-step,
        then zigzags north and east for $2$ steps respectively until $\ell$
        steps east have been made with the last step being a single one. Then
        define $D^{(2)}\subset \Z^2$ as the path starting again at
        $\mathbf{e}_1+3\mathbf{e}_2$ which starts with $4$ steps north, goes
        one step east and then zigzags $8$ steps north and one step east until
        $\ell$ steps north have been made with the last step $4$ long
        instead of $8$. Then, define $D^{(3)} =
        D^{(1)}+\ell/8\mathbf{e}_1+\ell\mathbf{e}_2$ and
        $D^{(4)}=D^{(2)}+\ell\mathbf{e}_1+\ell\mathbf{e}_2$, i.e.\ the paths
        $D^{(1)}$ resp.\ $D^{(2)}$ shifted to start at the end point of
        $D^{(2)}$ resp. $D^{(1)}$. We then define the \emph{$B$-base cell $Q$
        with side length $\ell$} as the set of vertices enclosed by and
        including the boundaries $D^{(i)}$ for $i\in [4]$. We refer to
        $D^{(i)}$ as the bottom, left, top and right boundary of $Q$ for
        $i=1,2,3,4$ respectively (see left side of \cref{fig:base_cell}).
\end{definition}
\begin{figure}[ht]
        \centering
        \begin{subfigure}[b]{0.475\textwidth}
        \centering
        \begin{tikzpicture}[scale=0.24]
                        \draw[step=1.0, opacity=0.15](-3.5,-2.5) grid (20.5,33.5);

                        \draw(-1,-2) node[anchor=north east] {\footnotesize $0$};
                        \draw(-1,-2) node[cross, black]{};

                        \draw[] (0,1)
                         -- ++(1,0) -- ++(0,2)
                         -- ++(2,0) -- ++(0,2)
                         -- ++(2,0) -- ++(0,2)
                         -- ++(2,0) -- ++(0,2)
                         -- ++(2,0) -- ++(0,2)
                         -- ++(2,0) -- ++(0,2)
                         -- ++(2,0) -- ++(0,2)
                         -- ++(2,0) -- ++(0,2)
                         -- ++(1,0)
                         -- ++(0,4) -- ++(1,0)
                         -- ++(0,8) -- ++(1,0)
                         -- ++(0,4)
                         -- ++(-1,0) -- ++(0,-2)
                         -- ++(-2,0) -- ++(0,-2)
                         -- ++(-2,0) -- ++(0,-2)
                         -- ++(-2,0) -- ++(0,-2)
                         -- ++(-2,0) -- ++(0,-2)
                         -- ++(-2,0) -- ++(0,-2)
                         -- ++(-2,0) -- ++(0,-2)
                         -- ++(-2,0) -- ++(0,-2)
                         -- ++(-1,0)
                         -- ++(0,-4) -- ++(-1,0)
                         -- ++(0,-8) -- ++(-1,0)
                         -- cycle;

                        \draw(9,17) node[opacity=0.5] {\footnotesize $Q$};
                        \draw(1,10) node[opacity=0.5, anchor=east] {\footnotesize $D^{(2)}$};
                        \draw(8.5,9) node[opacity=0.5, anchor=north] {\footnotesize $D^{(1)}$};
                        \draw(17,25) node[opacity=0.5, anchor=west] {\footnotesize $D^{(4)}$};
                        \draw(9.8,25) node[opacity=0.5, anchor=south] {\footnotesize $D^{(3)}$};


                        \draw[decorate,decoration={brace,amplitude=8pt},xshift=0pt,yshift=-3pt]
                                (16,1) -- (0,1) node [black,midway,yshift=-.5cm] 
                                {\footnotesize $\ell$};
        \end{tikzpicture}
        \end{subfigure}
        \hfill
        \begin{subfigure}[b]{0.475\textwidth}
        \centering
        \begin{tikzpicture}[scale=0.24]
                        \draw[step=1.0, opacity=0.15](-3.5,-2.5) grid (20.5,33.5);

                        \draw(-1,-2) node[anchor=north east] {\footnotesize $0$};
                        \draw(-1,-2) node[cross, black]{};

                        \draw[] (0,1)
                         -- ++(1,0) -- ++(0,2)
                         -- ++(2,0) -- ++(0,2)
                         -- ++(2,0) -- ++(0,2)
                         -- ++(2,0) -- ++(0,2)
                         -- ++(2,0) -- ++(0,2)
                         -- ++(2,0) -- ++(0,2)
                         -- ++(2,0) -- ++(0,2)
                         -- ++(2,0) -- ++(0,2)
                         -- ++(1,0)
                         -- ++(0,4) -- ++(1,0)
                         -- ++(0,8) -- ++(1,0)
                         -- ++(0,4)
                         -- ++(-1,0) -- ++(0,-2)
                         -- ++(-2,0) -- ++(0,-2)
                         -- ++(-2,0) -- ++(0,-2)
                         -- ++(-2,0) -- ++(0,-2)
                         -- ++(-2,0) -- ++(0,-2)
                         -- ++(-2,0) -- ++(0,-2)
                         -- ++(-2,0) -- ++(0,-2)
                         -- ++(-2,0) -- ++(0,-2)
                         -- ++(-1,0)
                         -- ++(0,-4) -- ++(-1,0)
                         -- ++(0,-8) -- ++(-1,0)
                         -- cycle;

                        \draw[red, line width=0.5mm] (3,5) -- ++(0,5) -- ++(2,0) -- ++(0,2) -- ++(1,0) --
                                ++(0,4) -- ++(2,0) --++(0,1) --++(1,0) -- ++(0,6);
                        \draw[blue, line width=0.5mm] (1,6) -- ++(1,0) -- ++(0,1) -- ++(1,0) -- ++(0,4)
                                -- ++(3,0) -- ++(0,5) -- ++(3,0) --++(0,1) --++(1,0)
                                -- ++(0,4) -- ++(6,0);
        \end{tikzpicture}
        \end{subfigure}
        \caption{\label{fig:base_cell} Left and right: $B$-Base cell
                $Q$ with side length $\ell=16$ (see \cref{def:base_cell}). Left: Notation as
                introduced in \cref{def:base_cell}. Right: Base cell
                $Q$ with cross as in \cref{def:renorm_crossing} with
                horizontal interior crossing in blue and vertical
                interior crossing in red. The colours only
                serve to better distinguish the horizontal from
                the vertical path.}
\end{figure}
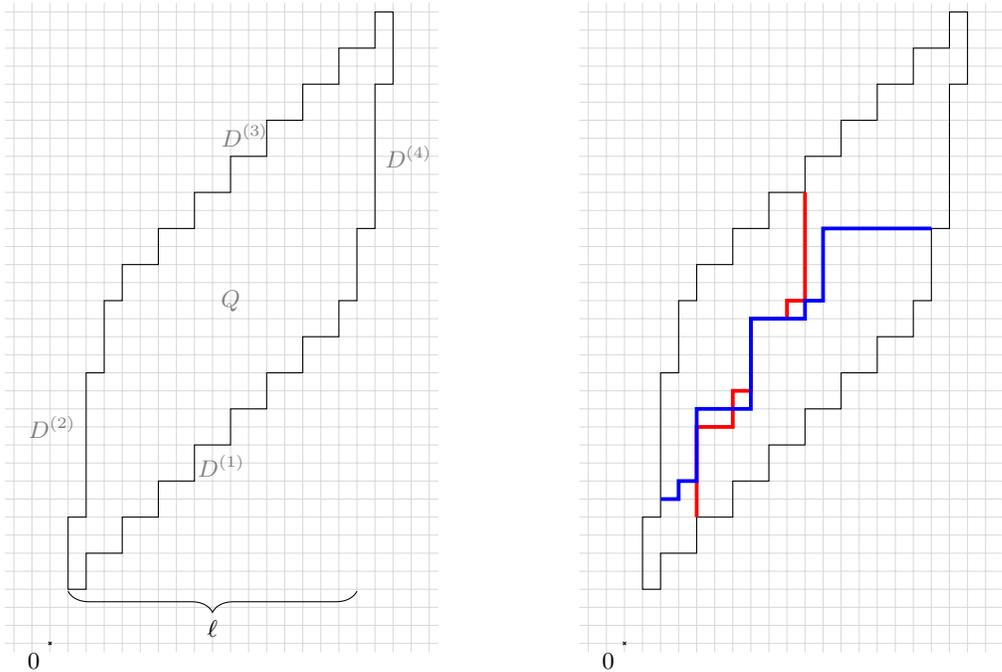
For the rest of this section fix a side length $\ell$. In this base cell we
define the notion of interior crossing paths in the horizontal and vertical
direction.
\begin{definition}[Interior $B$-crossings and cross]\label{def:renorm_crossing}
        Let $Q$ be the $B$-base cell.
        We say that a $B$-path $(x^{(1)},\ldots, x^{(n)})\subset Q$ is a
        \emph{vertical interior $B$-crossing for $Q$} if $x^{(1)}\in D^{(1)}$,
        $x^{(n)}\in
        D^{(3)}$ and $x^{(i)}\not\in\bigcup_{i\in [4]} D^{(i)}$ for $i\in
        [2,n-1]$. Similarly, we say that it is a \emph{horizontal interior
        $B$-crossing} if $x^{(1)}\in
        D^{(2)}$, $x^{(n)}\in D^{(4)}$ and $x^{(i)}\not\in \bigcup_{i\in [4]}
        D^{(i)}$ for $i\in [2,n-1]$ (see right side of \cref{fig:base_cell}).
        We call a pair $\mathcal{C}_0=(\mathcal{C}_0^{(v)}, \mathcal{C}_0^{(h)})$ of
        a vertical interior crossing and horizontal interior crossing of
        $Q$ a \emph{cross in $Q$}.
\end{definition}
We translate the cell $Q$ to construct larger square grids of cells.
\begin{figure}[ht]
        \begin{subfigure}[b]{0.475\textwidth}
        \centering
        \begin{tikzpicture}[scale=0.16]
                        \draw[step=1.0, opacity=0.1](-2.5,-3.5) grid (28.5,51.5);

                        \draw(-1,-1) node[anchor=north east] {\footnotesize $0$};
                        \draw(-1,-1) node[cross, black]{};

                        \foreach \i in {0, ..., 2}{
                                \foreach \j in {0, ..., 2}{
                                        \draw[]
                                                (8*\i+\j,8*\i+8*\j+2)
                                                 -- ++(1,0) -- ++(0,2)
                                                 -- ++(2,0) -- ++(0,2)
                                                 -- ++(2,0) -- ++(0,2)
                                                 -- ++(2,0) -- ++(0,2)
                                                 -- ++(1,0)
                                                 -- ++(0,4) -- ++(1,0)
                                                 -- ++(0,4)
                                                 -- ++(-1,0) -- ++(0,-2)
                                                 -- ++(-2,0) -- ++(0,-2)
                                                 -- ++(-2,0) -- ++(0,-2)
                                                 -- ++(-2,0) -- ++(0,-2)
                                                 -- ++(-1,0)
                                                 -- ++(0,-4) -- ++(-1,0)
                                                 -- cycle;
                                         \draw(8*\i+\j+4.5,8*\i+8*\j+9.5) node[opacity=0.5]
                                                {\tiny $Q_{\i,\j}$};
                                }
                        }

                        \foreach \i in {1}{
                                \foreach \j in {0, ..., 2}{
                                        \draw[fill, opacity=0.2] 
                                                (8*\i+\j,8*\i+8*\j+2)
                                                 -- ++(1,0) -- ++(0,2)
                                                 -- ++(2,0) -- ++(0,2)
                                                 -- ++(2,0) -- ++(0,2)
                                                 -- ++(2,0) -- ++(0,2)
                                                 -- ++(1,0)
                                                 -- ++(0,4) -- ++(1,0)
                                                 -- ++(0,4)
                                                 -- ++(-1,0) -- ++(0,-2)
                                                 -- ++(-2,0) -- ++(0,-2)
                                                 -- ++(-2,0) -- ++(0,-2)
                                                 -- ++(-2,0) -- ++(0,-2)
                                                 -- ++(-1,0)
                                                 -- ++(0,-4) -- ++(-1,0)
                                                 -- cycle;
                                }
                        }

                        \foreach \i in {0, ..., 2}{
                                \foreach \j in {2}{
                                        \draw[fill, opacity=0.2]
                                                (8*\i+\j,8*\i+8*\j+2)
                                                 -- ++(1,0) -- ++(0,2)
                                                 -- ++(2,0) -- ++(0,2)
                                                 -- ++(2,0) -- ++(0,2)
                                                 -- ++(2,0) -- ++(0,2)
                                                 -- ++(1,0)
                                                 -- ++(0,4) -- ++(1,0)
                                                 -- ++(0,4)
                                                 -- ++(-1,0) -- ++(0,-2)
                                                 -- ++(-2,0) -- ++(0,-2)
                                                 -- ++(-2,0) -- ++(0,-2)
                                                 -- ++(-2,0) -- ++(0,-2)
                                                 -- ++(-1,0)
                                                 -- ++(0,-4) -- ++(-1,0)
                                                 -- cycle;
                                }
                        }
        \end{tikzpicture}
        \end{subfigure}
        \hfill
        \begin{subfigure}[b]{0.475\textwidth}
        \centering
        \begin{tikzpicture}[scale=0.16]
                        \draw[step=1.0, opacity=0.1](-2.5,-3.5) grid (28.5,51.5);

                        \draw(-1,-1) node[anchor=north east] {\footnotesize $0$};
                        \draw(-1,-1) node[cross, black]{};

                        \foreach \i in {0, ..., 2}{
                                \foreach \j in {0, ..., 2}{
                                        \draw[]
                                                (8*\i+\j,8*\i+8*\j+2)
                                                 -- ++(1,0) -- ++(0,2)
                                                 -- ++(2,0) -- ++(0,2)
                                                 -- ++(2,0) -- ++(0,2)
                                                 -- ++(2,0) -- ++(0,2)
                                                 -- ++(1,0)
                                                 -- ++(0,4) -- ++(1,0)
                                                 -- ++(0,4)
                                                 -- ++(-1,0) -- ++(0,-2)
                                                 -- ++(-2,0) -- ++(0,-2)
                                                 -- ++(-2,0) -- ++(0,-2)
                                                 -- ++(-2,0) -- ++(0,-2)
                                                 -- ++(-1,0)
                                                 -- ++(0,-4) -- ++(-1,0)
                                                 -- cycle;
                                }
                        }

                        \draw[blue, line width=0.4mm] (0, 5) -- ++(2, 0) -- ++(0,4) -- ++(3,0)
                                -- ++(0,3) -- ++(2,0) -- ++(0,1) -- ++(3, 0) -- ++(0,2) -- ++(1,0)
                                -- ++(0,4) -- ++(4, 0) -- ++(0,4) -- ++(3,0)
                                -- ++(0,1) -- ++(1, 0) -- ++(0,3) -- ++(5,0);

                        \draw[blue, line width=0.4mm] (2, 15) -- ++(2,0)
                                -- ++(0,1) -- ++(1,0) -- ++(0,1) -- ++(1,0)
                                -- ++(0,3) -- ++(1,0) -- ++(0,1) -- ++(4,0)
                                -- ++(0,3) -- ++(2,0) -- ++(0,3) -- ++(2,0)
                                -- ++(0,2) -- ++(4,0) -- ++(0,3) -- ++(2,0)
                                -- ++(0,2) -- ++(2,0) -- ++(0,1) -- ++(2,0);

                        \draw[blue, line width=0.4mm] (2, 21) -- ++(2,0)
                                -- ++(0,3) -- ++(1,0) -- ++(0,1) -- ++(1,0)
                                -- ++(0,1) -- ++(1,0) -- ++(0,3) -- ++(5,0)
                                -- ++(0,2) -- ++(2,0) -- ++(0,2) -- ++(1,0)
                                -- ++(0,4) -- ++(5,0) -- ++(0,3) -- ++(2,0)
                                -- ++(0,3) -- ++(4,0);

                        \draw[red, line width=0.4mm] (4,6) -- ++(0,5) -- ++(1,0)
                                -- ++(0,8) -- ++(1,0) -- ++(0,6) -- ++(1,0) -- ++(0,1)
                                -- ++(1,0) -- ++(0,4);

                        \draw[red, line width=0.4mm] (10,12) -- ++(0,4) -- ++(2,0) -- ++(0,2)
                                -- ++(1,0) -- ++(0,9) -- ++(3,0) -- ++(0,11);
                        
                        \draw[red, line width=0.4mm] (18,20) -- ++(0,4) -- ++(1,0)
                                -- ++(0,8) -- ++(1,0) -- ++(0,10);

                        \filldraw[](5,12) circle[radius=10pt];
                        \filldraw[](6,20) circle[radius=10pt];
                        \filldraw[](8,29) circle[radius=10pt];
                        \filldraw[](13,19) circle[radius=10pt];
                        \filldraw[](16,29) circle[radius=10pt];
                        \filldraw[](19,27) circle[radius=10pt];
                        \filldraw[](20,32) circle[radius=10pt];
                        \filldraw[](16,37) circle[radius=10pt];
                        \filldraw[](20,40) circle[radius=10pt];
        \end{tikzpicture}
        \end{subfigure}
        \caption{\label{fig:asym_grid} Left: $Q_{i,j}$ for $i\in [0, 2]$ and
                $j\in [0,2]$, side length $\ell=8$. The vertical strip $Q^{(v)}_1$ and
                the horizontal strip $Q^{(h)}_2$ are shaded in gray. Right: A grid
                $\mathcal{C}$ with $N=2$. The hard vertical interior crossings
                are red and hard horizontal interior crossings are blue and the
                intersections points $X(\mathcal{C})$ are black.}
\end{figure}
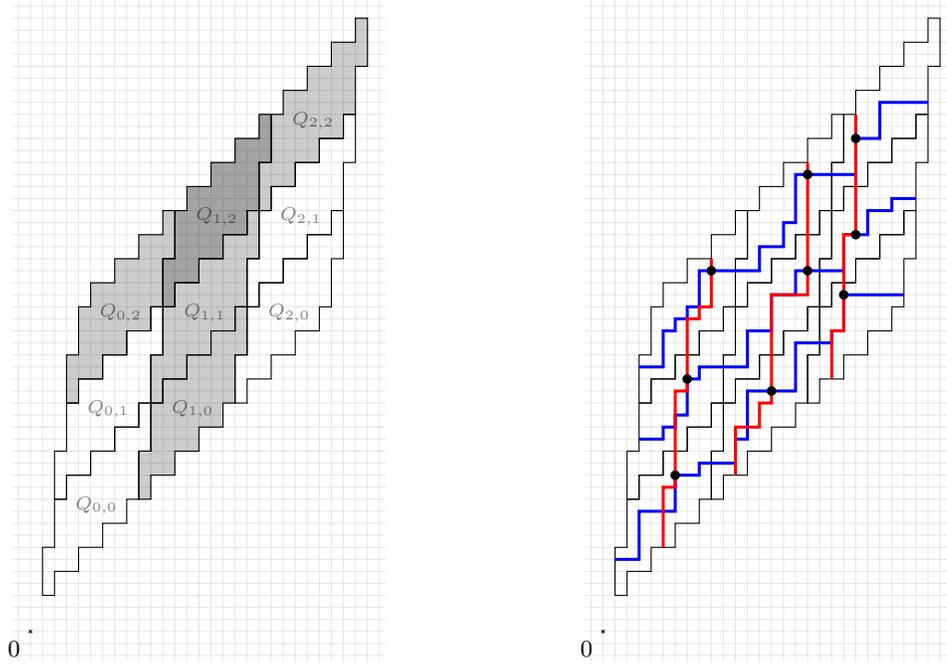
\begin{definition}[$Q_{i,j}$]
        Let $\mathbf{b}_1 =
        \ell(\mathbf{e}_1+\mathbf{e}_2)$ and $\mathbf{b}_2 =
        (\ell/8)\mathbf{e}_1 + \ell \mathbf{e}_2$. For $i,j\in \Z$ we then let
        $Q_{i,j}=Q_0+i \mathbf{b}_1+ j \mathbf{b}_2$. Given a \emph{square side
        length $N\in \N$} we define the \emph{rectangle of grids
        $\mathcal{Q}^{(B)}$} as
        \begin{equation}
                \mathcal{Q}^{(B)} =
                \bigcup_{(i,j)\in{[0,N]}^2}
                Q_{i,j}\;.
        \end{equation}
\end{definition}
\begin{remark}
        Notice that $Q_{0,0}=Q$ and that neighbouring cells share a boundary.
\end{remark}
In what follows consider the square side length $N\in
\N$ fixed. On sets of neighbouring cells we introduce a notion of hard
interior $B$-crossing, as opposed to the local one which only dealt with paths in
one cell.
\begin{definition}[$B$-strips and hard interior $B$-crossing]\label{def:hard_int_cros}
        For $i\in [0, N]$ we call the set of cells
        \begin{equation}
                Q^{(v)}_i=\bigcup_{j\in [0, N]} Q_{i,j}
        \end{equation}
        the \emph{$i$-th vertical $B$-strip} and for $j\in [0,N]$ we
        define the \emph{$j$-th horizontal $B$-strip} as
        \begin{equation}
                Q^{(h)}_j=\bigcup_{i\in [0, N]} Q_{i,j}\;.
        \end{equation}
        A $B$-path $\Gamma\subset Q_i$ is a \emph{hard vertical interior
        $B$-crossing of $Q_i$} if $\Gamma\cap Q_{i,j}$ is a vertical
        interior $B$-crossing of $Q_{i,j}$ for any $j\in [0,N]$. Analogously
        for \emph{hard horizontal interior $B$-crossings} (see
        \cref{fig:asym_grid}).
\end{definition}
The set of hard interior crossings induce a grid $\mathcal{C}$.
\begin{definition}[$B$-grids]\label{def:grid}
        For $i\in [0,N]$ let $\mathcal{C}_i^{(v)}$ be a hard vertical interior
        crossing for the $i$-th vertical $B$-strip and for $j\in [0,N]$ let
        $\mathcal{C}_j^{(h)}$be a hard horizontal interior crossing of the
        $j$-th horizontal strip. We call $\mathcal{C}=(\mathcal{C}^{(v)},
        \mathcal{C}^{(h)})$ a \emph{$B$-grid of $\mathcal{Q}^{(B)}$} where
        $\mathcal{C}^{(v/h)} ={\{\mathcal{C}_i^{(v/h)}\}}_{i\in [0,N]}$.
        Given a grid $\mathcal{C}$ of $\mathcal{Q}^{(B)}$ we call
        $\mathcal{C}_{i,j}=(\mathcal{C}_{i}^{(v)}, \mathcal{C}_{j}^{(h)})$ the
        cross induced in $Q_{i,j}$.
\end{definition}
The intersection points of the induced crosses in each $Q_{i,j}$ form a set
that is isomorphic to an equilateral box in $\Z^2$.
\begin{definition}[Intersection points associated to grid]%
        \label{def:intersect_points}
        Given a $B$-grid $\mathcal{C}$ of $\mathcal{Q}^{(B)}$ we denote by
        $x_{i,j}$ the highest point in $\mathcal{C}_i^{(v)}\cap
        \mathcal{C}_j^{(h)}$ in the $\prec^{(B)}$-partial order\footnote{The
                $\prec^{(B)}$-partial order corresponds to the usual order
                where $x\prec^{(B)}y$ if $x_i\le y_i$ for all $i\in [d]$, we
                write $\prec^{(B)}$ to make it easier to generalise to $A$- and
                $C$-grids.} and call it an
        intersection point of $\mathcal{C}$. We write $X(\mathcal{C})$ for the set of
        intersection points. We call $x_{i,j}$ and $x_{i',j'}$ \emph{neighbours
        in $X(\mathcal{C})$} if $(i,j)$ and $(i',j')$ are neighbours in
        ${[0,N]}^2$. Analogously we call $x_{i',j'}$ an \emph{oriented
        neighbour of $x_{i,j}$ in $X(\mathcal{C})$} if $x_{i',j'}$ and
        $x_{i,j}$ are neighbours in $X(\mathcal{C})$ such that
        $(i',j')\prec^{(B)} (i,j)$. We call $x_{i+1,j}$ (if it exists)
        the \emph{east neighbour of $x_{i,j}$ in $X(\mathcal{C})$} and
        $x_{i,j+1}$ (if it exists) the \emph{north neighbour of
        $x_{i,j}$ in $X(\mathcal{C})$} and analogously for the south
        and west neighbours.
\end{definition}
\begin{remark}
        The $\prec^{(B)}$-ordering is only partial but since we look at intersection
        points of $B$-paths there is always a unique highest point on
        $\mathcal{C}_i^{(v)}\cap \mathcal{C}_j^{(h)}$, and since $\mathcal{C}$
        induces a cross in each $Q_{i,j}$, $x_{i,j}$ is well defined for any
        $i,j\in [0,N]$.
\end{remark}
The $A$-base cell is defined analogously by exchanging the role of
$\mathbf{e}_1$ with $-\mathbf{e}_2$ and $\mathbf{e}_2$ with $-\mathbf{e}_1$ and
for the $C$-base cell exchange $\mathbf{e}_1$ with $-\mathbf{e}_1$. Do the
analogous exchanges in the following definitions for $h$-crossings and
$h$-grids. When changing the base vectors like this the horizontal $A$-crossing
would cross the base cell vertically so change the names appropriately.

Further, this construction can be translated to be based at any $x\in \Z^2$ by
replacing the origin in the definitions with $x$. We will denote this as an
explicit argument so $Q_{i,j}(x):=Q_{i,j}+x$. Since by translation
invariance we can apply the results for the origin to any $x\in \Z^2$ this
notation is rarely used.

Consider the set $V_0$ given by the points $x\in \Z^2$ such that
$-x_1+x_2\le 0$ (i.e.\ the set that is `below' the main diagonal going through
the origin) and for $n\in \Z$ let $V_n=V_0+(-n,n)$, then ${\{V_n\}}_{n\in
\Z}$ is an increasing and exhausting set of $\Z^2$. The following Lemma is the
principal reason to construct the $h$-grid as we did.
\begin{lemma}\label{lemma:grid_exterior}
        Let $\mathcal{A}_x$ be an event with support in $\cup_{h\in [A,B,C]}
        \mathcal{Q}^{(h)}_x$, then the family ${\{\mathcal{A}\}}_{x\in \Z^2}$
        satisfies the exterior condition w.r.t.\ ${\{V_n\}}_{n\in \Z}$.
\end{lemma}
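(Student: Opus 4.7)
\textit{Proof plan.}
By translation invariance of both the grid construction and the system ${\{V_n\}}_{n\in \Z}$, it suffices to verify that $\supp{\mathcal{A}_0}\subset \mathrm{Ext}_0$, i.e.\ that $\bigcup_{h\in \{A,B,C\}} \mathcal{Q}^{(h)}$ lies strictly above the main diagonal, in the sense $\{y:-y_1+y_2>0\}$; the lemma will then follow for general $x$ because $V_n$ is a family of parallel half-planes along that same diagonal. The plan is to check each $\mathcal{Q}^{(h)}$ separately, exploiting that the three cases are obtained from one another by lattice isometries of $\Z^2$ that either preserve or strengthen the quantity $-y_1+y_2$.

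For the $B$-case, I examine the base cell $Q$ directly. The boundary $D^{(1)}$ begins at $\mathbf{e}_1+3\mathbf{e}_2$, where $-y_1+y_2 = 2$, and proceeds by one $\mathbf{e}_1$-step followed by alternating pairs of $\mathbf{e}_2$- and $\mathbf{e}_1$-steps. Each $\mathbf{e}_1$-step decreases $-y_1+y_2$ by one while each $\mathbf{e}_2$-step increases it by one, so tracing the path shows that $-y_1+y_2\ge 1$ on $D^{(1)}$, with the minimum attained at the points reached immediately after the $\mathbf{e}_1$-blocks. Since $D^{(1)}$ is the ``bottom'' boundary (the one closest to the diagonal), the other three boundaries and the interior of $Q$ also satisfy this bound. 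The grid translations $\mathbf{b}_1=(\ell,\ell)$ and $\mathbf{b}_2=(\ell/8,\ell)$ satisfy $-(\mathbf{b}_1)_1+(\mathbf{b}_1)_2 = 0$ and $-(\mathbf{b}_2)_1+(\mathbf{b}_2)_2 = 7\ell/8\ge 0$, so any non-negative integer combination preserves the inequality and $\mathcal{Q}^{(B)}\subset \{y:-y_1+y_2\ge 1\}$.

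For the $A$- and $C$-cases I use symmetry. The $A$-construction is obtained from the $B$-construction by the exchange $\mathbf{e}_1\leftrightarrow -\mathbf{e}_2$, $\mathbf{e}_2\leftrightarrow -\mathbf{e}_1$, which acts on $\Z^2$ as the involution $(a,b)\mapsto(-b,-a)$; a direct computation shows that this map preserves $-y_1+y_2$, so the $B$-bound transfers verbatim to $\mathcal{Q}^{(A)}\subset \{y:-y_1+y_2\ge 1\}$, after checking that the transformed grid vectors $(-\ell,-\ell)$ and $(-\ell,-\ell/8)$ also contribute non-negatively to $-y_1+y_2$. The $C$-construction uses the exchange $\mathbf{e}_1\leftrightarrow -\mathbf{e}_1$, which acts as $(a,b)\mapsto(-a,b)$ and sends $-a+b$ to $a+b$; since the $B$-base cell is contained in $\{a\ge 1,\,b\ge 3\}$, this gives $\mathcal{Q}^{(C)}\subset\{y:-y_1+y_2\ge 4\}$, with the image translations $(-\ell,\ell)$ and $(-\ell/8,\ell)$ only increasing $-y_1+y_2$.

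Combining the three inclusions yields $\bigcup_h \mathcal{Q}^{(h)}\subset \{y:-y_1+y_2\ge 1\}$, and translation invariance upgrades this to $\bigcup_h \mathcal{Q}^{(h)}_x\subset \{y:-y_1+y_2\ge -x_1+x_2+1\}$ for every $x\in\Z^2$. Since each $V_n=\{y:-y_1+y_2\le 2n\}$ is a half-plane parallel to the main diagonal, the smallest $n$ with $x\in V_n$ puts $x$ on the boundary of $V_n$ and the exterior $\mathrm{Ext}_x=\Z^2\setminus V_n$ is exactly the half-plane strictly above it; the inclusion above places $\supp{\mathcal{A}_x}$ in this exterior. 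The only place where genuine work is needed is the zigzag tracing of $D^{(1)}$ in the $B$-case; everything else reduces to a pair of lattice symmetries and the translation invariance of the setup.
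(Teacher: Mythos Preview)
Your argument is exactly the verification the paper omits (its proof is the single line ``Follows from the construction of the grids''), and the geometric content is correct: tracing $D^{(1)}$ gives the sharp bound $-y_1+y_2\ge 1$ on $\mathcal{Q}^{(B)}$, and the two isometries $(a,b)\mapsto(-b,-a)$ and $(a,b)\mapsto(-a,b)$ transport this bound to $\mathcal{Q}^{(A)}$ and $\mathcal{Q}^{(C)}$ as you describe.

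There is, however, a small slip in your last paragraph. With the paper's choice $V_n=V_0+(-n,n)$ one has $V_n=\{y:-y_1+y_2\le 2n\}$, so consecutive sets differ by~$2$ in the linear form. When $-x_1+x_2$ is odd, the smallest $n$ with $x\in V_n$ does \emph{not} put $x$ on the boundary of $V_n$, and your translated bound falls one unit short of $\mathrm{Ext}_x$: for instance $x=(1,0)$ has $n(x)=0$ and $\mathrm{Ext}_x=\{y:-y_1+y_2\ge 1\}$, yet $(2,3)\in\mathcal{Q}^{(B)}$ gives $(3,3)\in\mathcal{Q}^{(B)}_{(1,0)}$ with $-y_1+y_2=0$. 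This is really a defect of the paper's particular $\{V_n\}$ rather than of your geometry; replacing it by the unit-step family $V_n=\{y:-y_1+y_2\le n\}$ (still increasing and exhausting, hence equally admissible in \cref{thm:exterior_thm}) makes your argument go through verbatim.
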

\begin{proof}
        Follows from the construction of the grids.
\end{proof}

\subsubsection{Crossing probabilities and grid relaxation}%
\label{sub:renorm_probability}
Let $\mathbf{q}$ be a parameter set for the $ABC$-model and set $\ell=\lceil
\theta_B^{3/2}\rceil$, $N=2^{\lceil \theta_B/2+\log_2(\theta_B)\rceil}$ as the
parameters for any base cells and grids. The goal for this section is to define
an event so that we can use the exterior condition theorem,
\cref{thm:exterior_thm}.

We say that a set $\Lambda$ is \emph{$B$-traversable} if it does not contain
$A$ or $C$ vacancies, and we define correspondingly $A$- and
$C$-traversability. The event for which we want to apply the exterior condition
theorem will require the existence of an appropriately traversable grid
$\mathcal{C}$ for each vacancy type so let us upper bound the probability of
not finding $B$-traversable $B$-crossings as a first step.
\begin{lemma}\label{lemma:good_strip}
        Let $\mathcal{A}$ be the event of finding a $B$-traversable hard
        interior $B$-crossing in a strip $Q$. If $\mathbf{q}$ is such that $q_A+q_C\rightarrow
        0$ as $q_B\rightarrow 0$ then we find a constant $C>0$ so that
        \begin{equation}
                \mu(\mathcal{A}^c) \le 2^{-\theta_B^{3/2}(1+o(1)}
        \end{equation}
        for $q_B$ small enough.
\end{lemma}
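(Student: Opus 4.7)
The plan is a Peierls-type argument based on oriented site percolation. A $B$-path has increments in $\{\mathbf e_1,\mathbf e_2\}$ (since $\mathcal P(B)=\{-\mathbf e_1,-\mathbf e_2\}$), so it is an oriented north-east path. A site is ``bad'' for such a path precisely if it carries an $A$- or $C$-vacancy, which happens independently with probability $q_A+q_C\to 0$. Thus finding a $B$-traversable hard interior $B$-crossing of the strip $Q$ is equivalent to finding a north-east oriented path from the bottom boundary to the top boundary, lying in the ``interior corridor'' formed by the union of the interiors of the stacked base cells $Q_{i,0},\ldots,Q_{i,N}$, and avoiding all bad sites. This corridor has width of order $\ell$ and height of order $\ell(N+1)$.

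First I would bound the failure probability inside a single base cell $Q_{i,j}$. By planar duality for oriented site percolation with step set $\{\mathbf e_1,\mathbf e_2\}$, the non-existence of a traversable interior $B$-crossing from $D^{(1)}_{i,j}$ to $D^{(3)}_{i,j}$ is equivalent to the existence of a \emph{dual blocker}: a connected set of $A/C$-vacancies forming a transverse ``cut'' across the cell that intersects every interior oriented path. Any such dual blocker has at least $\ell$ vertices (to span the cell), and the number of dual blockers starting at a given boundary vertex is bounded by $c_0^\ell$ for some combinatorial constant $c_0$ (the number of self-avoiding dual oriented paths of length $\ell$). With $O(\ell)$ possible starting positions, a union bound gives
\begin{equation*}
\mu(Q_{i,j}\text{ has no traversable interior $B$-crossing})\le \ell\,(c_0(q_A+q_C))^\ell.
\end{equation*}
Since $q_A+q_C\to 0$, for $q_B$ small enough $c_0(q_A+q_C)$ is arbitrarily small, so this probability is $\le 2^{-\ell(1+o(1))}$.

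Next I would combine the per-cell crossings into a global hard interior crossing. Since the top boundary $D^{(3)}_{i,j}$ of $Q_{i,j}$ coincides with the bottom boundary $D^{(1)}_{i,j+1}$ of $Q_{i,j+1}$, the zigzag shape of these shared boundaries together with the flexibility of $B$-paths (which can at each step go either $\mathbf e_1$ or $\mathbf e_2$) allows one to choose the traversable interior crossings in successive cells so that their endpoints on the shared boundary match up, producing a global hard interior crossing. Alternatively, one may apply the dual-blocker argument directly to the whole strip, which yields the same estimate up to constants. A union bound over the $N+1$ cells then gives
\begin{equation*}
\mu(\mathcal A^c)\le (N+1)\cdot 2^{-\ell(1+o(1))}.
\end{equation*}
Recalling $\ell=\lceil\theta_B^{3/2}\rceil$ and $\log_2(N+1)=\theta_B/2+O(\log_2\theta_B)=o(\theta_B^{3/2})=o(\ell)$, this yields $\mu(\mathcal A^c)\le 2^{-\theta_B^{3/2}(1+o(1))}$, as claimed.

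The main obstacle is making the planar-duality characterisation rigorous in the oriented-site setting with the specific zigzag boundaries of the base cells: one must identify exactly what a ``dual blocker'' is for this geometry and bound the combinatorial constant $c_0$ in the count of blockers of length $\ell$. The concatenation of per-cell crossings into a single interior path is a secondary technicality that can either be settled by endpoint-matching using the zigzag structure, or bypassed entirely by running the dual argument directly on the strip.
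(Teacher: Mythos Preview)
Your high-level idea (Peierls/duality) is right and matches the paper, but there is a genuine gap in the execution.

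The per-cell-plus-union-bound route does not upper bound $\mu(\mathcal A^c)$. Having a $B$-traversable interior crossing in each $Q_{i,j}$ does \emph{not} imply a hard interior crossing of the strip: the endpoint of the crossing in $Q_{i,j}$ on $D^{(3)}_{i,j}=D^{(1)}_{i,j+1}$ need not coincide with the starting point of the crossing in $Q_{i,j+1}$, and you have no control over the state of the shared boundary, so you cannot in general slide along it. Thus $\{\text{no hard crossing}\}\not\subseteq\bigcup_j\{\text{no crossing in }Q_{i,j}\}$, and a union bound over cell failures bounds the wrong quantity. Your ``endpoint-matching'' sentence is exactly the missing step, not a technicality.

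The alternative of running the dual argument on the whole strip is what the paper does, but you have not identified the crucial ingredient. For oriented (up-right) crossings the dual blocker is \emph{not} a connected set of $A/C$-vacancies; it is a contour in the face-dual, and only its east and south steps correspond to vacancy sites (north and west steps are free). In a strip of width $\ell$ and height $N\ell$, such a contour can have length $k$ much larger than $\ell$ while implying only $\sim\ell$ vacancies, and the Peierls sum $\sum_k 3^k (q_A+q_C)^{\ell}$ diverges. The zigzag geometry of the left/right boundaries (slope $\approx 8$) is exactly what saves this: it forces roughly one east or south step per eight north steps of the contour, so a contour of length $k$ implies $\Theta(k)$ vacancies, and the sum becomes $\sum_{k\ge \kappa\ell}3^k(q_A+q_C)^{\Theta(k)}\le C\,2^{-\ell}$. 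Together with $|D^{(2,*)}|=O(N\ell)$ starting points this gives $\mu(\mathcal A^c)\le C N\ell\,2^{-\ell}=2^{-\theta_B^{3/2}(1+o(1))}$. This geometric point is the heart of the proof and the reason the base cell was designed with those boundaries; your proposal attributes the zigzag to concatenation rather than to controlling the vacancy count along the dual contour.
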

\begin{proof}
        We follow the arguments from~\cite{martinelli2020diffusive} to apply a
        Peierls-type argument. We will deal with the vertical case first, the
        horizontal one being analogous. Consider a vertical strip $Q^{(v)}_i$ with left
        boundary $D^{(2)}$ and right boundary $D^{(4)}$. Define on it the dual
        graph $Q^*_i$ as the faces of $Q^{(v)}_i$, i.e.\ the graph given by
        \begin{equation}
                Q^*_i=\{x^*\in Q^{(v)}_i+1/2(\pm\mathbf{e}_1\pm\mathbf{e}_2)\colon \|
                \{x\in Q^{(v)}_i\colon \|x^*-x\|_1=1\}\|= 4\}, 
        \end{equation}
        with neighbourhood relations induced by $\Z^2+1/2
        (\mathbf{e}_1+\mathbf{e}_2)$. We define the left boundary $D^{(2,*)}$ as the set of
        $x^*\in Q^*_i$ for which there exists an $x\in D^{(2)}$ such that
        $\|x^*-x\|_1=1$ and analogously for the right boundary $D^{(4,*)}$ with $D^{(4)}$.
        Say that the horizontal directed edge $(x^*,x^*+\mathbf{e}_1)$ in
        $Q^*_i$ is \emph{closed} in a configuration $\omega\in \Omega$ if
        $x^*+1/2(\mathbf{e}_1+\mathbf{e}_2)$ (north-east corner) is not $B$-traversable, i.e.\ has
        an $A$- or $C$-vacancy and \emph{open} otherwise. Similarly for the
        vertical edge $(x^*,x^*+\mathbf{e}_2)$ with vertex
        $x^*+1/2(\mathbf{e}_1-\mathbf{e}_2)$ (south-east corner). For convenience call
        all other directed edges in $Q_i^*$ \emph{closed}. We call a dual path in
        $Q^*_i$ connecting $D^{(2,*)}$ to $D^{(4,*)}$ \emph{closed} iff all
        its edges are closed.

        For $\omega\in \Omega$ consider the set $W$ of vertices in
        $Q^{(v)}_i\setminus (D^{(2)}\cup D^{(4)})$ that are reachable by a
        $B$-traversable $B$-path (recall: up-right path) starting at
        $D^{(1)}\setminus (D^{(2)}\cup D^{(4)})$ and let the \emph{contour} be
        the set of faces $x^*$ that have a vertex inside and a vertex outside
        of $\{W\cup D^{(1)}\setminus(D^{(2)}\cup D^{(4)})\}$ incident to them.
        Not finding a $B$-traversable hard interior $B$-crossing on $Q^{(v)}_i$ then,
        by construction, implies that the contour is a closed dual path in
        $Q_i^*$ connecting $D^{(2,*)}$ to $D^{(4,*)}$ (see \cref{fig:blocking_paths}).

        For a fixed $\omega\in \Omega$ let $\Gamma=\Gamma(\omega)$ be a closed
        non-backtracking dual path connecting the left to the right boundary
        and $n_n$, $n_e$, $n_s$, $n_w$ be the amount of north, east, south and
        west steps in it respectively. $\Gamma$ being closed then implies the
        existence of at least $(n_e+n_s)/2$ $A$- or $C$-vacancies, only half
        since if an east step follows a south step they have the same
        associated vertex. Further note that by construction of $Q^{(v)}_i$ every
        eighth step north an additional step east or south has to be made to
        reach the right boundary while any step west immediately implies
        another step east. So, $\Gamma$ being closed implies the existence
        $\Theta(|\Gamma|)$ $A$- or $C$-vacancies\footnote{Note, we are not
                saying that there are only $\Theta(|\Gamma|)$ vacancies, but
        that the directly implied amount is of this order}. Let $\Pi_{x^*}$ be the set of
        dual paths starting at $x^*\in D^{(2,*)}$ and ending at $D^{(4,*)}$.
        \begin{figure}
                \centering
        \begin{tikzpicture}[scale=0.35]
                        \tikzset{
                          on each segment/.style={
                            decorate,
                            decoration={
                              show path construction,
                              moveto code={},
                              lineto code={
                                \path [#1]
                                (\tikzinputsegmentfirst) -- (\tikzinputsegmentlast);
                              },
                              curveto code={
                                \path [#1] (\tikzinputsegmentfirst)
                                .. controls
                                (\tikzinputsegmentsupporta) and (\tikzinputsegmentsupportb)
                                ..
                                (\tikzinputsegmentlast);
                              },
                              closepath code={
                                \path [#1]
                                (\tikzinputsegmentfirst) -- (\tikzinputsegmentlast);
                              },
                            },
                          },
                          mid arrow/.style={postaction={decorate,decoration={
                                markings,
                                mark=at position .5 with {\arrow[#1]{stealth}}
                              }}},
                        }
                        \draw[step=1, opacity=0.15](-0.5,-1.5) grid (14.5,21.5);

                        \draw[opacity=0.5]
                                (1,0) -- ++(10,0) --
                                ++(0,4) -- ++(1,0) -- ++(0,8) -- ++(1,0) --
                                ++(0,8) -- ++(-10,0) -- ++(0,-5) -- ++(-1,0) --
                                ++(0,-8) -- ++(-1,0) -- ++(0,-7);

                        \fill[opacity=0.3, gray] (2,0) -- ++(0,3) -- ++(1,0) --
                                ++(0,-1) -- ++(2,0) -- ++(0,4) -- ++(5,0) -- ++(0,10)
                                -- ++(2,0) -- ++(0,-3) -- ++(-1,0) -- ++(0,-8) --
                                ++(-1,0) -- ++(0,-5) -- cycle;

                        \foreach \x in {0,...,6}{
                                \fill[black] (1+0.4,\x+0.4) rectangle
                                        (1+0.6,\x+0.6);
                        }
                        \foreach \x in {0,...,8}{
                                \fill[black] (2+0.4,6+\x+0.4) rectangle
                                        (2+0.6,6+\x+0.6);
                        }

                        \foreach \x in {0,...,5}{
                                \fill[black] (3+0.4,14+\x+0.4) rectangle
                                        (3+0.6,14+\x+0.6);
                        }

                        \foreach \x in {0,...,4}{
                                \fill[black] (10+0.4,\x+0.4) rectangle
                                        (10+0.6,\x+0.6);
                        }
                        \foreach \x in {0,...,8}{
                                \fill[black] (11+0.4,4+\x+0.4) rectangle
                                        (11+0.6,4+\x+0.6);
                        }

                        \foreach \x in {0,...,7}{
                                \fill[black] (12+0.4,12+\x+0.4) rectangle
                                        (12+0.6,12+\x+0.6);
                        }

                        \foreach \horizontal in {0,...,9}{
                                \foreach \one in {0,...,6}{
                                        \filldraw[black](\horizontal+1+0.5, \one+0.5) circle[radius=1pt];
                                }
                                \foreach \two in {0,...,7}{
                                        \filldraw[black](\horizontal+2+0.5, \two+7+0.5) circle[radius=1pt];

                                }
                                \foreach \three in {0,...,4}{
                                        \filldraw[black](\horizontal+3+0.5, \three+15+0.5) circle[radius=1pt];
                                }

                        }

                        \foreach \x in {0,1}{
                                \foreach \y in {0,...,2}{
                                        \filldraw[black](11+\x+0.5, \y+8*\x+4+0.5) circle[radius=1pt];
                                }
                        }

                        \path[draw=black, postaction={on each segment={mid arrow=black}}] (1.5,3.5) -- ++(2,0) -- ++(0,-1) -- ++(1,0) -- ++(0,4)
                                -- ++(5,0) -- ++(0,10) -- ++(3,0);
                        \filldraw[Cyan](2,4) circle[radius=2.5pt];
                        \filldraw[Cyan](3,4) circle[radius=2.5pt];
                        \filldraw[Cyan](4,3) circle[radius=2.5pt];
                        \filldraw[Cyan](5,7) circle[radius=2.5pt];
                        \filldraw[Cyan](6,7) circle[radius=2.5pt];
                        \filldraw[Cyan](7,7) circle[radius=2.5pt];
                        \filldraw[Cyan](8,7) circle[radius=2.5pt];
                        \filldraw[Cyan](9,7) circle[radius=2.5pt];
                        \filldraw[Cyan](10,17) circle[radius=2.5pt];
                        \filldraw[Cyan](11,17) circle[radius=2.5pt];
                        \filldraw[Cyan](12,17) circle[radius=2.5pt];

                        \node[anchor=east] at (2,12) {$D^{(2,*)}$};
                        \node[anchor=west] at (12,6) {$D^{(4,*)}$};
        \end{tikzpicture}

                \caption{\label{fig:blocking_paths} Example for a closed dual path
                together with $W$ shaded in grey. The implied $A$- or
                $C$-vacancies are in blue.}
        \end{figure}
        We then have for some constants $\kappa, C$,
        \begin{align}
                \mu(\mathcal{A}^c)
                &\le \sum_{x^*\in D^{(2,*)}}\sum_{\Gamma\in \Pi_{x^*}}
                \mu(\Gamma\ \text{is closed})\\
                &\le \sum_{x^*\in D^{(2,*)}}\sum_{\Gamma\in \Pi_{x^*}}
                {(q_A+q_C)}^{\Theta(|\Gamma|)}\\
                &\le \sum_{x^*\in D^{(2,*)}}\sum_{k=\kappa\ell }^{\infty}
                3^k {(q_A+q_C)}^{\Theta(k)}\\
                &\le C N\ell 2^{-\ell}
        \end{align}
        where we chose $q_B$ small enough and use that $q_A+q_C\rightarrow 0$
        as $q_B\rightarrow 0$. The proof for horizontal strips is analogous and
        the claim follows.
\end{proof}
With this we can calculate the failing probability of finding a $B$-traversable
grid is a simple union bound.
\begin{corollary}\label{cor:grid_failing_1}
        Let $\mathcal{E}^{(B,1)}$ be the event that 
        \begin{itemize}
                \item there is a $B$-traversable $B$-grid,
                \item there is an intersection point $x_{i,j}$ in the above grid with
                        $i,j> N/2$ such that there exists $\mathbf{e}\in
                        \mathcal{B}=\{(0,1),(1,0)\}$ with $\omega_{x_{(i,j)+\mathbf{e}}}=B$,
        \end{itemize}
        Then, for parameter sets such that $(q_A+q_C)\rightarrow 0$ as
        $q_B\rightarrow 0$ we have
        \begin{equation}
                \lim_{q_B\rightarrow
                0}|\mathcal{Q}^{(B)}|\mu(1-\mathds{1}_{\mathcal{E}^{(B,1)}}) =0.
        \end{equation}
\end{corollary}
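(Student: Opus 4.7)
The plan is to split the failure event by a union bound: letting $\mathcal{F}_1$ denote the existence of a $B$-traversable $B$-grid on $\mathcal{Q}^{(B)}$ and $\mathcal{F}_2$ the existence of an intersection point $x_{i,j}$ of such a grid with $i,j>N/2$ and $\omega_{x_{i,j}+\mathbf{e}}=B$ for some $\mathbf{e}\in\mathcal{B}$, I have
\[
\mu\bigl(1-\mathds{1}_{\mathcal{E}^{(B,1)}}\bigr)\le \mu(\mathcal{F}_1^c)+\mu(\mathcal{F}_1\cap \mathcal{F}_2^c).
\]
Since $|\mathcal{Q}^{(B)}|=O(N^2\ell^2)=O(2^{\theta_B}\theta_B^5)$, it suffices to show each term above is $o(2^{-\theta_B}\theta_B^{-5})$ as $q_B\to 0$.

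For the first term, a union bound over the $2(N+1)$ vertical and horizontal strips combined with \cref{lemma:good_strip} gives $\mu(\mathcal{F}_1^c)\le 2(N+1)\cdot 2^{-\theta_B^{3/2}(1+o(1))}=2^{-\theta_B^{3/2}(1+o(1))}$, which easily beats the polynomial-in-$2^{\theta_B}$ prefactor since $\theta_B^{3/2}\gg\theta_B$.

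For the second term I would first expose the canonical (e.g.\ lexicographically smallest) $B$-traversable grid $\mathcal{C}$ on the event $\mathcal{F}_1$, thus fixing the intersection set $X(\mathcal{C})$. There are $\ge (N/2)^2$ intersection points with $i,j>N/2$, each lying in a distinct base cell $Q_{i,j}$, and the two candidate neighbours $x_{i,j}+\mathbf{e}_1$ and $x_{i,j}+\mathbf{e}_2$ are (up to boundary cases) disjoint across different cells. Conditional on the exposed grid, the state at each such neighbour has marginal either $\nu$ (if it lies off the crossings) or $\nu(\cdot\mid\omega\notin\{A,C\})$ (if it lies on them), both giving probability $\ge cq_B$ of being $B$ for some $c=c(\Delta)>0$. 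By the (approximate) independence of the resulting Bernoulli indicators across the $(N/2)^2$ disjoint cells,
\[
\mu(\mathcal{F}_1\cap\mathcal{F}_2^c)\le (1-cq_B)^{(N/2)^2}\le \exp(-c'q_BN^2)\le \exp(-c'\theta_B^2),
\]
where the last inequality uses $q_BN^2\ge\theta_B^2$ from the definition of $N$. This decays super-polynomially in $\theta_B$ and so also beats the polynomial-in-$2^{\theta_B}$ prefactor.

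The main obstacle is the conditional-independence step, because the intersection points $x_{i,j}$ are functions of the whole configuration, so one has to verify that conditioning on $\mathcal{F}_1$ and on the canonical grid does not destroy the product structure at the neighbours $x_{i,j}+\mathbf{e}_k$. I would handle this by exposing the strip-crossings one at a time according to the canonical rule and keeping track of which vertices are revealed: the still-hidden vertices remain i.i.d.\ $\nu$, so the $B$-marks at the neighbours of intersection points that lie off the exposed grid become independent Bernoulli$(q_B)$, reducing the bound to the elementary estimate above.
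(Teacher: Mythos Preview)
Your approach is correct and matches the paper's, which gives no proof beyond the remark that the corollary follows from \cref{lemma:good_strip} ``by a simple union bound''. Your decomposition $\mu((\mathcal{E}^{(B,1)})^c)\le \mu(\mathcal{F}_1^c)+\mu(\mathcal{F}_1\cap\mathcal{F}_2^c)$ and the two bounds are exactly what the paper has in mind.

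Two small points are worth flagging. First, you misparse the notation: $x_{(i,j)+\mathbf{e}}$ is the \emph{grid}-neighbour $x_{i+1,j}$ or $x_{i,j+1}$, not the $\Z^2$-neighbour $x_{i,j}+\mathbf{e}$. So the candidate vertices in $\mathcal{F}_2$ are themselves intersection points and hence lie \emph{on} the crossings; their conditional marginal is always $\nu(\cdot\mid\{\star,B\})$, never the unconditional $\nu$. This does not damage your estimate, since $\nu(B\mid\{\star,B\})=q_B/(p+q_B)\ge q_B$ and there are still $\Theta(N^2)$ of them, one per cell.

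Second, your conditional-independence worry dissolves once you notice that $B$-traversability depends only on the indicator field $\sigma_x:=\mathds{1}_{\omega_x\in\{A,C\}}$; hence the canonical grid $\mathcal{G}$ and its intersection set $X(\mathcal{G})$ are $\sigma$-measurable. Conditioning on $\sigma$, the remaining $\star$/$B$ labels at grid vertices are i.i.d.\ $\mathrm{Ber}(q_B/(p+q_B))$, which gives your bound $(1-q_B/(p+q_B))^{(N/2)^2}\le\exp(-c\theta_B^2)$ directly, with no need for the sequential exposure you sketched.
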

This gives us a $B$ vacancy on an intersection point and the necessary
$B$-traversable paths to bring it into $Q_{0,0}$. The intersection point
$x_{0,0}$ is still random though so we require another set of $B$-traversable
paths to bring the $B$-vacancy to a deterministic point. The following result
gives this with another set of simple estimates.
\begin{lemma}\label{lemma:grid_failing_2}
        Let $\mathcal{E}^{(B,2)}$ be the event that the boundary
        $D_{0,0}^{(1)}$ is $B$-traversable. Then we have for parameter sets
        such that $\ell^2(q_A+q_C)\rightarrow 0$ as $q_B\rightarrow 0$ that
        \begin{equation}
                \lim_{q_B\rightarrow 0}\mathrm{Supp}(\mathcal{E}^{(B,2)})
                \mu(1-\mathds{1}_{\mathcal{E}^{(B,2)}})=0.
        \end{equation}
\end{lemma}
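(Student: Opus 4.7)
The plan is to observe that this is essentially a union bound once we have controlled the size of the boundary $D^{(1)}_{0,0}$.

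First I would unpack the geometry from \cref{def:base_cell}. The bottom boundary $D^{(1)}_{0,0}$ is by construction a $B$-path that advances $\ell$ steps in the $\mathbf{e}_1$ direction while zigzagging with period $2$ in the $\mathbf{e}_2$ direction, so its cardinality (and thus $\mathrm{Supp}(\mathcal{E}^{(B,2)})$) is $\Theta(\ell)$. In particular there is a constant $c>0$ such that $|D^{(1)}_{0,0}|\le c\ell$ for all $\ell$.

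Next I would apply a union bound. The event $\mathcal{E}^{(B,2)}$ says that no vertex of $D^{(1)}_{0,0}$ carries an $A$- or $C$-vacancy. Since $\mu$ is a product measure in which each vertex independently fails to be $B$-traversable with probability $q_A+q_C$, we get
\begin{equation}
\mu(1-\mathds{1}_{\mathcal{E}^{(B,2)}})\;\le\;|D^{(1)}_{0,0}|\,(q_A+q_C)\;\le\;c\ell(q_A+q_C).
\end{equation}
Multiplying by the support size then yields
\begin{equation}
\mathrm{Supp}(\mathcal{E}^{(B,2)})\,\mu(1-\mathds{1}_{\mathcal{E}^{(B,2)}})\;\le\;c^2\ell^2(q_A+q_C),
\end{equation}
which tends to $0$ by the standing hypothesis $\ell^2(q_A+q_C)\to 0$ as $q_B\to 0$.

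There is no real obstacle here; the only point worth flagging is the verification that the length of $D^{(1)}_{0,0}$ is indeed linear in $\ell$ (and not, say, quadratic), which is immediate from the zigzag construction in \cref{def:base_cell}. The result is exactly the counterpart of \cref{cor:grid_failing_1} for a single boundary, and it is phrased in the form required to apply \cref{thm:exterior_thm} via the sufficient condition \cref{eqn:ext_cond_indep_assumptions}.
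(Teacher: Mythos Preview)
Your proof is correct and matches the paper's approach: the paper simply states that this follows from ``another set of simple estimates'' without giving details, and the intended argument is precisely the union bound you wrote, using that $|D^{(1)}_{0,0}|=\Theta(\ell)$ from \cref{def:base_cell}.
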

To show that $\mathcal{E}^{(B)}=\mathcal{E}^{(B,1)}\cap \mathcal{E}^{(B,2)}$
allows us to find an inequality like \cref{eqn:particle_poincare} we need to
introduce another tool.
\begin{lemma}[Extending the variance]\label{lemma:extend_variance}
        Let $\mathcal{A}=\mathcal{A}_1\cap \mathcal{A}_2\cap \mathcal{A}_3$
        be an event on $\Omega$, let $V_i:=\mathrm{Supp}(\mathcal{A}_i)$ for
        $i\in [3]$ and assume that $V_i\cap V_j=\emptyset$ for any pair $i\neq
        j$. Then, for any $f\in L^2(\mu)$ and for the conditional variance
        $\var_x(f\tc \mathcal{A})=\mu_x(f^2\tc \mathcal{A})-{(\mu_x(f\tc
        \mathcal{A}))}^2$ we find
        \begin{equation}
                \mu(\mathds{1}_{\mathcal{A}}\var_{V_1}(f\tc \mathcal{A}))
                \le \mu(\mathds{1}_{\mathcal{A}}\var_{V}(f\tc \mathcal{A}))
        \end{equation}
        for $V=V_1\cup V_2$.
\end{lemma}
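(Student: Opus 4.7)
The plan is to reduce the inequality to the law of total variance for a product of restricted measures. Since $\mu$ is a product measure and the supports $V_1, V_2, V_3$ are pairwise disjoint, for any function $g$ depending only on $\omega_{V_1}$ the indicator $\mathds{1}_{\mathcal{A}_2\cap\mathcal{A}_3}$ is a constant under the integration over $\omega_{V_1}$ and cancels in numerator and denominator of the conditional expectation. Hence $\mu_{V_1}(g\tc\mathcal{A})=\mu_{V_1}(g\tc\mathcal{A}_1)$, and consequently $\var_{V_1}(f\tc\mathcal{A})=\var_{V_1}(f\tc\mathcal{A}_1)$ as a function of $\omega_{V_1^c}$. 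By the same reasoning the conditional measure $\mu_V(\cdot\tc\mathcal{A})$ on $V=V_1\cup V_2$ factorises as the independent product $\mu_{V_1}(\cdot\tc\mathcal{A}_1)\otimes\mu_{V_2}(\cdot\tc\mathcal{A}_2)$.

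The key step is then the law of total variance for this product: for fixed $\omega_{V^c}$,
\begin{equation*}
\var_V(f\tc\mathcal{A})=\mu_{V_2}\bigl[\var_{V_1}(f\tc\mathcal{A}_1)\tc\mathcal{A}_2\bigr]+\var_{V_2}\bigl[\mu_{V_1}(f\tc\mathcal{A}_1)\tc\mathcal{A}_2\bigr]\ge \mu_{V_2}\bigl[\var_{V_1}(f\tc\mathcal{A}_1)\tc\mathcal{A}_2\bigr].
\end{equation*}
Multiplying this inequality by $\mathds{1}_{\mathcal{A}}$ and integrating against $\mu$, I would use the product structure together with the disjointness of the $V_i$ to pull out $\mathds{1}_{\mathcal{A}_1}$ and $\mathds{1}_{\mathcal{A}_2}$, producing prefactors $\mu(\mathcal{A}_1)\mu(\mathcal{A}_2)$ which cancel exactly against the normalising denominators hidden in $\mu_{V_2}(\,\cdot\,\tc\mathcal{A}_2)$ and $\var_{V_1}(f\tc\mathcal{A}_1)$. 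The residual integration over $\omega_{V_2}$ folds with $\mathds{1}_{\mathcal{A}_2}$ back into the full expectation, and using that $\var_{V_1}(f\tc\mathcal{A})=\var_{V_1}(f\tc\mathcal{A}_1)$ the resulting expression is exactly $\mu[\mathds{1}_{\mathcal{A}}\var_{V_1}(f\tc\mathcal{A})]$.

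I do not expect a serious obstacle: the lemma is essentially a packaged form of the law of total variance adapted to events with disjoint supports under a product measure, and $\mathcal{A}_3$ plays the role of a passive spectator whose indicator commutes through every manipulation because its support is disjoint from $V$. The only point requiring care is the bookkeeping to check that the various $\mu(\mathcal{A}_i)$ cancel cleanly, which is where the disjointness assumption $V_i\cap V_j=\emptyset$ is used decisively.
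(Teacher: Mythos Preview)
Your proposal is correct and follows essentially the same route as the paper. The paper splits $\mu=\mu_{V_2}\otimes\mu_{V_2^c}$, pulls out the factor $\mu_{V_2}(\mathcal{A}_2)$, and then applies Jensen's inequality to $\mu_{V_2}\big[(\mu_{V_1}(f\tc\mathcal{A}))^2\tc\mathcal{A}_2\big]\ge(\mu_V(f\tc\mathcal{A}))^2$; your law of total variance step (dropping the nonnegative term $\var_{V_2}[\mu_{V_1}(f\tc\mathcal{A}_1)\tc\mathcal{A}_2]$) is exactly this Jensen step rephrased, and the remaining bookkeeping with the $\mu(\mathcal{A}_i)$ factors is the same in both arguments.
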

\begin{remark}
        The usual use case is that we have an event $\mathcal{A}$ with a large
        support that we split into two smaller events $\mathcal{A}_1$,
        $\mathcal{A}_2$ and the `rest' $\mathcal{A}_3$ which is why $V$ only
        contains $V_1$ and $V_2$.
\end{remark}
\begin{proof}
        Write $\mathcal{A}'=\mathcal{A}_1\cap \mathcal{A}_3$ and calculate
        directly
        \begin{align}
                \mu(\mathds{1}_{\mathcal{A}}\var_{V_1}(f\tc \mathcal{A}))
                &=
                \mu_{V_2}(\mathcal{A}_2)\mu_{V_2^c}\left[\mathds{1}_{\mathcal{A}'}
                        \mu_{V_2}(\mu_{V_1}(f^2\tc\mathcal{A})-{(\mu_{V_1}(f\tc
                        \mathcal{A}))}^2\tc \mathcal{A}_2)\right]\\
                &\le
                \mu_{V_2}(\mathcal{A}_2)\mu_{V_2^c}\left[\mathds{1}_{\mathcal{A}'}
                        \left(\mu_{V}(f^2\tc\mathcal{A})-{(\mu_{V}(f\tc
                        \mathcal{A}))}^2\right)\right]\\
                &= \mu\left[\mathds{1}_{\mathcal{A}}
                        \var_V(f\tc \mathcal{A})\right],
        \end{align}
        where in the first inequality we used Jensen's inequality and in the
        last equality we used that $\var_V(f\tc \mathcal{A})$ does not depend
        on spins in $V_2$ anymore.
\end{proof}
Any configuration in $\mathcal{E}^{(B)}$ potentially contains many conforming
$B$-grids so let us introduce a partial order on them. Let
$\Gamma=(x^{(1)},\ldots, x^{(n)})$ and $\Gamma'=(y^{(1)},\ldots, y^{(m)})$ be
two hard interior crossings of the same strip. If there is no crossing point
say that $\Gamma$ is smaller than $\Gamma'$ if $x^{(1)}\prec y^{(1)}$. If they
cross in a single point $x^{(i)}=y^{(j)}$ and $x^{(i+1)}\prec y^{(j+1)}$ then
we say that $\Gamma$ is smaller than $\Gamma'$.

This generalises to a partial order on any family of hard interior crossings of
the same strip with multiple crossing points if the above condition is
fulfilled after \emph{every} crossing point. Note that this is only a partial
order but there is a unique smallest crossing. For $\omega\in
\mathcal{E}^{(B)}$ we write $\mathcal{G}(\omega)$ for the $B$-grid with the
smallest crossings in each strip conforming to $\mathcal{E}^{(B)}$.

A final remark about notation: We will
write $\mu^{(h)}(\cdot):=\mu(\cdot \tc \{\star, h\})$ and
$\var^{(h)}(\cdot):=\var(\cdot\tc \{\star, h\})$ for the measure resp.\
variance conditioned to be in the state space $\{\star, h\}$. Recall further that
$\mathcal{Q}^{(B)}$ is the grid of base cells with parameters $N,\ell$ of which
the smallest vertex in the $\prec$-partial order (i.e.\ the closest vertex to
the origin) is $z_B:=\mathbf{e}_1+3\mathbf{e}_2$.
\begin{lemma}\label{lemma:grid_relaxation}
        Let $f\in L^2(\mu)$. For any $\varepsilon>0$ we find a $q(\varepsilon)>0$ such that
        \begin{equation}
                \mu_{\mathcal{Q}^{(B)}}
                (\mathds{1}_{\mathcal{E}^{(B)}}
                \var^{(B)}_{z_B}(f))
                \le 2^{\theta_B^2(1+\varepsilon)/4}
                \sum_{y\in \mathcal{Q}^{(B)}}
                \mu_{\mathcal{Q}^{(B)}}
                \left[c_y^{B}q_B p{(\nabla_y^{(B)}f)}^2\right],
        \end{equation}
        for $q_B<q(\varepsilon)$.
\end{lemma}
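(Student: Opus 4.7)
The plan is to exploit $\mathcal{E}^{(B)}$ to exhibit a $B$-traversable grid which, at the scale $N = 2^{\lceil\theta_B/2+\log_2\theta_B\rceil}$, mimics a two-dimensional East model on its intersection points $X(\mathcal{G})$. The finite-volume two-dimensional East model at this scale has spectral gap equal to $\gamma_2(q_B)(1+o(1)) = 2^{-\theta_B^2(1+o(1))/4}$ by Theorem \ref{thm:east_relax}, which supplies the leading constant. All transport and overcounting overheads will be polynomial in $\theta_B$ and therefore absorbed in the $(1+\varepsilon)$ factor of the exponent, provided $q_B$ is small enough.

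First, I would condition on the value $\mathcal{G}(\omega)$ of the smallest conforming $B$-grid and on the configuration outside $V := \{z_B\}\cup X(\mathcal{G})$. Writing $\mathcal{E}^{(B)} = \mathcal{E}^{(B,1)}\cap\mathcal{E}^{(B,2)}$ as the intersection of events whose supports lie outside $V$, an application of Lemma \ref{lemma:extend_variance} enlarges $\var^{(B)}_{z_B}(f)$ to the conditional variance $\var^{(B)}_V(f \mid \mathcal{E}^{(B)})$ at no cost. The $B$-traversability of the bottom boundary $D_{0,0}^{(1)}$ guaranteed by $\mathcal{E}^{(B,2)}$ also provides a short legal $B$-path linking $z_B$ to the corner intersection point $x_{0,0}$, which I will reuse at the path-method stage.

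On the fixed grid, $X(\mathcal{G})$ is isomorphic to $\{0,\dots,N\}^2\subset\Z^2$ and the conditional law of $V$ restricted to $\{\star,B\}^{X(\mathcal{G})}$ is the Bernoulli$(q_B/(p+q_B))$ product. I would then introduce an auxiliary two-dimensional East dynamics on $X(\mathcal{G})$ with $B$ in the role of the vacancy, the East constraints induced by the $\prec^{(B)}$ order, and the distant $B$-vacancy guaranteed by $\mathcal{E}^{(B,1)}$ playing the role of the boundary source. Classical finite-volume spectral-gap bounds for the two-dimensional East model on boxes of side comparable to $2^{\theta_B/2}$, combined with Theorem \ref{thm:east_relax}, yield
\begin{equation*}
\var^{(B)}_V(f\mid\mathcal{E}^{(B)}) \;\le\; 2^{\theta_B^2(1+o(1))/4} \sum_{(i,j)\in\{0,\dots,N\}^2} \mu^{(B)}_V\!\Bigl[c^{\mathrm{East}}_{x_{i,j}}\bigl(\nabla^{\mathrm{East}}_{x_{i,j}} f\bigr)^{\!2} \Bigm| \mathcal{E}^{(B)}\Bigr].
\end{equation*}

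Finally, I would realise each auxiliary East gradient $\nabla^{\mathrm{East}}_{x_{i,j}} f$ through a legal MCEM path of length $O(N\ell)$ that transports a $B$-vacancy along the hard $B$-crossings from a $\prec^{(B)}$-predecessor intersection point to $x_{i,j}$, using only $B$-transitions on $B$-traversable sites. Lemma \ref{lemma:path_method} then converts each East gradient into a sum of MCEM Dirichlet terms $q_B p(\nabla^{(B)}_y f)^2$ for $y$ along the path, with a cost polynomial in $N\ell = \mathrm{poly}(\theta_B)$; since each $y\in\mathcal{Q}^{(B)}$ is shared by at most $O(1)$ such paths, the overcounting is also polynomial in $\theta_B$. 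All polynomial factors are absorbed into $2^{\theta_B^2\varepsilon/4}$ for $q_B$ below some $q(\varepsilon)$, and averaging over $\mathcal{G}$ recovers the full sum on $\mathcal{Q}^{(B)}$. The main obstacle is the middle step: identifying the conditional measure on $X(\mathcal{G})$ with a genuine finite-volume two-dimensional East equilibrium and invoking the sharp finite-volume spectral-gap estimate matching the infinite-volume $\gamma_2(q_B)$ up to $(1+o(1))$ — the tuning $N\sim 2^{\theta_B/2}\theta_B$, large enough to contain the East relaxation length but with $\log N$ subleading to $\theta_B^2/4$, is precisely what makes this comparison work.
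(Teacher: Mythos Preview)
Your overall strategy is the paper's, but two steps have genuine gaps.

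The path-method step is the main one. Lemma~\ref{lemma:path_method} produces a factor $\tfrac{n}{\min(\mathbf{q},p)}\max_i\mu(\omega)/\mu(\omega^{(i)})$, and the measure ratio, not the path length, dominates. To realise $c^{\mathrm{East}}_{x_{i,j}}(\nabla^{\mathrm{East}}_{x_{i,j}}f)^2$ you must bring a $B$-vacancy from the predecessor intersection point (one cell away, so distance $O(\ell)$, not $O(N\ell)$) to a lattice neighbour of $x_{i,j}$; any legal path doing this on a $B$-traversable segment creates $\Theta(\ell)$ intermediate $B$-vacancies at its midpoint, giving $\max_i\mu(\omega)/\mu(\omega^{(i)})\gtrsim q_B^{-\Theta(\ell)}=2^{\Theta(\theta_B^{5/2})}$, which already exceeds the target $2^{\theta_B^2/4}$. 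The paper never invokes the raw path method at this step: it extends the variance to the connecting $B$-path and applies the one-dimensional East spectral-gap bound of \cite{chleboun2016relaxation}*{Theorem~2} on a segment of length $O(\ell)$ with a boundary vacancy, paying only $2^{O(\theta_B\log\theta_B)}$. (Separately, $N\ell\sim 2^{\theta_B/2}\theta_B^{5/2}$ is not $\mathrm{poly}(\theta_B)$, since $N$ is exponential in $\theta_B$.)

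Second, your claim that $\mathcal{E}^{(B,1)}$ has support disjoint from $V=\{z_B\}\cup X(\mathcal{G})$ is false: the clause ``some $x_{(i,j)+\mathbf{e}}$ with $i,j>N/2$ carries a $B$-vacancy'' is a condition on intersection points themselves, so the conditional law on $X(\mathcal{G})$ is not product Bernoulli and you cannot directly run a clean two-dimensional East comparison on all of $X(\mathcal{G})$. The paper handles this by further decomposing over the location $\xi$ of the highest such point, and then --- rather than appealing to the infinite-volume Theorem~\ref{thm:east_relax} or to an unspecified finite-box bound --- invoking \cite{couzinie2022front}*{Proposition~3.5(i)}, which manufactures a \emph{subset} $\{x_{0,0},\xi\}\subset V\subset X(\mathcal{C})$ on which the two-dimensional East Poincar\'e inequality holds with the sharp constant $2^{\theta_B^2(1+\varepsilon/2)/4}$ under a single boundary vacancy at $\xi$. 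This subset simultaneously localises the $B$-vacancy constraint outside the varying coordinates and supplies the finite-volume estimate you need.
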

\begin{proof}
        For simplicity we write $\mu_{\mathcal{Q}^{(B)}}=\mu$ in this proof.
        There might be many intersection points $x_{i,j}\in
        X(\mathcal{G}(\omega))$ such that $\omega_{x_{(i,j)+\mathbf{e}}}=B$ for
        some $e\in \mathcal{B}$. Introduce the constraint
        $c^{(\mathcal{G})}_{x_{i,j}}$ that there is an $\mathbf{e}\in
        \mathcal{B}$ such that $\omega_{x_{(i,j)+\mathbf{e}}}=B$ and denote by
        $\xi(\omega)$ the vertex in $X(\mathcal{G}(\omega))$ with the highest
        coordinate in the lexicographic order such that
        $c^{(\mathcal{G})}_{\xi(\omega)}=1$. Since this uniquely identifies a grid and
        an intersection point we have
        \begin{equation}
                \mu\left[\mathds{1}_{\mathcal{E}^{(B)}}
                \var^{(B)}_{z_B}(f)\right]
                   = \sum_{\mathcal{C}\ \text{grid}}
                   \sum_{x\in X(\mathcal{C})} 
                   \mu\left[\mathds{1}_{\mathcal{G}=\mathcal{C},\xi=x,
                           \mathcal{E}^{(B,2)}}
                   \var^{(B)}_{z_B}(f)\right].
        \end{equation}
        Let us upper bound a generic summand
        $\mu\left[\mathds{1}_{\mathcal{G}=\mathcal{C},\xi=x,
        \mathcal{E}^{(B,2)}} \var^{(B)}_{z_B}(f)\right]$ and assume without
        loss of generality that $x=x_{N,N-1}$.
        Let $V\subset X(\mathcal{C})$ be a subset with $x_{0,0},x_{N,N-1}\in V$. The event
        $\mathcal{G}=\mathcal{C}$ on $V\cap z_B$ reduces to requiring
        $B$-traversability so that we can extend the variance
        (\cref{lemma:extend_variance})
        \begin{equation}
                \mu\left[\mathds{1}_{\mathcal{G}=\mathcal{C},\xi=x,
                \mathcal{E}^{(B,2)}} \var^{(B)}_{z_B}(f)\right]
                = \mu\left[\mathds{1}_{\mathcal{G}=\mathcal{C},\xi=x,
                \mathcal{E}^{(B,2)}} \var^{(B)}_{V\cup z_B}(f)\right].
                \label{eqn:grid_expand_events}
        \end{equation}
        We now want to consider two separated blocks. One in which we consider
        the relaxation on $V$ given that $\xi=x$ and the other were we consider
        $z_B$ given that $x_{0,0}$ has a $B$-vacancy. We can do this using the
        block relaxation lemma, that we explicitly state here again for
        completeness sake as we frequently cite it throughout this paper so
        that we state it in a more general form.
        Consider two sets $V_1,
        V_2\subset \Z^d$ together with some state space $\Omega^{(i)}$ and measures
        $\nu^{(i)}$ on $V_i$ for $i\in [2]$. Write $V=V_1\cup V_2$ and
        $\nu=\nu^{(1)}\otimes \nu^{(2)}$. Consider an event $\mathcal{A}$ on
        $\Omega^{(1)}$ such that $\nu^{(1)}(\mathcal{A})>0$.
        \begin{lemma}[Block relaxation
                Lemma,~{\cite{cancrini2008kcm}*{Proposition~4.4}}]\label{lemma:block_relax}
                        In the above situation we have for $f:\Omega^{(1)}\otimes
                        \Omega^{(2)}\rightarrow \R$
                        \begin{equation}
                                \var_{\nu}(f)
                                \le \frac{2}{\nu^{(1)}(\mathcal{A})}
                                \nu(\var_{\nu^{(1)}}(f)+
                                \mathds{1}_{\mathcal{A}}\var_{\nu^{(2)}}(f)).
                        \end{equation}
        \end{lemma}
        Using this we have
        \begin{align}
                \mu&\left[\mathds{1}_{\mathcal{G}=\mathcal{C},\xi=x,
                \mathcal{E}^{(B,2)}} \var^{(B)}_{V\cup z_B}(f)\right]
                 \\&\le\frac{2}{q_B}\mu\left[\mathds{1}_{\mathcal{G}=\mathcal{C},\xi=x,
                \mathcal{E}^{(B,2)}}\left(
                \mathds{1}_{\omega_{x_{0,0}}=B}\var^{(B)}_{z_B}(f) 
                +\var^{(B)}_{V}(f)\right) \right].\label{eqn:grid_two_summands}
        \end{align}
        Let us deal with both these terms separately and start with the first
        summand. For any $\omega\in \mathcal{E}^{(B,2)}\cap
        \{\mathcal{G}=\mathcal{C}\}$ there is a unique shortest $B$-traversable
        $B$-path $\Gamma(\omega)\subset D^{(1)}_{0,0}\cup \mathcal{C}$ from
        $z_B$ to $x_{0,0}-\mathbf{e}$ for some $\mathbf{e}\in \mathcal{B}$. As
        before, the event $\mathcal{E}^{(B,2)}\cap \{\mathcal{G}=\mathcal{C}\}$
        on $\Gamma$ simplifies to $\Gamma$ being $B$-traversable. Thus, we can
        extend the variance again to get
        \begin{equation}
                \mu\left[\mathds{1}_{\mathcal{G}=\mathcal{C},\xi=x,\mathcal{E}^{(B,2)},
                        \omega_{x_{0,0}=B}}\var^{(B)}_{z_B}(f)\right]
                         \le
                \mu\left[\mathds{1}_{\mathcal{G}=\mathcal{C},\xi=x,\mathcal{E}^{(B,2)},
                        \omega_{x_{0,0}=B}}\var^{(B)}_{\Gamma}(f)\right].
                        \label{eqn:grid_reinsert_1}
        \end{equation}
        Consider the auxiliary model on $\Gamma$ with equilibrium measure
        $\mu^{(B)}_{\Gamma}$ that is given by the
        one-dimensional East model where $B$-vacancies are the vacancy state
        and the neutral state is the particle state and note that with
        $\omega_{x_{0,0}}=B$ this has ergodic boundary conditions. Since
        $|\Gamma|=O(\ell)$ we find a constant $\kappa>0$ with
        \cite{chleboun2016relaxation}*{Theorem~2} such that
        \begin{equation}
                \mathds{1}_{\omega_{x_{0,0}}=B} \var_{\Gamma}^{(B)}(f) \le
                2^{\kappa \theta_B\log_2(\theta_B)}\sum_{y\in
                \Gamma}\mu^{(B)}_{\Gamma}\left(c_x^{B} p
                q_B{(\nabla_y^{(B)}f)}^2\right),
        \end{equation}
        for $q_B$ small enough, where we used that the one-dimensional
        constraints on $\Gamma$ lower bound the two-dimensional constraints
        $c_x^B$ for $x\in \Gamma$ and that $p>\Delta$ to bound the $1/(q_B+p)$
        term coming from the conditional density of vacancies and particles in
        the East model. Inserting back into \cref{eqn:grid_reinsert_1} gives
        terms like
        \begin{equation}
                \sum_{y\in \Gamma}
                \mu\left[\mathds{1}_{\mathcal{G}=\mathcal{C},\xi=x,\mathcal{E}^{(B,2)}}
                c_y^B p q_B{(\nabla_y^{(B)} f)}^2\right]
                \le \sum_{y\in Q_{0,0}}
                \mu\left[\mathds{1}_{\mathcal{G}=\mathcal{C},\xi=x,\mathcal{E}^{(B,2)}}
                c_y^B p q_B{(\nabla_y^{(B)} f)}^2\right].
        \end{equation}
        Contrary to $\Gamma$, $Q_{0,0}$ is not dependent on the specific
        $\mathcal{C}$ and $\xi$ anymore so that we can resolve the sum over
        them to get that the first summand \cref{eqn:grid_two_summands} gives a
        contribution of
        \begin{equation}
                2^{\kappa\theta_B\log_2(\theta_B)}
                \sum_{y\in Q_{0,0}} \mu\left[c_x^B q_B p
                        {(\nabla_y^{(B)}f)}^2\right].
        \end{equation}
        For the second summand in \cref{eqn:grid_two_summands} note that we
        have not yet specified the subset $V\subset X(\mathcal{C})$.
        $X(\mathcal{C})$ is isomorphic to an equilateral box in $\Z^2$ and
        the dynamics with the constraints $c_y^{(\mathcal{G})}$ are equivalent
        to a two-dimensional East process on that box. Thus
        by~\cite{couzinie2022front}*{Proposition~3.5}(i)
        we find a subset $\{x_{0,0},x_{N,N-1}\}\subset V\subset X(\mathcal{C})$ such that
        \begin{equation}
                \var_V^{(B)}(f)
                \le 2^{\theta_B^2(1+\varepsilon/2)/4}
                \sum_{y\in V}\mu_V^{(B)}\left[c_y^{(\mathcal{G})}q_B
                p{(\nabla_y^{(B)}f)}^2\right]
        \end{equation}
        for $q_B$ small enough. Given the events
        $\{\mathcal{G}=\mathcal{C}\}\cap \{\xi=x\}$ we can again extend to
        $B$-traversable paths this time between points on $X(\mathcal{C})$ and
        using completely analogous calculations to the first summand we get.
        \begin{align}
                \mu\left[\mathds{1}_{\mathcal{G}=\mathcal{C},\xi=x,
                        \mathcal{E}^{(B,2)}}\var^{(B)}_{V}(f)\right]
                &\le 2^{\kappa \theta_B\log_2(\theta_B)}
                \sum_{y\in \mathcal{C}} 
                \mu\left[\mathds{1}_{\mathcal{G}=\mathcal{C},\xi=x,
                                \mathcal{E}^{(B,2)}}c_y^B q_B p {(\nabla^{(B)}_y
                f)}^2\right]\\
                &\le 2^{\kappa \theta_B\log_2(\theta_B)}
                \sum_{y\in \mathcal{Q}^{(B)}} 
                \mu\left[\mathds{1}_{\mathcal{G}=\mathcal{C},\xi=x,
                                \mathcal{E}^{(B,2)}}c_y^B q_B p {(\nabla^{(B)}_y
                f)}^2\right].
        \end{align}
        Resolve the sum over $\mathcal{C}$ and $\xi$ again and note that
        $Q_{0,0}$ is counted twice leading to an additional term of the order
        $O(\ell^2)$ that we absorb into $\kappa$ to get the claim. 
\end{proof}
\begin{remark}\label{rem:grid_generalisation}
        For simplicity we limited the discussion in this section to $B$-grids. The
        results generalise to $h$-grids with $q_h$ going to $0$ and the
        conditions for $q_A+q_C$ are substituted with conditions for $1-q_h-p$.
\end{remark}

\subsection{Low vacancy density: Proof of \texorpdfstring{\cref{thm:abc_relaxation}}{Theorem
6}(3.i)}\label{sec:proof_few_vacancies}
Fix an $\varepsilon>0$, let $\mathbf{q}$ be a parameter set such that
$\min_{h\in G} q_h = q_B$ and $(q_A + q_C)\theta_B^3\rightarrow 0$. By
\cref{lemma:monotonicity_in_g} we have $\gamma(G,\mathbf{q})\le \gamma_2(q_B)$,
so using \cref{thm:east_relax} we need to show that there is a $\delta>0$ so
that for $q_B<\delta$ we have
\begin{equation}
        \gamma(G,\mathbf{q}) \ge 2^{-\theta_B^2(1+\varepsilon)/4}.
\end{equation}
For $h\in G$ let $\mathcal{E}^{(h)} = \mathcal{E}^{(h,1)}\cap
\mathcal{E}^{(h,2)}$ be the events from \cref{sec:hgrid}, let
$\mathcal{E}_x^{(h)}$ be the correspondingly translated event and let $z_h$ be
the analogous vertices to $z_B$. Using the
results from \cref{sec:hgrid} we can get h-vacancies to $z_h$. We thus need an event
that allows us to bring the vacancies back to the origin.

Let $\mathcal{E}_x^{(0)}$
be the event that there is no vacancy on $\{x+i\mathbf{e}_1\colon i\in
[3]\}\cup \{x-i\mathbf{e}_2\colon i\in [3]\}$ and $\mathcal{E}_x:=
\mathcal{E}_x^{(0)}\cap \bigcap_{h\in G}\mathcal{E}_x^{(h)}$. By construction
the family ${\{\mathcal{E}_x\}}_{x\in\Z^2}$ satisfies the exterior condition
with respect to the exhausting and increasing family of sets ${\{V_n\}}_{n\in
\Z}$ given in \cref{lemma:grid_exterior}. By assumption on the parameter set
and \cref{cor:grid_failing_1,lemma:grid_failing_2},
\cref{eqn:ext_cond_thm_assumption_original} holds for the family
${\{\mathcal{E}_x\}}_{x\in \Z^2}$ for $q_B$ small enough. Thus, we can apply
the exterior condition theorem, \cref{thm:exterior_thm}, and \cref{lemma:var_as_trans} to get
\begin{equation}\label{eqn:low_freq_first_ineq}
        \var(f)
        \le 4\sum_{x\in \Z^2}\mu(\mathds{1}_{\mathcal{E}_x}\var_x(f))
        \le C\sum_{x\in \Z^2}\sum_{h\in G}
        \mu(\mathds{1}_{\mathcal{E}_x^{(0)}\cap \mathcal{E}^{(h)}_x}\var^{(h)}_x(f)).
\end{equation}
Let us consider w.l.o.g.\ only the term for $h=B$ and $x=0$ and leave away the
subscript $x$. Recall that we write $z_B=\mathbf{e}_1+3\mathbf{e}_2$ which
by $\mathcal{E}^{(B,2)}$ is $B$-traversable so that we can extend the variance,
\cref{lemma:extend_variance}, and apply the block relaxation Lemma,
\cref{lemma:block_relax}:
\begin{align}
        \mu(\mathds{1}_{\mathcal{E}^{(0)}\cap \mathcal{E}^{(B)}}\var^{(B)}_0(f))
        &\le \mu(\mathds{1}_{\mathcal{E}^{(0)}\cap
        \mathcal{E}^{(B)}}\var^{(B)}_{\{0,z_B\}}(f))\\
        &\le \frac{C}{q_B} \mu\left[\mathds{1}_{\mathcal{E}^{(0)}\cap
        \mathcal{E}^{(B)}}\mu_0^{(B)}\left(
        \mathds{1}_{\omega_{z_B}=B}\var^{(B)}_{0}(f)+\var_{z_B}^{(B)}(f)
        \right)\right]\\
        &= \frac{C}{q_B} \mu\left[\mathds{1}_{\mathcal{E}^{(0)}\cap
        \mathcal{E}^{(B)}}\left(
        \mathds{1}_{\omega_{z_B}=B}\var^{(B)}_{0}(f)+\var_{z_B}^{(B)}(f)
        \right)\right],
\end{align}
where in the last equality we used that $\mathrm{Supp}(\mathcal{E})\cap
\{0\}=\emptyset$ and the tower property. By \cref{lemma:grid_relaxation} we can
upper bound the second summand by
\begin{equation}
        \mu\left(\mathds{1}_{\mathcal{E}^{(0)}\cap
        \mathcal{E}^{(B)}} \var^{(B)}_{z_B}(f)\right)
        \le
        2^{\theta_B^2(1+\varepsilon)/4}\mu\left(\mathcal{D}_{\mathcal{Q}^{(B)}}(f)\right),
\end{equation}
where we added the missing $A$- and $C$-transition terms to get a contribution
to the Dirichlet
form. Using an analogous estimate to the bounding of the one-dimensional terms
in the proof of \cref{lemma:grid_relaxation} we can show that the first summand
is of lower order than the contribution of the second summand. By translation invariance we get analogous terms for
any $x\in \Z^2$. When taking the sum over $x$ we need to account for the
overcounting, which we recall is how many times a single vertex $y\in \Z^2$
appears in the various Dirichlet forms that we get in the above way for the
different $x\in \Z^2$. In this case, for any $y\in \Z^2$ there are
$O(|\mathcal{Q}_B|)$ different $x\in \Z^2$ such that $y\in \mathcal{Q}^{(B)}_x$
we can absorb\footnote{We often use the term absorb in this context, where we either
        mean make the constant larger/smaller or here specifically, where
        $\varepsilon$ is fixed, do the whole proof for $\varepsilon/2$ and only
        in the final step write $\varepsilon$ upper bounding any lower order
        term by $2^{\theta_B^2 \varepsilon/4}$.}
the overcounting into $\varepsilon$ for $q_B$ small enough. Thus for $h=B$ the
r.h.s.\ in \cref{eqn:low_freq_first_ineq} is upper bounded by
\begin{equation}
        \sum_{x\in \Z^2}\mu(\mathds{1}_{\mathcal{E}^{(0)}\cap
        \mathcal{E}^{(B)}}\var^{(B)}_0(f))
        \le 2^{\theta_B^2(1+\varepsilon)/4}\mathcal{D}(f),
\end{equation}
for $q_B$ small enough. The calculation works analogously for each $h\in
\mathcal{G}$ and we get the claim for the chosen $\mathbf{q}$ by arbitrariness
of $\varepsilon$. Further, the proof also works analogously for any
$\mathbf{q}$ such that $q_{\max}\theta_{q_{\min}}^3\rightarrow 0$ as
$q_{\min}\rightarrow 0$ giving part (3.i) of
\cref{thm:abc_relaxation}. \qed

\subsection{Single frequent vacancy type: Proof of
\texorpdfstring{\cref{thm:abc_relaxation}}{Theorem 3(ii)}(3.ii)}%
\label{sec:theorem3ii}
Throughout this section assume that $\mathbf{q}$ is a parameter set such that
$q_{\min}=q_B$, $q_{\max}\theta_{\qmed}^3/\log_2(\theta_{B})\rightarrow \infty$
and $\qmed\theta_{B}^6\rightarrow 0$ as $q_{B}\rightarrow 0$ where we recall
that $\qmed$ is the remaining element of
$\mathbf{q}\setminus\{q_{\max},q_{\min}\}$. 

In this case the assumptions on $\mathbf{q}$ in \cref{cor:grid_failing_1} do
not hold anymore. We resolve this problem by
working on boxes and defining traversable configurations on them that do not
exclude the frequent vacancy type. We then show that on this coarse grained
lattice we can apply the results from \cref{sec:hgrid} again and conclude the
proof by using auxiliary models and the path method to go from the coarse
grained lattice back to $\Z^2$.

We start with the proofs for the case where $q_{\min}=q_B$. We will see later
that this is sufficient as the proofs for $q_{\min}\in \{q_A,q_C\}$ are
analogous.
\subsubsection{The case \texorpdfstring{$q_{\max}=q_A$}{qmax=qA}}\label{sub:qaqmax}
Assume for this subsection that $q_{\max}=q_A$ and $q_{\mathrm{med}}=q_C$. We
start by defining the coarse graining and the states on the coarse-grained
lattice.
\begin{definition}\label{def:btraversable}
        For $\mathbf{j}\in \Z^2$ and $L=\lfloor \theta_B^3\rfloor$ let
        $\Lambda_{\mathbf{j}}=(L+1)\mathbf{j}+{\{0,\ldots, L-1\}}^2$ be an equilateral box of side length $L-1$ and
        origin $(L+1)\mathbf{j}$ and let $W_{\mathbf{j}}$ be the outline of it,
        i.e.\ the shortest cycle containing $(L+1)\mathbf{j}+\{0,(L-1)\mathbf{e}_1,
        (L-1)\mathbf{e}_2,(L-1)(\mathbf{e}_1+\mathbf{e}_2)\}$. Let the
                \emph{enlargement $EW_{\mathbf{j}}$ of $W_{\mathbf{j}}$} be the union of $W_{\mathbf{j}}$ with the set
        $\tilde{\Lambda}\setminus\Lambda_{\mathbf{j}}$ where $\tilde{\Lambda}$
        is an equilateral box of side length $L$, origin $(L+1)\mathbf{j}$ and
        denote the top right corner of $EW_{\mathbf{j}}$ by $x_{\mathbf{j}}
        =(L+1)\mathbf{j}+L(\mathbf{e}_1+\mathbf{e}_2)$. For an $\omega\in
        \Omega$ we call $EW_{\mathbf{j}}$
        \begin{itemize}
                \item \emph{$B$-traversable} (see \cref{fig:btraversable}) if
                        \begin{itemize}
                                \item $\omega_{x}\in\{\star, A\}$ for any $x\in W_{\mathbf{j}}$,
                                \item $\omega_x\in \{\star, A, B\}$ for any
                                        $x\in EW_{\mathbf{j}}\setminus
                                        W_{\mathbf{j}}$ and
                                \item for any $i\in [2]$ there is at least one
                                        $x \in
                                        (L+1)\mathbf{j}+\{\mathbf{e}_i,2\mathbf{e}_i,\ldots,
                                        (L-1)\mathbf{e}_i\}$ such that
                                        $\omega_x=A$.
                        \end{itemize}
                        Let
                        $q_{BT}:=\mu_{EW_{\mathbf{j}}}(\text{$B$-traversable})$.
                \item \emph{$B$-super} if $EW_{\mathbf{j}}$ is $B$-traversable and
                        $\omega_{x_{\mathbf{j}}}=B$. 
                        Let
                        $q_{BS}:=\mu_{EW_{\mathbf{j}}}(\text{$B$-super})$.
                \item \emph{$B$-evil}\footnotemark{} if it is not $B$-traversable.
                        Let
                        $q_{BE}:=\mu_{EW_{\mathbf{j}}}(\text{$B$-evil})$.
        \end{itemize}
\end{definition}
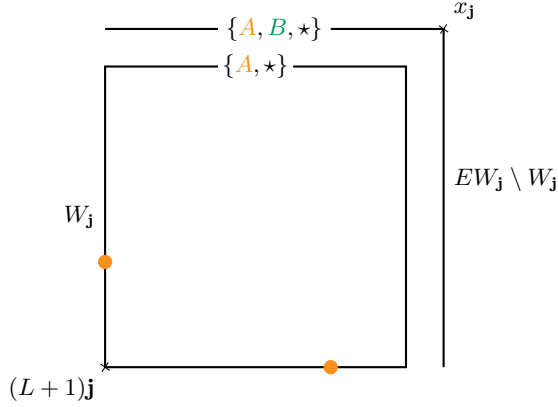
\begin{figure}[t]
        \centering
        \begin{tikzpicture}
                \draw[thick] (0,0) -- ++(0,4) -- ++(1.5,0);
                \draw[thick] (2.5,4) -- (4,4) -- (4,0) -- (0,0);
                \node[anchor=east] at (0,2) {\footnotesize $W_{\mathbf{j}}$};
                \node[] at (2,4) {\footnotesize $\{{\color{BurntOrange} A}, \star\}$};

                \draw[thick] (4.5,0) -- ++(0,4.5) -- ++(-1.5,0);
                \draw[thick] (1.5,4.5) -- ++(-1.5,0);
                \node[anchor=west] at (4.5,2.5) {\footnotesize $EW_{\mathbf{j}}\setminus W_{\mathbf{j}}$};
                \node[] at (2.25,4.5) {\footnotesize $\{{\color{BurntOrange}A},{\color{ForestGreen} B},\star\}$};

                \filldraw[BurntOrange](0,1.4)circle[radius=2.5pt] {};
                \filldraw[BurntOrange](3,0)circle[radius=2.5pt] {};

                \node[anchor=north east] at (0,0) {\footnotesize $(L+1)\mathbf{j}$};
                \draw (0,0) node[cross,rotate=10,minimum size=3] {};

                \node[anchor=south west] at (4.5,4.5) {\footnotesize $x_{\mathbf{j}}$};
                \draw (4.5,4.5) node[cross,rotate=10,minimum size=3] {};
        \end{tikzpicture}
        \caption{Illustration of a $B$-traversable $EW_{\mathbf{j}}$. Note that
        the $A$-vacancies on the bottom and left boundary can be anywhere on
        that boundary.\label{fig:btraversable}}
\end{figure}
\footnotetext{We use super and evil instead of the more common good and bad to
        avoid confusion in the notation with $G\subset H_d$ and $B$-vacancies.}
\textbf{Attention:} Previously, if we
said that $EW_{\mathbf{j}}$ was $B$-traversable, we meant that $\omega_x\in
\{\star, B\}$ for any $x\in EW_{\mathbf{j}}$ instead of the above definition.
In the context in which $q_{A}\gg q_B$ this notion of $B$-traversability has a
very small equilibrium probability so it is not useful for the proof of part
(3.ii). We justify the recycling of the name since the two notions of
traversability play analogous roles. In \cref{sec:hgrid} we
looked for grids of paths with vertices only in $\{\star, B\}$. In this
section we look for grids where each vertex is a $EW_{\mathbf{j}}$ that is
$B$-traversable in the above sense.

The next result whos that we can use the results from \cref{sec:hgrid} on the
coarse-grained lattice if the $B$-super boxes play the role of
$B$-vacancies and $B$-evil boxes the role of $A$ and $C$ vacancies. The proof
is a simple union bound.
\begin{lemma}\label{lemma:probability_limit}
        For $EW_{\mathbf{j}}$ as in \cref{def:btraversable} we have
        \begin{equation}
                \frac{\theta_{q_{BS}}}{\theta_B}\rightarrow 1, \qquad \qquad
                \theta_{q_{BS}}^3 q_{BE} \rightarrow 0,
        \end{equation}
        as $q_B\rightarrow 0$.
\end{lemma}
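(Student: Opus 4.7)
The plan is a direct computation using the product structure of $\mu$ and a union bound on the complement of the $B$-traversability event. Recall $L=\lfloor\theta_B^3\rfloor$, $|W_{\mathbf j}|=4(L-1)$, $|EW_{\mathbf j}\setminus W_{\mathbf j}|=2L+1$, and note that the three conditions appearing in the definition of $B$-traversability depend on disjoint subsets of vertices: the vertices of $EW_{\mathbf j}\setminus W_{\mathbf j}$ (which must be in $\{\star,A,B\}$), the vertices of $W_{\mathbf j}$ outside the two ``$A$-lines'' and corners (which must be in $\{\star,A\}$), and the two $A$-lines themselves (which must lie in $\{\star,A\}$ \emph{and} contain at least one $A$). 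The vertex $x_{\mathbf j}$ lies in $EW_{\mathbf j}\setminus W_{\mathbf j}$, so $q_{BS}=(q_B/(1-q_C))\,q_{BT}$.

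First I would estimate $q_{BT}$ from below by
\begin{equation*}
q_{BT}\ge (1-q_C)^{|EW_{\mathbf j}\setminus W_{\mathbf j}|}\,(p+q_A)^{|W_{\mathbf j}|}\,\bigl[1-(p/(p+q_A))^{L-1}\bigr]^{2},
\end{equation*}
and observe that each factor tends to $1$ as $q_B\to 0$. Indeed $q_CL=O(q_C\theta_B^3)\to 0$ by the hypothesis $\qmed\theta_B^6\to 0$ (since $\qmed=q_C$), $q_BL\to 0$ trivially, and the hypothesis $q_{\max}\theta_{\qmed}^3/\log_2\theta_B\to\infty$ together with $\theta_{\qmed}\le\theta_B$ gives $q_AL\to\infty$, so $(p/(p+q_A))^{L-1}\to 0$. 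Hence $q_{BT}\to 1$ and $\theta_{q_{BT}}=|\log_2 q_{BT}|\to 0$. From the identity $q_{BS}=q_B q_{BT}/(1-q_C)$ we then get
\begin{equation*}
\theta_{q_{BS}}=\theta_B+\theta_{q_{BT}}+\log_2\tfrac{1}{1-q_C}=\theta_B(1+o(1)),
\end{equation*}
proving the first claim.

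For the second claim I would union-bound $q_{BE}=1-q_{BT}$ by the three disjoint failure modes:
\begin{equation*}
q_{BE}\le |EW_{\mathbf j}\setminus W_{\mathbf j}|\,q_C+|W_{\mathbf j}|(q_B+q_C)+2(1-q_A)^{L-1}=O\bigl(L(q_B+q_C)+e^{-q_AL}\bigr).
\end{equation*}
Since $\theta_{q_{BS}}\sim\theta_B$, it suffices to show $\theta_B^3 q_{BE}\to 0$. The polynomial contribution is $\theta_B^3 L(q_B+q_C)=O(\theta_B^6(q_B+q_C))\to 0$ by the two standing hypotheses ($\theta_B^6 q_B\to 0$ follows from $q_B=2^{-\theta_B}$, and $\theta_B^6 q_C=\theta_B^6\qmed\to 0$ by assumption). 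The exponential contribution is $\theta_B^3 e^{-q_AL}$, and using $q_A\theta_B^3\ge q_A\theta_{\qmed}^3$ together with $q_A\theta_{\qmed}^3/\log_2\theta_B\to\infty$, the exponent dominates $3\log_2\theta_B$ for $q_B$ small enough, giving $\theta_B^3 e^{-q_AL}\to 0$. This yields $\theta_{q_{BS}}^3 q_{BE}\to 0$ and completes the proof.

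No substantive obstacle is expected: the computation is purely combinatorial once one recognises that the three conditions in the definition of $B$-traversability act on disjoint sets of coordinates, so $\mu$-probabilities factorise. The only care needed is in verifying that the ``$A$-line'' density requirement is satisfied with high probability, which is precisely what the hypothesis $q_{\max}\theta_{\qmed}^3/\log_2\theta_B\to\infty$ is designed to guarantee, with room to spare for absorbing the additional $\log_2\theta_B$ arising from the $\theta_B^3$ prefactor.
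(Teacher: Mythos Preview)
Your proposal is correct and is exactly the computation the paper has in mind: the text merely says ``The proof is a simple union bound,'' and what you wrote is precisely that union bound together with the factorisation coming from the product measure. Your use of the standing hypotheses $\qmed\theta_B^6\to 0$ and $q_{\max}\theta_{\qmed}^3/\log_2\theta_B\to\infty$ to control respectively the polynomial term $\theta_B^6(q_B+q_C)$ and the exponential term $\theta_B^3 e^{-q_A L}$ is the intended one.
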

\begin{remark}
        While $q_A\theta_B^{3}\rightarrow \infty$ as $q_B\rightarrow 0$ we thus
        find a renormalisation such that we again have the equivalent of
        $(q_A+q_C)\theta_B^{3}\rightarrow 0$ from \cref{sec:hgrid} on the
        renormalised lattice.
\end{remark}
For $\mathbf{j}\in \Z^2$ let $\Omega_{\mathbf{j}}^*={S(G)}^{EW_{\mathbf{j}}}$,
$\mu^*_{\mathbf{j}}(\cdot)=\mu_{EW_{\mathbf{j}}}(\cdot\tc
\text{$B$-traversable})$ and let $\var_{\mathbf{j}}^*(f)$ be the associated
variance. We will only use the letter $\mathbf{j}$ in bold font to refer to
indices of $EW_{\mathbf{j}}$ and thus say interchangeably that $\mathbf{j}$ or
$EW_{\mathbf{j}}$ is $B$-traversable, $B$-super or $B$-evil. Let us come to the
analogue statement of \cref{lemma:grid_relaxation}, for which we need to define
the analogue of the events $\mathcal{E}^{(B,i)}$ for the lattice of boxes. We
define $\mathcal{Q}^{(B,*)}=\{EW_{\mathbf{j}}\colon \mathbf{j}\in
\mathcal{Q}^{(B)}\}$ for $\mathcal{Q}^{(B)}$ with side length $\ell=\lceil
\theta_{q_{BS}}^{3/2}\rceil$ and square side length $N=2^{\lceil
\theta_{q_{BS}}/2+\log_2(\theta_{q_{BS}})\rceil}$. The vector
$z_B=\mathbf{e}_1+3\mathbf{e}_2$ we now write as $\mathbf{j}_B$.

Let $\mathcal{E}^{(B,1,*)}$
be the event that we find a $B$-grid $\mathcal{C}$ in $\mathcal{Q}^{(B)}$ such
that $EW_{\mathbf{j}}$ is $B$-traversable for any $\mathbf{j}\in \mathcal{C}$
and such that there is an intersection point $\mathbf{j}_{i,j}\in
X(\mathcal{C})$ with $i,j>N/2$ and $\mathbf{e}\in \mathcal{B}$ such that
$EW_{\mathbf{j}_{(i,j)+\mathbf{e}}}$ is $B$-super.

Let $\mathcal{E}^{(B,2,*)}$
be event that for each $\mathbf{j}$ on the boundary $D_{0,0}^{(1)}$,
$EW_{\mathbf{j}}$ is $B$-traversable. We write
$\mathcal{E}^{(B,*)}=\mathcal{E}^{(B,1,*)}\cap \mathcal{E}^{(B,2,*)}$. The
support is included in $\mathcal{Q}^{(B,*)}$, i.e.\
$\mathrm{Supp}(\mathcal{E}^{(B,*)})\subset \mathcal{Q}^{(B,*)}$ and
$\mathcal{E}^{(B,*)}$ satisfies the exterior condition with respect to the same
${\{V_n\}}_{n\in \Z}$ as in \cref{lemma:grid_exterior}.

The auxiliary model for which we state the analogue of
\cref{lemma:grid_relaxation} is given by the constraints $c_{\mathbf{j}}^{*,B}$
defined as the indicator over the event that there exists an $\mathbf{e}\in
\mathcal{B}$ such that $\mathbf{j}+\mathbf{e}$ is $B$-super. Analogous to the
$\mu^{(h)}$ notation we write $\mu^{(AB)}(\cdot)=\mu(\cdot\tc \{\star, A, B\})$
and $\var^{(AB)}(f):=\var(f\mid \{\star, A, B\})$.
\begin{corollary}\label{cor:block_grid_relax}
        For any $\varepsilon>0$ we find a $\delta>0$ such that
        \begin{equation}\label{eqn:block_eqn}
                \mu(\mathds{1}_{\mathcal{E}^{(B,*)}}
                \var^{(AB)}_{x_{\mathbf{j}_B}}(f))
                \le 2^{\theta_B^2(1+\varepsilon)/4}
                \sum_{\mathbf{j}\in \mathcal{Q}^{(B)}}
                \mu\left[\mathds{1}_{\text{$\mathbf{j}$ $B$-traversable}}
                        c_{\mathbf{j}}^{*,B}\var_{x_{\mathbf{j}}}^{(AB)}(f)
                \right]
        \end{equation}
        for $q_B<\delta$.
\end{corollary}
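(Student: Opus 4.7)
The plan is to run the proof of \cref{lemma:grid_relaxation} verbatim on the coarse-grained lattice of boxes, with $B$-super boxes playing the role of $B$-vacancies, $B$-traversable (non-super) boxes playing the role of the neutral state, and the single vertex $x_{\mathbf{j}}$ (the top-right corner of $EW_{\mathbf{j}}$) encoding the super/traversable dichotomy at the vertex level. The whole argument will be conditioned on every box on $\mathcal{Q}^{(B,*)}$ being $B$-traversable, which is a product event on the disjoint enlargements $\{EW_{\mathbf{j}}\}$, so the conditional measure factorises and all preparatory tools (\cref{lemma:extend_variance}, \cref{lemma:block_relax}) apply directly. By \cref{lemma:probability_limit} the renormalised parameter $q_{BS}$ satisfies $\theta_{q_{BS}}/\theta_B \to 1$ and $\theta_{q_{BS}}^3 q_{BE}\to 0$, which is the exact analogue of the assumption $(q_A+q_C)\theta_B^3\to 0$ that made the proof of \cref{lemma:grid_relaxation} work.

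First I will decompose the left-hand side by summing over the realisations of the smallest conforming $B$-grid $\mathcal{G}=\mathcal{C}$ on $\mathcal{Q}^{(B,*)}$ and over the lexicographically topmost $B$-super intersection $\xi=x_{\mathbf{j}_{N,N-1}}$ (w.l.o.g.). For each fixed pair $(\mathcal{C},\xi)$, the restriction of $\mathcal{E}^{(B,*)}$ to any intersection point $x_{\mathbf{j}_{i,j}}\in X(\mathcal{C})$ other than $\xi$ is simply the requirement that $EW_{\mathbf{j}_{i,j}}$ be $B$-traversable. Choosing $V\subset X(\mathcal{C})$ with $\{x_{\mathbf{j}_B}, \xi\}\subset V$ as in the original proof, \cref{lemma:extend_variance} then lets me replace $\var^{(AB)}_{x_{\mathbf{j}_B}}(f)$ by $\var^{(AB)}_{V\cup\{x_{\mathbf{j}_B}\}}(f)$.

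Next I apply \cref{lemma:block_relax} to split off $x_{\mathbf{j}_B}$, conditioning on $\omega_{x_{\mathbf{j}_{0,0}}}=B$ (i.e.\ $EW_{\mathbf{j}_{0,0}}$ is $B$-super), which pays a factor $1/q_{BS}$. The first resulting summand, a variance at $x_{\mathbf{j}_B}$ with an adjacent $B$-super block, is bounded by the one-dimensional East model on the shortest $B$-traversable block-path $\Gamma\subset D^{(1)}_{0,0}\cup \mathcal{C}$ from $\mathbf{j}_B$ to $\mathbf{j}_{0,0}$: by \cite{chleboun2016relaxation}*{Theorem~2} the corresponding block-chain has relaxation cost $2^{\kappa \theta_{q_{BS}}\log_2 \theta_{q_{BS}}}$, which is subleading. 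The second summand is handled by \cite{couzinie2022front}*{Proposition~3.5}(i) applied to the two-dimensional East-type process induced on $X(\mathcal{C})\cong[0,N]^2$ by the constraints $c^{(\mathcal{G})}_{\cdot}$, yielding the main factor $2^{\theta_{q_{BS}}^2(1+\varepsilon/2)/4}$. In both cases, each auxiliary block-level transition on $EW_{\mathbf{j}}$ (between $B$-super and $B$-traversable non-super) is implemented by flipping the single spin at $x_{\mathbf{j}}\in\{\star,B\}$; under the conditioning that $EW_{\mathbf{j}}$ stays $B$-traversable, this produces exactly the vertex-level transition term $\mathds{1}_{\text{$\mathbf{j}$ $B$-traversable}}\,c^{*,B}_{\mathbf{j}}\var^{(AB)}_{x_{\mathbf{j}}}(f)$ on the right-hand side.

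Finally, I resolve the sums over $(\mathcal{C},\xi)$, noting that each box $\mathbf{j}\in\mathcal{Q}^{(B)}$ is overcounted at most $|\mathcal{Q}^{(B)}|$ times, a polynomial-in-$\theta_B$ factor that is absorbed into the $\varepsilon$ in the exponent together with the $1/q_{BS}$, $1/p>1/\Delta$ and $2^{\kappa\theta_{q_{BS}}\log_2\theta_{q_{BS}}}$ prefactors. Using $\theta_{q_{BS}}=\theta_B(1+o(1))$ from \cref{lemma:probability_limit} converts $2^{\theta_{q_{BS}}^2(1+\varepsilon/2)/4}$ into $2^{\theta_B^2(1+\varepsilon)/4}$ for $q_B$ small enough, giving the claim. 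The main obstacle is the bookkeeping in step three: one must check that reducing the auxiliary East dynamics on the block lattice (whose ``spins'' are configurations on whole boxes $EW_{\mathbf{j}}$) to single-site Dirichlet terms at the corners $x_{\mathbf{j}}$ really is cost-free, which relies crucially on the product structure of $\mu$ across the disjoint enlargements and on the fact that the state of $x_{\mathbf{j}}$ is the only degree of freedom distinguishing $B$-super from $B$-traversable (non-super) under the conditioning.
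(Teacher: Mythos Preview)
Your proposal is correct and follows essentially the same route as the paper: decompose over the smallest conforming coarse-grained grid $\mathcal{G}=\mathcal{C}$ and the top $B$-super intersection $\xi$, reduce the event on the corner vertices $\{x_{\mathbf{j}}:\mathbf{j}\in\mathcal{C}\}$ to $\omega_{x_{\mathbf{j}}}\in\{\star,A,B\}$, rerun the one- and two-dimensional East bounds from \cref{lemma:grid_relaxation} on these corners, upper bound $c_y^{(\mathcal{G})}$ by $\mathds{1}_{\text{$\mathbf{j}$ $B$-traversable}}\,c_{\mathbf{j}}^{*,B}$, and resolve the sum. One small imprecision: you are not ``conditioned on every box on $\mathcal{Q}^{(B,*)}$ being $B$-traversable'' but only on those along $\mathcal{C}$ and $D^{(1)}_{0,0}$; your subsequent execution already handles this correctly.
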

\begin{remark}
        Notice that on the right hand side we only take the variance over the
        top-right corner points of each box instead of the variance over
        $EW_{\mathbf{j}}$. This is because with our conditions we cannot relax
        all of $EW_{\mathbf{j}}$, since the $A$-vacancies in $B$-traversable
        boxes may not reach all vertices in $EW_{\mathbf{j}}$.
\end{remark}
\begin{proof}
        As in the proof of \cref{lemma:grid_relaxation} let $\mathcal{G}$ be
        the smallest $B$-grid with $B$-traversable crossings and $\xi$ the
        vertex with the highest coordinate in the $\prec$-partial order such
        that if $\mathbf{j}_{i,j}=\xi$ then there is an $\mathbf{e}\in
        \mathcal{B}$ with a $B$-vacancy on $x_{\mathbf{j}_{(i,j)+\mathbf{e}}}$.
        Then,
        \begin{equation}
                \mu(\mathds{1}_{\mathcal{E}^{(B,*)}}
                \var^{(AB)}_{x_{\mathbf{j}_B}}(f))
                = \sum_{\mathcal{C}\ B\text{-grid}}
                \sum_{\mathbf{j}'\in X(\mathcal{C})} 
                \mu\left[\mathds{1}_{\mathcal{G}=\mathcal{C},\xi=\mathbf{j}',
                        \mathcal{E}^{(B,2,*)}}
                \var^{(AB)}_{x_{\mathbf{j}_B}}(f)\right].
        \end{equation}
        To save some space let us write $\tilde{\mathcal{E}}:=
        \{\mathcal{G}=\mathcal{C}\}\cap\{\xi=\mathbf{j}'\}\cap
        \mathcal{E}^{(B,2,*)}$. We upper bound a generic summand so fix a
        $\mathcal{C}$ and a $\mathbf{j}'$. Consider the subset
        $\mathcal{C}^{(TR,*)}:=\{x_{\mathbf{j}}\colon \mathbf{j}\in
        \mathcal{C}\}$ of top right corners of $EW_{\mathbf{j}}$. The event $
        \{\mathcal{G}=\mathcal{C}\}$ reduces to $\omega_{x_{\mathbf{j}}}\in
        \{A,B,\star\}$ on any $x_{\mathbf{j}}\in \mathcal{C}^{(TR,*)}$. In an
        analogous proof to \cref{lemma:grid_relaxation} we find
        \begin{align}
                \mu\left[\mathds{1}_{\tilde{\mathcal{E}}}
                \var^{(AB)}_{x_{\mathbf{j}_B}}(f)\right]
                   &\le 2^{\theta_B^2(1+\varepsilon)/4}
                \sum_{y\in \mathcal{C}^{(TR,*)}}
                \mu
                \left[\mathds{1}_{\tilde{\mathcal{E}}}
                c_y^{(\mathcal{G})}\var_y^{(AB)}(f)\right]\label{eqn:relax_this}
        \end{align}
        for $q_B$ small enough where $c_{x_{\mathbf{j}}}^{(\mathcal{G})}$ is
        the constraint that there is an $\mathbf{e}\in \mathcal{B}$ such that
        $\mathbf{j}+\mathbf{e}\in \mathcal{G}$ and $x_{\mathbf{j}+\mathbf{e}}$
        has a $B$-vacancy. Given $\tilde{\mathcal{E}}$ any $EW_{\mathbf{j}}$ for $\mathbf{j}\in
        \mathcal{G}$ is $B$-traversable so that
        $\mathds{1}_{\tilde{\mathcal{E}}}c_y^{(\mathcal{G})}\le
        \mathds{1}_{\tilde{\mathcal{E}}}c_{\mathbf{j}}^{*,B}$. We can thus
        upper bound the sum in the r.h.s.\ by
        \begin{align}
                \sum_{y\in \mathcal{C}^{(TR,*)}}
                \mu
                \left[\mathds{1}_{\tilde{\mathcal{E}}}
                c_y^{(\mathcal{G})}\var_y^{(AB)}(f)\right]\label{eqn:relax_this}
                \le 
                \sum_{\mathbf{j}\in \mathcal{Q}^{(B,*)}}
                \mu
                \left[\mathds{1}_{\tilde{\mathcal{E}}, \mathbf{j}\ 
                \text{$B$-traversable}}c_{\mathbf{j}}^{*,B}\var_{x_{\mathbf{j}}}^{(AB)}(f)\right].
        \end{align}
        We get the claim after resolving the sum over $\mathcal{G}$ and $\xi$
        and taking into account the overcounting which we can absorb into the
        $\varepsilon$.
\end{proof}
Given a $B$-traversable box with a neighbouring $B$-super box we want to
recover from a generic term in the r.h.s.\ in \cref{cor:block_grid_relax} a
Dirichlet form of the $ABC$-model. To that end, let us isolate two generic
situations first. The first explains how to use the $A$-vacancies on
$W_{\mathbf{j}}$ to relax $EW_{\mathbf{j}}$.
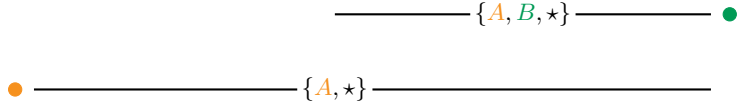
\begin{figure}[t]
        \centering
        \begin{tikzpicture}
                \draw[thick] (0,0) -- (1.8,0);
                \draw[thick] (3.2,0) -- (5,0);
                \node[] at (2.5,0) {\footnotesize $\{{\color{BurntOrange} A}, {\color{ForestGreen} B}, \star\}$};

                \draw[thick] (-4,-1) -- (-0.5,-1);
                \draw[thick] (0.5,-1) -- (5,-1);
                \node[] at (0,-1) {\footnotesize $\{{\color{BurntOrange} A}, \star\}$};

                \filldraw[BurntOrange](-4.25,-1)circle[radius=2.5pt] {};
                \filldraw[ForestGreen](5.25,0)circle[radius=2.5pt] {};
        \end{tikzpicture}
        \caption{Starting situation from
                \cref{lemma:line_relax}.\label{fig:line_relax}}
\end{figure}
\begin{lemma}\label{lemma:line_relax}
        Let $C_2,C_1=O(\theta_B^{3})$ be two constants and consider two paths
        \begin{align}
                \Gamma_1&=\{0,\mathbf{e}_1,\ldots, C_1\mathbf{e}_1\},\\
                \Gamma_2&=\{-C_2\mathbf{e}_1-\mathbf{e}_2, -(C_2-1) \mathbf{e}_1 -
                \mathbf{e}_2, \ldots, C_1\mathbf{e}_1-\mathbf{e}_2\}.
        \end{align}
        On these paths define the event $\mathcal{A}$ that on $\Gamma_1$ we
        find no $C$-vacancies, on $\Gamma_2$ no $B$- or $C$-vacancies,
        there is an $A$ vacancy on  $\Gamma_2\setminus (\Gamma_1-\mathbf{e}_2)$ and
        $\omega_{(C_1+1)\mathbf{e}_1}=B$ (see \cref{fig:line_relax}). Then we
        find a constant $\kappa>0$ such that for any $y\in \Gamma_1$
        \begin{equation}
                \mu(\mathds{1}_{\mathcal{A}}\var_{y}^{(AB)}(f))
                \le 2^{\kappa \theta_B\log_2(\theta_B)}
                \mu(\mathds{1}_{\mathcal{A}}\mathcal{D}_{\Gamma_{1}\cup
                        \Gamma_2}(f)).
        \end{equation}
\end{lemma}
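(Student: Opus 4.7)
The plan is to split the local variance at $y$ into $A$- and $B$-transition contributions (via the conditional form of \cref{lemma:var_as_trans} together with $p>\Delta$) and then, for each contribution, to reduce to an auxiliary one-dimensional East model of length $O(\theta_B^3)$ and parameter at least $q_B$. The cost $2^{\kappa\theta_B\log_2\theta_B}$ will arise from the finite-volume Poincar\'e inequality for the one-dimensional East model of \cite{chleboun2016relaxation}*{Theorem~2}, applied to such a chain in exactly the way it was already used inside the proof of \cref{lemma:grid_relaxation}.

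For the $A$-contribution I would argue as follows. On $\mathcal{A}$ the line $\Gamma_2$ takes values in $\{\star,A\}^{\Gamma_2}$ and carries an $A$-vacancy strictly west of $\Gamma_1-\mathbf{e}_2$. Since $\mathbf{e}_1,\mathbf{e}_2\in\mathcal{P}(A)$, that $A$-vacancy can play the role of ergodic boundary condition for a one-dimensional East chain on $\Gamma_2\cup\{y\}$ (with $y$ attached at the north-east end of $\Gamma_2$) whose state space is $\{\star,A\}$. Applying \cite{chleboun2016relaxation}*{Theorem~2} to this chain, with parameter $q_A\ge q_B$ and length $O(\theta_B^3)$, bounds $\mu(\mathds{1}_{\mathcal{A}}q_A(\nabla_y^{(A)}f)^2)$ by $2^{\kappa_1\theta_B\log_2\theta_B}$ times the $A$-part of $\mu(\mathds{1}_{\mathcal{A}}\mathcal{D}_{\Gamma_1\cup\Gamma_2}(f))$.

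The $B$-contribution is harder and constitutes the main obstacle. The naive analogue would use the fixed $B$-vacancy at $(C_1+1)\mathbf{e}_1$ as ergodic boundary condition for a one-dimensional East chain running west along $\Gamma_1\cup\{y\}$ (since $-\mathbf{e}_1\in\mathcal{P}(B)$). However $\Gamma_1$ may carry $A$-vacancies which block the $\{\star,B\}$-chain, so a direct reduction fails. I would resolve this by a two-step block relaxation via \cref{lemma:block_relax}: first use the $A$-vacancies of $\Gamma_2$ to wipe any $A$-vacancies from $\Gamma_1$, by applying the $A$-East argument of the previous paragraph site by site along $\Gamma_1$ at total cost $2^{\kappa_2\theta_B\log_2\theta_B}$; then, conditional on $\Gamma_1\in\{\star,B\}^{\Gamma_1}$, invoke \cite{chleboun2016relaxation}*{Theorem~2} for the one-dimensional $B$-East chain on $\Gamma_1\cup\{y\}$ with parameter $q_B$ and length $O(\theta_B^3)$, at a further cost $2^{\kappa_3\theta_B\log_2\theta_B}$. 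The product of the two exponential factors is absorbed into a single $2^{\kappa\theta_B\log_2\theta_B}$.

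Combining the $A$- and $B$-contributions and absorbing any polynomial-in-$\theta_B$ overcounting between the two auxiliary Dirichlet forms into $\kappa$ then yields the stated inequality. The main difficulty, as just indicated, is exactly the cooperation phenomenon advertised in the introduction: the frequent $A$-vacancies on $\Gamma_2$ must first unblock $\Gamma_1$ before the one-dimensional $B$-East chain can propagate the distinguished $B$-vacancy westward to $y+\mathbf{e}_1$, and formalising this cooperation through block relaxation is what turns a failed one-step argument into a two-step one with the claimed cost.
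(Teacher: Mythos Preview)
Your plan has a genuine gap in the $B$-contribution step. In the regime of \cref{thm:abc_relaxation}(3.ii) with $q_{\max}=q_A$, the density $q_A$ need not be small (it may even stay bounded away from~$0$), so the event $\{\Gamma_1\in\{\star,B\}^{\Gamma_1}\}$ has conditional probability
\[
\Bigl(\tfrac{p+q_B}{p+q_A+q_B}\Bigr)^{|\Gamma_1|}
= \bigl(1+\tfrac{q_A}{p+q_B}\bigr)^{-|\Gamma_1|}
\]
with $|\Gamma_1|=O(\theta_B^3)$. Any block-relaxation scheme that conditions on $\Gamma_1$ being $A$-free --- whether in one shot or ``site by site'' --- therefore picks up a factor $2^{\Theta(\theta_B^3)}$, not $2^{\kappa\theta_B\log_2\theta_B}$. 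Running the $A$-East argument on $\Gamma_2$ controls the \emph{Dirichlet} cost of $A$-flips along $\Gamma_1$; it does not reduce the \emph{entropic} cost of insisting that all those sites simultaneously avoid $A$. So your two-step scheme ``wipe $A$ off $\Gamma_1$, then run the $\{\star,B\}$-East chain'' fails exactly at the wiping step.

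The paper circumvents this by reversing the order and enlarging the state space: it runs the one-dimensional East auxiliary model on $\Gamma_1$ \emph{first}, but with local state space $\{\star,A,B\}$ and constraint $c_x^B$, sampling from $\mu^{(AB)}$. This is a generalized East process whose spectral gap equals that of the binary East model with vacancy density $q_B/(p+q_A+q_B)\sim q_B$, so the cost is the desired $2^{\kappa\theta_B\log_2\theta_B}$, \emph{without} ever conditioning $\Gamma_1$ to be $A$-free. The output is $\sum_{x\in\Gamma_1}\mu(\mathds{1}_{\mathcal{A}}c_x^B\var_x^{(AB)}(f))$; only now, per site $x$, does one invoke block relaxation with the single vertex $x-\mathbf{e}_2\in\Gamma_2$ (cost $O(1/q_A)$, harmless) to produce $c_x^A$ for the $A$-transition at $x$, and the enlargement trick on $\Gamma_2$ for the remainder. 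Your $A$-contribution paragraph is essentially this last piece and is fine; the missing idea is that the $B$-East step must be done on the three-letter alphabet.
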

\begin{proof}
        Consider the auxiliary model on $\Gamma_1$ with constraints $c_x^B$
        that samples from $\mu^{(AB)}$ at a legal ring. If the starting state
        is in $\mathcal{A}$ then any later state is as well and the spectral
        gap of the auxiliary process agrees with that of a one-dimensional East
        model with good boundary conditions. Thus, we can extend the variance
        (\cref{lemma:extend_variance}) and find a constant $\kappa$ with
        \cite{chleboun2016relaxation}*{Theorem~2} such that
        \begin{equation}
                \mu(\mathds{1}_{\mathcal{A}}\var^{(AB)}_{y}(f))
                \le \mu(\mathds{1}_{\mathcal{A}}\var^{(AB)}_{\Gamma_1}(f))
                \le 2^{\kappa\theta_B\log_2(\theta_B)}
                \sum_{x\in \Gamma_1}
                \mu(\mathds{1}_{\mathcal{A}} c^B_x \var^{(AB)}_{x}(f)),%
                        \label{eqn:toym1}
        \end{equation}
        for $q_B$ small enough. For each $x$ we can extend the variance and use
        block relaxation, \cref{lemma:block_relax}, to get
        \begin{equation}
                \mu(\mathds{1}_{\mathcal{A}} c^B_x \var^{(AB)}_{x}(f))
                \le \frac{C}{q_A}
                \mu\left[\mathds{1}_{\mathcal{A}} c^B_x
                (\mathds{1}_{\omega_{x-\mathbf{e}_2}=A}\var^{(AB)}_x(f)
                +\var^{(A)}_{x-\mathbf{e}_2}(f))\right],
        \end{equation}
        for $q_B$ small enough. For the first summand we can write the variance
        as transition terms (\cref{lemma:var_as_trans}) and use that
        $\mathds{1}_{\omega_{x-\mathbf{e}_2}=A}\le c_x^A$ to recover a
        term of the Dirichlet form. For the second summand we can use the
        enlargement trick (\cite{chleboun2016relaxation}*{Lemma~3.6}) so that
        \begin{align}
                \mu(\mathds{1}_{\mathcal{A}} \var_y^{(AB)}(f))
                \le 2^{\kappa\theta_B\log_2(\theta_B)}
                \sum_{x\in \Gamma_1} \mu(\mathcal{D}_{\{x\}\cup \Gamma_2}).
        \end{align}
        The overcounting is of order $O(\theta_B^3)$ and can thus be absorbed into
        the $\kappa$ and we recover the claim.
\end{proof}
Being able to relax $EW_{\mathbf{j}}$ means that we can move the $B$-vacancy
freely on it using the block relaxation Lemma. The second of our isolated
results moves the $B$-vacancy from a neighbouring $B$-super box to a
$B$-traversable box. This requires the path method since $x_{\mathbf{j}}$ does
not neighbour a vertex in $W_{\mathbf{j}}$ and so we can not use
\cref{lemma:line_relax} together with the block relaxation Lemma to move a
$B$-vacancy here.
\begin{figure}[t]
        \centering
        \begin{tikzpicture}[scale=0.695]
                \draw[thick] (0,0) -- ++(0,4) -- ++(1.25,0);
                \draw[thick] (2.75,4) -- (4,4) -- (4,0) -- (0,0);
                \node[] at (2,4) {\footnotesize $\{{\color{BurntOrange} A}, \star\}$};

                \draw[thick] (4.5,0) -- ++(0,4.5) -- ++(-1.25,0);
                \draw[thick] (1.25,4.5) -- ++(-1.25,0);
                \node[] at (2.25,4.5) {\footnotesize $\{{\color{BurntOrange}A},{\color{ForestGreen} B},\star\}$};

                \filldraw[BurntOrange](0,1.4)circle[radius=2.5pt] {};
                \filldraw[BurntOrange](3,0)circle[radius=2.5pt] {};

                \draw[->,thick,BurntOrange] (3,0) -- (4,0) -- (4,3.9);

                \begin{scope}[shift={(5,0)}]
                        \draw[thick] (0,0) -- ++(0,4) -- ++(1.25,0);
                        \draw[thick] (2.75,4) -- (4,4) -- (4,0) -- (0,0);
                        \node[] at (2,4) {\footnotesize $\{{\color{BurntOrange} A}, \star\}$};

                        \draw[thick] (4.5,0) -- ++(0,4.5) -- ++(-1.25,0);
                        \draw[thick] (1.25,4.5) -- ++(-1.25,0);
                        \draw[->,thick,ForestGreen] (4.5,4.5) -- (0.1,4.5);
                        \node[] at (2.25,4.5) {\footnotesize $\{{\color{BurntOrange}A},{\color{ForestGreen} B},\star\}$};

                        \filldraw[BurntOrange](0,3.6)circle[radius=2.5pt] {};
                        \filldraw[BurntOrange](2.5,0)circle[radius=2.5pt] {};

                        \filldraw[ForestGreen](4.5,4.5)circle[radius=2.5pt] {};
                \end{scope}

                \draw[thin] (4.5,4.5) circle (1cm);

                \draw[->, thick] (5, 5) to[bend left] (11,5);

                \begin{scope}[shift={(11,2.5)}]
                        \draw[thin] (0,0) -- (0,2) -- (2,2) -- (2,0) -- cycle;
                        \node[font=\footnotesize] at (0.25,1.75) {$1$};
                        \begin{scope}[shift={(1,0.75)}]
                                \draw[thin] (-1,0) -- (-0.5,0) -- (-0.5,-0.75);
                                \draw[thin] (-1,0.5) -- (0,0.5) -- (0,-0.75);
                                \draw[thin] (0.5,0.5) -- (1,0.5);
                                \draw[thin] (0.5,-0.75) -- (0.5,0) -- (1,0);
                                \filldraw[BurntOrange](-0.5,0)circle[radius=2.5pt] {};
                                \filldraw[](0,0)circle[radius=2.5pt] {};
                                \node[font=\footnotesize] at (0.5,0) {$\star$};
                                \filldraw[](0,0.5)circle[radius=2.5pt] {};
                                \filldraw[ForestGreen](0.5,0.5)circle[radius=2.5pt] {};
                        \end{scope}
                \end{scope}

                \begin{scope}[shift={(13.5,2.5)}]
                        \draw[thin] (0,0) -- (0,2) -- (2,2) -- (2,0) -- cycle;
                        \node[font=\footnotesize] at (0.25,1.75) {$2$};
                        \begin{scope}[shift={(1,0.75)}]
                                \filldraw[BurntOrange](-0.5,0)circle[radius=2.5pt] {};
                                \filldraw[](0,0)circle[radius=2.5pt] {};
                                \filldraw[ForestGreen](0.5,0)circle[radius=2.5pt] {};
                                \filldraw[](0,0.5)circle[radius=2.5pt] {};
                                \filldraw[ForestGreen](0.5,0.5)circle[radius=2.5pt] {};
                        \end{scope}
                \end{scope}

                \begin{scope}[shift={(16,2.5)}]
                        \draw[thin] (0,0) -- (0,2) -- (2,2) -- (2,0) -- cycle;
                        \node[font=\footnotesize] at (0.25,1.75) {$3$};
                        \begin{scope}[shift={(1,0.75)}]
                                \filldraw[BurntOrange](-0.5,0)circle[radius=2.5pt] {};
                                \node[font=\footnotesize] at (0,0) {$\star$};
                                \filldraw[ForestGreen](0.5,0)circle[radius=2.5pt] {};
                                \filldraw[](0,0.5)circle[radius=2.5pt] {};
                                \filldraw[ForestGreen](0.5,0.5)circle[radius=2.5pt] {};
                        \end{scope}
                \end{scope}

                \begin{scope}[shift={(16,0)}]
                        \draw[thin] (0,0) -- (0,2) -- (2,2) -- (2,0) -- cycle;
                        \node[font=\footnotesize] at (0.25,1.75) {$4$};
                        \begin{scope}[shift={(1,0.75)}]
                                \filldraw[BurntOrange](-0.5,0)circle[radius=2.5pt] {};
                                \filldraw[BurntOrange](-0.5,0)circle[radius=2.5pt] {};
                                \filldraw[BurntOrange](0,0)circle[radius=2.5pt] {};
                                \node[font=\footnotesize] at (0.5,0) {$\star$};
                                \filldraw[](0,0.5)circle[radius=2.5pt] {};
                                \filldraw[ForestGreen](0.5,0.5)circle[radius=2.5pt] {};
                        \end{scope}
                \end{scope}

                \begin{scope}[shift={(13.5, 0)}]
                        \draw[thin] (0,0) -- (0,2) -- (2,2) -- (2,0) -- cycle;
                        \node[font=\footnotesize] at (0.25,1.75) {$5$};
                        \begin{scope}[shift={(1,0.75)}]
                                \filldraw[BurntOrange](-0.5,0)circle[radius=2.5pt] {};
                                \filldraw[BurntOrange](0,0)circle[radius=2.5pt] {};
                                \node[font=\footnotesize] at (0.5,0) {$\star$};
                                \node[font=\footnotesize] at (0,0.5) {$\star$};
                                \filldraw[ForestGreen](0.5,0.5)circle[radius=2.5pt] {};
                        \end{scope}
                \end{scope}

                \begin{scope}[shift={(11,0)}]
                        \draw[thin] (0,0) -- (0,2) -- (2,2) -- (2,0) -- cycle;
                        \node[font=\footnotesize] at (0.25,1.75) {$6$};
                        \begin{scope}[shift={(1,0.75)}]
                                \filldraw[BurntOrange](-0.5,0)circle[radius=2.5pt] {};
                                \filldraw[](0,0)circle[radius=2.5pt] {};
                                \node[font=\footnotesize] at (0.5,0) {$\star$};
                                \filldraw[BurntOrange](0,0.5)circle[radius=2.5pt] {};
                                \begin{scope}
                                        \clip (1,1) rectangle (0,0);
                                        \filldraw[ForestGreen](0,0.5)circle[radius=2.5pt] {};
                                \end{scope}
                                \filldraw[ForestGreen](0.5,0.5)circle[radius=2.5pt] {};
                        \end{scope}
                \end{scope}

        \end{tikzpicture}
        \caption{Left: Two enlarged boxes next to each other, the left is
        $B$-traversable, the right $B$-super. Right: The sequence of legal
        moves that brings $B$ from a neighbouring $B$-super box to a $B$-traversable
        box, or alternatively removes it, indicated by the two-coloured
        node.\label{fig:tr_corner_pm}}
\end{figure}
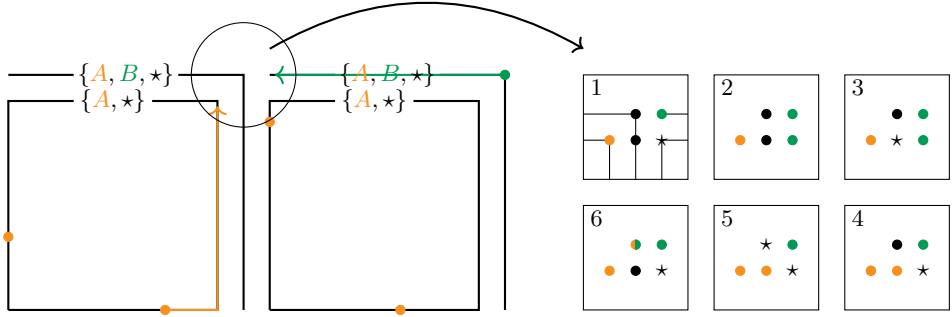
\begin{lemma}\label{lemma:grid_local_path_method}
        Consider the set $V=\{0,-\mathbf{e}_2,-\mathbf{e}_1-\mathbf{e}_2\}$ and
        the event $\mathcal{A}$ given by the $\omega\in \Omega$ such that
        $\omega_{-\mathbf{e}_1-\mathbf{e}_2}=A$,
        $\omega_{\mathbf{e}_1-\mathbf{e}_2}=\star$ and
        $\omega_{\mathbf{e}_1}=B$. Further define the event $\mathcal{A}'$
        given by the configurations $\omega$ such that $\omega_x\in \{\star,
        A,B\}$ for $x\in \{-\mathbf{e}_2,0,\mathbf{e}_1\}$ and $\omega_x\in
        \{\star, A\}$ for $x\in
        \{-\mathbf{e}_1-\mathbf{e}_2,\mathbf{e}_1-\mathbf{e}_2\}$. Then, for
        $q_B$ small enough we find a constant $\kappa$ such that
        \begin{equation}
                \mu_V\left[\mathds{1}_{\mathcal{A}}
                        \var^{(AB)}_{0}(f)\tc \mathcal{A}'\right]
                \le 2^{\kappa \theta_B} \sum_{x\in V} \mathcal{D}_{V}(f).
        \end{equation}
\end{lemma}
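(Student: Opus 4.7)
The strategy is to split $\var_0^{(AB)}(f)$ into $A$- and $B$-transition contributions at $0$ via \cref{lemma:var_as_trans}, handle the $B$-term directly using the $B$-vacancy at $\mathbf{e}_1$, and apply the path method (\cref{lemma:path_method}) to the $A$-term, using the $A$-vacancy at $-\mathbf{e}_1-\mathbf{e}_2$ as a facilitator to transiently place an $A$-vacancy at $-\mathbf{e}_2$.

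By \cref{lemma:var_as_trans} applied to the three-state local space $\{\star,A,B\}$ at $0$, one has $\var_0^{(AB)}(f)\le C\bigl[q_A(\nabla_0^{(A)} f)^2 + q_B(\nabla_0^{(B)} f)^2\bigr]$ for a constant $C$ depending only on $\Delta$, since $p>\Delta$ controls the conditional normalisation. Under $\mathcal{A}$ the condition $\omega_{\mathbf{e}_1}=B$ together with $-\mathbf{e}_1\in \mathcal{P}(B)$ gives $c_0^B(\omega)=1$, so the $B$-summand is already, up to an absolute constant, a term of $\mathcal{D}_V(f)$ at vertex $0$.

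For the $A$-term I would build, for each $\omega\in\mathcal{A}\cap\mathcal{A}'$, a legal path of length $n=O(1)$ in $V$ joining the configurations $\star\cdot\omega_{\setminus 0}$ and $A\cdot\omega_{\setminus 0}$, using the following template: plant an $A$ at $-\mathbf{e}_2$ via $c_{-\mathbf{e}_2}^A=1$ (guaranteed by $\omega_{-\mathbf{e}_1-\mathbf{e}_2}=A$), flip $0$ by means of this fresh $A$-facilitator, and restore $-\mathbf{e}_2$. When $\omega_{-\mathbf{e}_2}=A$ the flip at $0$ is legal in a single step; when $\omega_{-\mathbf{e}_2}=\star$ the path has three steps; when $\omega_{-\mathbf{e}_2}=B$ one must first evacuate $-\mathbf{e}_2$, and since the only $B$-facilitator for $-\mathbf{e}_2$ available under $\mathcal{A}$ (where $\omega_{\mathbf{e}_1-\mathbf{e}_2}=\star$) is the origin, this forces a transient $B$-placement at $0$ (legal by $c_0^B=1$) and a symmetric undoing, producing the six-step path drawn in \cref{fig:tr_corner_pm}. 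Along any such path every intermediate configuration differs from $\omega$ at at most two sites of $V$, each site swapped between states of relative weight at most $q_{\max}/q_{\min}\le 2^{\theta_B}$, so $\mu_V(\omega)/\mu_V(\omega^{(i)})\le 2^{O(\theta_B)}$; with $1/\min(\mathbf{q},p)\le 2^{\theta_B}$ coming from the prefactor in \cref{lemma:path_method}, one obtains $\mu_V(\omega)(f(A\cdot\omega_{\setminus 0})-f(\star\cdot\omega_{\setminus 0}))^2\le 2^{\kappa\theta_B}\mathcal{D}_V(f)(\omega)$ for a suitable $\kappa$.

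Summing the $B$- and $A$-contributions, averaging under $\mu_V(\cdot\tc\mathcal{A}')$ (whose normalisation is bounded away from $0$ uniformly as $q_B\to 0$), and absorbing the bounded number of pairwise transition terms coming from expanding the three-state variance yields the claim. The main obstacle is the case $\omega_{-\mathbf{e}_2}=B$: because $\omega_{\mathbf{e}_1-\mathbf{e}_2}=\star$, the only way to clear $-\mathbf{e}_2$ is through a $B$ temporarily planted at $0$, which doubles the length of the path and is the source of the $2^{\kappa\theta_B}$ prefactor; a purely constant prefactor is not achievable by a path argument confined to the three vertices of $V$.
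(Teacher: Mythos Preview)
Your overall strategy---splitting $\var_0^{(AB)}(f)$ into $A$- and $B$-transitions via \cref{lemma:var_as_trans} and treating the $A$-term by the path method---is exactly the paper's approach, and your handling of the $B$-term and of the sub-cases $\omega_{-\mathbf{e}_2}\in\{\star,A\}$ is correct. There is, however, a genuine gap in the sub-case $\omega_{-\mathbf{e}_2}=B$.

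Your proposed path plants a transient $B$ at the origin to evacuate $-\mathbf{e}_2$, then installs $A$ at $-\mathbf{e}_2$ and subsequently at $0$. But the ``symmetric undoing'' fails: once $\omega_0=A$, the only potential $B$-facilitators for $-\mathbf{e}_2$ are $0$ and $\mathbf{e}_1-\mathbf{e}_2$, both now in $\{A,\star\}$, so the restoration $-\mathbf{e}_2:\star\to B$ is illegal. In fact a short case analysis on the nine possible states of the pair $(\omega_0,\omega_{-\mathbf{e}_2})\in\{\star,A,B\}^2$ shows that $(A,B)$ is unreachable from $(\star,B)$ by any legal path that changes only sites of $V$ (under the constraints guaranteed by $\mathcal{A}$). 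The six-step path in \cref{fig:tr_corner_pm} that you cite is \emph{not} the path you describe: it first plants $B$ at $\mathbf{e}_1-\mathbf{e}_2$ (legal via the fixed $B$ at $\mathbf{e}_1$), uses this vertex as the persistent $B$-facilitator for $-\mathbf{e}_2$, and removes it at the very end, again via $\mathbf{e}_1$. This detour through $\mathbf{e}_1-\mathbf{e}_2\notin V$ is precisely what allows the path to close. The cost estimate $2^{\kappa\theta_B}$ you give is correct for this corrected path, but the Dirichlet form on the right must be taken over $V\cup\{\mathbf{e}_1-\mathbf{e}_2\}$ rather than $V$; this is a harmless enlargement for the application in \cref{lemma:b_block_relax}.
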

\begin{proof}
        The path to use the path method with, and thus the proof, is apparent
        from \cref{fig:tr_corner_pm}.
\end{proof}
Armed with these results we can upper bound the right hand side in
\cref{cor:block_grid_relax}. For this we introduce the notation
$EW(V)=\cup_{\mathbf{j}\in V} EW_{\mathbf{j}}$ for any subset $V\subset
\Z^2$.
\begin{lemma}\label{lemma:b_block_relax}
        Let $\mathbf{j}\in \Z^2$ and $V=\{\mathbf{j},\mathbf{j} +\mathbf{e}_1,
        \mathbf{j}+\mathbf{e}_2\}$. We find a constant $\kappa>0$ such that
        \begin{equation}
                \mu\left[\mathds{1}_{\text{$\mathbf{j}$ $B$-traversable}}
                        c_{\mathbf{j}}^{*,B}\var_{x_{\mathbf{j}}}^{(AB)}(f)
                \right]
                \le 2^{\kappa\theta_B\log_2(\theta_B)}
                \mathcal{D}_{EW(V)}(f)
        \end{equation}
        for $q_B$ small enough.
\end{lemma}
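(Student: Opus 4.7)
Without loss of generality assume that the positive neighbour witnessing $c_{\mathbf{j}}^{*,B}=1$ is $\mathbf{j}+\mathbf{e}_1$, so that $EW_{\mathbf{j}+\mathbf{e}_1}$ is $B$-super and in particular $\omega_{x_{\mathbf{j}+\mathbf{e}_1}}=B$ (the vertical case is symmetric). The overall strategy is to decompose $\var^{(AB)}_{x_{\mathbf{j}}}(f)$ by iterated applications of the block relaxation lemma (\cref{lemma:block_relax}) in such a way that each resulting ``atomic'' variance can be bounded either by \cref{lemma:grid_local_path_method} (the corner transition that ferries a $B$-vacancy from the super box into the traversable box) or by \cref{lemma:line_relax} (the one-dimensional East-type relaxation that, once a $B$-vacancy sits near $W_{\mathbf{j}}$, uses the $A$-vacancies on the bottom/left boundary of $W_{\mathbf{j}}$ to move it around $EW_{\mathbf{j}}$).

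Concretely, I would first identify a small region $V^\ast\subset EW_{\mathbf{j}}\cup EW_{\mathbf{j}+\mathbf{e}_1}$ bridging $x_{\mathbf{j}}$ and $x_{\mathbf{j}+\mathbf{e}_1}$ (namely the handful of vertices that play the role of $V=\{0,-\mathbf{e}_2,-\mathbf{e}_1-\mathbf{e}_2\}$ in \cref{lemma:grid_local_path_method}, suitably translated), and then apply \cref{lemma:block_relax} to split
\begin{equation*}
\var^{(AB)}_{x_{\mathbf{j}}}(f)\le \frac{C}{q_Aq_B}\bigl(\var^{(AB)}_{V^\ast}(f)+\mathds{1}_{\mathcal{A}}\var^{(AB)}_{x_{\mathbf{j}}}(f)\bigr),
\end{equation*}
where $\mathcal{A}$ is the prerequisite event of \cref{lemma:grid_local_path_method} (an $A$-vacancy on the diagonal, $\star$ on the intermediate vertex, $B$ at the corner of the super box). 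On the event $\{\mathbf{j}\text{ is }B\text{-traversable}\}\cap\{c_{\mathbf{j}}^{\ast,B}=1\}$ these prerequisites occur with $\mu$-probability bounded below by a constant (coming from $q_A$, $q_B$ and $p>\Delta$ together with $B$-traversability of $W_{\mathbf{j}}$), so the variance $\var^{(AB)}_{V^\ast}(f)$ is immediately controlled by $\mathcal{D}_{V^\ast}(f)$ via \cref{lemma:grid_local_path_method} at cost $2^{\kappa\theta_B}$.

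It then remains to bound $\mathds{1}_{\mathcal{A}}\var^{(AB)}_{x_{\mathbf{j}}}(f)$. On $\mathcal{A}$ there is a $B$-vacancy at a vertex of $EW_{\mathbf{j}}$ adjacent to $W_{\mathbf{j}}$, and by $B$-traversability the strips just inside $W_{\mathbf{j}}$ carry $A$-vacancies with the geometry required by \cref{lemma:line_relax}. Iterating \cref{lemma:line_relax} once horizontally and once vertically (again via \cref{lemma:block_relax} and extending the variance as in \cref{lemma:extend_variance}) propagates the $B$-vacancy along a pair of paths $\Gamma_1,\Gamma_2\subset EW_{\mathbf{j}}$ ending at $x_{\mathbf{j}}$, each step contributing a factor $2^{\kappa\theta_B\log_2\theta_B}$ and a Dirichlet form on $EW_{\mathbf{j}}$. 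Both legs lie inside $EW(V)$, so assembling everything yields
\begin{equation*}
\mu\!\left[\mathds{1}_{\mathbf{j}\,B\text{-traversable}}\,c_{\mathbf{j}}^{\ast,B}\var^{(AB)}_{x_{\mathbf{j}}}(f)\right]\le 2^{\kappa'\theta_B\log_2\theta_B}\,\mathcal{D}_{EW(V)}(f)
\end{equation*}
after absorbing the $O(L^2)=O(\theta_B^6)$ overcounting between vertices in $EW_{\mathbf{j}}$ into the constant $\kappa'$ (this is polynomial in $\theta_B$ and is dwarfed by the $2^{\kappa\theta_B\log_2\theta_B}$ prefactor).

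The main obstacle is the careful choice of the bridging region $V^\ast$ and the intermediate paths $\Gamma_1,\Gamma_2$: they must be chosen deterministically (so the event $\mathcal{A}$ has support disjoint from the variance we are extending) yet they must fit the precise geometric hypotheses of \cref{lemma:grid_local_path_method,lemma:line_relax}. In particular, one has to verify that the single-vertex gap between $EW_{\mathbf{j}}$ and $EW_{\mathbf{j}+\mathbf{e}_1}$ can be handled by the same corner-transition argument, and that the $A$-vacancies guaranteed by $B$-traversability on the bottom and left of $W_{\mathbf{j}}$ suffice to play the role of the $A$-vacancy at $-\mathbf{e}_1-\mathbf{e}_2$ in \cref{lemma:grid_local_path_method}; a short preliminary block relaxation bringing the $A$-vacancy to a designated vertex of $W_{\mathbf{j}}$, at constant cost, resolves this.
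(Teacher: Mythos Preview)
Your overall strategy --- extend the variance to a small bridging region, block-relax into two summands, and control them with \cref{lemma:grid_local_path_method} and \cref{lemma:line_relax} --- is the paper's, but you have assigned the two lemmas to the wrong summands, and there is a geometric misreading of the event $\mathcal{A}$.

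After translating by $x_{\mathbf{j}}$ the event $\mathcal{A}$ of \cref{lemma:grid_local_path_method} places the $B$-vacancy at $x_{\mathbf{j}}+\mathbf{e}_1$, i.e.\ one lattice step from $x_{\mathbf{j}}$ and inside $EW_{\mathbf{j}+\mathbf{e}_1}$ (not at the far corner $x_{\mathbf{j}+\mathbf{e}_1}$ of the super box, and not in $EW_{\mathbf{j}}$). Hence the term $\mathds{1}_{\mathcal{A}}\var^{(AB)}_{x_{\mathbf{j}}}(f)$ is precisely the quantity that \cref{lemma:grid_local_path_method} bounds in one shot; your plan to iterate \cref{lemma:line_relax} along paths in $EW_{\mathbf{j}}$ for this term is both unnecessary and based on a mislocated $B$-vacancy. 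Conversely, \cref{lemma:grid_local_path_method} cannot be used to bound the other summand $\var^{(AB)}_{V^\ast}(f)$: its path argument is only legal under the indicator $\mathds{1}_{\mathcal{A}}$, which that summand does not carry. In the paper this second summand is handled by splitting it into single-site variances $\var_y(f\tc\mathcal{A}')$ over $y\in U\setminus\{x_{\mathbf{j}}\}$, invoking \cref{lemma:line_relax} for the $y$ lying in $(EW_{\mathbf{j}}\setminus W_{\mathbf{j}})\cup(EW_{\mathbf{j}+\mathbf{e}_1}\setminus W_{\mathbf{j}+\mathbf{e}_1})$ (the $B$-vacancy at $x_{\mathbf{j}+\mathbf{e}_1}$ from the super condition sits at the end of the relevant row) and the enlargement trick for the $y$ on $W$. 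Swapping the assignments back makes your sketch go through and then matches the paper's argument.
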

\begin{proof}
        W.l.o.g.\ consider only the case $\mathbf{j}=\mathbf{0}$ and where the
        constraint on the l.h.s.\ of the claim is replaced by
        $\tilde{c}=\mathds{1}_{EW_{\mathbf{e}_1}\ \text{is $B$-super}}$. Let
        $U=x_{\mathbf{0}}+\{0,\mathbf{e}_1,-\mathbf{e}_2-\mathbf{e}_1,
        -\mathbf{e}_2, -\mathbf{e}_2+\mathbf{e}_1\}$ and let $\mathcal{A}$ be
        the event from \cref{lemma:grid_local_path_method} translated by
        $x_{\mathbf{0}}$, so that $\mathrm{Supp}(\mathcal{A})\subset U$. Analogously
        define $\mathcal{A}'$ as the translated version of $\mathcal{A}'$ from
        \cref{lemma:grid_local_path_method}. The event that 
        $EW_{\mathbf{0}}$ and $EW_{\mathbf{e}_1}$ are $B$-traversable on $U$
        reduces to $\mathcal{A}'$. Thus, we can extend the variance,
        \cref{lemma:extend_variance}, and use the block relaxation
        \cref{lemma:block_relax},
        \begin{align}
                \mu&\left[\tilde{c} \mathds{1}_{\text{$EW_{\mathbf{0}}$ $B$-traversable}}
                        \var_{x_{\mathbf{0}}}^{(AB)}(f)
                \right] \\
                   &\le \mu
                \left[\tilde{c}\mathds{1}_{\text{$EW_\mathbf{0}$ $B$-traversable}}
                        \var_{U}(f\tc \mathcal{A}')
                \right]\\
                        &\le 2^{\kappa\theta_B} \mu
                \left[\tilde{c}\mathds{1}_{\text{$EW_\mathbf{0}$ $B$-traversable}}
                        \left(\mathds{1}_{\mathcal{A}}
                                        \var^{(AB)}_{x_{\mathbf{0}}}(f)
                        +\var_{U\setminus \{x_{\mathbf{0}}\}}(f\tc \mathcal{A}')\right)
                \right]
        \end{align}
        for $q_B$ small enough. The first summand can be upper bounded using
        \cref{lemma:grid_local_path_method}. For the second summand write
        \begin{equation}
                \mu\left[ \tilde{c}\mathds{1}_{\text{$EW_0$ $B$-traversable}}
                        \var_{U\setminus \{x_{\mathbf{0}}\}}(f\tc
                        \mathcal{A}')\right]
                \le\mu\left[ \tilde{c}\mathds{1}_{\text{$EW_0$ $B$-traversable}}
                 \sum_{y\in U\setminus \{x_{\mathbf{0}}\}}
                \var_y(f\tc\mathcal{A}')\right].
        \end{equation}
        For $y\in (EW_{\mathbf{0}}\setminus W_{\mathbf{0}})\cup
        (EW_{\mathbf{e}_1}\setminus W_{\mathbf{e}_1})$ we can use
        \cref{lemma:line_relax} and for the others we can use the enlargement
        trick (\cite{chleboun2016relaxation}*{Lemma~3.6}) to get the claim.
\end{proof}

Combining the previous results we thus have
\begin{equation}\label{eqn:B_block_relax}
        \mu(\mathds{1}_{\mathcal{E}^{(B,*)}}
        \var^{(AB)}_{x_{\mathbf{j}_B}}(f))
        \le 2^{\theta_B^2(1+\varepsilon)/4}
        \mu\left[\mathcal{D}_{\mathcal{Q}^{(B,*)}}(f)
        \right],
\end{equation}
for $q_B$ small enough. For $C$-vacancies we can use the same construction of
$EW_{\mathbf{j}}$ and $W_{\mathbf{j}}$ with length parameter $\lfloor
\theta^{3}_C\rfloor$ and define $C$-traversable, -super, and -evil by replacing
the $B$-vacancies with $C$-vacancies. Recall that we assume
$q_A\theta_{q_C}^3/\log_2(\theta_B)\rightarrow \infty$ as $q_B\rightarrow 0$ so
that the results follow analogously for $C$-vacancies with minor adjustments.
We omit details here that lead to the result that
\begin{equation}\label{eqn:C_block_relax}
        \mu(\mathds{1}_{\mathcal{E}^{(C,*)}}
        \var^{(AC)}_{x_{\mathbf{j}_C}}(f))
        \le 2^{\theta_B^2(1+\varepsilon)/4}
        \mu\left[\mathcal{D}_{\mathcal{Q}^{(C,*)}}(f)
        \right],
\end{equation}
for $\mathbf{j}_C=-3\mathbf{e}_1+\mathbf{e}_2$ and $q_B$ small enough.

As in the low vacancy density case we need a final event that brings the $B$-
resp.\ $C$-vacancy from $\mathcal{Q}^{(h,*)}(x)$ to $x$. To that end, let us
define some paths (see \cref{fig:paths}).
\begin{itemize}
        \item Let $\Gamma^{(B)}$ be a shortest path
                starting at $\mathbf{e}_2$ and ending at
                the first vertex neighbouring $EW_{\mathbf{j}_B}\setminus
                W_{\mathbf{j}_B}$ that first goes straight up and
                then right.
        \item Let $\Gamma^{(B,left)}$ be the path that starts at
                $-\lfloor \theta_C^3\rfloor \mathbf{e}_1+\mathbf{e}_2$
                and is straight until $-\mathbf{e}_1+\mathbf{e}_2$ and then equal to
                $(\Gamma^{(B)}-\mathbf{e}_1)\setminus \Gamma^{(B)}$. Let $\Gamma^{(B,right)}$ be the
                path starting at $\mathbf{e}_1+\lfloor
                \theta_B^3\rfloor\mathbf{e}_2$ that goes straight up until it
                hits $\Gamma^{(B)}-\mathbf{e}_2$, which it then follows to the
                right.
        \item Let $\Gamma^{(C)}$ be the shortest path that starts at $-\mathbf{e}_1$ and goes
                straight left and then up that ends up at a vertex neighbouring
                $EW_{\mathbf{j}_{C}}\setminus
                W_{\mathbf{j}_C}$. Let $x^{(C)}$ be the point
                where the path switches from going left to going up.
        \item Let $\Gamma^{(C,left)}$ be the union of
                $\Gamma^{(C)}-\mathbf{e}_1-\mathbf{e}_2$ and $\{x^{(C)}-\lfloor
                \theta_C^3\rfloor \mathbf{e}_1,\ldots, x^{(C)}-\mathbf{e}_1\}$.
\end{itemize}
\begin{figure}[t]
        \centering
        \begin{tikzpicture}

                \fill[opacity=0.2, fill] (2,4) -- (3.5,4) -- (4.5,6) -- (4.5,7.5) --
                        (2,7.5) -- (2,4);
                \node at (3.5, 7.2) {$\mathcal{Q}^{(B,*)}$};

                \fill[opacity=0.2, fill] (-2,4) -- (-3.3,4) -- (-4.3,6) -- (-4.3,7.3) --
                        (-2,7.3) -- (-2,4);
                \node at (-3, 6.9) {$\mathcal{Q}^{(C,*)}$};

                \filldraw[](0,0)circle[radius=2.5pt] {};
                \draw (-1,-1) -- (5,5);

                \draw[thick] (0,0.25) -- (0,4.5) -- (1.9,4.5);

                \draw[thick] (-1.5,0.25) -- (-0.25,0.25) -- (-0.25,4.5);
                \filldraw[BurntOrange](-1.2,0.25)circle[radius=2.5pt] {};

                \draw[thick] (0.25,1.5) -- (0.25,4.25) -- (1.9,4.25);
                \filldraw[BurntOrange](0.25,2)circle[radius=2.5pt] {};

                \draw[thick] (2,4) rectangle (2.5,4.5);
                \draw[thick] (3,4) rectangle (3.5,4.5);
                \draw[thick] (2,5) rectangle (2.5,5.5);
                \draw[thick] (3,5) rectangle (3.5,5.5);
                \draw[thick] (2,6) rectangle (2.5,6.5);
                \draw[thick] (3,6) rectangle (3.5,6.5);
                \draw[thick] (4,6) rectangle (4.5,6.5);

                \draw[thick] (-0.25,0) -- (-2,0) -- (-2,3.9);

                \draw[thick] (-3,-0.25) -- (-0.5,-0.25);
                \filldraw[BurntOrange](-2.9,-0.25)circle[radius=2.5pt] {};

                \draw[thick] (-2.25,-0.25) -- (-2.25,3.9);

                \draw[thick] (-2,4) rectangle (-2.3,4.3);
                \draw[thick] (-3,4) rectangle (-3.3,4.3);
                \draw[thick] (-2,5) rectangle (-2.3,5.3);
                \draw[thick] (-3,5) rectangle (-3.3,5.3);
                \draw[thick] (-2,6) rectangle (-2.3,6.3);
                \draw[thick] (-3,6) rectangle (-3.3,6.3);
                \draw[thick] (-4,6) rectangle (-4.3,6.3);

                \node[anchor=south, font=\footnotesize] at (1,4.5) {$\Gamma^{(B)}$};
                \node[anchor=north, font=\footnotesize] at (1,4) {$\Gamma^{(B,right)}$};
                \node[anchor=east, font=\footnotesize] at (-0.2,3) {$\Gamma^{(B,left)}$};
                \node[anchor=west, font=\footnotesize] at (-2,2) {$\Gamma^{(C)}$};
                \node[anchor=north, font=\footnotesize] at (-2,-0.25) {$\Gamma^{(C,left)}$};





        \end{tikzpicture}
        \caption{Image of the various $\Gamma$ paths and the exemplary
        $A$-vacancies where $\mathcal{E}^{(0)}$ requires them. Note the
        different sizes of $\mathcal{Q}^{(B,*)}$ and
        $\mathcal{Q}^{(C,*)}$.\label{fig:paths}}
\end{figure}
Notice that since $\Gamma^{(B,left)}$ starts at $-\lfloor \theta_C^3\rfloor
\mathbf{e}_1+\mathbf{e}_2$ the various paths do not intersect. We define
$\mathcal{E}^{(0)}$ as the $\omega\in \Omega$ such that
\begin{itemize}
        \item $\omega_x\in \{\star,
                A,B\}$ for any $x\in \Gamma^{(B)}$.
        \item  $\omega_x\in \{\star, A\}$ for any $x\in
                \Gamma^{(B,left)}\cup \Gamma^{(B,right)}$ and there is
                at least one $A$-vacancy on $\Gamma^{(B,left)}\setminus
                \{\Gamma^{(B)}-\mathbf{e}_1\}$ and on
                $\Gamma^{(B,right)}\setminus \{\Gamma^{(B)}-\mathbf{e}_2\}$.
        \item $\omega_x\in \{\star, A,C\}$ for any $x\in \Gamma^{(C)}$.
        \item $\omega_x\in \{\star, A\}$ for any $x\in
                \Gamma^{(C,left)}$ at least one $A$-vacancy on
                $\Gamma^{(C,left)}\setminus
                (\Gamma^{(C)}-\mathbf{e}_1-\mathbf{e}_2)$.
\end{itemize}
The support of $\mathcal{E}^{(0)}$ by construction has no intersection with
$\mathcal{Q}^{(B,*)}$ and $\mathcal{Q}^{(C,*)}$. Let
$\mathcal{E}^{(*)}:=\mathcal{E}^{(0)}\cap \mathcal{E}^{(B,*)}\cap
\mathcal{E}^{(C,*)}$ and let $\mathcal{E}_x^{(*)}$ be the translated version by
$x\in \Z^2$. Then the family ${\{\mathcal{E}_x^{(*)}\}}_{x\in \Z^2}$ satisfies
the exterior condition w.r.t.\ to the same family of sets as given in
\cref{lemma:grid_exterior}. Using the assumptions on $\mathbf{q}$ it is
straightforward to check that
\begin{align}
        \lim_{q_B\rightarrow 0}
        \mathrm{Supp}(\mathcal{E}^{(0)})\mu(1-\mathcal{E}^{(0)})=0.
\end{align}
Combining this with \cref{lemma:probability_limit} and the results from
\cref{sub:renorm_probability} we can apply the exterior condition theorem,
\cref{thm:exterior_thm}.
Further, $\mathcal{E}^{(0)}$ fulfills that analogous role to the eponymous
event in \cref{sec:proof_few_vacancies} as we see in the next Lemma.
\begin{lemma}\label{lemma:origin_relax_box}
        Let $\mathcal{A}$ be the event defined
        by the intersection
        \begin{equation}
            \mathcal{A}:=\mathcal{E}^{(0)}\cap \{EW_{\mathbf{j}_B}\
            \text{$B$-super}\}\cap
            \{EW_{\mathbf{j}_{C}}\
            \text{$C$-super}\}.
        \end{equation}
        Then,
        \begin{equation}
                \mu(\mathds{1}_{\mathcal{A}}
                \var_0(f))
                \le 2^{\kappa\theta_B\log_2(\theta_B)}
                \mathcal{D}_{\mathrm{Supp}(\mathcal{A})}(f).
        \end{equation}
        for $q_B$ small enough.
\end{lemma}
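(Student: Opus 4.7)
The plan is to decompose $\var_0(f)$ via \cref{lemma:var_as_trans} into three summands, one for each vacancy type, and to show that each summand is bounded by the Dirichlet form on $\mathrm{Supp}(\mathcal{A})$ up to a multiplicative factor $2^{\kappa\theta_B\log_2(\theta_B)}$. Concretely, I would first write
\begin{equation}
\var_0(f) \le 2\sum_{h\in G} q_h (\nabla_0^{(h)} f)^2,
\end{equation}
so that the task reduces to controlling each summand $q_h(\nabla_0^{(h)} f)^2$ on the event $\mathcal{A}$. The three facilitating vacancies required near the origin (a $B$ on $\mathbf{e}_1$ or $\mathbf{e}_2$, a $C$ on $-\mathbf{e}_1$ or $\mathbf{e}_2$, and an $A$ on $-\mathbf{e}_1$ or $-\mathbf{e}_2$) will be produced in turn from the material supplied by $\mathcal{A}$.

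For the $B$-transition, I would use the path $\Gamma^{(B)}$ (which starts at $\mathbf{e}_2$ and terminates adjacent to $EW_{\mathbf{j}_B}\setminus W_{\mathbf{j}_B}$) together with the parallel paths $\Gamma^{(B,left)}$ and $\Gamma^{(B,right)}$, whose guaranteed $A$-vacancies provide East-like facilitation for a one-dimensional auxiliary model on $\Gamma^{(B)}$ with state space $\{\star,A,B\}$. This is precisely the setting of \cref{lemma:line_relax}, and invoking \cite{chleboun2016relaxation}*{Theorem~2} on a segment of length $O(\theta_B^3)$ yields the factor $2^{\kappa\theta_B\log_2\theta_B}$. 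To bring the $B$-vacancy from the corner $x_{\mathbf{j}_B}$ of the super box onto the endpoint of $\Gamma^{(B)}$ I would use the path-method step of \cref{lemma:grid_local_path_method}, and combine everything via the block relaxation \cref{lemma:block_relax}.

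The $C$-transition is treated symmetrically using $\Gamma^{(C)}$, $\Gamma^{(C,left)}$ and the $C$-super box at $\mathbf{j}_C$. For the $A$-transition, I need an $A$-vacancy on $-\mathbf{e}_1$ or $-\mathbf{e}_2$; since $\mathcal{P}(A)=\{\mathbf{e}_1,\mathbf{e}_2\}$, the $A$-vacancies guaranteed on $\Gamma^{(C,left)}$ (which lies to the south and west of the origin) can be propagated by a one-dimensional East relaxation along $\Gamma^{(C,left)}\cup \Gamma^{(C)}$ to produce an $A$-vacancy at $-\mathbf{e}_1$, again at the cost of a factor $2^{\kappa\theta_B\log_2\theta_B}$. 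With all three facilitating vacancies handled, one more application of \cref{lemma:block_relax} at the origin consolidates the three pieces into a global bound.

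The principal obstacle is combining these three auxiliary one-dimensional arguments and the various path-method steps into a single inequality while keeping the overcounts polynomial in $\theta_B$. The geometry of $\mathcal{E}^{(0)}$ is designed precisely to avoid interference between the different mechanisms: $\Gamma^{(B,left)}$ starts far enough to the left at $-\lfloor\theta_C^3\rfloor\mathbf{e}_1+\mathbf{e}_2$ that it does not meet $\Gamma^{(C)}$ or $\Gamma^{(C,left)}$, and the state-space restrictions $\{\star,A,B\}$ on $\Gamma^{(B)}$ and $\{\star,A,C\}$ on $\Gamma^{(C)}$ ensure that the coexistence of vacancy types along a path never blocks the auxiliary model under consideration. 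The resulting polynomial overcounts in $\theta_B$ are absorbed into the constant $\kappa$.
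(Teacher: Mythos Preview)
Your proposal uses the same geometric ingredients and tools as the paper (the paths $\Gamma^{(B)}$, $\Gamma^{(C)}$ with their companion paths, \cref{lemma:line_relax}, \cref{lemma:block_relax}, and short path-method steps near the origin), and the overall strategy is sound. The one substantive difference is the order of operations: you invoke \cref{lemma:var_as_trans} \emph{first} and then produce a facilitating vacancy separately for each transition term, whereas the paper keeps $\var_0(f)$ intact, iterates \cref{lemma:block_relax} to successively insert the indicators $\{\omega_{\mathbf{e}_2}=B\}$, $\{\omega_{-2\mathbf{e}_1}=C\}$, $\{\omega_{-2\mathbf{e}_1-\mathbf{e}_2}=A\}$ (paying the factors $1/q_B$, $1/q_C$, $1/q_A$ along the way), and only \emph{then} applies \cref{lemma:var_as_trans}, at which point each transition term already comes equipped with its constraint (directly for $B$, and via a short path of the type in \cref{fig:tr_corner_pm} for $A$ and $C$). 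The paper's order is slightly cleaner because \cref{lemma:block_relax} and \cref{lemma:line_relax} are stated for variances: after your early decomposition you hold $q_h(\nabla_0^{(h)}f)^2$, which is not literally a variance, so you must first observe that (since $0\notin\mathrm{Supp}(\mathcal{A})$) it equals a constant times $\var_0^{(h)}(f)$ before those lemmas apply. This is harmless but is an extra step you should make explicit; conversely, your order has the minor advantage that the three vacancy types are handled in parallel rather than nested.
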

\begin{proof}[Sketch of the proof]
        We only give a sketch since the employed techniques are always the
        same.
        Extending the variance, \cref{lemma:extend_variance} and using
        block relaxation, \cref{lemma:block_relax}, gives
        \begin{equation}
                \mu(\mathds{1}_{\mathcal{A}}
                 \var_0(f))
                \le \frac{C}{q_B}
                \mu\left[\mathds{1}_{\mathcal{A}}
                \left(\var^{(AB)}_{\mathbf{e}_2}(f)
                +\mathds{1}_{\omega_{\mathbf{e}_2}=B}
                \var_0(f)\right)\right].
        \end{equation}
        For the first summand, given $\mathcal{A}$, we can use a combination
        of the block relaxation Lemma (\cref{lemma:block_relax}) and
        \cref{lemma:line_relax} to get an appropriate upper bound. For the
        second summand we can repeat the calculation for the $C$-vacancy side
        to get a term
        \begin{equation}
                \frac{C}{q_A q_B q_C}
                \mu(\mathds{1}_{\mathcal{A},\omega_{\mathbf{e}_2}=B,
                \omega_{-2\mathbf{e}_1}=C,
                \omega_{-2\mathbf{e}_1-\mathbf{e}_2}=A}
                \var_0(f)).
        \end{equation}
        Write $\var_0(f)$ as a sum of transition terms using
        \cref{lemma:var_as_trans}, for the $B$-transition use that
        $\mathds{1}_{\omega_{\mathbf{e}_2}=B}\le c_0^{B}$ and for the $A$ and
        $C$ transition terms we can use the path method recalling that for
        $\omega\in \mathcal{A}$ we have $\omega_{-\mathbf{e}_1}\in \{\star,
        A,C\}$ (analogously to, for example, the situation in
        \cref{fig:tr_corner_pm}). The claim follows.
\end{proof}
Since the intersection of the various grids is negligible this gives us the
proof for the case of $q_A=q_{\max}$. We omit the proof as it is a
straightforward implication of the above Lemma with the block relaxation Lemma.
\begin{proposition}
        For parameter sets as fixed in the beginning of the section with
        $q_{\max}=q_A$ we have
        \begin{equation}
                \lim_{q_B\rightarrow 0}
                \frac{\gamma(G,\mathbf{q})}{\gamma_2(q_B)}
                \ge 1.
        \end{equation}
\end{proposition}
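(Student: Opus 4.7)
The plan is to combine the three main ingredients already prepared: the exterior condition theorem applied to the family ${\{\mathcal{E}^{(*)}_x\}}_{x \in \Z^2}$, the block-relaxation estimates \cref{eqn:B_block_relax} and \cref{eqn:C_block_relax}, and \cref{lemma:origin_relax_box} which transports a vacancy from its home grid back to the origin. First I would apply \cref{thm:exterior_thm} together with \cref{lemma:var_as_trans} to obtain
\begin{equation*}
        \var(f) \le \frac{4}{p} \sum_{h \in G} \sum_{x \in \Z^2} \mu\left[\mathds{1}_{\mathcal{E}^{(*)}_x}\, p q_h\, {(\nabla^{(h)}_x f)}^2\right];
\end{equation*}
the hypotheses hold because ${\{\mathcal{E}^{(*)}_x\}}_{x \in \Z^2}$ satisfies the exterior condition by construction, and each $\mathcal{E}^{(*)}_x$ has small failure probability by \cref{cor:grid_failing_1}, \cref{lemma:grid_failing_2} and \cref{lemma:probability_limit} (applied on the renormalised lattice to both the $B$- and $C$-grids) together with the direct union bound on $\mathcal{E}^{(0)}$ that uses $q_{\mathrm{med}} \theta_B^6 \to 0$.

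Second, I would process each of the three transition types. For $h = B$, translation invariance reduces matters to $x = 0$. Given $\mathcal{E}^{(*)}_0$, \cref{lemma:origin_relax_box} relaxes the origin against the paths $\Gamma^{(B)}$, $\Gamma^{(B,left)}$, $\Gamma^{(B,right)}$ and the two corner boxes at cost $2^{\kappa \theta_B \log_2(\theta_B)}$, reducing matters to bounding $\var^{(AB)}_{x_{\mathbf{j}_B}}(f)$, which is then handled by \cref{eqn:B_block_relax}. The case $h = C$ is identical using the $C$-side paths and \cref{eqn:C_block_relax}. For $h = A$, since $q_A = q_{\max}$ is not vanishing and an $A$-vacancy is guaranteed close to the origin by $\mathcal{E}^{(0)}$, a direct application of the enlargement trick \cite{chleboun2016relaxation}*{Lemma~3.6} recovers a single Dirichlet term at $O(1)$ cost, which is not competitive with the exponential scale $2^{\theta_B^2/4}$.

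Third, summing over $x \in \Z^2$ introduces overcounting: each vertex $y$ appears in at most $O(|\mathcal{Q}^{(B,*)}| + |\mathcal{Q}^{(C,*)}|) = 2^{O(\theta_B \log_2(\theta_B))}$ many Dirichlet forms. As is standard throughout the paper, I would run the argument with $\varepsilon / 2$ from the start and absorb the overcounting together with the $1/p$, $1/q_B$ and $2^{\kappa \theta_B \log_2(\theta_B)}$ factors from the block-relaxation steps into the exponent, which is valid because $\theta_B \log_2(\theta_B) = o(\theta_B^2)$. The variational characterisation \cref{eqn:variational_chara_mcem} then yields $\gamma(G, \mathbf{q}) \ge 2^{-\theta_B^2(1+\varepsilon)/4}$ for $q_B$ small enough, and combining with the matching upper bound from \cref{lemma:monotonicity_in_g} and \cref{thm:east_relax} gives the claim after sending $\varepsilon \to 0$. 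The one delicate bookkeeping point, already encoded in the hypotheses, is that the two scales $\theta_B$ and $\theta_{q_{\mathrm{med}}}$ differ and the $B$- and $C$-grids have different sizes: the assumption $q_{\mathrm{med}} \theta_B^6 \to 0$ is precisely what ensures that the $C$-grid's failure probability remains negligible when enforced simultaneously with the $B$-grid event on the larger $B$-scale, so the exterior condition does not break down.
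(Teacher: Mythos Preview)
Your proposal is correct and matches the paper's intended argument (which the paper itself omits as ``a straightforward implication of the above Lemma with the block relaxation Lemma''): exterior condition theorem on $\{\mathcal{E}^{(*)}_x\}$, then block relaxation to split off the pieces handled by \cref{lemma:origin_relax_box}, \cref{eqn:B_block_relax}, and \cref{eqn:C_block_relax}, with the overcounting absorbed into $\varepsilon$. One small slip worth flagging: the hypothesis $q_A\theta_{q_C}^3/\log_2(\theta_B)\to\infty$ only forces $\theta_A=o(\log_2\theta_B)$, so $q_A$ may in fact vanish and the $h=A$ term is not literally $O(1)$; but the resulting cost is still at most $2^{\kappa\theta_B\log_2(\theta_B)}$ (this is exactly what the path-method step inside the proof of \cref{lemma:origin_relax_box} pays for the $A$-transition), hence negligible against $2^{\theta_B^2/4}$ and your conclusion is unaffected.
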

We never explicitly used that $q_C>q_B$ so the same result also holds in the case
$q_B=q_{\mathrm{med}}$ and $q_C=q_{\min}$. Further, by symmetry this also
covers the case $q_B=q_{\max}$. The case $q_C=q_{\max}$ is analogous.
Indeed, above $EW_{\mathbf{j}}$ extended $W_{\mathbf{j}}$ to the north and
east (i.e. the boxes shared their origin). In the case $q_C=q_{\max}$ we do the
completely analogous construction only that $W_{\mathbf{j}}$ and
$EW_{\mathbf{j}}$ share the north-west corner. As everything else works
analogously we omit details here.\qed

\subsection{Single low density vacancy type: Proof of
\texorpdfstring{\cref{thm:abc_relaxation}}{Theorem 2}(3.iii)}%
\label{sec:theorem3iii}
In this section consider again the $G$-MCEM with $G=\{A,B,C\}$ this time with a
parameter set $\mathbf{q}$ such that $q_{\min}=q_B$ and
$\liminf_{q_{B}\rightarrow 0} \qmed >0$, i.e.\ there is a constant $\lambda>0$
with $q_A,q_C>\lambda$ for $q_B$ small enough. This covers case (3.iii) since
both $A$- and $C$-vacancies share the direction $\mathbf{e}_1$, the other case
in which $q_{\min}=q_A$, is equivalent to the present case by symmetry.

Using that both $A$- and $C$-vacancies have a high equilibrium density, we find
configurations that clear any non-$B$-vacancy in the $\mathbf{e}_1$-direction.
As in previous proofs we work with block lattices. In this section we let
${\{W_{\mathbf{j}}\}}_{\mathbf{j}\in \Z^2}$ be the block lattice given by boxes
of side lengths $(0,2)$ so that
\begin{equation}
        W_{\mathbf{j}}=(\mathbf{j}_1,3 \mathbf{j}_2)+\{0, \mathbf{e}_2,
                2\mathbf{e}_2\}.
\end{equation}
We call $(\mathbf{j}_1,3 \mathbf{j}_2)$ the \emph{lower vertex of
$W_{\mathbf{j}}$}, $(\mathbf{j}_1,3 \mathbf{j}_2+2)$ the \emph{upper vertex},
the set of lower and upper vertices we then call the \emph{outer vertices} and
$(\mathbf{j}_1,3 \mathbf{j}_2+1)$ the \emph{central vertex}. The
associated local state space is $\Omega_{\mathbf{j}}^{*} :=
{\{0,1\}}^{W_{\mathbf{j}}}$, the equilibrium measure is
$\mu^*_{\mathbf{j}}=\mu^*_{W_{\mathbf{j}}}$ and the variance is
$\var^*_{\mathbf{j}}(f)=\var_{W_{\mathbf{j}}}(f)$. For $\omega\in \Omega^*$ we
say that $W_{\mathbf{j}}$
\begin{itemize}
        \item is $B$-traversable, if there is no $B$ on the outer vertices.

        \item is $B$-super, if it is $B$-traversable and the central vertex is
                $B$.
        \item is $AC$-traversable, if there is no $B$ on $W_{\mathbf{j}}$.
        \item is $AC$-super, if it is $AC$-traversable, the
                lower vertex is $A$ and the upper vertex is $C$.
\end{itemize}
\begin{remark}
        To justify the above definitions and the recycling of the traversable
        and super names let us give a high level overview of what we do with
        these states to prepare the reader for the detailed calculations.
        Recall that $A$-vacancies propagate north and east, while $C$-vacancies
        propagate south and east. In an $AC$-super box the central vertex is
        always facilitated for any transition from $A$ or $C$ to the neutral
        state and vice versa. By $A$- and $C$-vacancies sharing the east
        propagation direction this extends to any vertex in an $AC$-traversable
        box to the east of an $AC$-super box (see \cref{lemma:ac_super_prop}).

        Further, if there are any $B$-traversable or $B$-super boxes to the
        East of an $AC$-super box, following at least one $AC$-traversable box,
        we can also remove any non $B$-vacancy from the central vertex. This is
        what allows us in \cref{lemma:many_ac_ver,lemma:many_ac_hor} to
        propagate the $B$-vertices from $B$-super boxes on paths of
        $B$-traversable boxes given an appropriate configuration of $AC$-super
        and traversable boxes.
\end{remark}

As in the previous proofs our goal is to define
a set of events ${\{\mathcal{E}_{\mathbf{j}}\}}_{\mathbf{j}\in \Z^2}$ on which
we can use the exterior condition theorem and where $\mathcal{E}_x$ allows us
to recover a Dirichlet form of the $ABC$-model starting from a term like
$\mu(\mathds{1}_{\mathcal{E}_{\mathbf{j}}}\var_{W_{\mathbf{j}}}(f))$ at a cost
$2^{\theta_B^2(1+\varepsilon)/4}$ for $q_B$ small enough.

For this we cannot use $\mathcal{Q}^{(B)}$ anymore since there is no obvious
relaxation scheme that allows us to transport $B$-vacancies on coarse-grained
$B$-paths (as in the proof of part (3.ii)). Since the $A$-vacancies have a high
frequency we also do not have to make a construction that stays above the
diagonal as in \cref{lemma:grid_exterior} to satisfy the exterior condition. We
can work with the set $V_0$ given by the vertices `below' the line that goes
through the origin and $2^{\theta_B^2}\mathbf{e_1}+\mathbf{e}_2$ and define
$V_n=V_0+n\mathbf{e}_2$ for any $n\in \Z^2$ so that ${\{V_n\}}_{n\in \N}$ is an
increasing and exhausting set. This allows us to construct a lattice of
straight lines of side length at most $2^{\theta_B^2}$ in the positive quadrant
and still put a condition on the line going in the $-\mathbf{e}_1$ direction
from the origin while satisfying the exterior condition.

Let $\ell=\lceil \theta_B^{3/2}\rceil$ and $N=2^{\lceil
\theta_{q_B}/2+\log_2(\theta_B)\rceil}$. For $i\in [0, N]$ we call the box of side
lengths $(\ell-1, N\ell-1)$ with origin at $i\ell\mathbf{e}_1+\mathbf{e}_2$ the
$i$-th vertical strip $Q^{(v)}_i$. For $j\in [0,N]$ we call the box with side
lengths $(N\ell-1, \ell-1)$ and origin at $(j\ell+1)\mathbf{e}_2$ the $j$-th
horizontal strip $Q^{(h)}_j$. We denote by $Q_{i,j}$ the equilateral box of
side length $\ell-1$ given by $Q^{(v)}_i\cap Q^{(h)}_j$. The union
$\mathcal{Q}^{(B)}$ of all strips is an equilateral box of side length $N\ell-1$
and origin $\mathbf{e}_2$.

The dynamics to propagate $B$-vacancies on horizontal and vertical paths is
different. $A$- and $C$-vacancies only share the $\mathbf{e}_1$ direction so
that $AC$-super boxes can only propagate in an $\mathbf{e}_1$ directions, which
means that for each row we want to move a $B$-super box vertically, we need an
$AC$-super box somewhere that removes any $A$- or $C$-vacancies. To propagate
$B$-super boxes horizontally a single $AC$-super box suffices. Thus
vertically we need boxes that guarantee us the $AC$-super vertices.
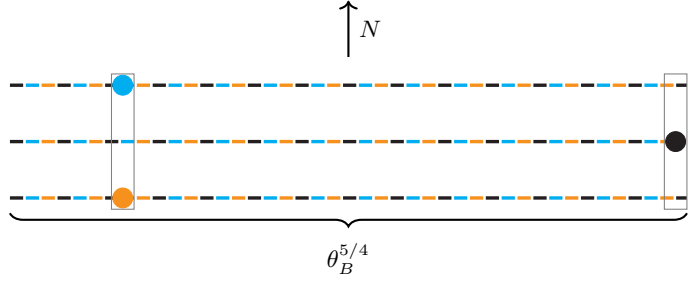
\begin{figure}[t]
        \centering
        \begin{tikzpicture}[scale=1.5]
                 \tikzset{ground/.style={%
                        postaction={draw,line width=0.45mm,BurntOrange,
                                    dash pattern=on 5pt off 13pt,
                                    dash phase=6pt},
                          postaction= {draw,line width=0.45mm,Cyan,
                                    dash pattern=on 5pt off 13pt,
                                    dash phase=12pt},
                        Black,dash pattern= on 5pt off 13pt,
                                    line width=0.45mm}}
         \draw[ground] (0,0.5) -- (6,0.5);
         \draw[ground] (0,0) -- (5.9,0);
         \draw[ground] (0,-0.5) -- (6,-0.5);
        \filldraw[Cyan](1,0.5)circle[radius=2.5pt] {};
        \filldraw[BurntOrange](1,-0.5)circle[radius=2.5pt] {};
        \filldraw[Black](5.9,0)circle[radius=2.5pt] {};
        \draw [thick,decorate,decoration={brace,amplitude=5pt,mirror},
               yshift=-1pt]
               (0,-0.6) -- (6,-0.6)
               node[black,midway,yshift=-0.6cm] {\footnotesize $\theta_B^{5/4}$};

        \draw[thick, ->] (3, 0.75) -- (3,1.25) node[anchor=west, midway] {\footnotesize $N$};

        \draw[gray] (5.8, -0.6) rectangle (6.0,0.6);
        \draw[gray] (0.9, -0.6) rectangle (1.1,0.6);
        \end{tikzpicture}
        \caption{One of the $N$ horizontal sections of a vertical crossing (see
                \cref{def:vert_cross}). Three vertically aligned vertices
                (e.g.\ the rectangles drawn) are one box $W_{\mathbf{j}}$. The
                right box is on the right boundary and thus by assumption
                $B$-traversable, so that there is no condition on the central
        vertex (black dot). All other vertices have no $B$-vacancy. The left
box is the $AC$-super box implied by the definition of vertical
crossings.\label{fig:ac_vert_crossing}}
\end{figure}
\begin{definition}[Vertical crossing]\label{def:vert_cross}
        Consider a box $\Lambda\subset Q^{(v)}_i$ of side lengths $(\lfloor
        \theta_B^{5/4}\rfloor -1,N\ell-1)$ with origin $\mathbf{j}_0$. Let
        $\partial^{(r)}\Lambda$ be the right boundary of $\Lambda$,
        i.e.\ the $\mathbf{j}\in \Lambda$ such that $\mathbf{j}\cdot
        \mathbf{e}_1=\mathbf{j}_0\cdot \mathbf{e}_1+\lfloor \theta_B^{5/4}\rfloor -1$. For
        $\omega\in \Omega^*_{\mathcal{Q}^{(B)}}$, $\Lambda$ is a \emph{vertical
        crossing of $Q^{(v)}_i$} if
        \begin{itemize}
                \item $W_{\mathbf{j}}$ is $B$-traversable for any
                        $\mathbf{j}\in \partial^{(r)}\Lambda$.
                \item $W_{\mathbf{j}}$ is $AC$-traversable for any
                        $\mathbf{j}\in \Lambda\setminus \partial^{(r)}\Lambda$.
                \item There is at least one $\mathbf{j}$ per row in
                        $\Lambda\setminus (\partial^{(r)}\Lambda\cup
                        (\partial^{(r)}\Lambda-\mathbf{e}_2))$ such that
                        $W_{\mathbf{j}}$ is $AC$-super.
        \end{itemize}
\end{definition}
The main idea behind this definition is the following: To propagate a $B$-super
box on the right boundary, we use that on each row there is an $AC$-super box
on a line of $AC$-traversable boxes. This $AC$-super box can remove any $A$- or
$C$-vacancy from the $AC$-traversable part and then in particular also from the
$B$-traversable part on the right boundary of $\Lambda$, which then allows the
$B$-vacancy in the $B$-super box to move down. Let us isolate this horizontal
motion of $AC$-super boxes. Recall for this, that we write $\mu^{(AC)}$ and $\var^{(AC)}$
to denote the measure resp.\ variance conditioned on there only being $A$ and $C$
vacancies and that by definition
\begin{equation}
        \var^*_{\mathbf{j}}(f\mid \text{$AC$-traversable})
        =\var_{W_{\mathbf{j}}}^{(AC)}(f).
\end{equation}
Using the path method the next lemma is straight forward.
\begin{lemma}\label{lemma:ac_super_prop}
        We find a constant such that
        \begin{equation}
                \mu(\mathds{1}_{\text{$W_{-\mathbf{e}_1}$ $AC$-super}}
                \var^*_{\mathbf{0}}(f\tc \text{$AC$-traversable})
                \le C \mu(\mathcal{D}_{W_{-\mathbf{e}_1}\cup W_{\mathbf{0}}}(f)).
        \end{equation}
\end{lemma}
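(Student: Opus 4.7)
The plan is to apply the path method (Lemma~\ref{lemma:path_method}) pair by pair. Expanding
\[
\var^{(AC)}_{W_{\mathbf{0}}}(f)=\tfrac{1}{2}\sum_{\omega,\omega'}\mu^*_{\mathbf{0}}(\omega\tc\text{AC-trav})\mu^*_{\mathbf{0}}(\omega'\tc\text{AC-trav})(f(\omega)-f(\omega'))^2,
\]
with the sum running over pairs of configurations in $\{\star,A,C\}^{W_{\mathbf{0}}}$, I will for each pair exhibit a legal $G$-MCEM path of bounded length from $\omega$ to $\omega'$ whose support lies in $W_{-\mathbf{e}_1}\cup W_{\mathbf{0}}$ and which stays in $\{\star,A,C\}$ throughout, so the rates along the path and the configurational weight ratios are all bounded independently of $q_B$.

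The central observation is that when $W_{-\mathbf{e}_1}$ is $AC$-super the $A$-vacancy at $(-1,0)$ satisfies $c^{A}_{(-1,1)}$ and $c^{A}_{(0,0)}$, and symmetrically the $C$-vacancy at $(-1,2)$ satisfies $c^{C}_{(-1,1)}$ and $c^{C}_{(0,2)}$. Hence the central vertex of $W_{-\mathbf{e}_1}$ can be freely toggled through $\{\star,A,C\}$ without breaking the $AC$-super property, and by pre-setting $\omega_{(-1,1)}$ to $A$ or $C$ we can in turn facilitate any desired transition at the central vertex $(0,1)$ of $W_{\mathbf{0}}$.

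Using this flexibility I connect any two $AC$-traversable configurations on $W_{\mathbf{0}}$ by routing through the all-$\star$ state. To clear a $C$ at $(0,0)$ when $(0,1)$ does not already carry a $C$, I first set $(-1,1)=C$ and propagate $C$ onto $(0,1)$, after possibly removing an $A$ there by temporarily setting $(-1,1)=A$; the upper vertex $(0,2)$ is handled symmetrically, the central vertex $(0,1)$ is cleared directly once the outer two are neutral, and finally $(-1,1)$ is restored. Since $W_{\mathbf{0}}$ has only three vertices with three admissible states each, the resulting path length is a universal constant.

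All transitions along the constructed paths are $A\!\leftrightarrow\!\star$ or $C\!\leftrightarrow\!\star$ moves, with rates $p,q_A,q_C$ uniformly bounded below by $\min(\Delta,\lambda)$ for $q_B$ small, and the density ratios appearing in Lemma~\ref{lemma:path_method} are similarly bounded above uniformly in $q_B$. Summing the resulting path-method bounds over the finitely many pairs yields the stated inequality with a universal constant $C$. The only real work is the finite case analysis for the path construction, which is routine thanks to the full flexibility at $(-1,1)$.
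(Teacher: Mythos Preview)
Your proposal is correct and takes essentially the same approach as the paper, which simply states that the lemma is straightforward from the path method and omits all details. Your identification of the key mechanism---that the $AC$-super configuration on $W_{-\mathbf{e}_1}$ makes $(-1,1)$ freely toggleable between $\{\star,A,C\}$, which in turn lets you facilitate any $A$- or $C$-transition at the central vertex $(0,1)$ and hence clear the outer vertices $(0,0),(0,2)$---is exactly the content the paper leaves implicit, and your observation that all rates and density ratios stay uniformly bounded away from zero (since only $A$, $C$, $\star$ states appear and $p>\Delta$, $q_A,q_C>\lambda$ in this regime) is what makes the constant independent of $q_B$.
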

With this we can show how $B$-super boxes propagate vertically on vertical
crossings.
\begin{lemma}[Vertical propagation]\label{lemma:many_ac_ver}
        Let $\Lambda\subset Q^{(v)}_i$ as in \cref{def:vert_cross},
        and let $\mathbf{j}^{(1)}\in \partial^{(r)}\Lambda\cap Q_{i,0}$. Let
        $\mathcal{A}^{(v)}$ be the event given by the $\omega$ such that
        $\Lambda$ is a vertical crossing of $Q^{(v)}_i$ and
        there is a $\mathbf{j}^{(2)}\in \partial^{(r)}\Lambda\cap Q_{i,1}$
        such that $W_{\mathbf{j}^{(2)}}$ is $B$-super. Then,
        \begin{equation}
                \mu\left(\mathds{1}_{\mathcal{A}^{(v)}}
                        \var^*_{\mathbf{j}^{(1)}}(f\tc \mathcal{A}^{(v)})\right) \le
                        2^{\kappa\theta_B\log_2(\theta_B)}
                        \mu(\mathcal{D}_{Q_{i,0}\cup Q_{i,1}}(f)).
        \end{equation}
\end{lemma}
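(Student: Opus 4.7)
The plan is to leverage the $B$-super box at $\mathbf{j}^{(2)}$ as a seed for a one-dimensional East-like relaxation along the column $\partial^{(r)}\Lambda\cap(Q_{i,0}\cup Q_{i,1})$, with the $AC$-super boxes guaranteed by \cref{def:vert_cross} in each row used to clear the $A$- and $C$-obstructions on the column that would otherwise block the southward propagation of the $B$-vacancy (recall $\mathcal{P}(B)=\{-\mathbf{e}_1,-\mathbf{e}_2\}$, so a $B$-vacancy facilitates transitions of its southern and western neighbours).

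First I would extend the variance from $W_{\mathbf{j}^{(1)}}$ to the full column of boxes $C:=\bigcup_{\mathbf{j}\in\partial^{(r)}\Lambda\cap(Q_{i,0}\cup Q_{i,1})}W_{\mathbf{j}}$ using \cref{lemma:extend_variance}: on this column the event $\mathcal{A}^{(v)}$ only imposes $B$-traversability of each box, which depends solely on outer vertices and thus factorises correctly. The underlying lattice of $C$ is a one-dimensional segment of $\Theta(\ell)=\Theta(\theta_B^{3/2})$ vertices. Conditioned on $\mathcal{A}^{(v)}$ and on $C$ containing no $A$ or $C$ vacancies, the induced dynamics on $C$ is essentially a 1d East process in the variable $\mathds{1}_{\omega_\cdot=B}$, with good boundary condition provided by the $B$-vacancy at the central vertex of $W_{\mathbf{j}^{(2)}}$. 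By \cite{chleboun2016relaxation}*{Theorem~2}, its spectral gap is at least $2^{-\kappa\theta_B\log_2\theta_B}$ for $q_B$ small enough, giving a bound on the conditional variance in terms of single-site transition terms over $C$, at cost $2^{\kappa\theta_B\log_2\theta_B}$.

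The next step is to remove the conditioning on the absence of $A$ and $C$ on $C$, which cannot be built into $\mathcal{A}^{(v)}$ since $q_A,q_C\not\to 0$. Here I would apply the block relaxation lemma (\cref{lemma:block_relax}) to introduce, for each row $\mathbf{j}$ meeting $C$, the indicator that $W_{\mathbf{j}}$ is $AC$-traversable, and then apply \cref{lemma:ac_super_prop} iteratively at most $\theta_B^{5/4}$ times horizontally starting from the $AC$-super box guaranteed in that row by \cref{def:vert_cross}. This contributes to the Dirichlet form $\mathcal{D}_{Q_{i,0}\cup Q_{i,1}}(f)$ and only produces a polynomial-in-$\theta_B$ overhead per row, which is absorbed into $2^{\kappa\theta_B\log_2\theta_B}$. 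For each single-site transition produced by the East chain on $C$, I would then use the path method (\cref{lemma:path_method}) to turn it into a bounded sequence of legal $ABC$-moves within the pair of adjacent boxes on the column, producing more contributions to $\mathcal{D}_{Q_{i,0}\cup Q_{i,1}}(f)$. Summing over the $\Theta(\ell)$ sites of the column and $\Theta(N)$ rows and bounding overcounting concludes.

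The hard part will be the bookkeeping coordination between the 1d East relaxation on $C$ and the horizontal $AC$-clearing in each row: one must ensure that the facilitation conditions needed for each elementary East move on $C$ are simultaneously prepared by the $AC$-super box in the corresponding row, so that the nested block-relaxation decomposition preserves the target factor $2^{\kappa\theta_B\log_2\theta_B}$ without accumulating additional $\theta_B$-powers from either the $\Theta(\theta_B^{5/4})$ horizontal iterations or from overcounting vertices that appear in multiple clearing paths.
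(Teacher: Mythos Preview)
Your plan has a genuine cost-accounting gap in the step where you ``remove the conditioning on the absence of $A$ and $C$ on $C$''. Conditioning the full column $C$ to lie in $\{\star,B\}$ is an event of probability at most $(1-q_A-q_C)^{|C|}\le(1-2\lambda)^{\Theta(\theta_B^{3/2})}$, so a single application of \cref{lemma:block_relax} already costs $2^{\Theta(\theta_B^{3/2})}$, which swamps the target $2^{\kappa\theta_B\log_2\theta_B}$. Iterating block relaxation row by row does not help: each row contributes a multiplicative factor bounded away from $1$ (since $q_A,q_C>\lambda$), and the product over $\Theta(\theta_B^{3/2})$ rows is the same exponential. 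In short, you cannot afford to globally force the column into $\{\star,B\}$ before running the vertex-level East chain. (Also, note that in the paper's terminology ``$AC$-traversable'' means no $B$, the opposite of what you need here.)

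The paper avoids this by running the East relaxation at the \emph{box} level rather than the vertex level: the auxiliary chain has sites $\mathbf{j}\in\Gamma\subset\partial^{(r)}\Lambda$, state space $\{W_{\mathbf{j}}\ B\text{-traversable}\}$, ``vacancy''${}={}$$B$-super and ``particle''${}={}$$B$-traversable-not-super. Since $\mathcal{A}^{(v)}$ already forces every box on the column to be $B$-traversable, no extra conditioning is paid, and because $\mu^*_{\mathbf{j}}(B\text{-super}\mid B\text{-traversable})=q_B$ the spectral gap is still $2^{-\kappa\theta_B\log_2\theta_B}$. This yields a sum of single-box terms $c_{\mathbf{j}}^{(v)}\var^{(*,BT)}_{\mathbf{j}}(f)$. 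Only then, for each fixed $\mathbf{j}$, does one locally condition on two adjacent boxes to the left being $AC$-super (cost $O(\lambda^{-4})$, a constant) and use the path method plus \cref{lemma:ac_super_prop} to clear the $A/C$ obstructions and push the $B$-vacancy down one box. Summing over the $\Theta(\ell)$ boxes gives only polynomial overcounting. The missing idea in your outline is precisely this: do the $AC$-clearing \emph{after} the coarse East decomposition, locally per transition, not globally before it.
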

\begin{proof}
        W.l.o.g.\ assume that the right boundary $\partial^{(r)}\Lambda$ of
        $\Lambda$ is on the vertical axis such that
        $\mathbf{j}^{(1)}=\mathbf{0}$ and assume also w.l.o.g.\ that the
        $B$-super $\mathbf{j}^{(2)}$ implied by $\mathcal{A}^{(v)}$ is on the
        furthest vertex in $\partial^{(r)}\Lambda\cap Q_{i,1}$ from the origin,
        i.e.\ $\mathbf{j}^{(2)}=(2\ell-1)\mathbf{e}_2$. Let
        $\Gamma=\{\mathbf{0},\mathbf{e}_2,\ldots,
        \mathbf{j}^{(2)}-\mathbf{e}_2\}$ be the part of the right boundary
        starting at $\mathbf{j}^{(1)}$ and stopping right before
        $\mathbf{j}^{(2)}$. Let $c_{\mathbf{j}}^{(v)}$ be the constraint given
        by the indicator over the event that $W_{\mathbf{j}+\mathbf{e}_2}$ is
        $B$-super if $\mathbf{j}\neq \mathbf{j}^{(2)}-\mathbf{e}_2$ and $1$ if
        $\mathbf{j}=\mathbf{j}^{(2)}-\mathbf{e}_2$.

        Consider the auxiliary process on $\Gamma$ with the constraints
        $c_{\mathbf{j}}^{(v)}$ that, if $W_{\mathbf{j}}$ is unconstrained,
        samples from all $B$-traversable states on $W_{\mathbf{j}}$. The
        equilibrium measure of this process is given by
        $\mu_{\Gamma}^{(*,BT)}:=\otimes_{\mathbf{j}\in
        \Gamma}\mu_{\mathbf{j}}^*(\cdot\tc \text{$B$-traversable})$. Since
        $\mu^{(*,BT)}_{\mathbf{j}}(\text{$B$-super})=\mu^*_{\mathbf{j}}(\text{$B$-super}\tc
        \text{$B$-traversable})=q_B$ the spectral gap of this process is equal
        to the spectral gap of the one-dimensional East model with vacancy
        density $q_B$ on $\Gamma$ with good boundary conditions.

        Hence, we can extend the variance (\cref{lemma:extend_variance}) and
        use \cite{chleboun2016relaxation}*{Theorem~2} to get
        \begin{align}
                \mu\left(\mathds{1}_{\mathcal{A}^{(v)}}
                        \var^*_{\mathbf{j}^{(1)}}(f\tc \mathcal{A}^{(v)})\right)
                &\le \mu\left(\mathds{1}_{\mathcal{A}^{(v)}}
                        \var_{\mu_{\Gamma}^{(*,BT)}}(f)\right)\\
                &\le2^{\kappa\theta_B\log_2(\theta_B)}
                \sum_{\mathbf{j}\in \Gamma}
                \mu\left(\mathds{1}_{\mathcal{A}^{(v)}}
                        c_{\mathbf{j}}^{(v)}\var_{\mu_{\mathbf{j}}^{(*,BT)}}(f)
                \right).
        \end{align}
        Consider the summand for $\mathbf{j}=\mathbf{0}$ and let $\omega\in
        \mathcal{A}^{(v)}$. Let $V^{(0)}=\cup_{j=0}^{3}j\mathbf{e}_2$ be the
        union of vertices in $W_{\mathbf{0}}$ together with the lower vertex of
        $W_{\mathbf{e}_2}$ and recall that by $c_{\mathbf{0}}^{(v)}$ the vertex
        $4\mathbf{e}_2$ has a $B$-vacancy. Further, let
        $V^{(i)}=\cup_{j=\{0,1\}} W_{-i\mathbf{e}_1+j\mathbf{e}_2}$ for $i\in
        [2]$ and let $V=V^{(0)}\cup V^{(2)}$. Recall that by
        $\mathcal{A}^{(v)}$ the boxes in $V^{(i)}$ are $AC$-traversable and
        define further $\tilde{\mathcal{A}}$ as the event that $W_{\mathbf{j}}$ is $AC$-super
        for $\mathbf{j}\in V^{(2)}$. We can extend the variance to $V$ and
        use the block relaxation Lemma (\cref{lemma:block_relax}) to get
        \begin{align}
                \mu\left(\mathds{1}_{\mathcal{A}^{(v)}}
                        c_{\mathbf{0}}^{(v)}\var_{\mu_{\mathbf{0}}^{(*,BT)}}(f)\right)
                &\le \mu\left(\mathds{1}_{\mathcal{A}^{(v)}}
                        c_{\mathbf{0}}^{(v)}\var_{V}(f\tc
                        \mathcal{A}^{(v)})\right)\\
                &\le C\mu\left[\mathds{1}_{\mathcal{A}^{(v)}}
                        c_{\mathbf{0}}^{(v)}\left(\mathds{1}_{\tilde{\mathcal{A}}}
                                \var_{V^{(0)}}(f\tc
                                \mathcal{A}^{(v)})+\var_{{V^{(2)}}}(f\tc
                \mathcal{A}^{(v)})\right)\right].\label{eqn:ver_move_twosummands}
        \end{align}
        We upper bound the two summands separately. For the first term we get
        \begin{equation}\label{eqn:B_vert_path_method}
                \mu\left(
                        \mathds{1}_{\mathcal{A}^{(v)},\tilde{\mathcal{A}}}
                        c_{\mathbf{0}}^{(v)}
                                \var_{V^{(0)}}(f\tc \mathcal{A}^{(v)})
                        \right)
                \le 2^{\kappa \theta_B}
                \mu\left(\mathcal{D}_{\cup_{i\in [0,2]}
                        V^{(j)}}(f)\right).\label{eqn:vert_prop_1}
        \end{equation}
        This is done through the path method analogous to
        \cref{lemma:ac_super_prop} with the additional step of defining the
        paths for the $B$-vacancy on $4\mathbf{e}_2$ to move downwards after
        clearing any $A$ or $C$ vacancy on $V^{(0)}$ using the $AC$-super
        states in $V^{(2)}$.

        For the second summand first split up the variance
        \begin{equation}
                \var_{V^{(2)}}(f\mid\mathcal{A}^{(v)})
                \le \mu_{V^{(2)}}(
                \var^*_{-2\mathbf{e}_2}(f\mid\mathcal{A}^{(v)})
                +\var^*_{-\mathbf{e}_2+\mathbf{e}_1}(f\mid\mathcal{A}^{(v)})
                \mid\mathcal{A}^{(v)})
        \end{equation}
        and consider the variance over $W_{-2\mathbf{e}_2}$. The upper bound for
        the second term follows analogously.

        Consider an auxiliary process with the constraints
        $c^{(h)}_{\mathbf{j}}$ given by the indicator over the event that
        $W_{\mathbf{j}-\mathbf{e}_1}$ is $AC$-super. If $W_{\mathbf{j}}$ is
        unconstrained in this process, sample it from all $AC$-traversable
        states at a legal ring. This process has the same spectral gap as the
        East model with vacancy density $\frac{q_A q_C}{(q_A+q_C)^2}$. Using
        that $\mathcal{A}^{(v)}$ implies that there is an $AC$-super box to the
        left of $W_{\mathbf{e}_2}$ we can use the enlargement
        trick (\cite{chleboun2016relaxation}*{Lemma~3.6}, which immediately generalises to
        this case), to get
        \begin{align}
                \mu \left(\mathds{1}_{\mathcal{A}^{(v)}}
                        \var^*_{-2\mathbf{e}_2}(f\tc
                \mathcal{A}^{(v)})\right)
                        &\le C
                        \sum_{j=2}^{\lfloor \theta_B^{5/4}\rfloor-2}
                        \mu(c_{-j\mathbf{e}_1}^{(h)}\var^{*}_{-j\mathbf{e}_1}(f\tc
                        \text{$AC$-traversable}))\\
                        &\le C
                        \sum_{j=2}^{\lfloor \theta_B^{5/4}\rfloor-2}
                        \mu(\mathcal{D}_{W_{-j\mathbf{e}_1}\cup
                                W_{-(j+1)\mathbf{e}_1}}(f))
        \end{align}
        where in the second inequality we used \cref{lemma:ac_super_prop}.
        Combining the two estimates gives the claim after taking into account
        that the vertices in $V^{(2)}$ are counted twice which we absorb into
        $\kappa$.
\end{proof}
\begin{remark}
        Notice that here we lose the indicator over $\mathcal{A}^{(v)}$ since
        it requires there to be no $B$-vacancy between the central vertices but
        the path method adds these transitions. This will be important later,
        as keeping the indicators was important for taking the sum over the
        possible grids $\mathcal{C}$.
\end{remark}

The horizontal paths will consist of $B$-traversable $W_{\mathbf{j}}$ that
connect the vertical crossings. We isolate here the result that allows us to
propagate a central $B$ on these horizontal paths.

The basic situation is as follows. Let $\Gamma=\Gamma^{(l)}\cup \Gamma^{(r)}$
with $\Gamma^{(l)}= [-\lfloor \theta_B^{5/4}\rfloor
\mathbf{e}_1,\ldots,-\mathbf{e}_1]$ and $\Gamma^{(r)}=[\mathbf{0},\ldots,
\ell\mathbf{e}_1-1]$. Let $\mathcal{A}^{(h)}$ be the event that
$w_{\mathbf{j}}$ for $\mathbf{j}\in \Gamma^{(l)}$ is $AC$-traversable, that
there is an $AC$-super $W_{-i\mathbf{e}_1}$ for $i\le -3$, that
$W_{\mathbf{j}}$ for $\mathbf{j}\in\Gamma^{(r)}$ is $B$-traversable and that
$W_{\ell\mathbf{e}_1}$ is $B$-super (see \cref{fig:many_ac_hor}).
\begin{figure}
        \centering
        \begin{tikzpicture}[scale=2]
                 \tikzset{ground/.style={%
                        postaction={draw,line width=0.45mm,BurntOrange,
                                    dash pattern=on 5pt off 13pt,
                                    dash phase=6pt},
                          postaction= {draw,line width=0.45mm,Cyan,
                                    dash pattern=on 5pt off 13pt,
                                    dash phase=12pt},
                        Black,dash pattern= on 5pt off 13pt,
                                    line width=0.45mm}}
         \draw[ground] (0,0.5) -- (6,0.5);
         \draw[ground] (0,0) -- (3,0);
         \draw[line width=0.45mm] (3,0) -- (6,0);
         \draw[ground] (0,-0.5) -- (6,-0.5);
        \filldraw[Cyan](1,0.5)circle[radius=2.5pt] {};
        \filldraw[BurntOrange](1,-0.5)circle[radius=2.5pt] {};
        \filldraw[ForestGreen](5.9,0)circle[radius=2.5pt] {};
        \draw [thick,decorate,decoration={brace,amplitude=5pt,mirror},
               yshift=-1pt]
               (0,-0.6) -- (3,-0.6)
               node[black,midway,yshift=-0.6cm] {\footnotesize $\Gamma^{(l)}$};
        \draw [thick,decorate,decoration={brace,amplitude=5pt,mirror},
               yshift=-1pt]
               (3,-0.6) -- (6,-0.6)
               node[black,midway,yshift=-0.6cm] {\footnotesize $\Gamma^{(r)}$};

        \draw[gray] (5.8, -0.6) rectangle (6.0,0.6);
        \draw[gray] (0.9, -0.6) rectangle (1.1,0.6);
        \end{tikzpicture}
        \caption{Path $\Gamma$ in the context of 
        \cref{lemma:many_ac_hor}. The right box is the $B$-super box and the
        left box the $AC$-super box. The black path emphasizes that there is no
        condition on the central vertices in $B$-traversable boxes.\label{fig:many_ac_hor}}
\end{figure}
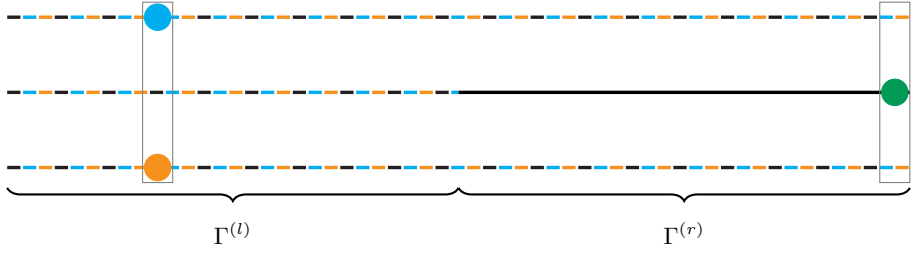
\begin{lemma}[Horizontal propagation]\label{lemma:many_ac_hor}
        For $\Gamma$ and $\mathcal{A}^{(h)}$ as above we find a constant
        $\kappa$ such that
        \begin{equation}
                \mu(\mathds{1}_{\mathcal{A}^{(h)}}
                \var^*_{\mathbf{0}}(f\tc \mathcal{A}^{(h)}))
                \le 2^{\kappa \theta_B^{3/2}}
                \mu(\mathcal{D}_{W(\Gamma)}(f)),
        \end{equation}
        where $W(\Gamma)=\cup_{\mathbf{j}\in \Gamma}W_{\mathbf{j}}$
\end{lemma}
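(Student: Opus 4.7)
The plan is to mirror the structure of the vertical propagation argument in \cref{lemma:many_ac_ver}, transposed to horizontal dynamics: set up an auxiliary Markov chain on $\Gamma^{(r)}$ that behaves like a one-dimensional East model with vacancy density $q_B$ on $\ell=\lceil\theta_B^{3/2}\rceil$ sites, and then realize each resampling step of that chain by legal moves in the $ABC$-model. At each $\mathbf{j}\in\Gamma^{(r)}$ the auxiliary chain will resample $\omega\restriction_{W_{\mathbf{j}}}$ from $\mu^{(*,BT)}_{\mathbf{j}}(\cdot):=\mu^{*}_{\mathbf{j}}(\cdot\tc\text{$B$-traversable})$ whenever the constraint $c^{(h,B)}_{\mathbf{j}}$—the indicator that $W_{\mathbf{j}+\mathbf{e}_1}$ is $B$-super—is satisfied, with the right-end constraint at $(\ell-1)\mathbf{e}_1$ trivially satisfied by $\mathcal{A}^{(h)}$. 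Since $\mu^{(*,BT)}_{\mathbf{j}}(\text{$B$-super})=q_B$, this chain has the gap of a one-dimensional East process of density $q_B$ with good right boundary.

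Combining \cref{lemma:extend_variance} (to extend the variance from $W_{\mathbf{0}}$ to $W(\Gamma^{(r)})$ using only the $B$-traversability imposed by $\mathcal{A}^{(h)}$ there) with \cite{chleboun2016relaxation}*{Theorem~2} applied to the auxiliary chain, I obtain, for $q_B$ small enough,
\begin{equation*}
\mu(\mathds{1}_{\mathcal{A}^{(h)}}\var^{*}_{\mathbf{0}}(f\tc\mathcal{A}^{(h)}))
\le 2^{\kappa\theta_B^{3/2}}\sum_{\mathbf{j}\in\Gamma^{(r)}}\mu\bigl(\mathds{1}_{\mathcal{A}^{(h)}}\,c^{(h,B)}_{\mathbf{j}}\var_{\mu^{(*,BT)}_{\mathbf{j}}}(f)\bigr).
\end{equation*}
Each inner variance is over the three vertices of $W_{\mathbf{j}}$; I expand it via \cref{lemma:block_relax}. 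The $B$-transition at the central vertex of $W_{\mathbf{j}}$ is directly facilitated by $c^{(h,B)}_{\mathbf{j}}$ (the $B$-vacancy at the central vertex of $W_{\mathbf{j}+\mathbf{e}_1}$ satisfies $c^{B}$ at the central vertex of $W_{\mathbf{j}}$), yielding a Dirichlet-form term. For $A$- and $C$-transitions inside $W_{\mathbf{j}}$, I first propagate the $AC$-super located at $-i\mathbf{e}_1$ eastwards through $\Gamma^{(l)}$ by iterated applications of \cref{lemma:ac_super_prop}; since the effective vacancy density of this 1D East auxiliary process is $q_Aq_C/(q_A+q_C)^2$, bounded away from $0$ by $\liminf\qmed>0$, the cost of the propagation is only polynomial in $|\Gamma^{(l)}|=\theta_B^{5/4}$ and is absorbed into the prefactor.

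The main obstacle is the transport of $A$- and $C$-facilitators from $\Gamma^{(l)}$ into the boxes of $\Gamma^{(r)}$: the $AC$-super cannot directly move through $\Gamma^{(r)}$ in the same way, because $\Gamma^{(r)}$ is only $B$-traversable (the outer vertices may carry $A$ or $C$ but not in the structured $AC$-super pattern). I overcome this by exploiting that $\mathbf{e}_1\in\mathcal{P}(A)\cap\mathcal{P}(C)$, so that an $A$-vacancy (resp.\ $C$-vacancy) on the lower (resp.\ upper) row of $W_{\mathbf{j}-\mathbf{e}_1}$ facilitates the corresponding transition on the lower (resp.\ upper) row of $W_{\mathbf{j}}$. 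The lower and upper rows of $W(\Gamma)$ carry no $B$-vacancies by $B$-traversability, so a chain of legal single-site moves, encoded through the path method (\cref{lemma:path_method}) and the line-relaxation estimate of \cref{lemma:line_relax}, transports the facilitating $A$ or $C$ from $\Gamma^{(l)}$ into any $W_{\mathbf{j}}$, $\mathbf{j}\in\Gamma^{(r)}$, at polynomial cost per step.

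The last step is to sum over $\mathbf{j}\in\Gamma^{(r)}$, collect the resulting Dirichlet-form terms, and check that every vertex of $W(\Gamma)$ is counted $O(\mathrm{poly}(\theta_B))$ times so that the overcounting is absorbed into $2^{\kappa\theta_B^{3/2}}$. The bookkeeping of which vertices enter each path realization and of the associated equilibrium weights is the main technical source of complication, precisely as in the analogous step of \cref{lemma:many_ac_ver}; once it is in place, the stated bound follows.
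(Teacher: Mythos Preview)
Your overall architecture—run a 1D East auxiliary on the boxes of $\Gamma^{(r)}$ first, then realise each box variance by legal $ABC$-moves—is a legitimate alternative to the paper's route, and it does lead to the stated bound. The paper instead performs one block relaxation on \emph{all} of $W(\Gamma^{(r)})$ at once, conditioning on the low-probability event $\tilde{\mathcal{A}}=\{\text{lower row}\equiv A,\ \text{upper row}\equiv C\}$; this puts the full $2^{\kappa\theta_B^{3/2}}$ cost up front and leaves two clean pieces: the central row (handled by a 1D East chain on \emph{vertices}, with every $A$-, $B$- and $C$-constraint automatically satisfied by $\tilde{\mathcal{A}}$ and the $B$-super boundary) and the outer rows (handled by the path method from the $AC$-super in $\Gamma^{(l)}$). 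Your decomposition reverses the order—East chain first, block work afterwards—so the exponential cost must reappear in the ``transport'' step rather than in a single block-relaxation factor.

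That is precisely where your argument has a gap. The claim that the $A/C$ facilitator can be pushed along the outer rows of $\Gamma^{(r)}$ ``at polynomial cost per step'' is not correct, and the appeal to \cref{lemma:line_relax} is misplaced: that lemma needs the auxiliary row to be in $\{\star,A\}$ only, whereas here the outer rows carry a genuine mixture of $A$ and $C$. On a single row with states in $\{\star,A,C\}$ and only the west neighbour available (the central vertex may be $B$), the $A$- and $C$-waves block each other: to remove a $C$ at $(j,0)$ you need a $C$ at $(j-1,0)$, but if you have just laid down $A$'s to the left you must first rebuild a $C$-chain of length $j$ from the source. Any legal path therefore passes through configurations differing from $\omega$ on $\Theta(j)$ outer sites, so the factor $\max_i \mu(\omega)/\mu(\omega^{(i)})$ in \cref{lemma:path_method} is of order $(1/\min(\Delta,\lambda))^{\Theta(j)}$, i.e.\ $2^{\Theta(j)}$. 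Summed over $j\le\ell$ this is $2^{O(\theta_B^{3/2})}$, which is exactly the claimed prefactor—so the lemma is salvageable—but the cost is exponential in the distance, not polynomial, and it is this ratio (not the 1D East gap) that produces the $2^{\kappa\theta_B^{3/2}}$ in your scheme.
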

\begin{proof}
        Split $W(\Gamma^{(r)})$ into $W^{(ro)}\cup W^{(rc)}$ of respectively the set
        of outer and central vertices. Define the event $\tilde{\mathcal{A}}$
        that there are only $C$-vacancies on the upper vertices of $W^{(ro)}$
        and only $A$-vacancies the lower vertices. Then, we can extend the
        variance (\cref{lemma:extend_variance}) and use the block relaxation Lemma (\cref{lemma:block_relax})
        to find a constant $\kappa$ such that 
        \begin{align}
                \mu(\mathds{1}_{\mathcal{A}^{(h)}}
                \var_{\mathbf{0}}^*(f\mid\mathcal{A}^{(h)}))
                &\le \mu(\mathds{1}_{\mathcal{A}^{(h)}}
                \var_{\Gamma^{(r)}}^*(f\mid\mathcal{A}^{(h)}))\\
                &\le 2^{\kappa\theta_B^{3/2}}
                \mu\left[\mathds{1}_{\mathcal{A}^{(h)}}
                \left(\mathds{1}_{\tilde{\mathcal{A}}}
                                \var_{W^{(rc)}}(f)
                        +\var^{(AC)}_{W^{(ro)}}(f)\right)\right].
                        \label{eqn:hor_move_two_sums}
        \end{align}
        Consider the first summand.
        On $\mathcal{A}^{(h)}$ there is a $B$-vacancy to the right of
        $W^{(rc)}$, so consider the auxiliary model with the standard
        $B$-vacancy constraints $c_x^B$ that samples from $\mu_x$ at a legal
        ring on $x\in W^{(rc)}$. Given $\mathds{1}_{\mathcal{A}^{(h)}}$ this
        auxiliary model on $W^{(rc)}$ has good boundary conditions and the same spectral gap as the
        East model with vacancy density $q_B$ so that
        by~\cite{chleboun2016relaxation}*{Theorem~2}
        \begin{equation}
                \mu(\mathds{1}_{\mathcal{A}^{(h)},\tilde{A}}\var_{W^{(rc)}}(f))
                \le 2^{\kappa\theta_B\log_2(\theta_B)}
                \sum_{x\in W^{(rc)}}
                \mu(\mathds{1}_{\tilde{A}}c_x^B\var_{x}(f)).
        \end{equation}
        Now write the variances as transition terms using
        \cref{lemma:var_as_trans} and use that with $\tilde{A}$ and $c_x^B$
        every $x\in W^{(rc)}$ is unconstrained for every transition so that
        \begin{equation}
                \sum_{x\in W^{(rc)}}
                \mu(\mathds{1}_{\tilde{A}}c_x^B\var_{x}(f))
                \le C \mu(\mathcal{D}_{W^{(rc)}}(f)).
        \end{equation}
        For the second summand in \cref{eqn:hor_move_two_sums} write
        $\var^{(AC)}_{W^{(ro)}}(f)$ as a sum of transition terms for $A$- and
        $C$-vacancy transitions. We saw in \cref{lemma:ac_super_prop} how an
        $AC$-super state can put any state on an $AC$-traversable state to its
        right. Given an $AC$-super and then an $AC$-traversable state we can
        thus put any state in $\{\star, A, C\}$ onto the upper or lower
        vertices of boxes right to them, if they don't contain $B$-vacancies.
        The legal path dynamic is completely analogous to the one in
        \cref{lemma:ac_super_prop} so we omit the details. The lengths of the paths
        are $O(|W^{(ru)}|)=O(\theta_B^{3/2})$, so the path method gives an
        upper bound of the order $2^{\kappa\theta_B^{3/2}}$
        and the claim follows.
\end{proof}
We now come to the grids we use in this section (see \cref{fig:many_ac_grid}).
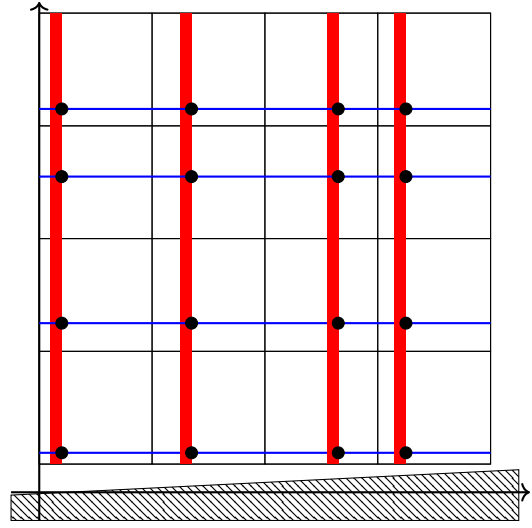
\begin{figure}[t]
        \centering
        \begin{tikzpicture}[scale=0.75]
                \draw[->,thick] (-0.5,0) -- (8.7,0);
                \draw[->,thick] (0,-0.5) -- (0,8.7);
                \draw[pattern=north west lines]  (-0.5,-0.05) -- (8.5,0.4) --
                        (8.5,-0.5) -- (-0.5,-0.5) -- cycle;
                \foreach \x in {0,2,4,6}{
                        \draw[] (0,\x+0.5) rectangle (8,\x+2.5);
                        \draw[] (\x,0.5) rectangle (\x+2,8.5);
                }
                \foreach \x in {0.2, 2.5, 5.1, 6.3}{
                        \fill[red] (\x,0.5) rectangle (\x+0.2,8.5);
                        \draw[blue, thick] (0,\x+0.5) -- (8,\x+0.5);
                }
                \foreach \x in {0.2, 2.5, 5.1, 6.3}{
                        \foreach \y in {0.2, 2.5, 5.1, 6.3}{
                                \filldraw[black](\x+0.2,\y+0.5)circle[radius=3pt] {};
                        }
                }
        \end{tikzpicture}
        \caption{Grid $\mathcal{C}$ as defined in \cref{def:many_ac_grid}. The horizontal
        configurations from \cref{lemma:many_ac_hor} are in blue, the vertical
        crossings from \cref{def:vert_cross} in red, a bit thicker to represent
        the horizontal extension of $\theta_B^{5/4}$. The black circles form the set
        $X(\mathcal{C})$. The striped area indicates the area on which we
        cannot condition by the exterior condition theorem together with the
        exhausting family of sets $\{V_n\}_{n\in \Z}$ defined above
        \cref{def:vert_cross}.\label{fig:many_ac_grid}}
\end{figure}
\begin{definition}[Grid]\label{def:many_ac_grid}
        Call a union of $\mathcal{C}=\cup_{i\in [N]} \mathcal{C}_i^{(h)}\cup
        \mathcal{C}_j^{(v)}$ a \emph{grid} if $\mathcal{C}_i^{(h)}\subset
        Q_i^{(h)}$ is a box of side length $(N\ell-1, 0)$ and
        $\mathcal{C}_i^{(v)}\subset Q_j^{(v)}$ is a box with side lengths
        $(\lfloor \theta_B^{5/4}\rfloor-1, N\ell-1)$. We call the grid \emph{good}
        if $W_{\mathbf{j}}$ is $B$-traversable for any $\mathbf{j}\in
        \cup_i\mathcal{C}_i^{(h)}$ and $\mathcal{C}_j^{(v)}$ is a vertical
        crossing for each $j\in [N-1]$.
\end{definition}
We have that $|\mathcal{C}_i^{(h)}\cap \mathcal{C}_j^{(v)}|=O(\theta_B^{5/4})$
and that on a grid we require this part to be $B$-traversable, $AC$-traversable
and to contain an $AC$-super box at the same time. This is well-defined since
$AC$-super states are a subset of $AC$-traversable states which in turn are
subsets of $B$-traversable states.

For a grid $\mathcal{C}$ let $X(\mathcal{C})$ be the vertices given by
$\mathbf{j}_{i,j}=\partial^{(r)}\mathcal{C}_i^{(v)}\cap \mathcal{C}_j^{(h)}$
for $i,j\in [N-1]$, where we recall that $\partial^{(r)}$ is the right
boundary. We define the event $\mathcal{E}^{(1)}$ as the $\omega\in \Omega^*$ such
that there is a good grid $\mathcal{C}$ and there are $i,j\in [N-1]$ with
$i,j>N/2$ such that $W_{\mathbf{j}_{i,j}}$ is $B$-super. The next lemma is
again a straightforward union bound.
\begin{lemma}\label{lemma:many_ac_failing_prob}
        For any $\varepsilon>0$ we find a $q(\varepsilon)$ such that
        \begin{equation}
                {(N\ell)}^2\mu(1-\mathds{1}_{\mathcal{E}^{(1)}}) \le \varepsilon
        \end{equation}
        if $q_B<q(\varepsilon)$.
\end{lemma}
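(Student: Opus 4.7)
The plan is a union bound that exploits the fact that distinct boxes $W_{\mathbf{j}},W_{\mathbf{j}'}$ occupy disjoint vertex subsets of $\Z^2$ and are therefore independent under the product measure $\mu$. I will decompose $(\mathcal{E}^{(1)})^{c}$ into three contributions: (i) some horizontal strip $Q_i^{(h)}$ contains no entirely $B$-traversable row; (ii) some vertical strip $Q_j^{(v)}$ admits no valid vertical crossing in the sense of \cref{def:vert_cross}; and (iii) a good grid exists but no intersection point in the upper quadrant carries a $B$-super box. Each failure probability must decay strictly faster than $(N\ell)^{-2}=2^{-\theta_B-O(\log\theta_B)}$, so the point is to arrange enough parallel choices in each strip.

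For (i), $Q_i^{(h)}$ contains $\ell$ pairwise disjoint candidate rows of length $N\ell$, and by independence of the $W_{\mathbf{j}}$'s the probability a fixed row is fully $B$-traversable equals $(1-q_B)^{2N\ell}\ge 1-2N\ell q_B$; hence the probability that all $\ell$ candidate rows fail factorises into at most $(2N\ell q_B)^{\ell}$. With $N\ell q_B=2^{-\theta_B/2}\theta_B^{5/2}(1+o(1))$ and $\ell=\lceil\theta_B^{3/2}\rceil$, this is $2^{-\Theta(\theta_B^{5/2})}$. For (ii), I pick $\lfloor\ell/\lfloor\theta_B^{5/4}\rfloor\rfloor=\Theta(\theta_B^{1/4})$ horizontally disjoint candidate blocks $\Lambda$ inside $Q_j^{(v)}$. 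A single candidate fails only if one of the $N\ell$ right-boundary boxes is not $B$-traversable (probability $O(N\ell q_B)$), one of the $O(N\ell\theta_B^{5/4})$ interior boxes is not $AC$-traversable (probability $O(N\ell\theta_B^{5/4}q_B)$), or some of its $N\ell$ rows contains no $AC$-super box. For the last item, $q_Aq_C\ge\lambda^2$ implies that an individual box is $AC$-super with probability at least $\lambda^2/2$ for $q_B$ small enough, so a given row fails with probability at most $(1-\lambda^2/2)^{\theta_B^{5/4}-2}=2^{-\Theta(\theta_B^{5/4})}$ and a union bound over the $N\ell$ rows keeps this of the same order. A single candidate thus fails with probability $O(2^{-\theta_B/2+o(\theta_B)})$, and by independence of the disjoint candidates the probability that all of them fail is $2^{-\Theta(\theta_B^{5/4})}$.

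For (iii), the key observation is that being a good grid depends only on the outer vertices of the relevant $W_{\mathbf{j}}$'s, whereas being $B$-super additionally requires the central vertex to equal $B$, an event of probability $q_B$ independent of the outer-vertex configuration and of the central vertices of all other boxes. Selecting $\mathcal{C}$ by a rule that uses only outer vertices (e.g., the lexicographically smallest good grid), conditional on a good grid existing the central vertices of the at least $\lfloor N/2\rfloor^{2}$ intersection boxes in the upper quadrant are i.i.d.\ and equal to $B$ with probability $q_B$, so the conditional probability that none of them is $B$-super is at most $(1-q_B)^{N^{2}/4}\le\exp(-q_BN^{2}/4)=2^{-\Theta(\theta_B^{2})}$, using $q_BN^2=\theta_B^2(1+o(1))$. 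Summing the three contributions via a union bound over $i,j$ and multiplying by $(N\ell)^{2}=2^{\theta_B+o(\theta_B)}$, all terms tend to zero as $q_B\to 0$, the slowest being (ii) at rate $2^{-\Theta(\theta_B^{5/4})}$. I do not anticipate any substantial conceptual obstacle: the only care required is to ensure that each of the three failure rates beats the $(N\ell)^2$ prefactor, which is guaranteed by the parallel-candidate geometry together with the $q_Aq_C\ge\lambda^2$ lower bound and the conditional independence of central vertices from grid conditions.
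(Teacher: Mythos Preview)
Your decomposition into (i), (ii), (iii) and the estimates for (i) and (ii) are correct and are exactly what the paper has in mind when it says the lemma is ``a straightforward union bound''. The numerology in all three parts is right, and in particular the use of $\Theta(\theta_B^{1/4})$ disjoint candidate blocks in (ii) is essential: a single deterministic candidate per strip would only give a per-strip failure of order $2^{-\theta_B/2+o(\theta_B)}$, which after the union over $N$ strips and the $(N\ell)^2$ prefactor does not vanish.

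There is, however, a genuine gap in your justification of (iii). You write that ``being a good grid depends only on the outer vertices'' and then propose to select $\mathcal{C}$ as the lexicographically smallest good grid. This is not correct: the vertical-crossing condition of \cref{def:vert_cross} requires the interior boxes to be $AC$-traversable and to contain an $AC$-super box per row, and both conditions involve \emph{central} vertices. Consequently the lex-smallest good grid is not an outer-vertex measurable selection, and your conditional-independence claim as stated fails.

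The repair is easy and local. Select $\mathcal{C}$ by taking, in each $Q_j^{(h)}$, the lowest $B$-traversable row (an outer-vertex measurable choice), and in each $Q_i^{(v)}$, the \emph{leftmost} vertical crossing $\Lambda_i$. The point is that although this second choice depends on central vertices, it never depends on the central vertex of any box on $\partial^{(r)}\Lambda_i$: the crossing condition on $\partial^{(r)}\Lambda_i$ is only $B$-traversability (outer vertices), and every competing candidate $\Lambda'$ with $\Lambda'$ to the left of $\Lambda_i$ satisfies $\Lambda'\cap\partial^{(r)}\Lambda_i=\emptyset$, so the central vertices on $\partial^{(r)}\Lambda_i$ play no role in ruling out $\Lambda'$. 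Since the intersection points $\mathbf{j}_{i,j}$ all lie on $\partial^{(r)}\Lambda_i$, their central vertices are independent of the event $\{\mathcal{G}=\mathcal{C}\}$ and are i.i.d.\ equal to $B$ with probability $q_B$. Your bound $(1-q_B)^{N^2/4}=2^{-\Theta(\theta_B^2)}$ then follows exactly as you wrote.
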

Combining these events we can bring a $B$-super vertex to $\mathbf{j}_{0,0}$
for the respective good grid given by $\mathcal{E}^{(1)}$. As before, we need
to bring the $B$-super box to a deterministic vertex. Since the grid this time
starts at $\mathbf{e}_2$ we can immediately bring it back to the origin. Let
$\mathcal{E}^{(2)}$ be the event that $\mathbf{W}_{\mathbf{j}}$ is
\begin{itemize}
        \item $AC$-traversable for $\mathbf{j}$ either in
                $\Gamma^{(1)}:=\{-\lfloor
                \theta_B^{5/4}\rfloor\mathbf{e}_1,\ldots, -\mathbf{e}_1\}$ or
                $\Gamma^{(2)}:=\Gamma^{(1)}+\mathbf{e}_2$ and there is at least
                one $\mathbf{j}$ in both $\Gamma^{(1)}$ and $\Gamma^{(2)}$ with
                $W_{\mathbf{j}}$ $AC$-super.
        \item $B$-traversable for $\mathbf{\mathbf{j}}$ in
                $\Gamma^{(3)}:=\{\mathbf{e}_2,\ldots, (\ell-1)\mathbf{e}_2\}$
                (i.e.\ the left boundary of $Q_{0,0}$).
\end{itemize}
In an analogous calculation to \cref{lemma:many_ac_failing_prob} we get.
\begin{lemma}\label{lemma:many_ac_failing_prob_2}
        For any $\varepsilon>0$ we find a $q(\varepsilon)$ such that
        \begin{equation}
                \ell\mu(1-\mathds{1}_{\mathcal{E}^{(2)}}) \le \varepsilon
        \end{equation}
        if $q_B<q(\varepsilon)$.
\end{lemma}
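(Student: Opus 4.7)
The plan is to mimic the proof of \cref{lemma:many_ac_failing_prob}: decompose the complement of $\mathcal{E}^{(2)}$ as a union over the three defining conditions (traversability on $\Gamma^{(1)}\cup\Gamma^{(2)}$, existence of an $AC$-super box in each of $\Gamma^{(1)}$ and $\Gamma^{(2)}$, $B$-traversability on $\Gamma^{(3)}$) and apply a union bound, using independence coming from the product measure $\mu$ and the fact that the $W_{\mathbf{j}}$ supports are disjoint.

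First I would estimate the traversability failures. A single box $W_{\mathbf{j}}$ fails to be $AC$-traversable iff at least one of its three vertices carries a $B$-vacancy, so $\mu^*_{\mathbf{j}}(\text{not $AC$-traversable})\le 3q_B$, and similarly $\mu^*_{\mathbf{j}}(\text{not $B$-traversable})\le 2q_B$ since $B$-traversability only concerns the two outer vertices. A union bound over $\mathbf{j}\in\Gamma^{(1)}\cup\Gamma^{(2)}\cup\Gamma^{(3)}$ yields a contribution of order $\theta_B^{5/4}q_B+\theta_B^{3/2}q_B$, which after multiplying by $\ell=O(\theta_B^{3/2})$ still tends to $0$ since $\theta_B^{3}q_B=q_B|\log_2 q_B|^3\to 0$.

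Second I would bound the probability of finding no $AC$-super box inside $\Gamma^{(i)}$ for $i\in\{1,2\}$. By the standing hypothesis $\liminf_{q_B\to 0}q_{\mathrm{med}}>0$ of case (3.iii) we have $q_A,q_C\ge\lambda$ for some $\lambda>0$ once $q_B$ is small enough, so a given $W_{\mathbf{j}}$ is $AC$-super (lower vertex $A$, central vertex in $\{\star,A,C\}$, upper vertex $C$) with probability at least $c:=\lambda^2(1-q_B)\ge\lambda^2/2$. Since the $W_{\mathbf{j}}$'s along $\Gamma^{(i)}$ are disjoint, independence under $\mu$ gives
\begin{equation}
\mu(\text{no $AC$-super in }\Gamma^{(i)})\le (1-c)^{\lfloor\theta_B^{5/4}\rfloor},
\end{equation}
which decays superpolynomially in $\theta_B$ and thus multiplied by $\ell$ still vanishes as $q_B\to 0$.

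Combining these three estimates, $\ell\,\mu(1-\mathds{1}_{\mathcal{E}^{(2)}})\to 0$ as $q_B\to 0$, and one picks $q(\varepsilon)$ small enough to make the left-hand side below $\varepsilon$. There is no real obstacle here: the lemma is genuinely a short Peierls-style union bound, and the only thing to verify is that the polynomial blow-up in $\ell$ and in the $\Gamma^{(i)}$'s is dominated either by $q_B=2^{-\theta_B}$ (for the traversability terms) or by an exponential in $\theta_B^{5/4}$ coming from the $\lambda$-lower bound on $q_A q_C$ (for the absence-of-$AC$-super terms), exactly as in \cref{lemma:many_ac_failing_prob}.
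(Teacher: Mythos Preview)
Your proposal is correct and is exactly the kind of argument the paper intends: the paper only says ``In an analogous calculation to \cref{lemma:many_ac_failing_prob} we get'' and gives no further details, and your union-bound decomposition into traversability failures (controlled by $\theta_B^3 q_B\to 0$) and absence of an $AC$-super box in $\Gamma^{(1)}$ or $\Gamma^{(2)}$ (controlled by $(1-\lambda^2/2)^{\lfloor\theta_B^{5/4}\rfloor}$) is precisely the intended filling-in of that analogy. The estimates and the use of disjointness of the $W_{\mathbf j}$ along each $\Gamma^{(i)}$ are all accurate.
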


Let $\mathcal{E}:=\mathcal{E}^{(1)}\cap \mathcal{E}^{(2)}$ and let
$\mathcal{E}_x$ be the event translated by $x\in \Z^2$. ${\{\mathcal{E}_x\}}_x$
satisfies the exterior condition w.r.t.\ the ${\{V_n\}}_{n\in \Z}$ defined
above and thus using
\cref{lemma:many_ac_failing_prob,lemma:many_ac_failing_prob_2} we get that we
can apply the exterior condition theorem, \cref{thm:exterior_thm}, with this
family of events. We come to the proof of part (3.iii).
\begin{proof}[Proof of \cref{thm:abc_relaxation}(3.iii)]
        By the exterior condition theorem we have
        \begin{equation}
                \var(f) \le 4\sum_{x\in
                \Z^2}\mu(\mathds{1}_{\mathcal{E}_x}\var_x(f)).
        \end{equation}
        
        Let us upper bound the summand for $x=0$. First use that
        $\mathrm{Supp}(\mathcal{E}_0)\cap W_{\mathbf{0}}=\emptyset$ to extend
        the variance (\cref{lemma:extend_variance})
        \begin{equation}
                \mu(\mathds{1}_{\mathcal{E}_0}\var_0(f))
                \le \mu(\mathds{1}_{\mathcal{E}_0}\var^*_{\mathbf{0}}(f)).
        \end{equation}
        For $\omega\in \mathcal{E}$ let $\mathcal{G}(\omega)$ denote the unique
        good grid in $\omega$ consisting of the lowest horizontal paths and
        vertical crossings in the $\prec$-order that make a good grid. Further
        let $\xi\in X(\mathcal{G})$ be the largest intersection point that is
        $B$-super in the lexicographic order. Let $\mathcal{E}_{\mathcal{C},
        \mathbf{j}_{i,j}}$ be the event $\mathcal{E}$ with
        $\mathcal{G}=\mathcal{C}$ and $\xi=\mathbf{j}_{i,j}$. We have
        \begin{equation}
                \mu(\mathds{1}_{\mathcal{E}_0}\var^*_{\mathbf{0}}(f))
                =\sum_{\text{$\mathcal{C}$ grid}}\sum_{n,m\in [N]}
                \mu(\mathds{1}_{\mathcal{E}_{\mathcal{C},\mathbf{j}_{n,m}}}
                \var^*_{\mathbf{0}}(f)).
        \end{equation}
        Further let $\mathcal{E}^{(i,j)}_{\mathcal{C}, \mathbf{j}_{n,m}}$ for
        $(i,j)\in [0,n]\times [0,m-1]$ be the part of the event
        $\mathcal{E}_{\mathcal{C},\mathbf{j}_{n,m}}$ that depends on the
        vertices outside the $i$-th vertical strip and $j$-th horizontal strip,
        if $i>n$ or $j>m-1$ let
        $\mathcal{E}^{(i,j)}_{\mathcal{C},\mathbf{j}_{n,m}}
        =\mathcal{E}_{\mathcal{C},\mathbf{j}_{n,m}}$. We have
        \begin{equation}
                \sum_{\text{$\mathcal{C}$ grid}}\sum_{n,m\in [N]}
                \mathds{1}_{\mathcal{E}^{(i,j)}_{\mathcal{C},\mathbf{j}_{n,m}}}
                \le 2\ell
        \end{equation}
        since only the grid outside of the $Q_i^{(h)}$ and $Q_j^{(v)}$ is
        fixed and inside these strips there are at most $\ell$ choices of
        straight horizontal paths or boxes that could be vertical crossings
        respectively (in the latter case $\ell$ is a rough estimate of
        $\ell/\lfloor \theta_B^{5/4}\rfloor$).

        Fix a grid $\mathcal{C}$ and $n,m\in [N]$, extend the variance
        (\cref{lemma:extend_variance}) and use the block relaxation Lemma
        (\cref{lemma:block_relax}) to get
        \begin{equation}
                \mu(\mathds{1}_{\mathcal{E}_{\mathcal{C},\mathbf{j}_{n,m}}}
                \var^*_{\mathbf{0}}(f))
                \le
                2^{\kappa\theta_B}
                \mu\left[(\mathds{1}_{\mathcal{E}_{\mathcal{C},\mathbf{j}_{n,m}}}
                \left(\mathds{1}_{\text{$W_{\mathbf{j}_{0,0}}$
                $B$-super}}\var^*_{\mathbf{0}}(f)
                +\var^{(*,BT)}_{\mathbf{j}_{0,0}}(f)\right)
                \right]\label{eqn:many_ac_two_sum_1}.
        \end{equation}
        We extend the variance in the first summand to $\{0,\mathbf{e}_2\}$ and then
        use the block relaxation Lemma again:
        \begin{align}
                \mu&(\mathds{1}_{\mathcal{E}_{\mathcal{C},\mathbf{j}_{n,m}},
                \text{$W_{\mathbf{j}_{0,0}}$
                $B$-super}}\var^*_{\mathbf{0}}(f)))\\
                   &\le 2^{\kappa\theta_B}
                \mu\left[\mathds{1}_{\mathcal{E}_{\mathcal{C},\mathbf{j}_{n,m}},
                \text{$W_{\mathbf{j}_{0,0}}$ $B$-super}}
                (\mathds{1}_{\text{$W_{\mathbf{e}_2}$ $B$-super}}
                \var^*_{\mathbf{0}}(f)+\var^{(*,BT)}_{\mathbf{e}_2}(f))\right].
                \label{eqn:many_ac_two_sum_2}
        \end{align}
        For the second summand in \cref{eqn:many_ac_two_sum_2} there is a
        unique shortest path $\Gamma$ from $\mathbf{e}_2$ to $\mathbf{j}_{0,0}$
        first on the bottom boundary of $D_{0,0}$ and then following the grid
        $\mathcal{C}$. Through a combination of extending the variance, the
        block relaxation Lemma, \cref{lemma:many_ac_hor,lemma:many_ac_ver} we
        get
        \begin{equation}
                \mu\left[\mathds{1}_{\mathcal{E}_{\mathcal{C},\mathbf{j}_{n,m}},
                \text{$W_{\mathbf{j}_{0,0}}$ $B$-super}}
                \var^{(*,BT)}_{\mathbf{e}_2}(f)\right]
                \le 2^{\kappa\theta_B^{3/2}}
                \mu\left[\mathds{1}_{\mathcal{E}^{(0,0)}_{\mathcal{C},\mathbf{j}_{n,m}}}
                \mathcal{D}_{\Gamma\cup\mathrm{Supp}(\mathcal{E}^{(2)})}(f)\right].
        \end{equation}
        Analogously for the first term in \cref{eqn:many_ac_two_sum_2} using
        \cref{lemma:many_ac_ver}. We can then take the sum over $\mathcal{C}$,
        $n$ and $m$ and absorb the overcounting of the vertices in
        $\mathrm{Supp}(\mathcal{E}^{(2)})$ into $\kappa$ above for $q_B$ small enough.

        For the second summand in \cref{eqn:many_ac_two_sum_1} we use
        completely analogous techniques to the proofs of the two-dimensional
        relaxation on the grids in part (i) and (ii), where here the $B$-super
        state corresponds to the vacancy state and the $B$-traversable state to
        the particle state of the auxiliary two-dimensional East model on the
        intersection points. Recovering the spectral gap of the $ABC$-model
        follows the same one-dimensional techniques from the first summand of
        \cref{eqn:many_ac_two_sum_1}.
\end{proof}
\section*{Acknowledgements}
The present work resulted from my Ph.D.\ thesis~\cite{phdthesis}, which also
contains some more details of the proofs. I wish to thank my supervisor Fabio
Martinelli who brought me to the original paper~\cite{garrahan2003coarse} and
has guided me in finding the above results for the multicolour East model.

\bibliographystyle{abbrv}
\begin{bibdiv}
 \begin{biblist}
\bib{cancrini2008kcm}{article}{
      author={Cancrini, Nicoletta},
      author={Martinelli, Fabio},
      author={Roberto, Cyril},
      author={Toninelli, Cristina},
       title={Kinetically constrained spin models},
        date={2008},
     journal={Probab. Theory Rel.},
      volume={140},
      number={3-4},
       pages={459\ndash 504},
  url={http://www.ams.org/mathscinet/search/publications.html?pg1=MR&s1=MR2365481},
}
\bib{chleboun2015mixing}{article}{
      author={Chleboun, Paul},
      author={Faggionato, Alessandra},
      author={Martinelli, Fabio},
      title = {Mixing time and local exponential ergodicity of the {East-like} process in $\mathbb{Z}^d$},
     journal = {Annales de la Facult\'e des sciences de Toulouse : Math\'ematiques},
     pages = {717--743},
     publisher = {Universit\'e Paul Sabatier, Toulouse},
     volume = {Ser. 6, 24},
     number = {4},
     year = {2015},
     doi = {10.5802/afst.1461},
    }
\bib{chleboun2016relaxation}{article}{
title={Relaxation to equilibrium of generalized {East} processes on $\mathbb{Z}^d$
 : Renormalization group analysis and energy-entropy competition},
author={Chleboun, Paul},
author={Faggionato, Alessandra},
author={Martinelli, Fabio},
journal={The Annals of Probability},
volume={44},
number={3},
pages={1817--1863},
year={2016},
publisher={Institute of Mathematical Statistics}
}

\bib{couzinie2022front}{misc}{
  doi = {10.48550/ARXIV.2112.14693},
  url = {https://arxiv.org/abs/2112.14693},
  author = {Couzinié, Yannick},
  author = {Martinelli, Fabio},
  title = {On a front evolution problem for the multidimensional East model},
  publisher = {arXiv},
  year = {2021},
}
\bib{phdthesis}{thesis}{
    title    = {The multidimensional East model: a multicolour model and a front evolution problem},
    school   = {Roma Tre University},
    type = {Ph.D. thesis}
    author   = {Couzini\'{e}, Yannick},
    year     = {2022}, 
    note = {Available on my website
    \url{https://yannick-couzinie.github.io/theses/phd-thesis}},
}

\bib{faggionato2012east}{article}{
  title={The East model: recent results and new progresses},
author={Faggionato, Alessandra},
author={Martinelli, Fabio},
author={Roberto, Cyril},
author={Toninelli, Cristina},
  journal={arXiv preprint arXiv:1205.1607},
  year={2012}
}

\bib{garrahan2003coarse}{article}{
  title={Coarse-grained microscopic model of glass formers},
  author={Garrahan, Juan P.},
  author={Chandler, David},
  journal={Proceedings of the National Academy of Sciences},
  volume={100},
  number={17},
  pages={9710--9714},
  year={2003},
  publisher={National Acad Sciences}
}
\bib{garrahan2011kinetically}{article}{
  title={Kinetically constrained models},
  author={Garrahan, Juan P},
  author={Sollich, Peter},
  author={Toninelli, Cristina},
  journal={Dynamical heterogeneities in glasses, colloids, and granular media},
  volume={150},
  pages={111--137},
  year={2011},
  publisher={International Series of Monographs on Physics}
}

\bib{gine2006lectures}{book}{
  title={Lectures on Probability Theory and Statistics: Ecole D'Et{\'e} de Probabilit{\'e}s de Saint-Flour XXVI-1996},
author={Gin{\'e}, Evarist},
author={Grimmett, Geoffrey R},
author={Saloff-Coste, Laurent},
  year={2006},
  publisher={Springer}
}
\bib{hartarsky2021universality}{article}{
  title={Universality for critical KCM: finite number of stable directions},
  author={Hartarsky, Ivailo and Martinelli, Fabio and Toninelli, Cristina},
  journal={The Annals of Probability},
  volume={49},
  number={5},
  pages={2141--2174},
  year={2021},
  publisher={Institute of Mathematical Statistics}
}
\bib{kordzakhia2006ergodicity}{article}{
  title={Ergodicity and mixing properties of the Northeast model},
  author={Kordzakhia, George},
  author={Lalley, Steven P.},
  journal={Journal of applied probability},
  volume={43},
  number={3},
  pages={782--792},
  year={2006},
  publisher={Cambridge University Press}
}
\bib{liggett1985interacting}{book}{
  title={Interacting particle systems},
  author={Liggett, Thomas Milton},
  volume={2},
  year={1985},
  publisher={Springer}
}

\bib{liggett2010continuous}{book}{
  title={Continuous time Markov processes: an introduction},
  author={Liggett, Thomas Milton},
  volume={113},
  year={2010},
  publisher={American Mathematical Soc.}
}
\bib{mareche2019exponential}{article}{
  title={Exponential convergence to equilibrium for the $ d $-dimensional East model},
  author={Mar{\^e}ch{\'e}, Laure},
  journal={Electronic Communications in Probability},
  volume={24},
  pages={1--10},
  year={2019},
  publisher={Institute of Mathematical Statistics and Bernoulli Society}
}

\bib{martinelli2019towards}{article}{
  title={Towards a universality picture for the relaxation to equilibrium of kinetically constrained models},
  author={Martinelli, Fabio},
  author={Toninelli, Cristina},
  journal={The Annals of Probability},
  volume={47},
  number={1},
  pages={324--361},
  year={2019},
  publisher={Institute of Mathematical Statistics}
}
\bib{martinelli2020diffusive}{inproceedings}{
  title={Diffusive scaling of the {Kob}-{Andersen} model in $\mathbb{Z}^d$},
  author={Martinelli, Fabio},
  author={Shapira, Assaf},
  author={Toninelli, Cristina},
  booktitle={Annales de l'Institut Henri Poincar{\'e}, Probabilit{\'e}s et Statistiques},
  volume={56},
  number={3},
  pages={2189--2210},
  year={2020},
  organization={Institut Henri Poincar{\'e}}
}
\bib{shapira2020kinetically}{article}{
title={Kinetically constrained models with random constraints},
author={Shapira, Assaf},
journal={The Annals of Applied Probability},
volume={30},
number={2},
pages={987--1006},
year={2020},
publisher={Institute of Mathematical Statistics}
}
\end{biblist}
\end{bibdiv}

\end{document}